\newcommand{\id}{\operatorname{id}}
\DeclareMathOperator*{\colim}{colim}
\newcommand{\catC}{\mathcal{C}}
\newcommand{\catD}{\mathcal{D}}
\newcommand{\cSet}{\mathsf{cSet}}
\newcommand{\Set}{\mathsf{Set}}
\newcommand{\sSet}{\mathsf{sSet}}
\newcommand{\FP}{\mathcal{P}}
\newcommand{\FS}{\mathcal{S}}
\newcommand{\zvec}{\overrightarrow{0}}
\newcommand{\ovec}{\overrightarrow{1}}
\newcommand{\avec}{\overrightarrow{a}}
\newcommand{\pushout}{\arrow [dr, phantom, "\ulcorner" very near end]}
\newcommand{\pullback}{\arrow [dr, phantom, "\lrcorner" very near start]}
\newcommand{\zo}{\{0,1\}}
\newcommand{\ihom}{\underline{\operatorname{hom}}}
\newcommand{\bd}{\partial}
\newcommand{\adjoint}{\dashv}
\newcommand{\conset}{\{\wedge,\vee\}}
\newcommand{\hcap}{\widehat{\sqcap}}
\newcommand{\hBox}{\widehat{\Box}}
\newcommand{\Poset}{\mathsf{Poset}}
\theoremstyle{plain}
	\newtheorem{thm}{Theorem}[section]
	\newtheorem*{thm*}{Theorem}
	\newtheorem{cor}[thm]{Corollary}
	\newtheorem*{cor*}{Corollary}
	\newtheorem{prop}[thm]{Proposition}
	\newtheorem*{prop*}{Proposition}
	\newtheorem{lem}[thm]{Lemma}
	\newtheorem*{lem*}{Lemma}
\theoremstyle{definition}
	\newtheorem{Def}[thm]{Definition}
	\newtheorem*{Def*}{Definition}
	\newtheorem{rmk}[thm]{Remark}
	\newtheorem*{rmk*}{Remark}
    \newtheorem{examples}[thm]{Examples}
\Crefname{lem}{Lemma}{Lemmas}
\Crefname{prop}{Proposition}{Propositions}
\title{Symmetry in the cubical Joyal model structure}
\author{Brandon Doherty}
\date{}
\begin{document}

\begin{abstract}
 We study properties of the cubical Joyal model structures on cubical sets by means of a combinatorial construction which allows for convenient comparisons between categories of cubical sets with and without symmetries. In particular, we prove that the cubical Joyal model structures on categories of cubical sets with connections are cartesian monoidal. Our techniques also allow us to prove that the geometric product of cubical sets (with or without connections) is symmetric up to natural weak equivalence in the cubical Joyal model structure, and to obtain induced model structures for $(\infty,1)$-categories on cubical sets with symmetries.
\end{abstract}

\maketitle

\section{Introduction}

Model structures on categories of \emph{cubical sets} can be used to model higher categories. Much like the more familiar simplicial sets, cubical sets are presheaves on a small indexing category (a ``cube category'') which can be thought of as collections of cubes in all dimensions. Unlike in the case of simplicial sets, there are many possible choices of this indexing category, giving rise to categories of cubical sets whose cubes are related by various different kinds of structure maps.

In particular, models of $\infty$-groupoids (the \emph{Grothendieck model structures} \cite{CisinskiAsterisque,CisinskiUniverses}) and $(\infty,1)$-categories (the \emph{cubical Joyal model structures} \cite{doherty-kapulkin-lindsey-sattler}), Quillen equivalent to the standard simplicial models, have been established using \emph{minimal cubical sets} and \emph{cubical sets with connections}. These kinds of cubical sets have a relatively simple set of structure maps: \emph{face maps}, \emph{projections} which play the role of degeneracies, and in the latter case, an additional kind of degeneracy called \emph{connections}. Other cube categories include \emph{symmetries}, automorphisms corresponding to the natural symmetries of cubical shapes.

Pre-composition with an inclusion of cube categories $i \colon \Box_A \hookrightarrow \Box_B$ defines a forgetful functor between the corresponding categories of cubical sets (which ``forgets the additional structure maps of $\Box_B$''), having both a left and a right adjoint. The aim of this paper is to study the cubical Joyal model structures by means of such comparison functors, specifically between the categories of cubical sets described above and those with symmetries. In doing so, we will also prove or re-prove analogous results involving the Grothendieck model structures.

Our main results involve the \emph{geometric product} of cubical sets, a monoidal product $\otimes$ having the useful property that a geometric product of cubes is again a cube; this is the most convenient monoidal product for parameterizing homotopy in the cubical Joyal model structures. One drawback of the geometric product, at least for the kinds of cubical sets on which cubical Joyal model structures have been established, is that it is not symmetric -- in general, $X \otimes Y$ is not isomorphic to $Y \otimes X$. Our first main result establishes that the geometric product is ``symmetric up to natural weak equivalence'', for both minimal cubical sets and those with connections.

\begin{thm*}[cf \cref{monoidal-weak-equiv}]
    For cubical sets $X$ and $Y$, the geometric products $X \otimes Y$ and $Y \otimes X$ are naturally weakly equivalent.
\end{thm*}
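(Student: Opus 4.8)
The plan is to route the comparison through cubical sets \emph{with} symmetries, where the geometric product becomes symmetric. Let $i \colon \Box_A \hookrightarrow \Box_B$ denote the inclusion of the relevant cube category (minimal, or with connections) into its extension by symmetry generators, and let $i_! \dashv i^* \colon \cSet^{\mathrm{sym}} \to \cSet$ be the induced adjunction on presheaf categories, so that $i_!$ is the ``free symmetrization''. Three ingredients are needed. (1) The functor $i$ is strong monoidal for the concatenation (``geometric'') monoidal structures on the two cube categories, so its left Kan extension $i_!$ is strong monoidal for the geometric products; in particular there is a natural isomorphism $i_!(X \otimes Y) \cong i_! X \otimes i_! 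Y$. (2) The concatenation structure on $\Box_B$ is \emph{symmetric}: the symmetry isomorphism $c \oplus d \cong d \oplus c$ is realized by the block-transposition automorphism, which is a morphism of $\Box_B$ precisely because $\Box_B$ has symmetry generators. Hence the geometric product on $\cSet^{\mathrm{sym}}$, being its Day convolution, is symmetric monoidal; in particular $i_! X \otimes i_! Y \cong i_! Y \otimes i_! X$ naturally. (3) Established earlier, en route to the induced cubical Joyal model structure on $\cSet^{\mathrm{sym}}$: the adjunction $i_! \dashv i^*$ is a Quillen equivalence, and $i^*$ preserves all weak equivalences. Since every cubical set is cofibrant, the derived unit at $Z$ is a weak equivalence; as it factors as $\eta_Z$ followed by $i^*$ of a weak equivalence, $2$-out-of-$3$ shows that the unit $\eta_Z \colon Z \to i^* i_! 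Z$ itself is a weak equivalence for every cubical set $Z$.

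Granting (1)--(3), the argument is immediate. Combining the strong monoidality of $i_!$ with the symmetry of the geometric product on $\cSet^{\mathrm{sym}}$ yields a natural isomorphism
\[
    i_!(X \otimes Y) \;\cong\; i_! X \otimes i_! Y \;\cong\; i_! Y \otimes i_! X \;\cong\; i_!(Y \otimes X),
\]
hence, applying $i^*$, a natural isomorphism $\theta_{X,Y} \colon i^* i_!(X \otimes Y) \xrightarrow{\ \sim\ } i^* i_!(Y \otimes X)$. Splicing in the units produces the natural zigzag
\[
    X \otimes Y \ \xrightarrow{\ \eta_{X \otimes Y}\ }\ i^* i_!(X \otimes Y) \ \xrightarrow{\ \theta_{X,Y}\ }\ i^* i_!(Y \otimes X) \ \xleftarrow{\ \eta_{Y \otimes X}\ }\ Y \otimes X ,
\]
in which all three maps are weak equivalences and every object and morphism is functorial in the pair $(X,Y)$. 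This is exactly a natural zigzag of weak equivalences between the functors $(X,Y) \mapsto X \otimes Y$ and $(X,Y) \mapsto Y \otimes X$, which is the claim.

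I expect the real content to lie entirely in ingredient (3), and specifically in the assertion that $\eta_Z \colon Z \to i^* i_! Z$ is always a weak equivalence; this is where the paper's combinatorial comparison machinery must do its work, presumably by building an explicit (anodyne) filtration or deformation relating a cubical set to the restriction of its symmetrization. Ingredients (1) and (2) are formal: (1) is the standard fact that left Kan extension along a strong monoidal functor is strong monoidal for Day convolution, and (2) is a direct check of the symmetry axioms for block transpositions in the cube category with symmetries -- the one genuinely cube-theoretic point being that these transpositions are morphisms only once symmetry generators are present, which is also why the geometric product fails to be symmetric in $\cSet$ itself. One should also confirm that the argument is uniform across the minimal and with-connections variants, which it is, since $i$, the monoidal structures, and the model structures are all set up uniformly in both cases.
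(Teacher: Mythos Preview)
Your approach is exactly the paper's: pass to a cube category $\Box_B$ with symmetries, use that $i_!$ is strong monoidal (\cref{i-shriek-monoidal}) and that the geometric product on $\cSet_B$ is symmetric, and invoke the fact that the unit $\eta_Z \colon Z \to i^* i_! Z$ is a weak equivalence to obtain the natural zigzag
\[
X \otimes Y \hookrightarrow i^* i_!(X \otimes Y) \cong i^* i_!(Y \otimes X) \hookleftarrow Y \otimes X.
\]

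There is, however, one genuine issue with your choice of $B$. You take $\Box_B$ to be the extension of $\Box_A$ by symmetry generators \emph{alone}; when $A = \varnothing$ this gives $B = \{\Sigma\}$, and for that inclusion your ingredient (3) is \emph{false}: the component of the unit at $\Box^n_\varnothing$ is not a weak equivalence even in the Grothendieck model structure (the paper states this explicitly in the remark following \cref{unit-tcof}; it fails already at $n \leq 2$). Consequently the Quillen equivalence and the induced model structure on $\cSet_B$ that you invoke for (3) are not available in this case either---\cref{induced-model-structures} requires $\Box_B$ to contain a connection. The reason is visible in the paper's combinatorics: the standard decomposition cubes $N_k(\phi) = \gamma_{n,1}(\phi \otimes \Box^1_B \otimes \Box^k_B)$ used to exhibit $\eta$ as inner anodyne need a connection in $\Box_B$. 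The paper therefore takes $B \subseteq \FP$ containing $\Sigma$ \emph{and} at least one of $\wedge, \vee$; with that choice \cref{unit-tcof} applies and your argument goes through verbatim. When $A$ already contains a connection your $B$ does too and there is no problem, but in the minimal case you must enlarge $B$ to include a connection as well as the symmetries.
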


Our results also allow us to obtain alternative models of $(\infty,1)$-categories via model structures on categories of cubical sets with symmetries.

\begin{thm*}[cf \cref{induced-model-structures}]
    Cubical sets with symmetries and connections admit model structures for $(\infty,1)$-categories left- and right-induced by the forgetful functor to minimal cubical sets, and Quillen equivalent to the cubical Joyal model structures.
\end{thm*}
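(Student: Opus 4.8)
The plan is to exhibit, for the forgetful functor $U \colon \cSet_{\Sigma,\conset} \to \cSet$ from cubical sets with symmetries and connections to minimal cubical sets, both a left-induced and a right-induced model structure, and then to verify that in each case the relevant adjoint pair is a Quillen equivalence with the cubical Joyal model structure on $\cSet$. I would first recall that $U$ fits into a string of adjunctions $i_! \adjoint U \adjoint i_*$, where $i \colon \Box \hookrightarrow \Box_{\Sigma,\conset}$ is the inclusion of cube categories; the left-induced structure is transferred along $i_!$ (or rather, has as its (co)fibrations/weak equivalences the classes created by $U$ on the left) and the right-induced structure is transferred along $U \adjoint i_*$. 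The existence of right-induced model structures is the easier half: one invokes the standard transfer theorem, checking that $U$ preserves filtered colimits (it does, being a presheaf restriction functor, hence a left adjoint as well) and that the acyclic cofibrations of the putative transferred structure have the left lifting property against fibrations, via an argument on relative $i_*$-cell complexes; the smallness and cofibrant-generation of the cubical Joyal model structure, established in \cite{doherty-kapulkin-lindsey-sattler}, provides the input. For the left-induced structure I would appeal to the machinery of \cite{CisinskiAsterisque} (Cisinski's theory of model structures on presheaf categories) or to the general left-induction results for locally presentable categories, using that $\cSet_{\Sigma,\conset}$ is a presheaf topos and that the class of monomorphisms together with a suitable localizer yields a combinatorial model structure; here the key technical point carried over from the body of the paper is the identification of the "cubical Joyal" anodyne maps in the symmetric-with-connections setting via the combinatorial construction advertised in the abstract.

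The heart of the matter, and the step I expect to be the main obstacle, is the Quillen equivalence. For this I would use the combinatorial comparison construction developed earlier in the paper (the same device underlying \cref{monoidal-weak-equiv}), which allows one to compare $\cSet$ and $\cSet_{\Sigma,\conset}$ at the level of homotopy categories. Concretely, one shows that the unit and counit of the adjunction $i_! \adjoint U$ (for the left-induced case) are weak equivalences: the counit $i_! U X \to X$ is handled by a cell-induction argument reducing to the representables, where one checks that the symmetric cube $i_! \Box^n$ maps to $\Box^n$ by a weak equivalence, using that symmetries act "freely up to homotopy" and that the relevant quotient/retraction is built from elementary anodyne maps; the unit $Y \to U i_! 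Y$ is treated similarly, or deduced formally once one knows $U$ creates weak equivalences and reflects them. The analogous statement for the right-induced structure and the pair $U \adjoint i_*$ follows by a dual argument, or by observing that the two induced model structures on $\cSet_{\Sigma,\conset}$ coincide (both presenting the same $(\infty,1)$-category), so that $\id$ is a Quillen equivalence between them and the composite Quillen equivalences with $\cSet$ are inherited. Throughout, the subtle points are: (i) verifying that adjoining symmetries does not change the homotopy theory — this is exactly where one needs that symmetries of cubes are generated by transpositions which are themselves "inessential" in the model structure, a fact one proves by exhibiting explicit homotopies built from connections; and (ii) ensuring compatibility of the geometric-product structure under $U$, which one has already arranged via \cref{monoidal-weak-equiv}. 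I would organize the write-up so that all the genuinely new cubical combinatorics is isolated in a single lemma about $i_! \Box^n \to \Box^n$, with the rest of the proof being formal transfer-and-equivalence bookkeeping.
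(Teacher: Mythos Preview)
Your overall shape is right---work with the adjoint triple $i_! \adjoint i^* \adjoint i_*$ and feed a single combinatorial input into general transfer machinery---but there are two concrete problems.

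First, you have the adjoints tangled. Both the left- and right-induced structures on $\cSet_{\Sigma,\conset}$ are induced along the \emph{same} functor $U = i^*$: left-induced means $i^*$ creates cofibrations and weak equivalences, right-induced means $i^*$ creates fibrations and weak equivalences. The right-induced structure is transferred along $i_! \adjoint i^*$ (with $i^*$ as right adjoint), so the relevant cell complexes are built from $i_!$ of the generating maps, not from $i_*$. Your phrase ``relative $i_*$-cell complexes'' points the wrong way.

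Second, and more seriously, you misidentify the key combinatorial lemma. The map $i_! \Box^n_\varnothing \to \Box^n_B$ is an isomorphism (left Kan extension along a fully faithful inclusion preserves representables), so there is nothing to prove there. The actual content lies in the \emph{unit} $\eta_X \colon X \to i^* i_! X$ of the adjunction $i_! \adjoint i^*$, specifically in showing that $\Box^n_\varnothing \hookrightarrow i^* \Box^n_B$ is inner anodyne in $\cSet_\varnothing$; this is where the standard-decomposition-cube construction does its work. Once the unit is known to be a natural trivial cofibration, the paper invokes a single packaged proposition (combining results of Drummond-Cole--Hackney for right induction and Hackney--Rovelli for left induction) whose hypotheses are just that $i^* i_!$ preserves cofibrations and that the unit is a natural weak equivalence; existence of both induced structures and all the Quillen equivalence statements drop out simultaneously. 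No separate analysis of the counit is needed, and your item (ii) on compatibility of the geometric product with $U$ plays no role in this theorem.
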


We also study the cartesian product of cubical sets and its relationship with the geometric product. In the absence of connections, the cartesian product is not homotopically well-behaved (see \cite[Rmk.~3.5]{jardine:categorical-homotopy-theory} for instance). On the other hand, cubical sets with connections form a strict test category, implying compatibility of the cartesian product with the Grothendieck model structure. Our next main result generalizes this observation to the setting of $(\infty,1)$-categories.

\begin{thm*}[cf \cref{X-Y-comparison-weq,cartesian-monoidal}]
    The cubical Joyal model structure on cubical sets with connections is monoidal with respect to the cartesian product. Moreover, the geometric and cartesian products are naturally weakly equivalent in the cubical Joyal model structure.
\end{thm*}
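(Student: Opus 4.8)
The plan is to organise everything around the natural comparison map $c_{X,Y}\colon X\otimes Y\to X\times Y$, induced on the coend presenting $X\otimes Y$ by the canonical maps $\Box^{m+n}\to\Box^{m}\times\Box^{n}$ (themselves assembled from the coordinate projections $\Box^{m+n}\to\Box^{m}$, $\Box^{m+n}\to\Box^{n}$). The ``moreover'' part of the theorem is exactly the assertion that $c_{X,Y}$ is a natural weak equivalence; I would then deduce the monoidality statement from this together with the compatibility of the geometric product with the cubical Joyal model structure proved in \cite{doherty-kapulkin-lindsey-sattler}. So there are two tasks: (1) show $c_{X,Y}$ is a weak equivalence for all $X,Y$; (2) derive the pushout-product axiom for $\times$ from (1).

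Task (2) is formal. Cofibrations in the cubical Joyal model structure are the monomorphisms, and a pushout-product of two monomorphisms in a presheaf topos is again a monomorphism, while the unit axiom holds trivially because the monoidal unit is terminal; this gives the ``cofibration'' half of the axiom. For the ``trivial cofibration'' half, take a cofibration $i\colon A\to B$ and a trivial cofibration $j\colon C\to D$. Naturality of $c$ produces a commuting square from the map $i\mathbin{\widehat{\otimes}}j$ to the map $i\mathbin{\widehat{\times}}j$: its codomain component is $c_{B,D}$, and its domain component is the map of pushouts induced by $c_{B,C}$, $c_{A,C}$, $c_{A,D}$. Since the geometric and cartesian products both preserve monomorphisms in each variable, these pushouts are homotopy pushouts (the model structure being left proper), so by the gluing lemma and (1) the domain component is a weak equivalence, and $c_{B,D}$ is a weak equivalence by (1). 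As $i\mathbin{\widehat{\otimes}}j$ is a trivial cofibration by the monoidal structure for $\otimes$, two-out-of-three forces $i\mathbin{\widehat{\times}}j$ to be a weak equivalence; being a cofibration by the first half, it is a trivial cofibration.

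For task (1) I would first reduce to representables. Both $-\otimes Y$ and $-\times Y$ are cocontinuous and preserve monomorphisms, every cubical set is cofibrant, and weak equivalences between cofibrant objects are stable under the coproducts, pushouts along monomorphisms, and sequential colimits along monomorphisms appearing in the skeletal filtration of a cubical set (by left properness and the gluing lemma). Carrying out the resulting induction on skeleta, first in $X$ and then in $Y$, reduces the statement to the single family of maps
\[
\Box^{m}\otimes\Box^{n}\;=\;\Box^{m+n}\;\hookrightarrow\;\Box^{m}\times\Box^{n},
\]
which one must show are trivial cofibrations in the cubical Joyal model structure. This is the combinatorial heart of the proof: one has to exhibit $\Box^{m}\times\Box^{n}$ as a (necessarily transfinite) relative cell complex over $\Box^{m+n}$ built from generating trivial cofibrations, by ordering the non-degenerate cubes of $\Box^{m}\times\Box^{n}$ into a ``shuffle'' filtration and checking that each successive cube is attached along an inner open box inclusion. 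This is precisely the step the symmetry apparatus of the earlier sections is built for: adjoining symmetries renders the non-degenerate cubes of $\Box^{m}\times\Box^{n}$ and their faces uniform enough to organise in this way, after which the comparison (Quillen) equivalences of \cref{induced-model-structures} carry the conclusion back to cubical sets with connections.

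I expect this last step to be the main obstacle. In contrast to the simplicial case, the cartesian product of two cubes with connections is infinite-dimensional, so the filtration of $\Box^{m}\times\Box^{n}$ over $\Box^{m+n}$ is genuinely transfinite; the delicate point is to verify that at each stage the cells already attached assemble to exactly an inner open box in the cube being filled, so that the attaching map is a generating trivial cofibration. This bookkeeping is where the symmetry-based combinatorics of the paper do the real work; the remainder of the argument is routine.
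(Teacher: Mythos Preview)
Your overall architecture matches the paper's: define the comparison $c_{X,Y}\colon X\otimes Y\to X\times Y$, reduce to representables by skeletal induction, prove $\Box^{m+n}_A\hookrightarrow\Box^m_A\times\Box^n_A$ is inner anodyne, and deduce the pushout-product axiom. Your Task~(2) argument via the naturality square relating $i\,\widehat{\otimes}\,j$ to $i\,\widehat{\times}\,j$ is correct; the paper packages the same two-out-of-three reasoning slightly differently, first observing that $\times$ preserves weak equivalences in each variable (\cref{cartesian-pres-weq}) and then invoking a general lemma (\cref{pop-axiom-sufficient}), but the content is the same.

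Where your sketch diverges from the paper, and where it becomes unreliable, is the mechanism for the key combinatorial step. You write that one should pass to cubical sets with symmetries, organise the filtration there, and then let ``the comparison (Quillen) equivalences of \cref{induced-model-structures} carry the conclusion back''. Taken literally this is circular: $i_!$ does not preserve cartesian products, and while $i^*$ does, relating $\Box^m_A\times\Box^n_A$ to $i^*\Box^m_B\times i^*\Box^n_B$ via the units $\Box^m_A\to i^*\Box^m_B$ already presupposes that $\times$ preserves weak equivalences. The paper avoids this trap by never leaving $\cSet_A$. Instead it constructs a monomorphism $\Box^m_A\times\Box^n_A\hookrightarrow i^*\Box^{m+n}_B$ (for $B$ containing symmetries and diagonals) such that the composite $\Box^{m+n}_A\to\Box^m_A\times\Box^n_A\to i^*\Box^{m+n}_B$ is the unit of $i_!\dashv i^*$; see \cref{X-Y-composite-unit,X-Y-comparison-mono,cartesian-image}. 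The image is characterised as those $\phi\in\Box_B(\Box^k,\Box^{m+n})$ whose projections to $\Box^m$ and $\Box^n$ lie in $\Box_A$, and the crucial check (\cref{cartesian-N-closed}) is that this image is a \emph{decomposition-closed} subcomplex of $i^*\Box^{m+n}_B$ in the sense of \cref{sec N-closed}. The general machine of \cref{N-closed-anodyne} then applies directly: any inclusion of decomposition-closed subcomplexes is inner anodyne in $\cSet_A$, so both $\Box^{m+n}_A\hookrightarrow\Box^m_A\times\Box^n_A$ and $\Box^m_A\times\Box^n_A\hookrightarrow i^*\Box^{m+n}_B$ are inner anodyne. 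So the ``symmetry apparatus'' functions not as a target category to which one transfers and returns, but as an ambient scaffold $i^*\Box^{m+n}_B$ inside which both $\Box^{m+n}_A$ and $\Box^m_A\times\Box^n_A$ sit, with the standard decomposition cubes $N_k(\phi)$ organising the inner open box filtration entirely within $\cSet_A$.
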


The key combinatorial construction underlying all of our results is that of the \emph{standard decomposition cubes}, which allow for certain inclusions of cubical sets to be expressed as transfinite composites of open-box fillings. Though similar in spirit to the open-box filling techniques used in \cite{doherty:without-connections} to analyze the relationship between minimal cubical sets and those with connections, this construction does not rely on intricate analysis of standard factorizations into generators of maps in cube categories, as the techniques of that paper did. Instead, it can be described by simple formulas (see \cref{N-def,N-explicit}), and admits a more intuitive conceptual understanding (see \cref{N-examples}). 

\subsection*{Related work}

Model structures for $\infty$-groupoids on categories of cubical sets with symmetries have been the subject of considerable study, much of it motivated by applications to type theory; see, for instance, \cite{isaacson:symmetric,awodey:cartesian-cubical-model-categories,cavallo-sattler:relative-elegance,awodey-cavallo-coquand-riehl-sattler}. Induced model structures on one particular category of cubical sets with symmetries, for both $\infty$-groupoids and $(\infty,1)$-categories, have  been constructed in \cite{hackney-rovelli:left-induced}; the model structures in that reference are induced by a functor from cubical to simplicial sets, whereas ours are induced by forgetful functors from symmetric cubical sets to those without symmetries.

The topic of the compatibility of cartesian products of cubical sets with model structures for $\infty$-groupoids has previously been studied in terms of the theory of strict test categories; see in particular \cite{maltsiniotis:connections-strict-test-cat}, in which it is shown that the cube category with faces, projections and connections is a strict test category, and \cite{buchholtz-morehouse:varieties-of-cubes}, in which this result is extended to cube categories with additional structure maps.

\subsection*{Acknowledgements}

While working on this paper, the author was supported in part by a grant from the Knut and Alice Wallenberg Foundation, entitled ``Type Theory for Mathematics and Computer Science'' (principal investigator: Thierry Coquand).

The author would like to thank Timothy Campion, Evan Cavallo, Krzysztof Kapulkin, and Peter LeFanu Lumsdaine for helpful discussions on the topic of this paper. The author would also like to thank the anonymous referee for numerous helpful comments, in particular for suggesting the current proof structure of \cref{bdry-colim}, for identifying an error affecting the original proofs of \cref{i-shriek-pairs,N-active}, and for the observation about test model structures which underlies \cref{left-induced-test} and the current form of \cref{HR-comparison}.

\subsection*{Organization of the paper}

In \cref{sec background}, we define the categories of cubical sets under consideration and discuss some of the basic constructions used in their study, including the geometric product. Most of this is background material, but we will also define some concepts of specific relevance to the present work and prove some basic combinatorial lemmas which will be of use in later sections. In \cref{sec weak sym}, we introduce the adjoint triples $i_! \adjoint i^* \adjoint i_*$ which relate different categories of cubical sets, as well as the standard decomposition cubes which allow the units of the adjunctions $i_! \adjoint i^*$ to be analyzed by open-box filling. In particular, \cref{N-closed-anodyne} is our key technical result, of which all of this paper's main theorems are consequences. In \cref{sec unit} we apply \cref{N-closed-anodyne} to show that, given certain conditions on the categories of cubical sets under consideration, the unit of the comparison adjunction $i_! \adjoint i^*$ is a natural trivial cofibration in the cubical Joyal model structure. Using this, we show that the geometric product is symmetric up to natural weak equivalence, and construct induced model structures for $(\infty,1)$-categories and $\infty$-groupoids on cubical sets with symmetries. In \cref{sec monoidal}, we further apply \cref{N-closed-anodyne} to show that the cubical Joyal model structure on cubical sets with connections is cartesian monoidal, and is naturally weakly equivalent to the geometric product.

\section{Background} \label{sec background}

\subsection{Categories of cubical sets}

We begin by introducing the categories which will be our primary objects of study.

\begin{Def}\label{cube-cat}
The \emph{full cube category} $\Box_\FS$ is the full subcategory of $\Set$ on the objects $\{0,1\}^n, n \geq 0$.
This category is generated under composition by the following classes of maps (where $\wedge$ and $\vee$ respectively denote the minimum and maximum functions $\zo^2 \to \zo$):
\begin{itemize}
  \item \emph{faces} $\partial^n_{i,\varepsilon} \colon \zo^{n-1} \to \zo^n$ for $i = 1, \ldots , n$ and $\varepsilon = 0, 1$ given by:
  \[ \partial^n_{i,\varepsilon} (a_1, a_2, \ldots, a_{n-1}) = (a_1, a_2, \ldots, a_{i-1}, \varepsilon, a_i, \ldots, a_{n-1})\text{;}  \]
  \item \emph{diagonals} $\delta^n_{i} \colon \zo^{n-1} \to \zo^n$ for $i = 1, \ldots, n-1$, given by:
    \[ \delta^n_{i} (a_1, a_2, \ldots, a_{n-1}) = (a_1, a_2, \ldots, a_{i-1}, a_i, a_i, a_{i+1}, \ldots, a_{n-1})\text{;}  \]
  \item \emph{projections} $\sigma^n_i \colon \zo^{n+1} \to \zo^{n}$ for $i = 1, 2, \ldots, n + 1$ given by:
  \[ \sigma^n_i ( a_1, a_2, \ldots, a_{n+1}) = (a_1, a_2, \ldots, a_{i-1}, a_{i+1}, \ldots, a_{n+1})\text{;}  \]
   \item \emph{positive connections} $\gamma^n_{i,1} \colon \zo^{n+1} \to \zo^{n}$ for $i = 1, 2, \ldots, n$ given by:
  \[ \gamma^n_{i,1} (a_1, a_2, \ldots, a_{n+1}) = (a_1, a_2, \ldots, a_{i-1},  a_i \wedge a_{i+1}, a_{i+2}, \ldots, a_{n+1}) \text{.} \]
  \item \emph{negative connections} $\gamma^n_{i,0} \colon \zo^{n+1} \to \zo^{n}$ for $i = 1, 2, \ldots, n$ given by:
  \[ \gamma^n_{i,0} (a_1, a_2, \ldots, a_{n+1}) = (a_1, a_2, \ldots, a_{i-1},  a_i \vee a_{i+1}, a_{i+2}, \ldots, a_{n+1}) \text{.} \]
  \item \emph{transpositions} $\lambda^n_i \colon \zo^n \to \zo^n$ for $1 \leq i \leq n - 1$ given by
    \[ \lambda^n_{i} (a_1, a_2, \ldots, a_n) = (a_1, a_2, \ldots, a_{i-1}, a_{i+1}, a_{i}, a_{i+2}, \ldots, a_n) \text{.} \]
      \item \emph{reversals} $\rho^n_i \colon \zo^n \to \zo^n$ for $1 \leq i \leq n$ given by
    \[ \rho^n_{i} (a_1, a_2, \ldots, a_n) = (a_1, a_2, \ldots, a_{i-1}, 1-a_{i}, a_{i+1}, \ldots, a_n) \text{.} \]
\end{itemize}
\end{Def}

For clarity, the superscript $n$ will typically be omitted when it is irrelevant or clear from context.

Analogously to the familiar simplicial identities, one could describe the composition of these maps in terms of \emph{cubical identities}, as is done, for instance, in \cite{grandis-mauri} for the faces, projections and connections. As this would be both unwieldy and unnecessary for the present work, we will not produce a full list of cubical identities here; instead, we simply record a few identities which will be of particular use in our calculations, all of which can easily be derived from the definitions above.

\begin{lem}\label{some-cubical-identities}
    The maps above satisfy the following identities:
    \begin{itemize}
        \item $\sigma_i \bd_{i,\varepsilon} = \id$;
        \item $\gamma_{i,\varepsilon} \bd_{i,\varepsilon} = \gamma_{i,\varepsilon} \bd_{i+1,\varepsilon} = \id$;
        \item $\gamma_{i,\varepsilon} \bd_{i,1-\varepsilon} = \gamma_{i,\varepsilon} \bd_{i+1,1-\varepsilon} = \bd_{i,1-\varepsilon} \sigma_i$. \qed
    \end{itemize}
\end{lem}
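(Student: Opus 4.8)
The plan is to verify each identity by direct computation on an arbitrary tuple, since every map in question is given by an explicit formula on $\zo^n$. All three identities are equalities of set functions between finite powers of $\zo$, so it suffices to evaluate both sides on a generic element $(a_1, \ldots, a_k)$ and observe that the resulting tuples agree coordinatewise. First I would handle $\sigma_i \bd_{i,\varepsilon} = \id$: applying $\bd_{i,\varepsilon}$ inserts the constant $\varepsilon$ in position $i$, and then $\sigma_i$ deletes exactly the coordinate in position $i$, so we recover the original tuple; one only needs to be slightly careful about the indexing shift, namely that $\bd^n_{i,\varepsilon} \colon \zo^{n-1} \to \zo^n$ is followed by $\sigma^{n-1}_i \colon \zo^n \to \zo^{n-1}$, which matches.

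Next I would treat the two connection identities together. For $\gamma_{i,\varepsilon} \bd_{i,\varepsilon} = \id$ and $\gamma_{i,\varepsilon} \bd_{i+1,\varepsilon} = \id$, note that $\gamma_{i,\varepsilon}$ replaces the pair $(a_i, a_{i+1})$ by $a_i \wedge a_{i+1}$ (if $\varepsilon = 1$) or $a_i \vee a_{i+1}$ (if $\varepsilon = 0$); precomposing with $\bd_{i,\varepsilon}$ forces the entry in position $i$ to be $\varepsilon$, and since $\varepsilon \wedge a = a$ when $\varepsilon = 1$ and $\varepsilon \vee a = a$ when $\varepsilon = 0$, the combined operation $a_i \wedge a_{i+1}$ (resp.\ $\vee$) returns the surviving coordinate unchanged; the case of $\bd_{i+1,\varepsilon}$ is symmetric, forcing the entry in position $i+1$ to be the absorbing-free value $\varepsilon$. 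For the third identity, $\gamma_{i,\varepsilon} \bd_{i,1-\varepsilon} = \gamma_{i,\varepsilon} \bd_{i+1,1-\varepsilon} = \bd_{i,1-\varepsilon} \sigma_i$, the same analysis applies but now one of the two inputs to $\wedge$ (resp.\ $\vee$) is set to $1 - \varepsilon$, which is the \emph{absorbing} element: $0 \wedge a = 0$ and $1 \vee a = 1$. Hence $\gamma_{i,\varepsilon} \bd_{i,1-\varepsilon}$ deletes one of the two coordinates and replaces the other by the constant $1-\varepsilon$, which is precisely the effect of $\sigma_i$ followed by $\bd_{i,1-\varepsilon}$ (delete coordinate $i$, then reinsert the constant $1-\varepsilon$ in position $i$).

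Since each verification is an elementary coordinatewise check on $\zo$-valued tuples, there is no real obstacle here; the only point requiring minor care is bookkeeping the index shifts induced by the face and projection maps (the superscripts $n$ that the paper suppresses), to be sure that, e.g., in $\gamma_{i,\varepsilon}\bd_{i+1,1-\varepsilon}$ the face is inserted at the position that becomes the $(i+1)$-st argument of the connection. I would write out the computation for the representative case $\varepsilon = 1$ in each bullet and remark that $\varepsilon = 0$ follows by the evident symmetry exchanging $\wedge \leftrightarrow \vee$ and $0 \leftrightarrow 1$, then conclude with \qed as the statement already indicates.
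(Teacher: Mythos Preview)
Your proposal is correct and matches the paper's approach: the paper omits the proof entirely (the \qed appears in the statement itself), having noted just before the lemma that these identities ``can easily be derived from the definitions above.'' Your direct coordinatewise verification on tuples in $\zo^n$ is exactly the intended argument, and your bookkeeping of the index shifts and of which element is absorbing versus neutral for $\wedge$ and $\vee$ is accurate.
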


A \emph{cube category} is a subcategory of $\Box_\FS$ generated by some subset of the generating classes above which includes the faces and projections. We denote cube categories by $\Box_A$, where $A$ is a (possibly empty) subset of the set of symbols $\{\wedge, \vee, \Sigma, \rho, \delta\}$. For such a subset, $\Box_A$ denotes the cube category generated by faces and projections, as well as:
\begin{itemize}
\item positive connections if $\wedge \in A$;
\item negative connections if $\vee \in A$;
\item transpositions if $\Sigma \in A$;
\item reversals if $\rho \in A$;
\item diagonals if $\delta \in A$.
\end{itemize}

By composing transpositions, we obtain all automorphisms of the posets $\{0 \leq 1\}^n$, which act by permutation of coordinates. We will refer to these maps as \emph{symmetries}, and speak of a cube category ``containing symmetries'' rather than ``containing transpositions''. 

In this context we let $\FS$ denote the entire set $\{\wedge, \vee, \Sigma, \rho, \delta\}$, so that an arbitrary cube category can be written as $\Box_A$ for $A \subseteq \FS$. For ease of notation, when writing subsets of $\FS$ in subscripts we will not use braces or commas, so that, for instance, the cube category with both kinds of connections is denoted $\Box_{\wedge \vee}$. When viewing $\zo^n$ as an object of $\Box_A$, we will denote it by $\Box^n_A$. 

Note that the positive connections and reversals together generate the negative connections, and vice-versa; thus we will implicitly assume, when dealing with an arbitrary subset $A \subseteq \FS$, that if $A$ contains both $\rho$ and $\wedge$ (resp.~$\vee$) then $A$ contains $\vee$ (resp.~$\wedge$) as well.

The category of presheaves on $\Box_A$, ie~contravariant functors from $\Box_A$ to $\Set$, will be denoted $\cSet_A$. Objects in such a category are called \emph{cubical sets}, and may be viewed as collections of cubes in each dimension; for $X \in \cSet$, the image under $X$ of $\Box^n_A$ is denoted $X_n$, and thought of as the set of $n$-cubes of $X$. By a standard abuse of notation, we write the representable presheaf $\Box_A(-,\Box^n_A)$ as $\Box^n_A$. We refer to this object as the \emph{$n$-cube} in $\cSet_A$; this may be viewed as consisting of a single cube of dimension $n$ and all of its faces. 

\begin{examples}
We list specific examples of notable cube categories.
\begin{itemize}
\item $\Box_\varnothing$ is the \emph{minimal cube category}, having only faces and projections. The corresponding category of cubical sets $\cSet_\varnothing$ was originally studied by Kan in \cite{kan:abstract-htpy-1}, and more recently by Cisinski in \cite{CisinskiAsterisque}.
\item $\Box_{\wedge \vee \Sigma \delta}$ is the subcategory of maps which respect the partial ordering on $\{0,1\}^n$ induced by the ordering on $\{0,1\}$ given by $0 \leq 1$; in other words, it is the full subcategory of $\Poset$ on the powers of the interval $[1] = \{0 \leq 1\}$. As this cube category is of particular interest, we will denote it $\Box_\FP$ for ease of reference (so $\FP = \{\wedge, \vee, \Sigma, \delta\}$).
\end{itemize}
\end{examples}

Before continuing, we should comment on certain ambiguities of terminology. In many sources, in particular \cite{doherty-kapulkin-lindsey-sattler}, the term ``degeneracy'' is used to refer specifically to the projections. Our terminology instead follows \cite{campion:EZ-cubes}, as it is convenient to reserve ``degeneracy'' for a catch-all term for the strictly degree-decreasing maps in the Eilenberg-Zilber structures to be described in \cref{sec combinatorics}. Likewise, in many sources, the term ``diagonal'' refers to the maps $\zo^n \to \zo^{2n}$ which repeat an entire string of coordinates, ie, those which send $(a_1,\ldots,a_n)$ to $(a_1,\ldots,a_n,a_1,\ldots,a_n)$; the well-known observation that diagonals and projections together generate the symmetries refers to these diagonals, not to those specified in our list above. Our usage instead follows \cite{buchholtz-morehouse:varieties-of-cubes}, as we wish to regard the diagonals and symmetries as distinct generating classes. 

We will denote the $n$-tuple $(0,\ldots,0)$ by $\zvec_n$; likewise $(1,\ldots,1)$ will be denoted $\ovec_n$. When there is no risk of confusion we will omit the subscripts and simply write $\zvec, \ovec$.

We will occasionally represent cubical sets visually.
For a $1$-cube $f$, we draw
\[
\xymatrix{ x \ar[r]^f & y} \]
to indicate $x = f \partial_{1,0}$ and $y = f \partial_{1,1}$.
For a $2$-cube $s$, we draw
\[
\xymatrix{
 x
  \ar[r]^h
  \ar[d]_f
&
   y
  \ar[d]^g
\\
  z
  \ar[r]^k
&
 w
}
\]
to indicate $s\partial_{1,0} = f$,  $s\partial_{1,1} = g$, $s\partial_{2,0} = h$,  and $s\partial_{2,1} = k$.
As for the convention when drawing $3$-dimensional cubes, we use the following ordering of axes: 
\begin{tiny}
\[
\xymatrix{
 \cdot
  \ar[rrr]^1
  \ar[rrd]_3
  \ar[ddd]_2
& & &
   \cdot
\\
  & & \cdot
\\ & & &
\\
 \cdot & & &
}
\]
\end{tiny}
For readability, we do not label $2$- and $3$-cubes.

Lastly, a degenerate $1$-cube $x \sigma_1$ on $x$ is represented by
\[
\xymatrix{ x \ar@{=}[r] & x\text{,}} \]
while a $2$- or $3$-cube whose boundary agrees with that of a degenerate cube is assumed to be degenerate unless indicated otherwise.
For instance, a $2$-cube depicted as
\[
\xymatrix{
 x
  \ar@{=}[r]
  \ar[d]_f
&
   x
  \ar[d]^f
\\
  y
  \ar@{=}[r]
&
 y
}
\]
represents $f \sigma_1$.

\subsection{The geometric product}

The cartesian product on $\Set$ restricts to a monoidal product $\otimes$ on each category $\Box_A$. The faithfulness of the cartesian product implies the following useful result.

\begin{lem}\label{tensor-faithful}
For any $A \subseteq \FS$, the functor $\otimes \colon \Box_A \times \Box_A \to \Box_A$ is faithful. \qed
\end{lem}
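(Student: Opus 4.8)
The plan is to unwind the definitions and reduce to the elementary fact that the cartesian product of nonempty sets is faithful. I would begin by recalling that $\Box_A$ is by construction a subcategory of $\Set$, so that a morphism $\Box^m_A \to \Box^n_A$ is literally a function $\zo^m \to \zo^n$, and that $\otimes$ is the restriction of the cartesian product of sets: under the canonical bijection $\zo^m \times \zo^n \cong \zo^{m+n}$ given by concatenation of tuples, $f \otimes g$ is the function which applies $f$ to the first $m$ coordinates and $g$ to the last $n$. Since a functor is faithful exactly when it is injective on each hom-set, it suffices to show that for all $m, n, m', n'$ the assignment $(f, g) \mapsto f \otimes g$ from $\Box_A(\Box^m_A, \Box^{m'}_A) \times \Box_A(\Box^n_A, \Box^{n'}_A)$ to $\Box_A(\Box^{m+n}_A, \Box^{m'+n'}_A)$ is injective.

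For the injectivity, I would suppose $f \otimes g = f' \otimes g'$ as functions $\zo^m \times \zo^n \to \zo^{m'} \times \zo^{n'}$ and recover each factor by evaluation. Fixing a point $b \in \zo^n$ --- for instance $b = \zvec_n$, which exists since $\zo^n$ is nonempty --- one has, for every $a \in \zo^m$, the equality $(f(a), g(b)) = (f \otimes g)(a, b) = (f' \otimes g')(a, b) = (f'(a), g'(b))$; projecting onto the first factor gives $f(a) = f'(a)$ for all $a$, hence $f = f'$. Symmetrically, fixing a point of $\zo^m$ and letting the $\zo^n$-coordinate vary yields $g = g'$, completing the argument.

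I do not expect any genuine obstacle. The only input is that every object $\zo^k$ is nonempty, which is immediate (even $\zo^0$ is a singleton); this is exactly the place where a naive "the cartesian product is faithful" argument would need care, since the product functor on all of $\Set$ fails to be faithful because of the empty set.
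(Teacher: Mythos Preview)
Your proposal is correct and matches the paper's approach: the paper simply asserts the lemma as an immediate consequence of the faithfulness of the cartesian product on $\Set$ (restricted to nonempty sets), and your argument spells out precisely this. Your attention to the nonemptiness of each $\zo^k$ is the only subtlety, and you handle it correctly.
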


We may extend this to a monoidal product on $\cSet_A$, called the \emph{geometric product}, by Day convolution. In other words, we define the geometric product $\otimes \colon \cSet_A \times \cSet_A \to \cSet_A$ by left Kan extension, as depicted below:
\[
\xymatrix@C+0.5cm{
  \Box_A \times \Box_A \ar[r]^{\otimes} \ar[d] & \Box_A \ar[r] 
&
  \cSet_A
\\
  \cSet_A \times \cSet_A
  \ar[rru]_{\otimes}
&
}
\]
The unit of this monoidal structure is the 0-cube $\Box^0_A$.

From this construction we obtain a formula for the geometric product $X \otimes Y$ as a colimit over the cubes of $X$ and $Y$:
\[
X \otimes Y = \colim\limits_{\substack{x \colon \Box_A^m \to X \\ y \colon \Box_A^n \to Y}} \Box_A^{m+n}
\]

Given a pair of cubes $x \colon \Box^m_A \to X, y \colon \Box^n_A \to Y$, the component of the colimit cone corresponding to the pair $(x,y)$ is a cube $\Box^{m+n}_A \to X \otimes Y$, which we denote by $x \otimes y$. As the notation suggests, this is also the image of $(x,y)$ under the bifunctor $\otimes \colon \cSet_A \times \cSet_A \to \cSet_A$. The colimit formula above allows us to obtain the following characterization of $X \otimes Y$, generalizing \cite[Prop.~1.24]{doherty-kapulkin-lindsey-sattler}.

\begin{prop} \label{geo-prod-characterization}
    For $X, Y \in \cSet_A$, every cube of $X \otimes Y$ is of the form $(x \otimes y) \phi$ for some $x \colon \Box^m_A \to X$, some $y \colon \Box^n_A \to Y$, and some $\phi \colon \Box^k_A \to \Box^{m+n}_A$. Moreover, these cubes are subject to the following identification: for $x, y$ as above and $\psi \colon \Box^{m'}_A \to \Box^m, \psi' \colon \Box^{n'}_A \to \Box^n_A$, we have $(x \otimes y) (\psi \otimes \psi') = (x \psi) \otimes (y \psi')$. \qed
\end{prop}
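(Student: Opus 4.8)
The plan is to read both assertions directly off the colimit formula for the geometric product recalled above. Since $X \otimes Y$ is defined by Day convolution it is a left Kan extension, hence it is precisely this colimit, indexed by the category whose objects are pairs of cubes $(x \colon \Box^m_A \to X,\ y \colon \Box^n_A \to Y)$ and whose morphisms $(x',y') \to (x,y)$ are pairs $(\psi \colon \Box^{m'}_A \to \Box^m_A,\ \psi' \colon \Box^{n'}_A \to \Box^n_A)$ with $x\psi = x'$ and $y\psi' = y'$; on such a morphism the colimit diagram takes the value $\psi \otimes \psi' \colon \Box^{m'+n'}_A \to \Box^{m+n}_A$, and by construction the leg of the colimiting cocone at $(x,y)$ is the map $x \otimes y \colon \Box^{m+n}_A \to X \otimes Y$. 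Everything below is uniform in $A$, so no extra work is required to obtain the asserted generalization of \cite[Prop.~1.24]{doherty-kapulkin-lindsey-sattler}.

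For the first assertion I would use that colimits of presheaves are computed objectwise, so that for each $k$ the set of $k$-cubes is $(X \otimes Y)_k = \colim_{x,y}\, \Box_A(\Box^k_A, \Box^{m+n}_A)$, a colimit in $\Set$. By the standard construction of such a colimit, every element of $(X \otimes Y)_k$ is the image of some $\phi \in \Box_A(\Box^k_A, \Box^{m+n}_A)$ under the cocone leg attached to some pair $(x,y)$, and that leg is precisely the map $\phi \mapsto (x \otimes y)\phi$. Hence every cube of $X \otimes Y$ has the asserted form $(x \otimes y)\phi$.

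For the ``moreover'' clause I would invoke the cocone condition for $\{x \otimes y\}$: given a morphism $(\psi, \psi')$ of pairs of cubes as above, compatibility of the legs with the diagram morphism $\psi \otimes \psi'$ reads $(x \otimes y)(\psi \otimes \psi') = x' \otimes y' = (x\psi) \otimes (y\psi')$, which is the claimed identity. The same bookkeeping moreover shows that these relations generate all coincidences among the cubes $(x \otimes y)\phi$, because the generating relation of the $\Set$-colimit above is exactly the identification of $(x \otimes y)\big((\psi \otimes \psi')\phi''\big)$ with $\big((x\psi) \otimes (y\psi')\big)\phi''$; this is what makes the statement a genuine characterization, although only the identity as stated is used later.

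I do not expect a real obstacle here; the one point requiring care is matching the abstract colimit data — the objectwise colimit of hom-sets, its cocone legs, and the action of the diagram on morphisms — with the compact notation $x \otimes y$ and $(x \otimes y)\phi$ of the statement. In particular one should check that the morphism the colimit diagram assigns to $(\psi,\psi')$ is the image of $(\psi,\psi')$ under the functor $\otimes$ of \cref{tensor-faithful}, and not a twisted variant, which is immediate since that functor is the restriction to $\Box_A$ of the cartesian product of $\Set$.
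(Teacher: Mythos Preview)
Your proposal is correct and is precisely the argument the paper has in mind: the proposition is stated with a \qed\ and no proof, the preceding sentence simply observing that it follows from the colimit formula for $X \otimes Y$. Your unpacking of that colimit---computing it objectwise in $\Set$ and reading off the cocone legs and their compatibility conditions---is exactly the intended justification, so there is nothing to add.
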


Note that for most cube categories $\Box_A$, the geometric product of cubical sets does not coincide with the cartesian product, and is not even a symmetric monoidal product. However, it is symmetric if $\Sigma \in A$, and coincides with the cartesian product if $\Sigma, \delta \in A$. (These results are well-known, but we will re-prove the latter as \cref{geometric-cartesian-iso}.)

In cases where the two do not coincide, the geometric product is better behaved than the cartesian product -- for instance, in every cube category $\Box_A$, for $m, n \geq 0$ we have $\Box^m_A \otimes \Box_A^n = \Box_A^{m+n}$, while the analogous isomorphism generally does not hold for the cartesian product. Moreover, in the absence of connections, a cartesian product of cubes may not even have the homotopy type of a contractible space (see \cite[Rmk.~3.5]{jardine:categorical-homotopy-theory}).

Given a cubical set $X$, the left tensor $- \otimes X$ and the right tensor $X \otimes -$ define two functors $\cSet_A \to \cSet_A$; as discussed above, these are not isomorphic unless $\Box_A$ contains symmetries. Both of these functors admit right adjoints, and we write $\ihom_L(X, -)$ for the right adjoint of the left tensor and $\ihom_R(X, -)$ for the right adjoint of the right tensor. (Of course, in the symmetric case these are naturally isomorphic.)  Explicitly, these functors are given by $\ihom_L(X,Y)_{n} = \cSet(\Box^n \otimes X,Y)$, $ \ihom_R(X,Y)_{n} = \cSet(X \otimes \Box^n, Y)$.
Thus the monoidal structure on $\cSet_A$ given by the geometric product is closed.

\subsection{Combinatorics of cubical sets} \label{sec combinatorics}

We next discuss the convenient combinatorial properties of certain categories of cubical sets, which are of use in studying their homotopy theory.

\begin{prop}[{\cite[Cor.~7.9]{campion:EZ-cubes}}]\label{cube-EZ}
If $\delta \notin A$, then $\Box_A$ is an Eilenberg-Zilber category, with the following structure:
\begin{itemize}
\item for all $n$, $\deg([1]^n) = n$;
\item $(\Box_A)_+$ is generated under composition by the faces, symmetries, and reversals;
\item $(\Box_A)_-$ is generated under composition by the projections, connections, symmetries, and reversals. \qed
\end{itemize}
\end{prop}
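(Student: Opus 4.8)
The plan is to verify directly the axioms of an Eilenberg--Zilber category (a generalized Reedy category whose $\mathcal{R}_-$/$\mathcal{R}_+$ factorization is unique up to isomorphism and whose degeneracies admit absolute pushouts in $\mathcal{R}_-$), with the proposed structure: $\deg(\Box^n_A) = n$, with $(\Box_A)_+$ the wide subcategory generated by faces, symmetries and reversals, and $(\Box_A)_-$ the one generated by projections, connections, symmetries and reversals. Two observations are immediate from the definitions and \cref{some-cubical-identities}: every generator of $(\Box_A)_+$ is an injective function, hence a monomorphism, and every generator of $(\Box_A)_-$ is a surjective function admitting a section in $\Box_A$ (faces for projections via $\sigma_i \bd_{i,\varepsilon} = \id$, and via $\gamma_{i,\varepsilon}\bd_{i,\varepsilon} = \id$ for connections, with symmetries and reversals being invertible). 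Since monos and split epimorphisms are each closed under composition, every map of $(\Box_A)_+$ is monic and every map of $(\Box_A)_-$ is a split epi; as an injective surjection is a bijection, $(\Box_A)_+ \cap (\Box_A)_- $ consists of isomorphisms. The degree conditions are then bookkeeping: a composite of faces, symmetries and reversals is non-invertible iff it involves at least one face, and then raises degree by the number of faces; dually for $(\Box_A)_-$ using projections and connections. One also checks that the isomorphisms of $\Box_A$ are precisely the automorphisms of $\zo^n$ lying in $\Box_A$ (these are generated by whichever of the symmetries and reversals belong to $A$), so that isomorphisms preserve degree; this gives the generalized Reedy degree data.

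The substantive step is the factorization. Given $f \in \Box_A(\Box^m_A, \Box^n_A)$, I would take its epi--mono factorization $\Box^m_A \xrightarrow{p} S \xhookrightarrow{\iota} \Box^n_A$ in $\Set$ and show $p \in (\Box_A)_-$, $\iota \in (\Box_A)_+$. The key combinatorial lemma is that, because $\delta \notin A$, each input coordinate of a map of $\Box_A$ influences at most one output coordinate (this is checked generator-by-generator and is preserved under composition; it is exactly the property that fails for the diagonals). Consequently the output coordinates of $f$ partition into those which are constant, with value $c_i$ say for $i \in I$, and those depending on a nonempty set of inputs, each such coordinate being then surjective onto $\zo$ with pairwise disjoint sets of influencing inputs; hence the image $S$ is the coordinate subcube $\{b : b_i = c_i \text{ for } i \in I\}$, and $\iota$ is literally a composite of faces $\bd_{i,c_i}$, so lies in $(\Box_A)_+$. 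For the surjection $p \colon \Box^m_A \to \Box^{m-|I|}_A$ one argues by induction on the dimension drop, peeling off a projection for every input influencing no output and a connection whenever two inputs are combined into one output, reducing to a coordinate bijection realized by symmetries and reversals; thus $p \in (\Box_A)_-$. Uniqueness up to isomorphism is then formal: any two such factorizations of $f$ differ by an automorphism of the middle object, since $p$ is (split) epic and $\iota$ monic.

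Finally I would check the extra Eilenberg--Zilber condition, that every span of maps in $(\Box_A)_-$ admits an absolute pushout again in $(\Box_A)_-$ (equivalently, the condition guaranteeing the cubical Eilenberg--Zilber lemma). Using the above description, a map of $(\Box_A)_-$ out of $\Box^n_A$ is, up to pre- and post-composition with automorphisms, determined by the data of a partition of $\{1,\dots,n\}$ into blocks (each block being the set of inputs merged into a single output, or discarded), together with a choice of $\wedge$ or $\vee$ and a sign pattern; I would describe the pushout of two such explicitly as the common coarsening of the two partitions and verify it is absolute by exhibiting the requisite sections, again using \cref{some-cubical-identities}. The main obstacle is the factorization step, and specifically the image lemma in the presence of connections and reversals: one must confirm that merging coordinates by $\wedge$ or $\vee$ never ``entangles'' distinct output coordinates (so that the image stays a coordinate subcube) and keep careful track of reversals, which conjugate positive and negative connections into one another and so interact nontrivially with the normal forms for maps of $(\Box_A)_-$. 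Everything else is routine verification against the definitions and the recorded cubical identities.
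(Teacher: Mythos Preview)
The paper does not prove this result; it is cited from Campion with no argument given, so there is nothing here to compare against directly. Your outline is the natural direct verification, and the opening paragraph (degree data, monos and split epis from \cref{some-cubical-identities}, isomorphisms) is correct. The gap is in the factorization step.

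Your key combinatorial lemma --- that each input coordinate of a map in $\Box_A$ influences at most one output coordinate when $\delta \notin A$ --- is correct and does isolate what fails for diagonals. But it is a necessary condition, not a sufficient one, for a function $\zo^m \to \zo^n$ to lie in $\Box_A$ when $\Sigma \notin A$. Already for $A = \varnothing$ the swap $(a_1,a_2) \mapsto (a_2,a_1)$ satisfies your lemma yet is not in $\Box_\varnothing$; with $A = \{\wedge\}$ the surjection $(a_1,a_2,a_3)\mapsto(a_2,\,a_1\wedge a_3)$ likewise satisfies it but is not in $\Box_\wedge$, since without symmetries the category is planar and wires cannot cross. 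Your peeling procedure operates purely on functions $\zo^m \to \zo^k$ and never invokes the hypothesis that the original $f$ lies in $\Box_A$; nothing you have written prevents the terminal ``coordinate bijection'' from being a permutation that $\Box_A$ does not contain, nor guarantees that at each stage the inputs to be merged are adjacent so that a connection can actually be peeled. The standard fix is to argue from the other side: write $f$ as a word in the generators of $\Box_A$ and use the cubical identities to commute every face generator to the left of every non-face generator (each such commutation is a one-line check). This exhibits $f$ directly as a composite in $(\Box_A)_-$ followed by one in $(\Box_A)_+$, which is then automatically the $\Set$ epi--mono factorization. Your absolute-pushout sketch via ``common coarsening of partitions'' has the same planarity blind spot; it is typically cleaner to verify instead the equivalent criterion that two maps of $(\Box_A)_-$ with common domain and the same set of sections coincide.
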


In keeping with standard usage in the theory of Eilenberg-Zilber categories, we will refer to the non-isomorphisms of $(\Box_A)_-$ as \emph{degeneracies}. Given a cubical set $X$ and an $m$-cube $x \in X_m$, we say $x$ is \emph{degenerate} if it is in the image of the structure map $X_n \to X_m$ induced by some degeneracy $\Box^m_A \to \Box^n_A$, and \emph{non-degenerate} otherwise.

\begin{lem}[{\cite[Prop.~1.18]{doherty-kapulkin-lindsey-sattler}}]\label{EZ-for-cubes}
Let $X \in \cSet_A$ for $A \subseteq \conset$. Then each cube $x \colon \Box^n_A \to X$ can be written as $y \phi$ for a unique non-degenerate cube $y \colon \Box^m_A \to X$ and a unique map $\phi \colon \Box^n_A \to \Box^m_A$ in $(\Box_A)_-$. \qed
\end{lem}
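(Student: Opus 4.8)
The plan is to derive this from the Eilenberg--Zilber lemma for presheaves on an Eilenberg--Zilber category, which applies to $\Box_A$ by \cref{cube-EZ} (the hypothesis $A \subseteq \conset$ in particular guarantees $\delta \notin A$). For such $A$ we moreover have, since $\Sigma, \rho \notin A$, that $(\Box_A)_+$ is generated under composition by the faces alone and $(\Box_A)_-$ by the projections and connections alone; I would use two consequences of this repeatedly. First, every morphism of $(\Box_A)_-$ admits a section lying in $(\Box_A)_+$: it suffices to check this on generators, where the identities $\sigma_i \bd_{i,\varepsilon} = \id$ and $\gamma_{i,\varepsilon} \bd_{i,\varepsilon} = \id$ of \cref{some-cubical-identities} supply sections. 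Second, a non-identity morphism of $(\Box_A)_+$ strictly raises dimension and a non-identity morphism of $(\Box_A)_-$ strictly lowers it; hence $\id_{\Box^m_A}$ is the only endomorphism of $\Box^m_A$ in either class, and, as $\Box_A$ contains neither symmetries nor reversals, $\Box^m_A$ has no non-identity automorphisms at all.

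Existence is a minimal-dimension argument: among all factorizations $x = y\phi$ with $\phi \in (\Box_A)_-$ (at least one exists, with $\phi = \id$), I would choose one for which $\dim y$ is least. If $y$ were degenerate we could write $y = z\psi$ with $\psi$ a non-identity morphism of $(\Box_A)_-$, whence $x = z(\psi\phi)$ with $\psi\phi \in (\Box_A)_-$ and $\dim z < \dim y$, contradicting minimality; so $y$ is non-degenerate.

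For uniqueness, suppose $x = y\phi = y'\phi'$ with $y, y'$ non-degenerate and $\phi \colon \Box^n_A \to \Box^m_A$, $\phi' \colon \Box^n_A \to \Box^{m'}_A$ in $(\Box_A)_-$. Applying a section $s \in (\Box_A)_+$ of $\phi$ gives $y = xs = y'(\phi's)$; factoring $\phi's$ through the Eilenberg--Zilber factorization system as $\phi's = \alpha\beta$ with $\alpha \in (\Box_A)_+$ and $\beta \in (\Box_A)_-$, and using that $y = (y'\alpha)\beta$ is non-degenerate, we conclude $\beta = \id$, so $y = y'\alpha$. Symmetrically $y' = y\alpha'$ with $\alpha' \in (\Box_A)_+$, and comparing dimensions forces $m = m'$ and $\alpha = \alpha' = \id$, so $y = y'$. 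What then remains --- and what I expect to be the crux --- is to show $\phi = \phi'$, i.e.\ that a non-degenerate cube cannot be carried to the same cube by two distinct morphisms of $(\Box_A)_-$. This is exactly where $A \subseteq \conset$ is essential: in the presence of symmetries the factorization is unique only up to the action of the automorphisms of $\Box^m_A$, and it is the triviality of this automorphism group (from the dimension observation above) that upgrades the general Eilenberg--Zilber lemma's ``uniqueness up to isomorphism'' to the strict uniqueness asserted here. Concretely, one can finish either by invoking that general lemma, or combinatorially, by putting morphisms of $(\Box_A)_-$ into a normal form as standard composites of projections and connections and checking that distinct normal forms act differently on a non-degenerate cube by examining suitable faces --- the $\Box_A$-analogue of recovering a surjection of $\Delta$ from its minimal section.
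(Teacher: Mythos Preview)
The paper does not give its own proof of this lemma: it is stated with a citation to \cite[Prop.~1.18]{doherty-kapulkin-lindsey-sattler} and closed with \qed. So there is no argument in the paper to compare against.

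Your proposal is correct. The only comment worth making is one of economy: the detour through the hand-built existence and partial uniqueness argument is unnecessary once you are willing to invoke the general Eilenberg--Zilber lemma at the end. That lemma already gives both existence and uniqueness of the pair $(y,\phi)$ up to a unique automorphism of the intermediate object, so the entire statement follows in one line from \cref{cube-EZ} together with your observation that for $A \subseteq \conset$ the objects $\Box^m_A$ have no non-identity automorphisms. Your separate treatment of existence and of $y = y'$ is fine and self-contained, but it duplicates work that the cited lemma is doing anyway; conversely, the closing alternative of a direct combinatorial normal-form argument would make the proof independent of the general lemma but is only sketched.
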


In cubical sets with symmetries or reversals, but without diagonals, as well as those with symmetries and diagonals but without connections, an analogue of \cref{EZ-for-cubes} holds up to isomorphism; see \cite[Thms.~7.9 \& 8.12]{campion:EZ-cubes}. In our present work, however, we will have no need of this stronger result.

The following consequence of \cref{EZ-for-cubes} will be of use in recognizing non-degenerate cubes.

\begin{cor}\label{degen-one-face}
Let $X \in \cSet_A$ with $A \subseteq \conset$ and $x \colon \Box_A^m \to X, y \colon \Box^{m-1}_A \to X$, with $x$ degenerate and $y$ non-degenerate. Then it is not the case that $x \bd_{i,\varepsilon} = y$ for exactly one face map $\bd_{i,\varepsilon}$. 
\end{cor}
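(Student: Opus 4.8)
The plan is to exploit the Eilenberg--Zilber structures of \cref{cube-EZ,EZ-for-cubes} --- available since $A \subseteq \conset$ forces $\delta \notin A$ --- in order to identify precisely which faces of $x$ have any chance of being non-degenerate, and then to note that the cubical identities of \cref{some-cubical-identities} make such faces come in pairs. First I would use \cref{EZ-for-cubes} to write $x = z\phi$ with $z \colon \Box^\ell_A \to X$ non-degenerate and $\phi \colon \Box^m_A \to \Box^\ell_A$ in $(\Box_A)_-$. Since $x$ is degenerate, $\phi$ is not an identity; by \cref{cube-EZ}, $\phi$ is a composite of projections and connections (there are no symmetries or reversals, as $\Sigma, \rho \notin A$), each strictly degree-decreasing, so $\ell < m$. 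The dual observation, that $(\Box_A)_+$ is generated by faces, also shows that the only isomorphisms of $\Box_A$ are identities. Now suppose toward a contradiction that $\bd_{i,\varepsilon}$ is the \emph{unique} face map with $x\bd_{i,\varepsilon} = y$.

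The crux of the argument, which I expect to be the main obstacle, is to deduce that $\phi\bd_{i,\varepsilon}$ is the identity --- so that in particular $\ell = m-1$ and $\phi$ is a single projection or connection. For this I would factor $\phi\bd_{i,\varepsilon}$ in the Eilenberg--Zilber category $\Box_A$ as $\mu\nu$ with $\nu \in (\Box_A)_-$ and $\mu \in (\Box_A)_+$, then apply \cref{EZ-for-cubes} to the cube $z\mu$, writing $z\mu = w\pi$ with $w$ non-degenerate and $\pi \in (\Box_A)_-$. Then $y = z\phi\bd_{i,\varepsilon} = w(\pi\nu)$ with $\pi\nu \in (\Box_A)_-$, so comparing with the trivial factorization $y = y \cdot \id$ and invoking the uniqueness clause of \cref{EZ-for-cubes} yields $\pi\nu = \id$; a degree count then forces both $\pi$ and $\nu$ to be degree-preserving maps in $(\Box_A)_-$, hence identities. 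Thus $\phi\bd_{i,\varepsilon} = \mu$ is a composite of faces $\Box^{m-1}_A \to \Box^\ell_A$, which is possible only if $m - 1 \le \ell$; combined with $\ell < m$ this gives $\ell = m-1$ and $\phi\bd_{i,\varepsilon} = \id$, so that $\phi \colon \Box^m_A \to \Box^{m-1}_A$ decreases degree by exactly one and is therefore a single projection $\sigma_j$ or connection $\gamma_{j,\eta}$.

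To conclude, I would read off from \cref{some-cubical-identities} two distinct face-map sections of $\phi$: these are $\bd_{j,0}$ and $\bd_{j,1}$ when $\phi = \sigma_j$, and $\bd_{j,\eta}$ and $\bd_{j+1,\eta}$ when $\phi = \gamma_{j,\eta}$. Since these two sections are distinct, at least one of them, say $\bd_{s}$, differs from $\bd_{i,\varepsilon}$; but then $x\bd_{s} = z\phi\bd_{s} = z = z\phi\bd_{i,\varepsilon} = x\bd_{i,\varepsilon} = y$, contradicting the assumed uniqueness of $\bd_{i,\varepsilon}$. Once the Eilenberg--Zilber bookkeeping of the middle paragraph has identified the faces of $x$ on which $y$ could possibly occur as exactly the sections of $\phi$, this final step is immediate from the recorded identities; so essentially all of the work lies in that bookkeeping.
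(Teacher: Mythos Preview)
Your proof is correct and rests on the same two ingredients as the paper's: the Eilenberg--Zilber factorization of \cref{EZ-for-cubes} and the section identities of \cref{some-cubical-identities}. The organization differs slightly. The paper argues directly by a case split on the degree drop $m-\ell$: when $\ell < m-1$ it shows every face $z\phi\bd_{i,\varepsilon}$ is degenerate (so no face can equal the non-degenerate $y$), and when $\ell = m-1$ it observes that $\phi$ is a single $\sigma_j$ or $\gamma_{j,\eta}$ and reads off two faces equal to $z$ from \cref{some-cubical-identities}. You instead proceed by contradiction from the assumption of a unique face equal to $y$, and use a second application of \cref{EZ-for-cubes} (to $z\mu$) to force $\phi\bd_{i,\varepsilon}=\id$ and hence $\ell=m-1$ directly, bypassing the explicit case $\ell<m-1$. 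Your route trades the paper's short degeneracy argument in that case for a bit of extra EZ bookkeeping, but the content is the same and the conclusion is reached identically once $\phi$ is identified as a single projection or connection.
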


\begin{proof}
We note that by \cref{EZ-for-cubes}, we have $x = z \phi$ for a unique degeneracy $\phi \colon \Box^m_A \to \Box^n_A$ and a unique non-degenerate $n$-cube $z \colon \Box^n_A \to X$. In the case $n < m - 1$, consider a face $x \bd_{i,\varepsilon} = z \phi \bd_{i,\varepsilon}$; our assumption on $m$ implies that $\phi \bd_{i,\varepsilon} \colon \Box^{m-1}_A \to \Box^n_A$ strictly decreases degree. By \cref{cube-EZ}, we can factor $\phi \bd_{i,\varepsilon}$ as $\kappa \phi'$ for some $\kappa$ in $(\Box_A)_+$ and $\phi'$ in $(\Box_A)_-$. Since $\kappa \phi'$ strictly decreases degree, $\phi'$ must be a degeneracy. Thus we see that in this case, all faces of $x$ are degenerate. 

In the case $m = n + 1$, then either $x = z \sigma_i$ for some $1 \leq i \leq n$, or $x = z \gamma_{i,\varepsilon}$ for some $1 \leq i \leq n - 1$ and $\varepsilon \in \{0,1\}$. In this case, \cref{some-cubical-identities} shows that two distinct faces of $x$ are equal to $z$; a straightforward calculation shows that all other faces of $x$ are degenerate.
\end{proof}

In many cube categories of interest, there are unique factorizations of maps into standard forms, ie~composites of generators satisfying detailed combinatorial characterizations which facilitate computation; see, for instance, \cite{grandis-mauri} as well as \cite[Fact 7.2]{campion:EZ-cubes}. In the case of cubical sets with diagonals, such factorizations are not available, owing to that category's less convenient combinatorial properties (cf.~\cite[Thm.~8.12]{campion:EZ-cubes}). Nevertheless, we may obtain factorizations in all cube categories which will be of use in studying the corresponding categories of cubical sets; we now define the classes of maps which will appear in these factorizations.

\begin{Def}\label{fix-def}
For $A \subseteq \FS$, $1 \leq i \leq n$, and $\varepsilon \in \{0,1\}$, a map $\phi \colon \Box^m_A \to \Box^n_A$ in $\Box_{A}$ \emph{fixes coordinate $i$ at $\varepsilon$} if $\phi(\avec)_i = \varepsilon$ for all $\avec \in \Box^m_A$. 
A morphism $\phi \colon \Box^m_A \to \Box^n_A$ in $\Box_{A}$ is \emph{active} if it does not fix any coordinate.
\end{Def}

Note that the definition of an active map in $\Box_A$ does not depend on $A$. Among the generating classes of maps listed in \cref{cube-cat}, all except for the faces are active.

\begin{lem}\label{active-poset}
For $A \subseteq \FP$, a map $\phi \colon \Box^m_A \to \Box^n_A$ in $\Box_{A}$ fixes coordinate $i$ at $0$ if and only if $\phi(\ovec)_i = 0$, and fixes coordinate $i$ at $1$ if and only if $\phi(\zvec)_i = 1$.
\end{lem}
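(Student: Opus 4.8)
The plan is to reduce everything to the observation, recorded in the examples following \cref{cube-cat}, that $\Box_\FP$ is the full subcategory of $\Poset$ on the posets $[1]^n$. Consequently every morphism of $\Box_\FP$ is a monotone map $\zo^m \to \zo^n$ for the coordinatewise partial order induced by $0 \leq 1$; and since all of the generating classes of \cref{cube-cat} indexed by elements of $\FP$ (faces, projections, connections, transpositions, diagonals) are visibly monotone from their defining formulas, $\Box_A$ is a subcategory of $\Box_\FP$ whenever $A \subseteq \FP$. Thus any $\phi \colon \Box^m_A \to \Box^n_A$ in $\Box_A$ is monotone, and this is the only input the argument needs.

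Granting this, both equivalences are one-line monotonicity arguments. For the first, the implication that $\phi$ fixing coordinate $i$ at $0$ gives $\phi(\ovec)_i = 0$ is simply the instance $\avec = \ovec$ of the definition of fixing a coordinate. Conversely, assume $\phi(\ovec_m)_i = 0$; for any $\avec \in \Box^m_A$ we have $\avec \leq \ovec_m$, so monotonicity gives $\phi(\avec) \leq \phi(\ovec_m)$, hence $\phi(\avec)_i \leq \phi(\ovec_m)_i = 0$ in $[1]$, forcing $\phi(\avec)_i = 0$. Therefore $\phi$ fixes coordinate $i$ at $0$. The second equivalence is dual: one direction is the instance $\avec = \zvec$, and for the converse, assuming $\phi(\zvec_m)_i = 1$, from $\zvec_m \leq \avec$ we obtain $1 = \phi(\zvec_m)_i \leq \phi(\avec)_i$, so $\phi(\avec)_i = 1$.

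I do not expect any genuine obstacle here. The only points requiring care are invoking the correct poset description of $\Box_\FP$ and observing that it is inherited by every $\Box_A$ with $A \subseteq \FP$; once monotonicity of $\phi$ is in hand, each direction of each equivalence follows by comparing an arbitrary $\avec$ with the top element $\ovec_m$ (respectively the bottom element $\zvec_m$).
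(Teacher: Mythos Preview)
Your proposal is correct and follows essentially the same argument as the paper: both use that $\phi$ is a poset map (since $A \subseteq \FP$) and compare an arbitrary $\avec$ with the extremal elements $\zvec_m$ and $\ovec_m$ to deduce the nontrivial implications. The only difference is that you spell out slightly more explicitly why maps in $\Box_A$ are monotone, which is fine.
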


\begin{proof}
The ``only if'' direction is immediate from \cref{fix-def}. For the ``if'' direction, suppose $\phi(\zvec)_i = 1$; then because $\phi$ is a poset map and $\zvec$ is minimal, for every $\avec \in \Box_A^m$ we have $\phi(\zvec) \leq \phi(\avec)$, implying in particular that $\phi(\avec)_i = 1$. The proof for the case where $\phi(\ovec)_i = 0$ is similar.
\end{proof}

\begin{cor}\label{active-initial-terminal}
For $A \subseteq \FP$, a map $\phi \colon \Box^m_A \to \Box^n_A$ in $\Box_{A}$ is active if and only if it preserves the initial and terminal elements, ie, $\phi(\zvec_m) = \zvec_n$ and $\phi(\ovec_m) = \ovec_n$. \qed
\end{cor}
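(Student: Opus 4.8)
The plan is to derive this directly from \cref{active-poset} by unwinding the definition of an active map coordinatewise. By \cref{fix-def}, the map $\phi$ is active precisely when, for every $i$ with $1 \le i \le n$, it fails to fix coordinate $i$ at $0$ and fails to fix coordinate $i$ at $1$. Since $A \subseteq \FP$, \cref{active-poset} converts each of these non-fixing conditions into a statement about a single coordinate of $\phi(\zvec_m)$ or $\phi(\ovec_m)$: namely, $\phi$ does not fix coordinate $i$ at $0$ if and only if $\phi(\ovec_m)_i \neq 0$, i.e.\ $\phi(\ovec_m)_i = 1$ (recall that $\Box^n_A = \zo^n$, so each coordinate takes only the values $0$ and $1$); and $\phi$ does not fix coordinate $i$ at $1$ if and only if $\phi(\zvec_m)_i \neq 1$, i.e.\ $\phi(\zvec_m)_i = 0$.

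First I would record these two equivalences. Then I would observe that imposing them simultaneously for all $i$ with $1 \le i \le n$ is exactly the assertion that $\phi(\ovec_m) = \ovec_n$ and $\phi(\zvec_m) = \zvec_n$; conversely, if these two equalities hold, then by the ``only if'' direction of \cref{active-poset} no coordinate is fixed at either value, so $\phi$ is active. I do not anticipate any genuine obstacle: the argument is purely a matter of bookkeeping the quantifier over target coordinates and using that ``$\neq 0$'' and ``$= 1$'' are interchangeable in $\zo$. The statement is therefore an immediate corollary of \cref{active-poset}.
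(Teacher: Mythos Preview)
Your proposal is correct and is exactly the intended argument: the paper states the corollary with a bare \qed, indicating it follows immediately from \cref{active-poset} by the coordinatewise reasoning you describe.
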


\begin{lem}\label{active-face-factor}
For $A \subseteq \FS$, every map $\phi \colon \Box^m_A \to \Box^n_A$ in $\Box_A$ factors as $\bd \psi$, where $\psi$ is active and $\bd$ is a (possibly empty) composite of face maps, with both $\psi$ and $\bd$ unique. Moreover, we have $\bd = \bd_{i_1,\varepsilon_1} \ldots \bd_{i_p,\varepsilon_p} \psi$, where $\psi$ is a unique active map and $i_1 > \ldots > i_p$ is the set of coordinates fixed by $\phi$, with each $i_k$ fixed at $\varepsilon_k$.
\end{lem}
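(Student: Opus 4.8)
The plan is to read off the factorization directly from the set of coordinates fixed by $\phi$, and then deduce uniqueness from the fact that a composite of face maps is determined by its image.

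\textbf{Existence.} Let $S = \{i_1 > \cdots > i_p\}$ be the set of coordinates fixed by $\phi$, with $i_k$ fixed at $\varepsilon_k$, and set $\bd := \bd_{i_1,\varepsilon_1}\cdots\bd_{i_p,\varepsilon_p}\colon \Box^{n-p}_A \to \Box^n_A$. A short induction on $p$ (using that deleting a larger coordinate index does not disturb a smaller one) shows that $\operatorname{im}(\bd) = \{\avec \in \zo^n : a_{i_k} = \varepsilon_k \text{ for all } k\}$. A second short induction, using the identity $\sigma_i \bd_{i,\varepsilon} = \id$ from \cref{some-cubical-identities}, produces a composite of projections $r \colon \Box^n_A \to \Box^{n-p}_A$ with $r\bd = \id$; in particular $\bd$ is a (split) monomorphism, and $\bd r$ restricts to the identity on $\operatorname{im}(\bd)$. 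Since $\phi$ fixes each $i_k$ at $\varepsilon_k$ we have $\operatorname{im}(\phi) \subseteq \operatorname{im}(\bd)$, so $\psi := r\phi$, a morphism of $\Box_A$, satisfies $\bd\psi = \phi$. Finally, $\psi$ is active: $\bd$ copies the coordinates of its domain, in order, into the output positions lying outside $S$, so if $\psi$ fixed its coordinate $l$ at some $\delta$, then $\phi = \bd\psi$ would fix the corresponding output coordinate, which lies outside $S$, at $\delta$ — contradicting the definition of $S$.

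\textbf{Uniqueness.} The key observation is that any composite of face maps $\Box^k_A \to \Box^n_A$ acts by copying its $k$ input coordinates, in order, into $k$ of the $n$ output positions and filling the remaining positions with constants; this is immediate by induction on the length of the composite, and it shows that such a map is determined by its image, a ``face'' $F_{T,\delta} := \{\avec \in \zo^n : a_j = \delta_j \text{ for } j \in T\}$. Now suppose $\phi = \bd'\psi'$ with $\psi'$ active and $\bd'$ a composite of face maps, say with image $F_{T,\delta}$. From $\operatorname{im}(\phi) \subseteq F_{T,\delta}$ we get $T \subseteq S$, with $\delta$ agreeing with $\varepsilon$ on matching indices. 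Conversely, if some $i_k \in S$ were not in $T$, then $\bd'$ would copy some input coordinate $l$ into output position $i_k$, whence $\phi(-)_{i_k} = \psi'(-)_l$ would force $\psi'$ to fix coordinate $l$ at $\varepsilon_k$, contradicting activeness of $\psi'$; hence $S \subseteq T$. Thus $T = S$ with matching constants, so $\bd'$ and $\bd$ are composites of face maps with the same image and are therefore equal as maps of $\Box_A$; cancelling the monomorphism $\bd$ then gives $\psi' = \psi$.

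\textbf{Main obstacle.} The only genuine content is the structural claim that a composite of face maps has the ``copy-in-order-and-fill'' shape, hence is pinned down by its image; everything else is bookkeeping with the fixed-coordinate set $S$. I expect the care to be needed in tracking how $\bd$ (respectively $\bd'$) relabels coordinates — which input position of $\psi$ lands in which output position of $\phi$ — since that correspondence is exactly what links ``$\psi$ is active'' to ``$S$ is the fixed set of $\phi$''. Note that no subtlety arises from connections, symmetries, reversals, or diagonals: the argument uses only that faces and projections belong to every $\Box_A$ together with the identity $\sigma_i\bd_{i,\varepsilon} = \id$, so it applies uniformly to all $A \subseteq \FS$.
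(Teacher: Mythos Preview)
Your proof is correct and follows essentially the same approach as the paper: both construct $\psi$ by post-composing $\phi$ with the projection retraction of $\bd$, and both establish uniqueness by showing that the set of coordinates fixed by $\bd\psi$ coincides with the set determined by $\bd$. The only notable difference is that where the paper cites \cite[Lem.~4.1]{grandis-mauri} for the standard form of a composite of face maps, you prove the equivalent ``copy-in-order-and-fill'' structural claim directly, making your argument slightly more self-contained.
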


\begin{proof}
We first prove the existence of the stated factorization. We define $\phi' \colon \Box^m_A \to \Box^{n}_A$ as the composite $\bd_{i_1,\varepsilon_1} \cdots \bd_{i_p,\varepsilon_p}\sigma_{i_p} \cdots \sigma_{i_1} \phi$; we will show that this is equal to $\phi$. We may note that the map $\bd_{i_1,\varepsilon_p} \cdots \bd_{i_p,\varepsilon_p}\sigma_{i_p} \cdots \sigma_{i_1} \colon \Box^n_A \to \Box^n_A$ sends a tuple $\avec \in \Box^n_A$ to the tuple obtained by replacing $\avec_{i_k}$ with $\varepsilon_k$ for each $1 \leq k \leq p$. Thus, for any $\avec \in \Box^m_A$ we have $\phi'_{i_k} = \varepsilon_k$ for all $1 \leq k \leq p$; by assumption this means $\phi'(\avec)_{i_k} = \phi(\avec)_{i_k}$. For $j$ not equal to any $i_k$ we likewise have $\phi'(\avec)_{j} = \phi(\avec)_{j}$ as these coordinates are unaffected by composition with $\bd_{i_1,\varepsilon_1}\cdots\bd_{i_p,\varepsilon_p}\sigma_{i_p} \cdots \sigma_{i_1}$. Thus $\phi = \phi'$. 

Now let $\psi = \sigma_{i_p} \cdots \sigma_{i_1} \phi$, so that $\phi = \bd_{i_1,\varepsilon_1} \cdots \bd_{i_p,\varepsilon_p} \psi$. This map does not fix any coordinate, since $\sigma_{i_p} \cdots \sigma_{i_1}$ projects away all the coordinates fixed by $\phi$. Thus $\psi$ is active.

To see that this factorization is unique, suppose that $\bd \psi = \bd' \psi'$, where $\psi$ and $\psi'$ are active and $\bd$ and $\bd'$ are composites of face maps. By \cite[Lem.~4.1]{grandis-mauri}, we may factor $\bd$ as $\bd_{i_1,\varepsilon_1} \ldots \bd_{i_p,\varepsilon_p}$ for some $i_1 > \ldots > i_p$ and $\varepsilon_1, \ldots , \varepsilon_p \in \{0,1\}$, and may similarly factor $\bd'$ as  $\bd_{i'_1,\varepsilon'_1} \ldots \bd_{i'_p,\varepsilon'_p}$. Then the set of coordinates fixed by $\bd \psi$ is precisely $\{i_1,\ldots,i_p\}$, with each $i_k$ fixed at $\varepsilon_k$, and similarly the set of coordinates fixed by $\bd' \psi'$ is precisely $\{i'_1,\ldots,i'_{p'}\}$, with each $i'_k$ fixed at $\varepsilon'_k$. Since $\bd \psi = \bd' \psi'$ by assumption, we thus see that the sets $\{i_1,\ldots,i_p\}$ and $\{i'_1,\ldots,i'_{p'}\}$ are equal; from this it follows that $p = p'$, and that $i_k = i'_k$ and $\varepsilon_k = \varepsilon'_k$ for all $k$. Thus $\bd = \bd'$; since all face maps are monomorphisms, it follows that $\psi = \psi'$.
\end{proof}

\begin{cor}\label{face-intersection}
For $n \geq 2$, $1 \leq i < j \leq n$, and $\varepsilon, \varepsilon' \in \{0,1\}$, a map $\phi \colon \Box^m \to \Box^n$ factors through both $\bd_{i,\varepsilon}$ and $\bd_{j,\varepsilon'}$ if and only if it factors through the composite map $\bd_{j,\varepsilon'}\bd_{i,\varepsilon} = \bd_{i,\varepsilon}\bd_{j-1,\varepsilon'}$. In other words, the following diagram is a pullback:
\[
\begin{tikzcd}
\Box^{n-2}_A \ar[r,"\bd_{j-1,\varepsilon'}"] \ar[d,swap,"\bd_{i,\varepsilon}"] \pullback & \Box^{n-1}_A \ar[d,"\bd_{i,\varepsilon}"] \\
\Box^{n-1}_A \ar[r,"\bd_{j,\varepsilon'}"] & \Box^n_A \\
\end{tikzcd}
\]
\qed
\end{cor}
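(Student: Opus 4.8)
The plan is to reduce the claim to \cref{active-face-factor}, which already tells us that every map $\phi \colon \Box^m \to \Box^n$ factors uniquely as $\bd\psi$ with $\psi$ active and $\bd$ a composite of face maps written in descending order of indices. First I would establish the ``if'' direction, which is just an instance of the cubical identity $\bd_{j,\varepsilon'}\bd_{i,\varepsilon} = \bd_{i,\varepsilon}\bd_{j-1,\varepsilon'}$ for $i < j$ (derivable directly from \cref{cube-cat}, or quotable from \cite[Lem.~4.1]{grandis-mauri}): a map factoring through the composite obviously factors through each of the two face maps it is built from.

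For the ``only if'' direction, suppose $\phi$ factors through $\bd_{i,\varepsilon}$ and through $\bd_{j,\varepsilon'}$. Since $\bd_{i,\varepsilon}$ and $\bd_{j,\varepsilon'}$ are monomorphisms, $\phi$ factoring through $\bd_{i,\varepsilon}$ is equivalent to $\phi$ fixing coordinate $i$ at $\varepsilon$, and likewise for $j$ — one direction is immediate from \cref{fix-def}, and the converse follows by the same computation as in the existence part of the proof of \cref{active-face-factor} (project away coordinate $i$, then re-insert it at $\varepsilon$; this recovers $\phi$ precisely when $\phi$ fixes that coordinate). Hence $\phi$ fixes both coordinate $i$ at $\varepsilon$ and coordinate $j$ at $\varepsilon'$. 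Feeding this into \cref{active-face-factor}: the set of coordinates fixed by $\phi$ contains $\{i,j\}$, and in the canonical factorization $\phi = \bd_{i_1,\varepsilon_1}\cdots\bd_{i_p,\varepsilon_p}\psi$ (indices descending) the face maps $\bd_{j,\varepsilon'}$ and $\bd_{i,\varepsilon}$ both appear, with $\bd_{j,\varepsilon'}$ to the left of $\bd_{i,\varepsilon}$ since $j > i$. Commuting the remaining face maps past these two (again using the descending-order identities) lets us write $\phi = \bd_{j,\varepsilon'}\bd_{i,\varepsilon}\theta$ for some $\theta \colon \Box^m_A \to \Box^{n-2}_A$, which is the desired factorization through $\bd_{j,\varepsilon'}\bd_{i,\varepsilon} = \bd_{i,\varepsilon}\bd_{j-1,\varepsilon'}$.

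Finally, the pullback reformulation: the square commutes by the identity $\bd_{j,\varepsilon'}\bd_{i,\varepsilon} = \bd_{i,\varepsilon}\bd_{j-1,\varepsilon'}$ already invoked, all four maps are monomorphisms, and the universal property is exactly the equivalence just proved — a cone over the cospan is a map $\phi$ factoring through both $\bd_{j,\varepsilon'}$ and $\bd_{i,\varepsilon}$, hence through their composite, and uniqueness of the induced map is automatic from monicity. I expect the only mildly delicate point to be bookkeeping the index shift (coordinate $j$ of $\Box^n_A$ corresponds to coordinate $j-1$ of $\Box^{n-1}_A$ after deleting coordinate $i$, since $i < j$) and making sure the two displayed composite expressions for the bottom-right corner genuinely agree; everything else is a direct appeal to \cref{active-face-factor} and the elementary face identities.
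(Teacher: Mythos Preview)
Your proposal is correct and takes the same approach the paper intends: the paper gives no proof (just a \qed), treating the corollary as an immediate consequence of \cref{active-face-factor}, and your write-up correctly spells out that deduction. The only remark is that once you have established that factoring through $\bd_{i,\varepsilon}$ is equivalent to fixing coordinate $i$ at $\varepsilon$, you can shortcut the commuting step by directly setting $\theta = \sigma_i \sigma_j \phi$ and checking $\bd_{j,\varepsilon'}\bd_{i,\varepsilon}\theta = \phi$; but your route via the canonical descending factorization is equally valid.
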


For a general $A \subseteq \FS$, we write $\partial \Box^n_A$ for the union of all proper faces of $\Box^n_A$ , ie, the subobject of $\Box^n_A$ consisting of all maps into $\Box_A^n$ which factor through a face map on the left. 
We refer to $\bd \Box^n_A$ as the \emph{boundary} of the $n$-cube. Note that for cube categories not containing diagonals, this coincides with the standard definition of the boundary of a representable presheaf on an Eilenberg-Zilber category when $\cSet_A$ is equipped with the Eilenberg-Zilber structure of \cref{cube-EZ}. The subobject of $\Box^n_A$ given by the union of all faces except $\bd_{i,\varepsilon}$ will be denoted $\sqcap^n_{A,i,\varepsilon}$ and referred to as the \emph{$(i, \varepsilon)$-open box} of dimension $n$.

Our factorization results allow us to obtain an explicit description of boundaries and open boxes as coequalizers of coproducts of cubes of lower dimension, analogous to the characterization of boundaries of simplices given in \cite[Sec.~I.2]{goerss-jardine}.

\begin{prop}\label{bdry-colim}
In any cube category $\Box_A$, for $n \geq 2$, the diagram below is a coequalizer.
\[
\begin{tikzcd}
\coprod\limits_{\substack{1 \leq i < j \leq n \\ \varepsilon, \varepsilon' \in \{0,1\}}} \Box^{n-2}_A \arrow[rr,shift left = 0.75ex,"\iota_{j,\varepsilon'} \bd_{i,\varepsilon}"] \arrow[rr, shift right = 0.75ex,swap,"\iota_{i,\varepsilon} \bd_{j-1,\varepsilon'}"] && \coprod\limits_{\substack{1 \leq i \leq n \\ \varepsilon \in \{0,1\}}} \Box^{n-1}_A \arrow[r,"\bd_{i,\varepsilon}"] & \bd \Box^n_A \\
\end{tikzcd}
\]
Moreover, the inclusion $\bd \Box^n_A \hookrightarrow \Box^n_A$ is the  map from the coequalizer induced by the map 
\[
\coprod\limits_{\substack{1 \leq i \leq n \\ \varepsilon \in \{0,1\}}} \Box^{n-1}_A \to \Box^n_A
\]
acting as $\bd_{i,\varepsilon}$ on component $(i,\varepsilon)$.

A similar description holds for any open box $\sqcap^n_{A,i,\varepsilon}$ where the coproducts are taken over all faces (or pairs of faces) other than $\bd_{i,\varepsilon}$.
\end{prop}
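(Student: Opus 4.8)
The plan is to verify the coequalizer claim pointwise. Since $\cSet_A$ is a presheaf category, all colimits — in particular coequalizers and coproducts — are computed objectwise, so it suffices to show that for each $\Box^k_A$ the diagram of $k$-cubes is a coequalizer of sets. Writing $B_k = \coprod_{1\le i\le n,\,\varepsilon}\Box_A(\Box^k_A,\Box^{n-1}_A)$ and $E_k = \coprod_{1\le i<j\le n,\,\varepsilon,\varepsilon'}\Box_A(\Box^k_A,\Box^{n-2}_A)$, I must show that the function $q\colon B_k \to (\partial\Box^n_A)_k$ sending $(\phi,(i,\varepsilon))$ to $\partial_{i,\varepsilon}\phi$ is surjective and that two elements of $B_k$ have the same image under $q$ exactly when they are related by the equivalence relation generated by the parallel pair $E_k \rightrightarrows B_k$.

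First, $q$ is surjective and coequalizes the pair: surjectivity is immediate since, by definition, every $k$-cube of $\partial\Box^n_A$ factors through some face $\partial_{i,\varepsilon}$; and on the $(i,j,\varepsilon,\varepsilon')$-summand of $E_k$ the two legs are carried by $q$ to $\partial_{j,\varepsilon'}\partial_{i,\varepsilon}\chi$ and $\partial_{i,\varepsilon}\partial_{j-1,\varepsilon'}\chi$, which agree by the cubical identity underlying \cref{face-intersection}. It remains to see that if $q(\phi,(i,\varepsilon)) = q(\psi,(j,\varepsilon'))$ then these two elements of $B_k$ are identified by the relation generated by $E_k$. If $(i,\varepsilon) = (j,\varepsilon')$ then, face maps being monomorphisms, $\phi = \psi$ and the two elements coincide. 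If $i = j$ but $\varepsilon \neq \varepsilon'$ the common cube would have constant $i$-th coordinate equal both to $0$ and to $1$, which is impossible, so this configuration never occurs. If $i \neq j$, say $i < j$, then the common cube factors through both $\partial_{i,\varepsilon}$ and $\partial_{j,\varepsilon'}$, so by \cref{face-intersection} it equals $\partial_{i,\varepsilon}\partial_{j-1,\varepsilon'}\chi = \partial_{j,\varepsilon'}\partial_{i,\varepsilon}\chi$ for some $\chi\colon\Box^k_A\to\Box^{n-2}_A$; cancelling the monomorphisms $\partial_{i,\varepsilon}$ and $\partial_{j,\varepsilon'}$ gives $\phi = \partial_{j-1,\varepsilon'}\chi$ and $\psi = \partial_{i,\varepsilon}\chi$, i.e. $\chi$ in the $(i,j,\varepsilon,\varepsilon')$-summand of $E_k$ is sent by one leg to $(\psi,(j,\varepsilon'))$ and by the other to $(\phi,(i,\varepsilon))$. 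Hence $q$ exhibits $(\partial\Box^n_A)_k$ as the coequalizer. The ``moreover'' statement then drops out: the map $\coprod_{i,\varepsilon}\Box^{n-1}_A\to\Box^n_A$ acting by faces also coequalizes the parallel pair (same identity), and under the identification just made the induced map out of the coequalizer sends the class of $(\phi,(i,\varepsilon))$ to $\partial_{i,\varepsilon}\phi$, which is exactly the subpresheaf inclusion $\partial\Box^n_A\hookrightarrow\Box^n_A$.

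For an open box $\sqcap^n_{A,i,\varepsilon}$ I would rerun the same argument verbatim, restricting the two coproducts to faces, resp. pairs of faces, other than $(i,\varepsilon)$. Surjectivity holds because $\sqcap^n_{A,i,\varepsilon}$ is the union of the remaining faces, and in the identification step, whenever $\partial_{j,\varepsilon'}\phi = \partial_{j',\varepsilon''}\psi$ with $(j,\varepsilon'),(j',\varepsilon'')$ distinct and both different from $(i,\varepsilon)$, one has $j\neq j'$ (the $j=j'$ case being impossible as before), and that pair — suitably reordered so the indices increase — indexes a summand of the codimension-$2$ coproduct, so the connecting cube $\chi$ is available exactly where it is needed.

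I do not anticipate a genuine obstacle; the only point demanding care is the codimension-$2$ bookkeeping — lining up the two factorizations $\partial_{i,\varepsilon}\partial_{j-1,\varepsilon'} = \partial_{j,\varepsilon'}\partial_{i,\varepsilon}$ supplied by \cref{face-intersection} with the correct summand and the correct legs of the parallel pair, and confirming that the ``equal index, opposite signs'' configuration really cannot arise. Everything else is formal, relying only on face maps being monomorphisms and on colimits of presheaves being pointwise.
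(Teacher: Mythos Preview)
Your proof is correct and follows essentially the same approach as the paper: both recognize $\partial\Box^n_A$ as a union of face subpresheaves and use \cref{face-intersection} to identify the pairwise intersections. The paper packages your levelwise verification by invoking the standard fact that a union $\bigcup_i X_i$ of subobjects is always the coequalizer of $\coprod_{i\neq j,\,X_i\cap X_j\neq\varnothing} X_i \cap X_j \rightrightarrows \coprod_i X_i$ and then specializing via \cref{face-intersection}; your case analysis is exactly the content of that general result in this instance.
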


\begin{proof}
By a standard result, for any set $X$ and family of subsets $(X_i)_{i \in I}$, there is a coequalizer diagram
\[
\begin{tikzcd}
\coprod\limits_{\substack{i , j \in I \\ i \neq j, X_i \cap X_j \neq \varnothing}} X_i \cap X_j \arrow[r,shift left = 0.75ex] \arrow[r, shift right = 0.75ex] & \coprod\limits_{i \in I} X_i \ar[r] & \bigcup\limits_{i \in I} X_i \\
\end{tikzcd}
\]
where one of the two morphisms between coproducts acts on the $(i,j)$ component as the inclusion of $X_i \cap X_j$ into $X_i$ composed with the coproduct inclusion at $i$, while the other acts as the inclusion into $X_j$ composed with the coproduct inclusion at $j$. Moreover, in such cases the inclusion $\bigcup_{i \in I} X_i \hookrightarrow X$ is the coequalizer map induced by the map $\coprod_{i \in I} X_i \to X$ which acts on the $i$ component as the inclusion of $X_i$.

An analogous result thus holds in any presheaf category for unions of subcomplexes of a given presheaf. In particular, for $n \geq 0$ we may apply this result to $\bd \Box^n_A$ or $\sqcap^n_{A,i,\varepsilon}$, viewed as a union of faces of $\Box^n_A$. In light of \cref{face-intersection} (and the fact that for any $i$ the $(i,0)$ and $(i,1)$ faces of $\Box^n_A$ have empty intersection) we thus obtain the coequalizer diagram given in the statement.
\end{proof}

\subsection{Model structures on cubical sets}

Next we review certain model structures on categories of the form $\cSet_A$ with $A \subseteq \conset$ which model the theories of $\infty$-groupoids and $(\infty,1)$-categories. We begin with a model structure for $\infty$-groupoids, due to Cisinski.

\begin{Def}
A map of cubical sets is a \emph{Kan fibration} if it has the right lifting property with respect to all open box fillings. A cubical set $X$ is a \emph{cubical Kan complex} if the map $X \to \Box^{0}_A$ is a Kan fibration.
\end{Def}

\begin{thm}[Cisinski] \label{Grothendieck-ms-cSet}
  For $A \subseteq \conset$, the category $\cSet_A$ carries a cofibrantly generated model structure, referred to as the Grothendieck model structure, in which the cofibrations are the monomorphisms and the fibrations are the Kan fibrations. In particular, the fibrant objects of this model structure are the cubical Kan complexes.
  
In all of these cases, the Grothendieck model structure on $\cSet_A$ is the test model structure which arises from the fact that $\Box_A$ is a test category.
\end{thm}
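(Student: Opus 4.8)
The plan is to recognise this as an instance of Cisinski's test-category machinery and then do the extra combinatorial work needed to pin down the fibrations precisely. First I would record that $\Box_A$ is a test category for every $A\subseteq\conset$. For $A=\varnothing$ this is Cisinski's original analysis of the minimal cube category in \cite{CisinskiAsterisque}; for $A$ containing a connection the relevant cases follow from the fact that $\Box_{\wedge\vee}$ is a strict test category \cite{maltsiniotis:connections-strict-test-cat} together with the extension in \cite{buchholtz-morehouse:varieties-of-cubes}, or from a direct check that $\Box^1_A$ is a separating interval and that the terminal presheaf is aspherical (this is where the projections and connections are used: they supply the contracting homotopies). Granting this, Cisinski's theory produces the test model structure on $\cSet_A$: it is cofibrantly generated, its cofibrations are exactly the monomorphisms, it is Quillen equivalent to the Kan--Quillen model structure on $\sSet$, and its fibrant objects and trivial cofibrations are controlled by the anodyne extensions of any chosen exact cylinder. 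This already gives every clause of the statement except the identification of the fibrant objects with the cubical Kan complexes and of the fibrations with the Kan fibrations.

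Second, I would take the geometric cylinder $-\otimes\Box^1_A$, with structure maps the two endpoint inclusions $\bd_{1,0},\bd_{1,1}$ and the projection $\sigma_1$; this preserves monomorphisms and colimits and its interval is separating, so it is an exact cylinder in Cisinski's sense, and its associated class of anodyne extensions is the saturation of the pushout--products $(\bd\Box^n_A\hookrightarrow\Box^n_A)\mathbin{\widehat{\otimes}}(\{\varepsilon\}\hookrightarrow\Box^1_A)$ for $n\geq 0$ and $\varepsilon\in\{0,1\}$. The crucial step is to show that this saturated class coincides with the saturation of the set of open-box inclusions $\sqcap^n_{A,i,\varepsilon}\hookrightarrow\Box^n_A$. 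One direction decomposes $\Box^n_A\otimes\Box^1_A$ into its cubes and exhibits each open-box inclusion as a transfinite composite of pushouts of generating anodynes; in the presence of symmetries this is routine reindexing of coordinates, and without them it is the familiar corner-filling argument for the cubical interval, for which the coequalizer presentations of boundaries and open boxes from \cref{bdry-colim} are the natural bookkeeping device (the connection maps enter only by enlarging the ambient cube category, not by changing the shape of these colimits). The other direction writes each generating anodyne as a composite of open-box fillings. It then follows that the fibrant objects of the test model structure are precisely the cubical Kan complexes.

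Finally I would upgrade this to the statement about all fibrations. That every fibration is a Kan fibration is immediate once each open-box inclusion is known to be a trivial cofibration (it is a monomorphism, and it is a weak equivalence because it is anodyne by the previous step). Conversely, given a Kan fibration $p$, factor $p=q\circ j$ with $j$ a trivial cofibration and $q$ a fibration; by the equality of saturated classes above, $j$ lies in the saturation of the relative open-box inclusions, so the standard lifting/retract argument shows $j$ has the left lifting property against $p$, whence $p$ is a retract of $q$ and therefore a fibration. The only genuinely non-formal input here is Cisinski's identification of the trivial cofibrations of a test model structure with the anodyne extensions of an exact cylinder, which I would cite from \cite{CisinskiAsterisque,CisinskiUniverses}. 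The main obstacle is the combinatorial coincidence of the two saturated classes in the middle paragraph: for $A=\varnothing$ this is exactly Cisinski's analysis of the cubical model structure, and for $A$ with connections one must re-run that analysis while tracking the connection maps in the decomposition of $\Box^n_A\otimes\Box^1_A$, but the structure of the argument is unchanged.
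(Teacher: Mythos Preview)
The paper does not prove this theorem at all: it is stated as a result of Cisinski and the proof consists entirely of two citations, \cite[Thm.~8.4.38]{CisinskiAsterisque} for $A=\varnothing$ and \cite[Thm.~1.17]{CisinskiUniverses} for the other cases (with the remark that the latter reference treats $A=\{\vee\}$ but the argument is identical for the others). Your proposal is a reasonable sketch of how those cited proofs actually run, so in spirit you are reconstructing rather than deviating from the paper's approach.

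That said, your final paragraph is confused. You factor a Kan fibration $p$ as $q\circ j$ with $j$ a trivial cofibration, and then claim that ``by the equality of saturated classes above, $j$ lies in the saturation of the relative open-box inclusions''. But the equality you established in the middle paragraph is between the saturation of the open-box inclusions and the saturation of the pushout-product generators for the cylinder; neither of these is the class of trivial cofibrations. In a general Cisinski model structure the anodyne extensions form a proper subclass of the trivial cofibrations, and the coincidence is a genuinely additional fact. You seem to realise this when you then say you would cite ``Cisinski's identification of the trivial cofibrations of a test model structure with the anodyne extensions of an exact cylinder'', but once you grant that identification the retract argument is superfluous: Kan fibrations are then literally the maps with the right lifting property against all trivial cofibrations, hence the fibrations. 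So either the retract argument is unnecessary, or it has a gap, depending on what you are assuming; the exposition should be cleaned up to make clear that the non-trivial step is precisely anodynes $=$ trivial cofibrations, and that this is what is being cited from Cisinski.
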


\begin{proof}
For the case $A = \varnothing$, see \cite[Thm.~8.4.38]{CisinskiAsterisque}. For other $A \subseteq \conset$, see \cite[Thm.~1.17]{CisinskiUniverses}. (The proof in this reference is for $A = \{\vee\}$, but the proof for other $A$ is identical.)
\end{proof}

Note that the definition of the fibrations in this model structure implies that the open box fillings form a set of generating trivial cofibrations; maps in the saturation of this set will also be referred to as \emph{anodyne maps}.

We next consider models of $(\infty,1)$-categories. In \cite{doherty-kapulkin-lindsey-sattler}, it was shown that each of the categories $\cSet_A$ for $A \subseteq \conset$ carries an analogue of the Joyal model structure on the category $\sSet$ of simplicial sets. To describe these model structures, we must first introduce certain concepts which play a key role in their definition; all of these definitions will be identical regardless of the specific cube category under consideration.

\begin{Def}
We recall standard terminology of the theory of cubical models of $(\infty,1)$-categories.
\begin{itemize}
\item The \emph{critical edge} of $\Box^{n}_A$ with respect to a face $\partial_{i,\varepsilon}$, denoted $\bd^c_{i,\varepsilon}$, is the edge $\Box^1_A \to \Box^n_A$ identified under the isomorphisms $\Box^1_A \cong \Box^0_A \otimes \Box^1_A \otimes \Box^0_A$ and $\Box^n_A \cong \Box^{i-1}_A \otimes \Box^1_A \otimes \Box^{n-i}_A$ with $\overrightarrow{(1-\varepsilon)}_{i-1} \otimes \id \otimes \overrightarrow{(1-\varepsilon)}_{n-i}$. In other words, this is the edge between the two vertices of the $n$-cube having all coordinates other than the $i^{\mathrm{th}}$ equal to $1-\varepsilon$.
\item For $n \geq 2$, $1 \leq i \leq n, \varepsilon \in \{0,1\}$, the $(i,\varepsilon)$-\emph{inner open box}, denoted $\widehat{\sqcap}^{n}_{A,i,\varepsilon}$, is the quotient of $\sqcap^n_{A,i,\varepsilon}$ in which the critical edge $\bd_{i,\varepsilon}^c$ is made degenerate. The $(i,\varepsilon)$-\emph{inner cube}, denoted $\widehat{\Box}^{n}_{A,i,\varepsilon}$, is defined similarly. The $(i,\varepsilon)$-\emph{inner open box inclusion} is the inclusion $\widehat{\sqcap}^{n}_{A,i,\varepsilon} \hookrightarrow \widehat{\Box}^{n}_{A,i,\varepsilon}$.
\item The \emph{invertible interval} $K$ is the cubical set depicted below:
\[
\xymatrix{
  1 \ar[r] \ar@{=}[d] &0 \ar[d] \ar@{=}[r] & 0 \ar@{=}[d] \\
  1 \ar@{=}[r] &1 \ar[r] &0 }
\]
\item The class of \emph{inner anodyne maps} is the saturation of the set of inner open box inclusions.
\item An \emph{inner fibration} is a map having the right lifting property with respect to the inner open box inclusions.
\item An \emph{isofibration} is a map having the right lifting property with respect to the endpoint inclusions $\Box^0_A \hookrightarrow K$.
\item A \emph{cubical quasicategory} is a cubical set $X$ such that the map $X \to \Box^0$ is an inner fibration.
\end{itemize}
\end{Def}

\begin{thm}[{\cite{doherty-kapulkin-lindsey-sattler}}]\label{cubical-Joyal}
For $A \subseteq \conset$, the category $\cSet_A$ carries a model structure, referred to as the cubical Joyal model structure, in which:
\begin{itemize}
\item The cofibrations are the monomorphisms;
\item The fibrant objects are the cubical quasicategories;
\item A map between fibrant objects is a fibration if and only if it is both an inner fibration and an isofibration. \qed
\end{itemize}
\end{thm}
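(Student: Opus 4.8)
The plan is to build the model structure by hand, following \cite{doherty-kapulkin-lindsey-sattler}, by invoking a general existence theorem for cofibrantly generated model structures on a presheaf category (of the kind available in Cisinski's framework). Such a theorem takes as input a cellular model for the monomorphisms, an exact functorial cylinder, and a set of generating anodyne maps, and outputs a combinatorial model structure in which the cofibrations are exactly the monomorphisms and the trivial cofibrations are the saturation $\mathsf{An}$ of the chosen anodyne set. So the skeleton of the argument is: choose the inputs so that the output is the \emph{cubical Joyal} structure, verify the hypotheses of the existence theorem, and then identify the fibrant objects and the fibrations between fibrant objects.

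For the cellular model I would take the boundary inclusions $\bd\Box^n_A \hookrightarrow \Box^n_A$; that these generate the monomorphisms follows by a skeletal induction, attaching nondegenerate cubes one dimension at a time using \cref{bdry-colim}, and (when $A \subseteq \conset$) organizing the cubes of a given cubical set via \cref{EZ-for-cubes}. For the cylinder I would use $X \mapsto X \otimes K$, where $K$ is the invertible interval, with structure maps induced by the two endpoint inclusions $\Box^0_A \hookrightarrow K$ and the projection $K \to \Box^0_A$; here one must check that $-\otimes K$ is an exact cylinder (the geometric product preserves monomorphisms, and the relevant corner maps are again monomorphisms) and that $K$ is ``contractible'' in the appropriate homotopical sense. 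For the generating anodyne maps I would take the inner open box inclusions $\hcap^n_{A,i,\varepsilon} \hookrightarrow \hBox^n_{A,i,\varepsilon}$ (together, if needed, with the pushout-products of the endpoint inclusions $\Box^0_A \hookrightarrow K$ with the boundary inclusions), mirroring the generators of the Joyal model structure on simplicial sets. The key combinatorial facts to establish at this stage are closure properties of $\mathsf{An}$ under pushout-product with cofibrations --- chiefly that the pushout-product of an inner open box inclusion with a boundary inclusion is again inner anodyne --- which I would prove by decomposing the relevant geometric products, above all $\hBox^n_{A,i,\varepsilon} \otimes \Box^m_A$ and $K \otimes \Box^m_A$, into explicit cell complexes and attaching the cells in a carefully chosen order so that each attaching map becomes, after passing to the appropriate quotient, an inner open box inclusion; the factorization of \cref{active-face-factor} and the identities of \cref{some-cubical-identities} are what one uses to bookkeep which faces have already been glued on. Granting all of this, the existence theorem produces the model structure with cofibrations the monomorphisms, and the endpoint inclusions $\Box^0_A \hookrightarrow K$ become trivial cofibrations.

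It then remains to identify the fibrant objects and the fibrations between them. A cubical set $X$ is fibrant iff $X \to \Box^0_A$ lifts against every map in $\mathsf{An}$; since inner open boxes lie in $\mathsf{An}$, every fibrant object is a cubical quasicategory, and the converse --- that a cubical quasicategory also lifts the remaining generators of $\mathsf{An}$ coming from the $K$-cylinder --- is the cubical counterpart of Joyal's theorem that equivalences in a quasicategory are invertible, proved by an inductive ``special open box'' filling argument that exploits the internal structure of $K$ together with inner open box filling. Finally, for a map $f\colon X \to Y$ between cubical quasicategories, $f$ is a fibration iff it is both an inner fibration and an isofibration: the forward direction is immediate since inner open box inclusions and $\Box^0_A \hookrightarrow K$ are trivial cofibrations, while the reverse direction reduces an arbitrary trivial-cofibration lifting problem against $f$ to inner open box and $K$-endpoint lifting problems by a relative version of the fibrancy argument.

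The hard part will be the last two steps --- the cubical-quasicategory fibrancy theorem and the fibration characterization --- together with the underlying combinatorial closure properties of $\mathsf{An}$, since these are the cubical analogues of Joyal's most delicate arguments. Two features make them harder than in the simplicial case: the geometric product is not symmetric and $K$ is not a free-standing interval, so the corresponding simplicial computations with $\Delta^1$ and the nerve of the walking isomorphism cannot simply be imported; and everything must be carried out uniformly in $A \subseteq \conset$, so the arguments must be insensitive to the presence or absence of connections, relying only on combinatorial inputs (such as \cref{EZ-for-cubes} and \cref{active-face-factor}) that hold for all such $A$.
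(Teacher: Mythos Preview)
The paper does not prove this theorem: it is stated with attribution to \cite{doherty-kapulkin-lindsey-sattler} and closed with \qed, serving purely as background. There is therefore no proof in the paper to compare your proposal against.

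That said, your sketch is a reasonable outline of the Cisinski--Olschok style construction actually carried out in \cite{doherty-kapulkin-lindsey-sattler}: one builds the model structure from a cellular model (boundary inclusions), a functorial cylinder (though in that reference the cylinder is $-\otimes\Box^1_A$ rather than $-\otimes K$; the invertible interval $K$ enters later in the characterization of fibrations), and a set of generating anodyne maps including the inner open box inclusions, and then identifies fibrant objects and fibrations between them via cubical analogues of Joyal's arguments. Your assessment of where the difficulty lies is accurate. But none of this is rehearsed in the present paper, which simply quotes the result.
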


From this, we can see that the inner open box inclusions and the endpoint inclusions $\Box^0_A \hookrightarrow K$ form a set of \emph{pseudo-generating trivial cofibrations}, ie, that fibrations with fibrant codomain are characterized by the right lifting property with respect to these maps.

In both the Grothendieck and cubical Joyal model structures, cylinder objects may be constructed using the geometric product. In the Grothendieck model structure, a cylinder on $X$ is given by $X \otimes \Box^1_A$, while in the cubical Joyal model structure, a cylinder on $X$ is given by $X \otimes K$. This gives rise to a natural concept of homotopy in each model structure; by standard constructions, weak equivalences can then be defined as maps inducing isomorphisms on homotopy classes of maps into fibrant objects. 

More generally, we have the following result demonstrating the compatibility of these model structures on cubical sets with the geometric product.

\begin{thm} \label{geometric-monoidal-models}
    For $A \subseteq \conset$, the Grothendieck and cubical Joyal model structures on $\cSet_A$ are monoidal with respect to the geometric product.
\end{thm}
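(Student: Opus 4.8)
The plan is to verify the two conditions defining a (not necessarily symmetric) monoidal model category: the unit axiom and the pushout-product axiom. The unit axiom is immediate, since the unit $\Box^0_A$ of the geometric product is cofibrant --- indeed every object of $\cSet_A$ is cofibrant, as the cofibrations are precisely the monomorphisms --- so no cofibrant replacement of the unit is required. It remains to establish the pushout-product axiom, i.e.\ that for cofibrations $f\colon A_0 \hookrightarrow A_1$ and $g \colon B_0 \hookrightarrow B_1$ the pushout-product
\[
f \mathbin{\widehat{\otimes}} g \colon (A_1 \otimes B_0) \cup_{A_0 \otimes B_0} (A_0 \otimes B_1) \longrightarrow A_1 \otimes B_1
\]
is a cofibration, which is moreover trivial if $f$ or $g$ is. Since the classes of cofibrations and trivial cofibrations are each closed under pushout, transfinite composition and retract, and since the Grothendieck model structure is cofibrantly generated while in the cubical Joyal model structure the fibrations with fibrant codomain are detected by the pseudo-generating trivial cofibrations, this reduces to checking the statement when $f$ and $g$ range over the generating cofibrations (the boundary inclusions $\bd\Box^n_A \hookrightarrow \Box^n_A$) and the relevant (pseudo-)generating trivial cofibrations.

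That $f \mathbin{\widehat{\otimes}} g$ is always a monomorphism when $f$ and $g$ are is a general fact about the geometric product. The main point is that each of the one-variable functors $-\otimes Y$ and $X \otimes -$ preserves monomorphisms; this can be extracted from the description of the cubes of a geometric product in \cref{geo-prod-characterization} together with the faithfulness of $\otimes$ on $\Box_A$ (\cref{tensor-faithful}), which show that a subobject $X' \hookrightarrow X$ induces an injection on cubes after tensoring with $Y$. Granting this, for monomorphisms $f$ and $g$ all four maps in the defining pushout square of $f \mathbin{\widehat\otimes} g$ are monomorphisms, so, working in the presheaf topos $\cSet_A$, the pushout is the union of $A_1 \otimes B_0$ and $A_0 \otimes B_1$ as subobjects of $A_1 \otimes B_1$, and $f \mathbin{\widehat\otimes} g$ is the inclusion of that subpresheaf, in particular a monomorphism.

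The substance of the proof is the analysis of pushout-products with trivial cofibrations, which is combinatorial. For the Grothendieck model structure it suffices to show that the pushout-product of an open box inclusion $\sqcap^m_{A,i,\varepsilon} \hookrightarrow \Box^m_A$ with a boundary inclusion $\bd\Box^n_A \hookrightarrow \Box^n_A$ is anodyne; one does this by exhibiting a finite filtration of $\Box^m_A \otimes \Box^n_A = \Box^{m+n}_A$, beginning at the domain of the pushout-product, in which each inclusion is a pushout of an open box inclusion, the bookkeeping being organized by the active/face factorization of \cref{active-face-factor} and the colimit presentation of boundaries in \cref{bdry-colim}. For the cubical Joyal model structure one argues in two stages, paralleling the two families of pseudo-generating trivial cofibrations. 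First, by a similar filtration argument --- now attaching cubes along \emph{inner} open box inclusions $\hcap^k_{A,j,\eta} \hookrightarrow \hBox^k_{A,j,\eta}$ --- one shows that inner anodyne maps are closed under pushout-product with monomorphisms; in particular $\ihom_L(\Box^n_A, X)$ and $\ihom_R(\Box^n_A, X)$ are cubical quasicategories whenever $X$ is. Second, one shows that the pushout-product of $\Box^0_A \hookrightarrow K$ with a boundary inclusion is a trivial cofibration, using the weak contractibility of $K$ and a direct analysis of $\Box^n_A \otimes K$. One then passes from the pseudo-generating set to all trivial cofibrations via the adjunction $(-\mathbin{\widehat\otimes} g) \dashv \widehat{\ihom}_L(g,-)$: since the fibrant objects of $\cSet_A$ are exactly the cubical quasicategories (\cref{cubical-Joyal}), the first stage shows this right adjoint carries a fibration between fibrant objects to an inner fibration, and the second stage shows it carries such a map to an isofibration, hence to a fibration; thus $-\mathbin{\widehat\otimes} g$ is a left Quillen functor for every cofibration $g$.

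The main obstacle is the filtration lemma(s) in the last step: choosing an explicit order in which to attach the non-degenerate cubes of $\Box^{m+n}_A$ so that every attachment is a pushout of an (inner) open box inclusion, and verifying that the argument goes through uniformly for all $A \subseteq \conset$ --- in particular that the extra degeneracies present when $\wedge$ or $\vee$ lies in $A$ do not produce cubes that obstruct the filtration. For $A = \varnothing$ this is carried out in \cite{doherty-kapulkin-lindsey-sattler}, and the same strategy applies in the presence of connections, the combinatorial details being adjusted to account for the larger degeneracy structure.
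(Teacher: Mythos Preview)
The paper does not prove this result independently; its proof is a bare citation to \cite[Thm.~1.7]{CisinskiUniverses} for the Grothendieck case and \cite[Cor.~4.11]{doherty-kapulkin-lindsey-sattler} for the cubical Joyal case. Your proposal is not a genuinely different argument but rather an outline of the one carried out in those references: reduce the pushout-product axiom to (pseudo-)generators, check that pushout-products of monomorphisms remain monomorphisms, and establish the combinatorial filtration showing that (inner) anodyne maps are closed under pushout-product with cofibrations --- a step you yourself flag as the main obstacle and ultimately defer to \cite{doherty-kapulkin-lindsey-sattler}. So in substance the two agree; yours simply unpacks what the paper cites. One place where your sketch is thin: deducing that $-\mathbin{\widehat\otimes} g$ is left Quillen from the fact that its right adjoint sends fibrations \emph{between fibrant objects} to inner fibrations and isofibrations needs both an auxiliary criterion (that preserving trivial fibrations together with fibrations between fibrant objects suffices for a right Quillen functor) and a verification that the codomain of $\widehat\ihom_L(g,p)$ is itself fibrant, so that the characterization of fibrations in \cref{cubical-Joyal} applies to it at all.
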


\begin{proof}
    For the Grothendieck model structure on $\cSet_\vee$, this is part of \cite[Thm.~1.7]{CisinskiUniverses}; identical proofs apply for other $A$. For the cubical Joyal model structure, this is \cite[Cor.~4.11]{doherty-kapulkin-lindsey-sattler}.
\end{proof}

The following result will allow us to prove many statements about the Grothendieck model structure as immediate consequences of analogous statements about the cubical Joyal model structure.

\begin{prop}[{\cite[Prop.~4.23(i)]{doherty-kapulkin-lindsey-sattler}}]\label{cubical-Joyal-localization}
For $A \subseteq \conset$, the Grothendieck model structure on $\cSet_A$ is a localization of the cubical Joyal model structure. In particular, every weak equivalence in the cubical Joyal model structure is a weak equivalence in the Grothendieck model structure. \qed
\end{prop}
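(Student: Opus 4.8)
The plan is to establish the second, concrete assertion first: every weak equivalence in the cubical Joyal model structure is a weak equivalence in the Grothendieck model structure. The statement that the Grothendieck model structure is a localization of the cubical Joyal one then follows formally, since both model structures have the same cofibrations, namely the monomorphisms (\cref{Grothendieck-ms-cSet} and \cref{cubical-Joyal}). Throughout I would work with the description of weak equivalences in each model structure as the maps inducing bijections on homotopy classes of maps into fibrant objects, homotopies being witnessed by the respective cylinders $-\otimes\Box^1_A$ (Grothendieck) and $-\otimes K$ (cubical Joyal); write $[X,Y]_{\Box^1}$ and $[X,Y]_K$ for these sets of homotopy classes, and $\sim_{\Box^1}$, $\sim_K$ for the corresponding homotopy relations on $\cSet_A(X,Y)$.

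The first step is to check that every cubical Kan complex $W$ is a cubical quasicategory, so that the Grothendieck-fibrant objects are among the cubical-Joyal-fibrant ones. Given a map $\widehat{\sqcap}^n_{A,i,\varepsilon}\to W$, precompose with the quotient $\sqcap^n_{A,i,\varepsilon}\twoheadrightarrow\widehat{\sqcap}^n_{A,i,\varepsilon}$ and extend the result along the ordinary open box inclusion $\sqcap^n_{A,i,\varepsilon}\hookrightarrow\Box^n_A$, using that $W$ is a Kan complex. Since the critical edge lies in $\sqcap^n_{A,i,\varepsilon}$ and the quotient sends it to a degenerate edge, the extension $\Box^n_A\to W$ sends the critical edge to a degenerate edge, hence factors through the quotient $\Box^n_A\twoheadrightarrow\widehat{\Box}^n_{A,i,\varepsilon}$; the resulting map $\widehat{\Box}^n_{A,i,\varepsilon}\to W$ is the required lift.

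The second step is to show that, for $W$ a cubical Kan complex and $X$ arbitrary, the relations $\sim_{\Box^1}$ and $\sim_K$ on $\cSet_A(X,W)$ coincide, so that $[X,W]_{\Box^1}=[X,W]_K$. Restriction along the inclusion $\Box^1_A\hookrightarrow K$ of the non-degenerate edge joining the two endpoints of $K$ turns a $K$-homotopy into a $\Box^1_A$-homotopy between the same maps, giving one inclusion. For the other, I would use that $K$ is weakly contractible in the Grothendieck model structure (a standard fact, $K$ having the homotopy type of a point), so that by two-out-of-three the monomorphism $\Box^1_A\hookrightarrow K$ is a Grothendieck trivial cofibration; since the Grothendieck model structure is monoidal with respect to the geometric product (\cref{geometric-monoidal-models}), $X\otimes\Box^1_A\hookrightarrow X\otimes K$ is a Grothendieck trivial cofibration, so any $\Box^1_A$-homotopy $X\otimes\Box^1_A\to W$ extends along it to a $K$-homotopy between the same maps into the fibrant object $W$.

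Finally, if $f\colon A\to B$ is a cubical Joyal weak equivalence then $f^*\colon[B,Z]_K\to[A,Z]_K$ is a bijection for every cubical quasicategory $Z$, in particular for every cubical Kan complex $W$; by the first two steps this says precisely that $f^*\colon[B,W]_{\Box^1}\to[A,W]_{\Box^1}$ is a bijection for every cubical Kan complex $W$, i.e.\ that $f$ is a Grothendieck weak equivalence. The main obstacle is the second step, and within it the comparison of the two homotopy relations: this is where one must use the specific combinatorics of $K$ — that its two endpoints are joined by an edge, and that it is weakly contractible — together with the monoidality of the Grothendieck model structure; the remaining steps are routine manipulations of lifting problems and of formal properties of homotopy classes into fibrant objects. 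One could alternatively try to show directly that a generating set of trivial cofibrations for the cubical Joyal model structure consists of Grothendieck anodyne maps, but this would require knowing such a set explicitly rather than merely the pseudo-generating set of inner open box inclusions and endpoint inclusions $\Box^0_A\hookrightarrow K$.
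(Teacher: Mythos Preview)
The paper does not supply its own proof of this proposition: it is stated as a citation of \cite[Prop.~4.23(i)]{doherty-kapulkin-lindsey-sattler} and closed with a \qed, so there is nothing here to compare your argument against. Your proposal is nonetheless a correct, self-contained proof. The three steps --- Kan complexes are cubical quasicategories, the $\Box^1_A$- and $K$-homotopy relations agree on maps into Kan complexes, and hence cubical Joyal weak equivalences are Grothendieck weak equivalences --- constitute the standard route to a localization statement of this kind, and each is carried out soundly; the final deduction that the Grothendieck model structure is a localization then follows, as you note, from the fact that the two model structures share the same cofibrations. The one place where a further line of justification might be added is the weak contractibility of $K$ in the Grothendieck model structure, which follows, for instance, from the observation that each endpoint inclusion $\Box^0_A \hookrightarrow K$ is anodyne.
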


\section{Comparisons between categories of cubical sets} \label{sec weak sym}

Our task in this section is to develop the technical tools which we will use to relate the categories of cubical sets on which the Grothendieck and cubical Joyal model structures have been established to those having symmetries or diagonals. In \cref{sec i-shriek}, we introduce the adjoint triples which will be used in these comparisons, and prove some of their basic properties. In \cref{sec N-construction} we describe our main combinatorial construction, the \emph{standard decomposition cubes}, and analyze some of their basic properties. In \cref{sec N-closed}, we show how the standard decomposition cubes can be used to prove that certain morphisms of cubical sets are trivial cofibrations, and we discuss some basic consequences of this result, including that the geometric product of cubical sets is symmetric up to a zigzag of natural weak equivalences in the cubical Joyal model structure.

\subsection{The left Kan extension functor} \label{sec i-shriek}

For $A \subseteq B \subseteq \FS$ we have an inclusion of cube categories $i \colon \Box_A \hookrightarrow \Box_B$. This induces a pre-composition functor on the corresponding presheaf categories, denoted $i^* \colon \cSet_B \to \cSet_A$. One may view this as a forgetful functor: for $X \in \cSet_B$, the cubical set $i^* X \in \cSet_A$ has the same set of cubes in each dimension as $X$, but lacks the structure maps corresponding to morphisms of $\Box_B$ which are not present in $\Box_A$. For instance, if $A = \varnothing$ and $B = \{\wedge\}$, then the connections of a cubical set $X \in \cSet_{\wedge}$ still appear as cubes of $i^* X$, but can no longer be identified as connections; they are, in general, non-degenerate.

The operations of left and right Kan extension define left and right adjoint functors to $i^*$, respectively; thus we obtain an adjoint triple $i_! \adjoint i^* \adjoint i_*$. In \cite{doherty:without-connections}, these adjoint triples are considered in the setting of model structures for $(\infty,n)$-categories on categories $\cSet^+_A$ of cubical sets with markings. The techniques used in that paper, and results thereby obtained, generalize easily to the unmarked setting. 

Specifically, the key idea of the proof of \cite[Prop.~2.2]{doherty:without-connections} is to show that for any marked cubical set $X$, the unit $\eta_X \colon X \to i^* i_! X$ is anodyne, by verifying that it has the left lifting property with respect to \emph{comical fibrations}, the naive fibrations of the model structures studied in that paper, and this is done by explicitly constructing lifts via filling of open boxes having specified faces marked. In the setting of unmarked cubical sets modelling $(\infty,1)$-categories or $\infty$-groupoids, one can likewise verify that $\eta_X$ has the right lifting property with respect to inner fibrations, by explicitly constructing lifts via filling inner open boxes. At any step of the proof of \cite[Prop.~2.9]{doherty:without-connections} and the results on which it depends in which it is verified that an open box is \emph{comical}, ie~that its faces satisfy the marking conditions given in \cite[Def.~2.1]{doherty-kapulkin-maehara}, the given proof specializes to a proof that its critical edge is degenerate (note that the critical edge is the unique edge which is required to be marked by these conditions).
One may then use the analogue of \cite[Prop.~2.29]{doherty:without-connections} to prove analogues of \cite[Thm.~2.25 \& Cor.~2.29]{doherty:without-connections} via arguments of homotopical algebra similar to those used in \cite{doherty:without-connections}.

\begin{prop}[cf~{\cite[Prop.~2.9]{doherty:without-connections}}] \label{unit-tcof-empty}
    For $A \subseteq B \subseteq \conset$, the unit of $i_! : \cSet_A \rightleftarrows \cSet_B : i^*$ is a trivial cofibration in the cubical Joyal model structure on $\cSet_A$. \qed
\end{prop}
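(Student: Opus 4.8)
The plan is to reduce the statement to the fact, already sketched in the surrounding text, that the unit $\eta_X \colon X \to i^*i_!X$ has the right lifting property with respect to inner fibrations, and to organize the rest by standard homotopical algebra. First I would observe that $\eta_X$ is always a monomorphism: $i_!$ is a left adjoint out of a presheaf category, $i^*$ preserves and reflects monomorphisms (it acts by restriction on underlying sets of cubes in each dimension), and for a representable $\Box^n_A$ the unit $\Box^n_A \to i^*i_!\Box^n_A = i^*\Box^n_B$ is the evident inclusion; the general case follows by writing $X$ as a colimit of representables and checking injectivity dimensionwise. So $\eta_X$ is a cofibration in the cubical Joyal model structure by \cref{cubical-Joyal}, and it remains to show it is a weak equivalence.

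For the weak equivalence part, the key input is the analogue of \cite[Prop.~2.9]{doherty:without-connections} in the unmarked setting: $\eta_X$ lifts against every inner fibration. Concretely, given a square
\[
\begin{tikzcd}
X \ar[r] \ar[d,swap,"\eta_X"] & E \ar[d,"p"] \\
i^*i_!X \ar[r] & B
\end{tikzcd}
\]
with $p$ an inner fibration, one constructs a diagonal filler. Since $i_!X$ is obtained from the cubes of $X$ by freely adjoining the structure maps of $\Box_B$ not present in $\Box_A$, a cube of $i^*i_!X$ not in the image of $\eta_X$ is of the form $x\phi$ for a cube $x$ of $X$ and a morphism $\phi$ of $\Box_B$ involving those extra structure maps; one extends the lift over such cubes by induction, at each stage filling an inner open box whose critical edge is forced to be degenerate — this is exactly where the proof of \cite[Prop.~2.9]{doherty:without-connections} specializes, replacing each verification that an open box is comical by a verification that its critical edge is degenerate, as the passage preceding the statement explains. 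Granting this, $\eta_X$ has the left lifting property against all inner fibrations, in particular against all fibrations between fibrant objects (which are in particular inner fibrations), so by the characterization of weak equivalences via lifting against fibrations of fibrant objects — equivalently, by taking a fibrant replacement $X \hookrightarrow \hat X$ with $\hat X$ a cubical quasicategory and producing a retraction-up-to-homotopy of $i^*i_!X \to \hat X$ off of $X$ — we conclude $\eta_X$ is a trivial cofibration.

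Alternatively, and perhaps more cleanly, one argues directly: the pseudo-generating trivial cofibrations of the cubical Joyal model structure are the inner open box inclusions together with the endpoint inclusions $\Box^0_A \hookrightarrow K$, so it suffices to show $\eta_X$ is in the weak saturation of this set, or equivalently that it lifts on the left against every fibration with fibrant codomain. Since such fibrations are precisely the maps that are both inner fibrations and isofibrations, and since the lifting construction above already handles inner fibrations, the statement follows once one also checks the lift against isofibrations — but this is subsumed, as the construction only ever needs to fill inner open boxes and never needs to invert an edge. Naturality in $X$ is automatic since $\eta$ is a natural transformation.

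The main obstacle is the inductive open-box-filling argument establishing that $\eta_X$ lifts against inner fibrations: one must set up the correct well-ordering on the cubes of $i^*i_!X$ that are not cubes of $X$ (organized by the complexity of the extra $\Box_B$-morphisms needed to express them), and verify at each stage that the relevant open box is inner with degenerate critical edge. This is precisely the content that \cite[Prop.~2.9]{doherty:without-connections} and its supporting lemmas handle in the marked setting, and the claim — made explicitly in the text preceding the proposition — is that every ``comical'' verification there degenerates to a ``degenerate critical edge'' verification here; checking that this specialization goes through uniformly, with no case relying essentially on markings beyond the critical edge, is the real work. Everything else (monomorphism, cofibrancy, deducing ``trivial cofibration'' from the lifting property) is formal.
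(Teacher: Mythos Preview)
Your proposal is correct and follows the same approach the paper sketches in the passage preceding the statement: specialize the open-box-filling argument of \cite[Prop.~2.9]{doherty:without-connections} from the marked to the unmarked setting, replacing each ``comical'' check by a ``degenerate critical edge'' check. Since the paper gives no further proof beyond that sketch (the statement carries a bare \qed), you are filling in exactly what the paper intends.

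One small simplification: your detour through isofibrations and fibrations between fibrant objects is unnecessary. Every fibration in the cubical Joyal model structure is in particular an inner fibration (since inner open box inclusions are trivial cofibrations), so once you know $\eta_X$ has the left lifting property against all inner fibrations, it automatically lifts against all fibrations and is therefore a trivial cofibration. Equivalently, the lifting property against all inner fibrations exhibits $\eta_X$ as inner anodyne, and inner anodyne maps are trivial cofibrations.
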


\begin{thm}[cf~{\cite[Thm.~2.25 \& Cor.~2.29]{doherty:without-connections}}] \label{Quillen-equiv-empty}
    For $A \subseteq B \subseteq \conset$, both of the adjunctions $i_! : \cSet_A \rightleftarrows \cSet_B : i^*$ and $i^* : \cSet_B \rightleftarrows \cSet_A : i_*$ are Quillen equivalences, where $\cSet_A$ and $\cSet_B$ are equipped with either their respective Grothendieck or cubical Joyal model structures. Moreover, in both cases, both of the left adjoints $i_!, i^*$ create weak equivalences. \qed
\end{thm}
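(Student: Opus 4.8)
\emph{Strategy and reductions.} The plan is to establish the statement for the cubical Joyal model structures first and deduce the Grothendieck case via \cref{cubical-Joyal-localization}. In all the model structures involved every object is cofibrant (\cref{Grothendieck-ms-cSet,cubical-Joyal}), so weak equivalences are detected by homotopy classes of maps into fibrant objects, and the only genuinely non-formal ingredients are \cref{unit-tcof-empty}, that the unit $\eta\colon\id\Rightarrow i^*i_!$ of $i_!\adjoint i^*$ is a natural trivial cofibration, together with a little combinatorics showing that $i_!$ transports the relevant generating families of $\cSet_A$ to those of $\cSet_B$. Everything else is homotopical algebra following the template of \cite{doherty:without-connections}.

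\emph{The combinatorial facts and a Quillen adjunction.} The functor $i_!$ is cocontinuous and strong monoidal --- the latter since both geometric products are computed as colimits over cubes (\cref{geo-prod-characterization}) and $i_!$ sends $\Box^n_A$ to $\Box^n_B$ --- and it carries each generating structure map of $\Box_A$ to the corresponding map of $\Box_B$; so by the coequalizer presentations of \cref{bdry-colim} (valid in any cube category), $i_!$ sends the boundary inclusions, inner open box inclusions, and endpoint inclusions of $\cSet_A$ to the corresponding maps of $\cSet_B$. Dualising along $i_!\adjoint i^*$, this shows that $i^*$ preserves and reflects trivial fibrations, cubical quasicategories, inner fibrations, and isofibrations. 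Next, using the monoidality of the model structures (\cref{geometric-monoidal-models}), the natural isomorphism $i^*\ihom_L(i_!Z,X)\cong\ihom_L(Z,i^*X)$ coming from strong monoidality of $i_!$ and the adjunction, and the fact that a monomorphism $A\hookrightarrow B$ is a trivial cofibration if and only if $\ihom_L(B,X)\to\ihom_L(A,X)$ is a trivial fibration for every fibrant $X$, one shows that $i_!$ preserves trivial cofibrations; hence $i_!$ is left Quillen and, by an adjunction argument, $i^*$ preserves all fibrations. Finally, two-out-of-three applied to the naturality squares of $\eta$ along the inclusions $\hcap^n_A\hookrightarrow\hBox^n_A$ and $\Box^0_A\hookrightarrow K$ shows that $i^*$ sends the pseudo-generating trivial cofibrations of $\cSet_B$ to trivial cofibrations of $\cSet_A$, and dually that $i_*$ preserves fibrant objects and fibrations between them.

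\emph{The crux, and the formal consequences.} The remaining point, which I expect to be the main obstacle, is to upgrade the above to the assertion that $i^*$ preserves \emph{all} weak equivalences --- equivalently, that $i_*$ preserves all fibrations, equivalently that $i^*\adjoint i_*$ is a Quillen adjunction. In the present unmarked setting this does not follow from the pseudo-generating data alone, and it is precisely the step that reprises the homotopical algebra of \cite[Thm.~2.25 \& Cor.~2.29]{doherty:without-connections}. Granting it, the rest is formal. The derived unit of $i_!\adjoint i^*$ at $X$ is the composite of $\eta_X$ (a weak equivalence by \cref{unit-tcof-empty}) with $i^*$ applied to a fibrant replacement $i_!X\to R(i_!X)$ (a weak equivalence since $i^*$ preserves weak equivalences), hence a weak equivalence, so $i_!\adjoint i^*$ is a Quillen equivalence. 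Since $i^*$ preserves all weak equivalences, the functor it induces on homotopy categories is at once the right-derived functor of the Quillen equivalence $i_!\adjoint i^*$ --- hence an equivalence --- and the left-derived functor of $i^*\adjoint i_*$, so $i^*\adjoint i_*$ is a Quillen equivalence as well. And $i_!$ and $i^*$ preserve weak equivalences by Ken Brown's lemma (both being left Quillen) and reflect them because a left adjoint of a Quillen equivalence reflects weak equivalences between cofibrant objects and here every object is cofibrant; so both create weak equivalences. The Grothendieck case follows by the identical argument: \cref{cubical-Joyal-localization} makes a cubical Joyal trivial cofibration a Grothendieck trivial cofibration, so \cref{unit-tcof-empty} still applies; $i^*$ and $i_*$ still preserve cubical Kan complexes and the relevant fibrations; and the monoidality used to prove $i_!$ left Quillen holds for the Grothendieck model structures too (\cref{geometric-monoidal-models}).
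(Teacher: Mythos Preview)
Your proposal is correct and takes essentially the same approach as the paper. The paper gives no self-contained proof here (the theorem is stated with a \qed and deferred to \cite{doherty:without-connections}), indicating only that the key non-formal input is \cref{unit-tcof-empty} together with the behaviour of $i_!$ on the generating maps, with the remainder being the homotopical algebra of that reference --- exactly the template you follow, including your explicit acknowledgment that the crux step reprises the argument of the cited work.
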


Note that although the Quillen equivalence mentioned in \cref{Quillen-equiv-empty} for the cubical Joyal model structures is stated as \cite[Thm.~A.24]{doherty:without-connections}, the proof given in that paper is incorrect, as it relies on the  assumption that the forgetful functor $|-| \colon \cSet^+_A \to \cSet_A$ (which forgets markings on cubes) is left Quillen; this is not the case for the cubical Joyal model structure and the model structure on marked cubical sets considered in that work, as is correctly observed earlier in the same appendix of that paper.

The adjunctions $i_! \adjoint i^*$ will be of the greatest relevance to our work; thus our immediate task is to characterize the left adjoints $i_!$. A standard result concerning left Kan extension functors between presheaf categories gives us the following.

\begin{prop}\label{i-shriek-on-reps}
For any $A \subseteq B \subseteq \FS$, the following diagram commutes up to natural isomorphism, where the vertical functors are Yoneda embeddings:
\[
\begin{tikzcd}
\Box_A \ar{r}{i} \ar{d} & \Box_B \ar{d} \\
\cSet_A \ar{r}{i_!} & \cSet_B \\
\end{tikzcd}
\]
In particular, for all $n \geq 0$ we have $i_! \Box_A^n \cong \Box_B^n$.
\qed
\end{prop}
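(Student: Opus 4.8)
The plan is to invoke the standard adjointness argument for left Kan extensions along a functor between small categories, applied to the Yoneda embeddings. Recall that for any functor $i \colon \Box_A \to \Box_B$, precomposition $i^* \colon \cSet_B \to \cSet_A$ has a left adjoint $i_!$ given by left Kan extension, and that this left adjoint is characterized up to natural isomorphism by the universal property $\cSet_B(i_! X, Z) \cong \cSet_A(X, i^* Z)$, natural in $X \in \cSet_A$ and $Z \in \cSet_B$. The claim is that $i_!$ sends the representable $\Box_A^n = \Box_A(-, \Box_A^n)$ to the representable $\Box_B^n = \Box_B(-, \Box_B^n)$, compatibly with the Yoneda embeddings.

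The key steps, in order, are as follows. First, I would recall the Yoneda lemma: for any $Z \in \cSet_B$, there is a natural isomorphism $\cSet_B(\Box_B^n, Z) \cong Z_n = Z(\Box_B^n)$. Second, apply this together with the adjunction: for any $Z \in \cSet_B$,
\[
\cSet_B(i_! \Box_A^n, Z) \cong \cSet_A(\Box_A^n, i^* Z) \cong (i^* Z)(\Box_A^n) = Z(\Box_A^n) = Z(\Box_B^n),
\]
where the second isomorphism is Yoneda in $\cSet_A$ and the final equalities use the definition of $i^*$ as precomposition with $i$ together with the fact that $i$ is the identity on objects (since $\Box_A$ and $\Box_B$ have the same objects $\zo^k$, $k \geq 0$, with $i$ the inclusion of a subcategory). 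Third, assemble these isomorphisms: we have produced a natural isomorphism $\cSet_B(i_! \Box_A^n, -) \cong \cSet_B(\Box_B^n, -)$ of functors $\cSet_B \to \Set$, so by the Yoneda lemma (applied now to representables in the functor category, i.e.\ the fully faithfulness of the Yoneda embedding $\cSet_B \hookrightarrow [\cSet_B, \Set]^{\mathrm{op}}$) we conclude $i_! \Box_A^n \cong \Box_B^n$. Fourth, to upgrade this to the commutativity of the square with the Yoneda embeddings, I would note that all isomorphisms above are natural in $n$ as well — equivalently, track a morphism $\Box_A^m \to \Box_A^n$ in $\Box_A$ through the chain and observe it corresponds to $i$ of that morphism under the identifications — so the natural isomorphism $i_! \circ \mathrm{y}_A \cong \mathrm{y}_B \circ i$ holds as claimed.

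There is essentially no serious obstacle here: this is the well-known fact that left Kan extension along $i$ restricts to $i$ on representables (the density/Yoneda argument), and the only mild point requiring care is the bookkeeping that $i^*$ acts as the identity on underlying sets of cubes because $i$ is the identity on objects, so that $(i^* Z)_n = Z_n$; this is exactly what makes $Z(\Box_A^n) = Z(\Box_B^n)$ meaningful. If anything deserves attention it is verifying naturality in $n$ for the full square statement rather than just the objectwise isomorphism $i_! \Box_A^n \cong \Box_B^n$, but this follows formally from the naturality of the Yoneda isomorphism and of the adjunction unit/counit. The final sentence of the proposition, $i_! \Box_A^n \cong \Box_B^n$, is then just the object-level consequence of the commuting square.
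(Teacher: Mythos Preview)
Your argument is correct and is exactly the standard Yoneda/adjunction proof underlying this fact. The paper itself does not give a proof: it states the proposition with a \qed, prefaced by the remark that it is ``a standard result concerning left Kan extension functors between presheaf categories.'' Your proposal simply supplies the details of that standard result.
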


Roughly speaking, for $X \in \cSet_A$, $i_! X$ is obtained by freely adding the structure maps of $\Box_B$ to $X$ in a manner compatible with the structure maps of $X$. More precisely, by the density theorem and the fact that $i_!$ preserves colimits as a left adjoint, we have the following description of $i_!X$:
\[
i_! X = \colim\limits_{\Box^n_A \to X} \Box^n_B 
\]
From this, together with the standard description of a colimit as a coequalizer of coproducts, we obtain the following.

\begin{prop}\label{i-shriek-characterization}
Let $A \subseteq B \subseteq \FS$. For $X \in \cSet_A$, an $n$-cube of $i_! X$ is determined by a cube $x \in X_m$ for some $m$ and a map $\psi \colon \Box^n_B \to \Box^m_B$; we write this as $x \psi$, or simply $x$ in the case $\psi = \id$. These are subject to the identification $(x \phi) \psi = x (\phi \psi)$ for $x$ as above, $\phi \colon \Box^n_A \to \Box^m_A$ and $\psi \colon \Box^{n'}_B \to \Box^n_B$. Structure maps of $\Box_B$ act by pre-composition.

Given a map $f \colon X \to Y$ in $\cSet_A$, the map $i_! f \colon i_! X \to i_! Y$ sends each cube $x \phi$ to $f(x) \phi$. \qed
\end{prop}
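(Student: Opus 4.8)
The plan is to derive Proposition~\ref{i-shriek-characterization} directly from the colimit formula $i_! X = \colim_{\Box^n_A \to X} \Box^n_B$ established just above it, by unwinding the standard presentation of such a colimit as a coequalizer of coproducts and then translating that presentation back into the representable language afforded by \cref{i-shriek-on-reps}. Concretely, the colimit is indexed by the category of elements $\int_{\Box_A} X$, whose objects are cubes $x \colon \Box^m_A \to X$ and whose morphisms $x \to x'$ are maps $\phi \colon \Box^m_A \to \Box^{m'}_A$ with $x' \phi = x$; the diagram sends $x$ to $\Box^m_B$ (using $i_! \Box^m_A \cong \Box^m_B$) and a morphism $\phi$ to $i_!$ applied to it, which by \cref{i-shriek-on-reps} is the map $\Box^m_B \to \Box^{m'}_B$ represented by $i(\phi)$. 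So $i_! X$ is the coequalizer
\[
\coprod_{\phi \colon x \to x'} \Box^m_B \rightrightarrows \coprod_{x \colon \Box^m_A \to X} \Box^m_B \twoheadrightarrow i_! X,
\]
where the top map reindexes along $\phi$ and the bottom map is the identity on each summand.

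From this, the first step is to read off that every cube of $i_! X$ is represented, via the coproduct-inclusion of some summand $x \colon \Box^m_A \to X$, by a pair consisting of $x$ and a cube of $\Box^m_B$ of the appropriate dimension; and a cube of $\Box^m_B$ of dimension $n$ is exactly a map $\psi \colon \Box^n_B \to \Box^m_B$ in $\Box_B$. This justifies the notation $x\psi$ for the image of $(x,\psi)$, with $x$ alone denoting the case $\psi = \id$. The second step is to identify the equivalence relation: the coequalizer quotients by the relation generated by declaring, for each $\phi \colon x \to x'$ in $\int_{\Box_A} X$ (i.e.\ each $\phi$ with $x'\phi = x$) and each $\psi \colon \Box^n_B \to \Box^m_B$, that the class of $(x, \psi)$ equals the class of $(x', i(\phi)\circ\psi)$ --- which in the $x\psi$ notation is precisely the stated identification $x\psi = (x'\phi)\psi = x'(\phi\psi)$ (suppressing the now-invisible $i$). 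Since every cube of $X$ factors through itself via $\id$, this generating relation already puts every element in the form $x\psi$ with $x$ ranging over all cubes, and one checks the relation is exactly the one claimed. The third step is functoriality: the map $\Box_A^n \to X$ given by $x$ followed by $f \colon X \to Y$ is the cube $f(x)$ of $Y$, and $i_!$ being a functor (in particular natural in the colimit cones) forces $i_! f$ to send the summand indexed by $x$ identically onto the summand indexed by $f(x)$, hence $x\psi \mapsto f(x)\psi$. The clause ``structure maps of $\Box_B$ act by pre-composition'' is immediate since the $\Box_B$-presheaf structure on the colimit is computed pointwise, and on the representable summand $\Box^m_B$ a structure map $\theta \colon \Box^{n'}_B \to \Box^n_B$ sends the $n$-cube $\psi$ to $\psi\theta$.

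I do not expect a genuine obstacle here --- this is a bookkeeping unwinding of a universal property --- so the writeup should be short, essentially: ``combine the colimit formula with the coequalizer-of-coproducts presentation and \cref{i-shriek-on-reps}, then translate''. The one place to be slightly careful is checking that the relation generated by the two coequalizer legs coincides on the nose with the relation $(x\phi)\psi \sim x(\phi\psi)$, rather than merely being generated by it: one direction is the generating relation applied with $\psi = \id$ and general $\phi$; the reverse (that the generated equivalence relation is no coarser) follows because the described relation is already symmetric and transitive once one notes every summand maps into the $\id$-summand of the non-degenerate representative, so no further collapsing occurs. Given the paper's stated intent to treat this as routine (it ends with \qed and no proof), I would in fact present it in two or three sentences rather than belabor the coequalizer diagram, citing the density theorem and \cref{i-shriek-on-reps} and leaving the verification of the identification to the reader.
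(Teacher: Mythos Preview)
Your proposal is correct and matches the paper's own approach: the proposition is stated with a \qed and no written proof, the paper having indicated just beforehand that it follows from the colimit formula together with the standard description of a colimit as a coequalizer of coproducts. Your unwinding via the category of elements and \cref{i-shriek-on-reps} is exactly the intended justification.
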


Analyzing the adjunction $i_! \adjoint i^*$ in view of this characterization of $i_!$, we obtain the following description of its unit.

\begin{cor}\label{unit-characterization}
For $X \in \cSet_A$ as above, the unit map $\eta \colon X \to i^* i_! X$ sends each cube $x$ of $X$ to $x$, viewed as a cube of $i^* i_! X$. \qed
\end{cor}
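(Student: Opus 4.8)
The plan is to exhibit directly the morphism that the statement claims is the unit, and then to verify that it enjoys the universal property of the unit of $i_! \adjoint i^*$; this uses only the explicit description of $i_!$ in \cref{i-shriek-characterization}. (One could instead compute $\eta$ on representable cubical sets, where \cref{i-shriek-on-reps} shows it is essentially the identity, and then appeal to naturality of $\eta$ together with the fact that $i^*i_!$ preserves colimits; but the direct route seems shortest.)

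First I would define a map $j_X \colon X \to i^* i_! X$ in $\cSet_A$. Since $i^*$ leaves the underlying set of $m$-cubes of a presheaf unchanged, an $m$-cube of $i^* i_! X$ is the same thing as an $m$-cube of $i_! X$; so I set $j_X(x)$ to be the cube of $i_! X$ denoted $x$ (that is, $x\,\id$ in the notation of \cref{i-shriek-characterization}), for each $x \in X_m$. To see that $j_X$ respects the $\Box_A$-action, let $\phi$ be a structure map of $\Box_A$: by the identification $(x\phi)\,\id = x(\phi\,\id)$ of \cref{i-shriek-characterization}, the cube $j_X(x\phi)$ coincides with the image of $j_X(x)$ under the action of $\phi$, since the $\Box_B$-action on $i_! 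X$ (whose restriction is the $\Box_A$-action on $i^* i_! X$) is by precomposition. Hence $j_X$ is a morphism of $\cSet_A$.

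Next I would check that $j_X$ is a universal arrow from $X$ to $i^*$, which forces $j_X = \eta_X$ and yields the claim. Given any $Y \in \cSet_B$ and any $f \colon X \to i^* Y$, define $g \colon i_! X \to Y$ by $g(x\psi) = f(x)\psi$ for $x \in X_m$ and $\psi \colon \Box^n_B \to \Box^m_B$. Using \cref{i-shriek-characterization}: the assignment $g$ is well-defined on the quotient presentation of $i_! X$ because $f$ is $\Box_A$-natural, so that $g\bigl((x\phi)\psi\bigr) = f(x\phi)\psi = \bigl(f(x)\phi\bigr)\psi = g\bigl(x(\phi\psi)\bigr)$ whenever $\phi$ is a map of $\Box_A$; $g$ commutes with the $\Box_B$-action, which on $i_! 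X$ is precomposition; and $i^*(g) \circ j_X = f$ since $g(x\,\id) = f(x)\,\id = f(x)$. For uniqueness, any $g'$ with $i^*(g') \circ j_X = f$ must satisfy $g'(x\psi) = g'\bigl((x\,\id)\psi\bigr) = g'(x\,\id)\,\psi = f(x)\psi$, so $g' = g$. Thus $j_X$ is the unit, i.e.\ $\eta_X(x) = x$ for every cube $x$ of $X$.

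I do not expect any genuine obstacle: the corollary is essentially an unwinding of \cref{i-shriek-characterization} together with the defining property of the unit. The only place demanding a little care is the verification that $g$ is well-defined, where one must keep track of the single identification $(x\phi)\psi = x(\phi\psi)$; everything else is formal.
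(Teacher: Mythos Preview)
Your argument is correct. The paper itself gives no proof here: the corollary is stated with a \qed, treating it as an immediate consequence of \cref{i-shriek-characterization}. Your verification that the map $x \mapsto x$ is well-defined and satisfies the universal property of the unit is exactly the unwinding the paper leaves implicit, so your approach agrees with the paper's in spirit, just with the details spelled out.
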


We likewise obtain convenitent characterizations of the actions of $i_!$ on some of the key objects and morphisms used in the study of the cubical Joyal model structures.

\begin{lem}\label{i-shriek-open-box}
For any $A \subseteq B \subseteq \FS$, the image under $i_!$ of a boundary inclusion, (inner) open box filling, or endpoint inclusion into $K$ in $\cSet_A$ is the corresponding map in $\cSet_B$.
\end{lem}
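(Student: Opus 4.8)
The plan is to verify that each of these maps in $\cSet_A$ is built as a colimit of representables via maps already understood under $i_!$, using \cref{i-shriek-on-reps} (so $i_!\Box^n_A \cong \Box^n_B$) together with the fact that $i_!$ preserves colimits. First I would handle the boundary inclusion. By \cref{bdry-colim}, $\bd\Box^n_A$ is a coequalizer of a diagram whose terms are coproducts of $\Box^{n-2}_A$'s and $\Box^{n-1}_A$'s, with the parallel pair given by face maps $\iota_{j,\varepsilon'}\bd_{i,\varepsilon}$ and $\iota_{i,\varepsilon}\bd_{j-1,\varepsilon'}$, and the inclusion $\bd\Box^n_A \hookrightarrow \Box^n_A$ is the induced comparison map with $\Box^n_A$. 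Since $i_!$ preserves colimits and sends each $\Box^k_A$ to $\Box^k_B$ and each face map $\bd_{i,\varepsilon}\colon \Box^{k-1}_A \to \Box^k_A$ to $\bd_{i,\varepsilon}\colon \Box^{k-1}_B \to \Box^k_B$ (this last point is exactly the naturality square in \cref{i-shriek-on-reps}, or immediate from \cref{i-shriek-characterization} since $x\psi \mapsto x\psi$ and face maps act by precomposition), applying $i_!$ to the whole coequalizer diagram of \cref{bdry-colim} for $\Box_A$ yields the coequalizer diagram of \cref{bdry-colim} for $\Box_B$. Hence $i_!(\bd\Box^n_A) \cong \bd\Box^n_B$, and the induced map $i_!(\bd\Box^n_A \hookrightarrow \Box^n_A)$ is the comparison map $\bd\Box^n_B \hookrightarrow \Box^n_B$, as claimed. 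The same argument applies verbatim to any open box $\sqcap^n_{A,i,\varepsilon}$ using the ``similar description'' clause of \cref{bdry-colim}, giving $i_!(\sqcap^n_{A,i,\varepsilon} \hookrightarrow \Box^n_A) = (\sqcap^n_{B,i,\varepsilon} \hookrightarrow \Box^n_B)$.

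Next I would treat the inner open box inclusions $\hcap^n_{A,i,\varepsilon} \hookrightarrow \hBox^n_{A,i,\varepsilon}$. The inner open box is defined as the quotient of $\sqcap^n_{A,i,\varepsilon}$ collapsing the critical edge $\bd^c_{i,\varepsilon}$ to a degenerate edge, i.e.\ it fits in a pushout square with corners $\Box^1_A$, $\sqcap^n_{A,i,\varepsilon}$, $\Box^0_A$, and $\hcap^n_{A,i,\varepsilon}$, where $\Box^1_A \to \sqcap^n_{A,i,\varepsilon}$ is the critical edge and $\Box^1_A \to \Box^0_A$ is the projection $\sigma_1$; similarly $\hBox^n_{A,i,\varepsilon}$ is the pushout of $\Box^0_A \leftarrow \Box^1_A \to \Box^n_A$ along the critical edge and $\sigma_1$. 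Since $i_!$ preserves pushouts, sends $\Box^0_A, \Box^1_A, \Box^n_A$ to $\Box^0_B, \Box^1_B, \Box^n_B$, sends $\sigma_1$ to $\sigma_1$, sends $\sqcap^n_{A,i,\varepsilon}$ to $\sqcap^n_{B,i,\varepsilon}$ by the previous paragraph, and sends the critical edge $\Box^1_A \to \sqcap^n_{A,i,\varepsilon}$ to the critical edge $\Box^1_B \to \sqcap^n_{B,i,\varepsilon}$ (again by \cref{i-shriek-characterization}, as the critical edge is a specified map in the cube category), it follows that $i_!\hcap^n_{A,i,\varepsilon} \cong \hcap^n_{B,i,\varepsilon}$, $i_!\hBox^n_{A,i,\varepsilon} \cong \hBox^n_{B,i,\varepsilon}$, and the induced map between them is the inner open box inclusion in $\cSet_B$. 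Finally, for the endpoint inclusions $\Box^0_A \hookrightarrow K$: the invertible interval $K$ is a finite colimit (in fact it is obtained by gluing two $1$-cubes and two degenerate $2$-cubes along their boundaries, as in the displayed picture), built entirely out of representables $\Box^0_A, \Box^1_A, \Box^2_A$ and the maps among them. Since this defining colimit, and the endpoint inclusions into it, involve only representables and structure maps of $\Box_A$, applying $i_!$ — using once more that $i_!$ preserves colimits and agrees with $i$ on representables and structure maps — produces the corresponding colimit description of $K$ in $\cSet_B$ together with its endpoint inclusions.

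The only mild subtlety, and the point I would be most careful about, is checking that the explicit structure maps appearing in these colimit presentations — face maps, projections, the critical edge, the defining maps of $K$ — are genuinely carried by $i_!$ to the same-named maps in $\cSet_B$. This is exactly what \cref{i-shriek-on-reps} (commutativity with the Yoneda embeddings) guarantees for representables and morphisms between them; concretely, a morphism $\phi\colon \Box^k_A \to \Box^\ell_A$, regarded as a map of representable presheaves, goes to $i\phi = \phi\colon \Box^k_B \to \Box^\ell_B$ under $i_!$, and the cube-level description in \cref{i-shriek-characterization} confirms that these maps act by the expected precomposition. Given that, every claim in the lemma reduces to the slogan ``$i_!$ preserves colimits and is the identity on cube-category data,'' so no real obstacle remains beyond bookkeeping; I would state the proof compactly, invoking \cref{i-shriek-on-reps}, \cref{bdry-colim}, and the pushout definitions of $\hcap$, $\hBox$, and $K$.
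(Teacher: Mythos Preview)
Your argument is correct and follows essentially the same route as the paper: reduce each case to the observation that the map in question is defined by a colimit of representables and cube-category morphisms whose shape is independent of $A$, then invoke \cref{i-shriek-on-reps} and colimit preservation by $i_!$. The paper treats the inner open box case by exactly the pushout square you describe, and handles $K$ by the same appeal to an explicit colimit presentation.
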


\begin{proof}
In all cases, this follows from the fact that the relevant map is induced by a particular colimit defined according to a formula which does not depend on $A$.

For inclusions of boundaries and open boxes, we note that by \cref{i-shriek-on-reps} and the fact that $i_!$ preserves colimits as a left adjoint, $i_!$ sends the colimit diagrams and maps of \cref{bdry-colim} to their analogues in $\cSet_B$. 

For inner open box fillings, we note that the inner cube $\hBox^n_{A,i,\varepsilon}$ is defined by the following pushout diagram, where the top horizontal map picks out the critical edge with respect to $\bd_{i,\varepsilon}$.
\[
\begin{tikzcd}
\Box^1_A \ar[r,"\bd^c_{i,\varepsilon}"] \ar[d] \pushout & \Box^n_A \ar[d] \\
\Box^0_A \ar[r] & \hBox^n_{A,i,\varepsilon} \\
\end{tikzcd}
\]
The inner open box $\hcap^n_{A,i,\varepsilon}$ is defined by a similar pushout diagram, and the inclusion $\hcap^n_{A,i,\varepsilon} \hookrightarrow \hBox^n_{A,i,\varepsilon}$ is the map between pushouts induced by the inclusion $\sqcap^n_{A,i,\varepsilon} \hookrightarrow \Box^n_A$. The fact that $i_!$ preserves this inner open box inclusion thus follows from the previously established fact that it preserves open box inclusions, together with a further application of \cref{i-shriek-on-reps} and the fact that it preserves colimits.

For endpoint inclusions into $K$, we may make a similar argument based on an explicit construction of $K$ as a colimit of representable cubical sets.
\end{proof}

\begin{cor}\label{i-shriek-monos}
For $A \subseteq B \subseteq \FS$ with $A \subseteq \conset$, the functor $i_! \colon \cSet_A \to \cSet_B$ preserves monomorphisms.
\end{cor}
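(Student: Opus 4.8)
The plan is to reduce the statement to the single fact, already recorded in \cref{i-shriek-open-box}, that $i_!$ carries boundary inclusions to boundary inclusions. The point is that a monomorphism of cubical sets in $\cSet_A$ can be built out of boundary inclusions $\bd\Box^n_A \hookrightarrow \Box^n_A$ by colimit constructions under which monomorphisms are stable, and $i_!$, being a left adjoint, respects all of those constructions. Since $A \subseteq \conset$ we have $\delta \notin A$, so by \cref{cube-EZ} the category $\Box_A$ is an Eilenberg--Zilber category; together with the unique factorization of \cref{EZ-for-cubes}, this is exactly the input needed to run the standard relative skeletal filtration argument. The only part of the proof carrying genuine content is this filtration argument; everything else is formal.

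Concretely, I would first observe (or recall) that the boundary inclusions $\{\bd\Box^n_A \hookrightarrow \Box^n_A : n \geq 0\}$ form a cellular model of $\cSet_A$. Given a monomorphism $f \colon X \hookrightarrow Y$, one filters $Y$ by the subpresheaves $Y^{(n)} \subseteq Y$ ($n \geq -1$) generated by $X$ together with all cubes of $Y$ of dimension $\leq n$, so that $Y^{(-1)} = X$ and $Y = \colim_n Y^{(n)}$. Using the Eilenberg--Zilber structure of \cref{cube-EZ} and the factorization of \cref{EZ-for-cubes}, each inclusion $Y^{(n-1)} \hookrightarrow Y^{(n)}$ is a pushout of the coproduct $\coprod \bd\Box^n_A \hookrightarrow \coprod \Box^n_A$ indexed by the non-degenerate $n$-cubes of $Y$ not lying in $X$. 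Hence every monomorphism of $\cSet_A$ lies in the saturation of the set of boundary inclusions, i.e.\ the closure under transfinite composition, cobase change, coproduct and retract. (Since $A \subseteq \conset$, this also follows directly from \cref{Grothendieck-ms-cSet}: the monomorphisms are the cofibrations of a cofibrantly generated model structure whose generating cofibrations are the boundary inclusions.)

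To conclude, note that $i_!$ preserves all colimits, being a left adjoint, and any functor preserves retracts; so $i_!$ carries the saturation of a set $S$ of maps into the saturation of $i_!(S)$. By \cref{i-shriek-open-box} (or by \cref{i-shriek-on-reps} applied to the coequalizer presentation of \cref{bdry-colim}), $i_!$ sends $\bd\Box^n_A \hookrightarrow \Box^n_A$ to $\bd\Box^n_B \hookrightarrow \Box^n_B$, which is a monomorphism of $\cSet_B$ by the very definition of the boundary. Finally, in the presheaf category $\cSet_B$ the monomorphisms form a saturated class --- checked pointwise in $\Set$, where injections are stable under transfinite composition, pushout along arbitrary maps, coproduct and retract --- so the saturation of the boundary inclusions of $\cSet_B$ consists of monomorphisms. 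Therefore $i_! f$ is a monomorphism. I expect the main obstacle to be the cellular-model step of the previous paragraph: the relative skeletal filtration is routine but is the one place where the Eilenberg--Zilber structure is genuinely used, so it is the point that must be either carefully verified or cleanly cited.
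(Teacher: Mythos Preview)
Your proof is correct and follows essentially the same approach as the paper: monomorphisms in $\cSet_A$ form the saturation of the boundary inclusions (the paper cites this as \cite[Lem.~1.32]{doherty-kapulkin-lindsey-sattler} rather than rederiving it via the skeletal filtration), and since $i_!$ is a left adjoint sending boundary inclusions to boundary inclusions by \cref{i-shriek-open-box}, it sends this saturation into the class of monomorphisms of $\cSet_B$. Your version is more explicit about why the saturation of boundary inclusions in $\cSet_B$ consists of monomorphisms, a point the paper leaves implicit.
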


\begin{proof}
In these cases, the class of monomorphisms in $\cSet_A$ is the saturation of the set of boundary inclusions (see \cite[Lem.~1.32]{doherty-kapulkin-lindsey-sattler}), so this is immediate from \cref{i-shriek-open-box}.
\end{proof}

In the case where $A \subseteq \conset$, the convenient combinatorial properties of $\cSet_A$ allow us to characterize the cubes of $i_! X$ in terms of the non-degenerate cubes of $X$.

\begin{prop}\label{i-shriek-pairs}
For $A \subseteq B \subseteq \FS$ with $A \subseteq \conset$, and $X \in \cSet_A$, each cube of $i_! X$ is equal to $x\phi$ for a unique non-degenerate cube $x$ of $X$ and a unique active map $\phi$ in $\Box_B$.
\end{prop}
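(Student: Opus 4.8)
The plan is to sidestep any direct analysis of the equivalence relation defining $i_! X$ and instead compute $i_! X$ by pushing the Eilenberg--Zilber skeletal filtration of $X$ through the cocontinuous functor $i_!$. Since $A \subseteq \conset$, the category $\Box_A$ is an Eilenberg--Zilber category (\cref{cube-EZ}), so $X = \colim_p \mathrm{sk}_p X$, where $\mathrm{sk}_{-1}X = \varnothing$ and, for each $p \geq 0$, $\mathrm{sk}_p X$ is obtained from $\mathrm{sk}_{p-1}X$ by pushing out $\coprod_x \bd\Box^p_A \hookrightarrow \coprod_x \Box^p_A$ along the restrictions of the cells to their boundaries, with both coproducts indexed by the non-degenerate $p$-cubes $x$ of $X$ and the $x$-component of $\coprod_x \Box^p_A \to \mathrm{sk}_p X$ being the cube $x$. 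First I would apply $i_!$ to this presentation: since $i_!$ is cocontinuous and, by \cref{i-shriek-on-reps,i-shriek-open-box}, sends $\bd\Box^p_A \hookrightarrow \Box^p_A$ to $\bd\Box^p_B \hookrightarrow \Box^p_B$, this exhibits $i_! X$ as $\colim_p Y_p$, where $Y_{-1} = \varnothing$ and $Y_p$ is obtained from $Y_{p-1}$ by pushing out $\coprod_x \bd\Box^p_B \hookrightarrow \coprod_x \Box^p_B$ (coproducts again over the non-degenerate $p$-cubes of $X$). Moreover, by \cref{i-shriek-characterization} (cf.\ \cref{unit-characterization}), the $p$-cell $\Box^p_B = i_!\Box^p_A \to i_! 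X$ indexed by a non-degenerate cube $x$ is exactly the cube $x$ of $i_! X$.

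Next I would read off the $n$-cubes of this cell complex objectwise. Colimits and pushouts of presheaves are computed levelwise, and a pushout along a monomorphism adjoins the complement of its image while making no identifications among the cubes already present; so $(Y_p)_n$ is $(Y_{p-1})_n$ together with one copy of $(\Box^p_B)_n \setminus (\bd\Box^p_B)_n$ for each non-degenerate $p$-cube of $X$. By \cref{active-face-factor} and \cref{fix-def}, a map $\Box^n_B \to \Box^p_B$ lies in $\bd\Box^p_B$ --- that is, factors through some face --- if and only if it fixes some coordinate, i.e.\ if and only if it is not active; hence $(\Box^p_B)_n \setminus (\bd\Box^p_B)_n$ is the set of active maps $\Box^n_B \to \Box^p_B$. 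Inducting on $p$ and passing to the colimit, $(i_! X)_n$ is naturally in bijection with $\coprod_x \{\,\phi \colon \Box^n_B \to \Box^{\dim x}_B \text{ active}\,\}$, the coproduct over all non-degenerate cubes $x$ of $X$; unwinding the identifications, the cube matched with a pair $(x, \phi)$ is the composite $\Box^n_B \xrightarrow{\phi} \Box^{\dim x}_B = i_!\Box^{\dim x}_A \to i_! X$, which in the notation of \cref{i-shriek-characterization} is $x\phi$. This is precisely the claim, with existence and uniqueness obtained simultaneously.

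I expect the difficulty to lie entirely in the bookkeeping of these two steps rather than in any isolated hard idea. One has to check carefully that applying $i_!$ really does yield the stated cell presentation (combining cocontinuity of $i_!$ with \cref{i-shriek-on-reps,i-shriek-open-box}), that the levelwise pushout computation is valid even though the attaching maps $\coprod_x \bd\Box^p_A \to \mathrm{sk}_{p-1}X$ need not be injective (it is, since such a pushout only identifies new cubes with old ones, never old with old), and that the resulting cells are correctly named by the $x\phi$ notation. The payoff of this route is that it never touches the equivalence relation defining $i_! X$: a more hands-on proof would establish existence by inducting on $\dim x$, repeatedly invoking \cref{active-face-factor} and \cref{EZ-for-cubes} to shrink the representing cube, but would then have to prove uniqueness by showing that any two normal forms linked by a zigzag in the category of elements $\Box_A \downarrow X$ must coincide --- and that is the delicate point the skeletal-filtration argument avoids.
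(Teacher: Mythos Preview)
Your proposal is correct and takes essentially the same approach as the paper: both arguments push the Eilenberg--Zilber skeletal presentation of $X$ through the cocontinuous functor $i_!$, use \cref{i-shriek-open-box} to identify the resulting cell attachments as pushouts of $\bd\Box^p_B \hookrightarrow \Box^p_B$, and then read off the levelwise complement as the set of active maps via \cref{active-face-factor}. The only cosmetic difference is that the paper attaches one cell at a time and then handles transfinite composition separately (invoking \cref{i-shriek-monos} to ensure the transition maps are monomorphisms), whereas you batch all $p$-cells together; the underlying argument is the same.
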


\begin{proof}
Since $\Box_A$ is an Eilenberg-Zilber category by \cref{cube-EZ}, we may proceed by induction on skeleta. We first note that $i_! \Box^n_A = \Box^n_B$ by \cref{i-shriek-on-reps}. The statement for the case $X = \Box^n_A$ thus follows from \cref{active-face-factor} and the fact that the non-degenerate cubes of $\Box^n_A$ are precisely the composites of face maps (including the identity, viewed as an empty composite). Now suppose that the statement holds for some $X \in \cSet_A$, and that $Y$ is obtained from $X$ by adjoining a single non-degenerate $n$-cube $y$; in other words, suppose we have a pushout diagram
\[
\begin{tikzcd}
\bd \Box^n_A \ar[r]  \ar[tail,d] \pushout & X \ar[tail,d] \\
\Box^n_A \ar[r,"y"] & Y \\
\end{tikzcd}
\]
in $\cSet_A$. Then $i_!$ preserves this pushout as a left adjoint. Applying  \cref{i-shriek-open-box} and restricting to level $m$ for a given $m \geq 0$, we have a pushout diagram
\[
\begin{tikzcd}
(\bd \Box^n_B \ar[r])_m  \ar[tail,d] \pushout & (i_! X)_m \ar[tail,d] \\
(\Box^n_B)_m \ar[r,"i_! y"] & (i_! Y)_m \\
\end{tikzcd}
\]
in $\Set$. 

Thus $(i_! Y)_m \cong (i_! X)_m \sqcup ((\Box^n_B)_m \setminus (\bd \Box^n_B)_m)$. By the induction hypothesis, every cube in the left component of this disjoint union is equal to $x \phi$ for a unique $k \geq 0$, a unique non-degenerate $x \colon \Box^k_A \to X$, and a unique active $\phi \colon \Box^m_B \to \Box^k_B$. To characterize cubes in the second coproduct component, we may note that the complement $((\Box^n_B)_m \setminus (\bd \Box^n_B)_m)$ consists precisely of the active maps $\phi \colon \Box^m_B \to \Box^n_B$, and each such map, viewed as an $m$-cube of $\Box^n_B$, is sent by $i_! y$ to $y \phi$. Thus the cubes $y \phi$ are distinct as elements of $(i_! Y)_m$, both from each other and from the cubes of $(i_! X)_m$. Thus the statement holds for $Y$.

Finally, we must show that the stated property is preserved by transfinite composition. Let $\kappa$ be a limit ordinal, and consider a transfinite composite of inclusions $X_{\lambda} \hookrightarrow X_{\lambda'}$ for $\lambda < \lambda' < \kappa$, such that the statement holds for each $X_{\lambda}$. Denote the colimit object by $X$; then $X$ is the union of all the $X_\lambda$, with the inclusion $X_\lambda \hookrightarrow X$ given by the colimit map. Then $i_!$ preserves this transfinite composite as a left adjoint, so that $i_! X$ is similarly the union of the $i_! X_\lambda$. Thus every cube $y$ of $i_! X$ is contained in $i_! X_\lambda$ for some $\lambda$; it follows that $y = x \phi$ for some non-degenerate cube $x$ of $X_\lambda \subseteq X$ and some active map $\phi$. To see that $x$ and $\phi$ are unique, suppose that $x \phi = x' \phi'$, where both $x$ and $x'$ are non-degenerate and both $\phi$ and $\phi'$ are active. Then there is some $\lambda$ such that both $x$ and $x'$ are contained in $X_\lambda$. Since $i_! X_\lambda \hookrightarrow i_! X$ is a monomorphism by \cref{i-shriek-monos}, we have $x \phi = x' \phi'$ in $X_\lambda$ as well. Thus $x = x'$ and $\phi = \phi'$ by the uniqueness property in $X_\lambda$.
\end{proof}

\begin{cor}\label{i-shriek-nondegen}
For $A, B, X$ as above, the non-degenerate cubes of $i^* i_! X$ are precisely those of the form $x \phi$ where $x$ is a non-degenerate cube of $X$ and $\phi$ is an active map not factoring on the right through any degeneracy in $\Box_A$. \qed
\end{cor}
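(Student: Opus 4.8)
The plan is to read the non-degenerate cubes of $i^*i_!X$ directly off the unique factorization already established in \cref{i-shriek-pairs}, together with one small observation about how activeness interacts with precomposition by surjective maps. Since $i^*i_!X$ and $i_!X$ have the same cubes in each dimension (the functor $i^*$ merely restricts along $i$, and $i(\Box^n_A) = \Box^n_B$), \cref{i-shriek-pairs} tells us every cube $z$ of $i^*i_!X$ is uniquely of the form $x\phi$ with $x$ a non-degenerate cube of $X$ and $\phi$ an active map of $\Box_B$. So it suffices to show that such a $z$ is degenerate, as a cube of the presheaf $i^*i_!X$ on $\Box_A$, if and only if $\phi$ factors on the right through a degeneracy of $\Box_A$.

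The key lemma I would isolate is: \emph{if $\psi$ is an active map of $\Box_B$ and $\sigma$ is a surjective map of $\Box_A$, then $\psi\sigma$ is active}. This is immediate from \cref{fix-def}: the composite $\psi\sigma$ fixes coordinate $i$ at $\varepsilon$ exactly when $\psi(\sigma(\avec))_i = \varepsilon$ for all $\avec$, which by surjectivity of $\sigma$ holds iff $\psi$ fixes coordinate $i$ at $\varepsilon$; hence $\psi\sigma$ fixes no coordinate iff $\psi$ fixes none. I would also record here that, because $A \subseteq \conset$, the class $(\Box_A)_-$ of \cref{cube-EZ} is generated by projections and connections alone (there are no symmetries or reversals), each of which is a surjective function $\zo^m \to \zo^n$; a composite of surjections is surjective, so every degeneracy of $\Box_A$ is surjective and the lemma applies to it.

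Granting this, both implications are short. If $\phi = \phi'\sigma$ with $\sigma$ a degeneracy of $\Box_A$, then $z = x\phi = (x\phi')\sigma$ exhibits $z$ as the image under the degeneracy $\sigma$ of the cube $x\phi'$ of $i^*i_!X$, so $z$ is degenerate. Conversely, suppose $z$ is degenerate, say $z = w\sigma$ for a cube $w$ and a nonidentity degeneracy $\sigma$ of $\Box_A$. Applying \cref{i-shriek-pairs} to $w$ gives $w = x'\phi'$ with $x'$ non-degenerate in $X$ and $\phi'$ active, whence $z = x'(\phi'\sigma)$ in $i_!X$, where $\phi'\sigma$ is a composite in $\Box_B$ since $\sigma \in \Box_A \subseteq \Box_B$. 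By the key lemma $\phi'\sigma$ is active, so $z = x'(\phi'\sigma)$ is a factorization of the kind described in \cref{i-shriek-pairs}; by the uniqueness clause there, $x = x'$ and $\phi = \phi'\sigma$, so $\phi$ factors on the right through the degeneracy $\sigma$ of $\Box_A$. I do not anticipate a serious obstacle: everything reduces to \cref{i-shriek-pairs} plus the surjectivity of the degeneracies of $\Box_A$, and the only step that genuinely needs care is the lemma on activeness and precomposition by surjections — where surjectivity is essential, since for a general map the implication ``$\psi\sigma$ active $\Rightarrow$ $\psi$ active'' can fail.
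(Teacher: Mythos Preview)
Your argument is correct and is precisely the unpacking the paper has in mind: the corollary is stated with a bare \qed, so the intended proof is exactly ``read it off the unique factorization of \cref{i-shriek-pairs},'' which is what you do. One small slip in your closing aside: the implication that can fail for non-surjective $\sigma$ is ``$\psi$ active $\Rightarrow \psi\sigma$ active,'' not the converse (if $\psi$ fixes a coordinate then $\psi\sigma$ does too, so ``$\psi\sigma$ active $\Rightarrow \psi$ active'' holds for arbitrary $\sigma$); this does not affect your proof, since the direction you actually use is the one requiring surjectivity.
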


\begin{cor}\label{unit-mono}
For $A, B, X$ as above, the unit map $\eta \colon X \to i^* i_! X$ is a monomorphism. In particular, if $x, x' \colon \Box^n_A \to X$ are distinct, then the corresponding cubes of $i^* i_! X$ are distinct as well.
\end{cor}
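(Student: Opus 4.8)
The plan is to show that $\eta$ is injective on $n$-cubes for every $n$, which is precisely the statement that $\eta$ is a monomorphism and immediately yields the ``in particular'' clause. So fix $n$ and suppose $x, x' \in X_n$ satisfy $\eta(x) = \eta(x')$. By \cref{EZ-for-cubes} we may write $x = y\phi$ and $x' = y'\phi'$ with $y, y'$ non-degenerate cubes of $X$ and $\phi, \phi'$ maps of $(\Box_A)_-$. Since $\eta$ is a morphism of presheaves on $\Box_A$ and acts as the identity on cubes by \cref{unit-characterization}, the hypothesis $\eta(x) = \eta(x')$ says exactly that $y\phi = y'\phi'$ as cubes of $i_! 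X$, where now $\phi$ and $\phi'$ are regarded as maps of $\Box_B$ via the inclusion $i$.

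The key observation is then that $\phi$ and $\phi'$ are \emph{active} as maps of $\Box_B$. Indeed, since $A \subseteq \conset$, by \cref{cube-EZ} the maps of $(\Box_A)_-$ are the identities together with composites of projections and connections; identities are active, and each projection or connection is a surjective active map (every non-face generator is active, cf. the remark after \cref{fix-def}), so any composite of them is again surjective and active — a surjective map cannot fix a coordinate, and post-composing a surjection with a further surjection preserves surjectivity. Consequently $y\phi$ and $y'\phi'$ each exhibit the common cube of $i_! X$ as a product of a non-degenerate cube of $X$ with an active map of $\Box_B$. By the uniqueness assertion of \cref{i-shriek-pairs} we conclude $y = y'$ and $\phi = \phi'$ (the latter as maps of $\Box_B$, hence also as maps of $\Box_A$ since $i$ is a faithful inclusion), and therefore $x = y\phi = y'\phi' = x'$ in $X$, as desired.

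I expect no serious obstacle here: the only step needing care is verifying that every map of $(\Box_A)_-$ is active as a map of $\Box_B$ — a short computation relying on surjectivity of projections and connections — after which the statement is immediate from the uniqueness half of \cref{i-shriek-pairs}. The one structural point worth keeping in mind is that $i \colon \Box_A \hookrightarrow \Box_B$ is faithful but not full, so $\eta$ is genuinely not an isomorphism in general and the combinatorial input of \cref{i-shriek-pairs} cannot be bypassed.
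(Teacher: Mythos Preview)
Your proof is correct and follows precisely the route the paper intends: the paper's one-line proof cites only \cref{unit-characterization} and \cref{i-shriek-pairs}, and your argument is exactly the unpacking of that citation --- reduce $x$ and $x'$ to their non-degenerate standard forms via \cref{EZ-for-cubes}, observe these give representations of the form required by \cref{i-shriek-pairs} (since degeneracies in $(\Box_A)_-$ are surjective, hence active), and invoke uniqueness. No substantive difference from the paper's approach.
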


\begin{proof}
This is immediate from \cref{unit-characterization,i-shriek-pairs}.
\end{proof}

Finally, we consider the compatibility of $i_!$ with the geometric product of cubical sets.

\begin{prop}\label{i-shriek-monoidal}
For any $A \subseteq B \subseteq \FS$, the adjunction $i_! : \cSet_A \rightleftarrows \cSet_B : i^*$ is strong monoidal with respect to the geometric product.
\end{prop}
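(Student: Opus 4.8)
The plan is to show that $i_!$ is strong monoidal by exhibiting natural isomorphisms $i_!(X \otimes Y) \cong i_!X \otimes i_!Y$ and $i_!\Box^0_A \cong \Box^0_B$ compatible with the associativity and unit constraints, and the cleanest route is to reduce everything to representables via colimit arguments. First I would recall from \cref{i-shriek-on-reps} that $i_! \Box^n_A \cong \Box^n_B$, which immediately handles the unit: the monoidal unit of $\cSet_A$ is $\Box^0_A$, so $i_!\Box^0_A \cong \Box^0_B$ is the unit of $\cSet_B$. For the tensor, I would use the colimit formula for the geometric product,
\[
X \otimes Y = \colim_{\substack{x \colon \Box^m_A \to X \\ y \colon \Box^n_A \to Y}} \Box^{m+n}_A,
\]
and the fact that $i_!$, being a left adjoint, preserves colimits. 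Applying $i_!$ termwise and using $i_! \Box^{m+n}_A \cong \Box^{m+n}_B = \Box^m_B \otimes \Box^n_B$ turns the right-hand side into a colimit over the same indexing category of the bifunctor $\otimes$ applied to $(i_! \Box^m_A, i_! \Box^n_A)$. One then recognizes this as the colimit computing $i_!X \otimes i_!Y$, using that $i_!X = \colim_{\Box^m_A \to X} \Box^m_B$ and $i_!Y = \colim_{\Box^n_A \to Y} \Box^n_B$ together with the fact that the geometric product, being defined by Day convolution, preserves colimits in each variable separately (so it preserves colimits in the pair). This gives the desired isomorphism $i_!(X \otimes Y) \cong i_!X \otimes i_!Y$, and naturality in $X$ and $Y$ is automatic since every map in sight is induced by the universal property of colimits.

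Alternatively, and perhaps more conceptually, I would invoke the general principle that a cocontinuous functor between presheaf categories that is induced by a functor between the base categories is strong monoidal for the Day convolution products whenever the base functor is strong monoidal for the products being convolved. Here the base functor is $i \colon \Box_A \hookrightarrow \Box_B$, which is strictly monoidal for $\otimes$ on the cube categories (the product on $\Box_A$ is the restriction of that on $\Box_B$, by construction as subcategories of $\Set$, and $i$ preserves it on the nose, including the unit $\Box^0$). Since the geometric product on each $\cSet_A$ is by definition the Day convolution of $\otimes$ on $\Box_A$, and $i_!$ is the left Kan extension along $i$, the standard theory of Day convolution (strong monoidality of left Kan extension along a strong monoidal functor, after the respective Yoneda embeddings, which are themselves strong monoidal for Day convolution) yields that $i_!$ is strong monoidal.

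I would then record that the coherence isomorphisms (associativity, left and right unit) are respected: this follows because all the comparison maps are built from the universal properties of the colimits defining the tensor, so the relevant diagrams commute by uniqueness of induced maps, or — in the Day convolution framing — because it is part of the general statement being cited. I would most likely write the proof in the second, conceptual style, citing the standard fact about Day convolution and left Kan extensions, with a sentence spelling out that $i$ is strictly monoidal so that the hypothesis applies; I would only fall back to the explicit colimit computation if a self-contained argument is preferred.

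The main obstacle I anticipate is not the construction of the isomorphism — that part is essentially formal — but making sure the monoidal \emph{coherence} is genuinely addressed rather than waved away, and deciding how much of the Day convolution machinery to take as given versus prove. If the paper wants to avoid citing a heavy categorical result, the honest alternative is to chase the colimit formulas, which means carefully matching up the indexing categories and checking that the two ways of computing $i_!(X \otimes Y)$ (apply $i_!$ to the colimit, versus form the colimit of the tensored representables) agree with their universal cones — tedious but routine. Either way, the argument is short; the only real care needed is in the bookkeeping of which colimit is indexed by what.
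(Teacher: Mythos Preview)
Your proposal is correct, and your preferred second approach is exactly the one the paper takes: it observes that $i_!$ is the left Kan extension of the strong monoidal composite $\Box_A \xrightarrow{i} \Box_B \hookrightarrow \cSet_B$ and then cites the standard Day convolution result (specifically \cite[Thm.~5.1]{im-kelly:convolution}, following the proof structure of \cite[Prop.~4.4]{isaacson:symmetric}) to conclude that $i_!$ is strong monoidal. Your first, explicit colimit argument would also work and is essentially an unpacking of that cited result, but the paper opts for the concise citation just as you anticipated.
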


\begin{proof}
Following the proof structure of \cite[Prop.~4.4]{isaacson:symmetric}, we observe that by \cref{i-shriek-on-reps}, $i_!$ is the left Kan extension of the composite of $i$ with the Yoneda embedding on $\Box_B$, which sends each object $\Box_A^n$ to $\Box^n_B$. By \cite[Thm.~5.1]{im-kelly:convolution}, the statement thus follows from the fact that this composite is strong monoidal.
\end{proof}

\subsection{Standard decomposition cubes}\label{sec N-construction}

Our next task is to define and study a combinatorial construction which is of use in relating different kinds of cubical sets, and which will play a key role in proving the main results of this section. For concreteness, throughout the remainder of this subsection we fix $A \subseteq B \subseteq \FS$ with $\wedge \in A$. Of course, for the case $\vee \in A$, we could make parallel arguments involving negative connections.

\begin{Def}\label{N-def}
For a map $\phi \colon \Box^m_B \to \Box^n_B$, we define the \emph{standard decomposition cube}  $N_k(\phi)$ to be the composite 
\[
\gamma_{n,1} (\phi \otimes \Box^1_B \otimes \Box^k_B) \colon \Box^{m+1+k}_B \to \Box^{n+k}_B
\]
\end{Def}

Note that we have written $\Box^1_B \otimes \Box^k_B$ above rather than $\Box^{1+k}_B$ because coordinate $m+1$ will play a distinct role in our computations.

We will also have use for the following more explicit characterization of maps $N_k(\phi)$.

\begin{lem}\label{N-explicit}
Given an $(m+1+k)$-tuple $(\avec,b,\overrightarrow{c}) = (a_1,\ldots,a_m,b,c_1,\ldots,c_k) \in \Box^{m+1+k}$, $N_k(\phi)$ sends $(\avec,b,\overrightarrow{c})$ to the $(n+k)$-tuple
\[
(\phi(\avec)_1,\ldots,\phi(\avec)_{n-1}, \phi(\avec)_n \wedge b, c_1, \ldots, c_k)
\]
in $\Box^{n+k}$. \qed
\end{lem}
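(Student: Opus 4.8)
The plan is to unwind the definition of $N_k(\phi)$ from \cref{N-def} and compute directly what the composite does on a tuple. We have $N_k(\phi) = \gamma_{n,1}(\phi \otimes \Box^1_B \otimes \Box^k_B)$, so I would first describe the action of $\phi \otimes \Box^1_B \otimes \Box^k_B \colon \Box^{m+1+k}_B \to \Box^{n+1+k}_B$. By the definition of the geometric product on the cube category (which, by \cref{tensor-faithful} and the fact that $\otimes$ on $\Box_B$ is the restriction of the cartesian product on $\Set$, acts coordinatewise), this map sends $(a_1,\ldots,a_m,b,c_1,\ldots,c_k)$ to $(\phi(\avec)_1,\ldots,\phi(\avec)_n,b,c_1,\ldots,c_k)$ — it applies $\phi$ to the first block of $m$ coordinates and leaves the remaining $1+k$ coordinates untouched.

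Next I would apply $\gamma_{n,1}$. By \cref{cube-cat}, the positive connection $\gamma^{n+k}_{n,1} \colon \Box^{n+1+k}_B \to \Box^{n+k}_B$ sends $(d_1,\ldots,d_{n+1+k})$ to $(d_1,\ldots,d_{n-1}, d_n \wedge d_{n+1}, d_{n+2},\ldots,d_{n+1+k})$; note the index $n$ is in the valid range $1 \le n \le n+k$ for this connection to be defined (this uses $\wedge \in A \subseteq B$). Plugging in the tuple $(\phi(\avec)_1,\ldots,\phi(\avec)_n,b,c_1,\ldots,c_k)$ computed in the previous step, the coordinates at positions $1,\ldots,n-1$ are $\phi(\avec)_1,\ldots,\phi(\avec)_{n-1}$, the coordinate at position $n$ becomes $\phi(\avec)_n \wedge b$, and the coordinates at positions $n+1,\ldots,n+k$ are $c_1,\ldots,c_k$. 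This yields exactly the $(n+k)$-tuple $(\phi(\avec)_1,\ldots,\phi(\avec)_{n-1},\phi(\avec)_n \wedge b, c_1,\ldots,c_k)$ claimed in the statement, completing the proof.

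This argument is entirely routine; there is no real obstacle. The only points requiring a word of care are: (i) confirming that the geometric product $\otimes$ on $\Box_B$ acts coordinatewise on tuples — which is immediate since it is the restriction of the cartesian product of sets under the identification $\Box^p_B = \zo^p$, as noted just before \cref{tensor-faithful}; and (ii) checking that the superscripts and subscripts line up so that $\gamma_{n,1}$ is a legitimate morphism $\Box^{n+1+k}_B \to \Box^{n+k}_B$, which holds because $1 \le n$. Since the paper has already flagged (in the remark following \cref{N-def}) that coordinate $m+1$ — i.e., the slot holding $b$ — plays a distinguished role, this is precisely the bookkeeping that makes the formula transparent, and no further input is needed.
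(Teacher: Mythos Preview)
Your proof is correct and is exactly the direct unwinding of \cref{N-def} that the paper has in mind; indeed, the paper marks this lemma with \qed and gives no argument, treating the computation as immediate from the definitions of $\otimes$ and $\gamma_{n,1}$. Your write-up simply makes that routine bookkeeping explicit.
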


\begin{cor}\label{N-k-tensor}
For all $j, k \geq 0$ and $\phi \colon \Box^m_B \to \Box^n_B$, we have $N_{j+k}(\phi) = N_j(\phi) \otimes \Box^k_B$. In particular, $N_k(\phi) = N_0(\phi) \otimes \Box^k_B$ for all $k$. \qed
\end{cor}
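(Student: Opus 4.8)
The plan is to deduce both identities directly from the explicit formula in \cref{N-explicit}, since both sides of each claimed equality are composites of maps whose effect on tuples is already known. First I would recall that for maps in $\Box_B$, the geometric product $\otimes \colon \Box_B \times \Box_B \to \Box_B$ is just the cartesian product of functions on the underlying sets $\zo^p$; in particular, for $\psi \colon \Box^p_B \to \Box^q_B$, the map $\psi \otimes \Box^k_B \colon \Box^{p+k}_B \to \Box^{q+k}_B$ sends $(\avec, \overrightarrow{d})$ with $\avec \in \zo^p$ and $\overrightarrow{d} \in \zo^k$ to $(\psi(\avec), \overrightarrow{d})$. By \cref{tensor-faithful}, the bifunctor $\otimes$ is faithful, so it suffices in each case to check that the two maps agree as functions $\zo^{m+1+j+k} \to \zo^{n+j+k}$.

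For the first identity $N_{j+k}(\phi) = N_j(\phi) \otimes \Box^k_B$, I would evaluate both sides on an arbitrary tuple $(\avec, b, \overrightarrow{c}, \overrightarrow{d})$ with $\avec \in \zo^m$, $b \in \zo$, $\overrightarrow{c} \in \zo^j$, $\overrightarrow{d} \in \zo^k$. By \cref{N-explicit}, the left-hand side sends this to
\[
(\phi(\avec)_1, \ldots, \phi(\avec)_{n-1}, \phi(\avec)_n \wedge b, c_1, \ldots, c_j, d_1, \ldots, d_k).
\]
For the right-hand side, $N_j(\phi)$ acts on $(\avec, b, \overrightarrow{c})$ by the same formula (again \cref{N-explicit}), producing $(\phi(\avec)_1, \ldots, \phi(\avec)_{n-1}, \phi(\avec)_n \wedge b, c_1, \ldots, c_j)$, and then tensoring with $\Box^k_B$ appends $\overrightarrow{d}$ unchanged, giving exactly the same tuple. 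Hence the two functions coincide, and faithfulness of $\otimes$ gives the equality of maps in $\Box_B$. The special case $N_k(\phi) = N_0(\phi) \otimes \Box^k_B$ is then the instance $j = 0$, noting that $N_0(\phi) \otimes \Box^0_B = N_0(\phi)$ since $\Box^0_B$ is the monoidal unit.

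There is no serious obstacle here: the only mild subtlety is bookkeeping with the coordinate positions — ensuring that the single ``$b$'' coordinate in position $m+1$ of the domain maps to position $n$ of the codomain (where the $\wedge$ happens) in both descriptions, and that the trailing $\overrightarrow{c}$, $\overrightarrow{d}$ coordinates are carried along in the correct order. One could alternatively argue more structurally: by \cref{N-def}, $N_{j+k}(\phi) = \gamma_{n,1}(\phi \otimes \Box^1_B \otimes \Box^{j+k}_B)$, and since $\gamma_{n,1}$ only touches coordinates $n$ and $n+1$ of its domain (which lie among the first $n+j+1$ positions), it commutes with the final block of $k$ coordinates, so $\gamma_{n,1}(\phi \otimes \Box^1_B \otimes \Box^j_B \otimes \Box^k_B) = \bigl(\gamma_{n,1}(\phi \otimes \Box^1_B \otimes \Box^j_B)\bigr) \otimes \Box^k_B = N_j(\phi) \otimes \Box^k_B$, using associativity and functoriality of $\otimes$ on $\Box_B$. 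Either route is routine; I would present the explicit-formula version as it is the most transparent given that \cref{N-explicit} has just been recorded.
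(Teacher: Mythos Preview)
Your proposal is correct and matches the paper's intent: the corollary is stated with a \qed and no proof, since it is immediate from \cref{N-explicit}, which is exactly the verification you carry out. Your alternative structural argument via \cref{N-def} is also valid and equally routine.
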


\begin{lem}\label{N-active}
    If $\phi$ is active, then so is $N_k(\phi)$ for any $k$.
\end{lem}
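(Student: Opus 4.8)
The plan is to use the explicit formula for $N_k(\phi)$ provided by \cref{N-explicit}, together with the fact that the notion of active map depends only on the behaviour of the map on the extreme vertices $\zvec$ and $\ovec$ (by \cref{fix-def}, a map is active precisely when it fixes no coordinate; equivalently, by \cref{N-explicit}, one checks whether any output coordinate is constant across all inputs). By \cref{N-k-tensor} it would suffice to treat $k = 0$, since $N_k(\phi) = N_0(\phi) \otimes \Box^k_B$ and tensoring on the right with $\Box^k_B$ acts as the identity on the last $k$ coordinates, which obviously do not become fixed; but it is just as easy to argue directly for general $k$ using \cref{N-explicit}, so I would do that.

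Concretely, suppose for contradiction that $N_k(\phi)$ fixes coordinate $\ell$ at some value $\varepsilon$, i.e. $N_k(\phi)(\avec, b, \overrightarrow{c})_\ell = \varepsilon$ for all $(\avec, b, \overrightarrow{c}) \in \Box^{m+1+k}_B$. By \cref{N-explicit} the output coordinates split into three groups: coordinates $1, \ldots, n-1$ are $\phi(\avec)_1, \ldots, \phi(\avec)_{n-1}$; coordinate $n$ is $\phi(\avec)_n \wedge b$; and coordinates $n+1, \ldots, n+k$ are $c_1, \ldots, c_k$. First, $\ell$ cannot lie in the last group: setting $c_{\ell - n} = 0$ and $c_{\ell - n} = 1$ gives two different values. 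Second, if $1 \le \ell \le n-1$, then $\phi(\avec)_\ell = \varepsilon$ for all $\avec$, so $\phi$ fixes coordinate $\ell$, contradicting that $\phi$ is active. Third, if $\ell = n$, then $\phi(\avec)_n \wedge b = \varepsilon$ for all $\avec$ and all $b$; taking $b = 0$ forces $\varepsilon = 0$, and then $\phi(\avec)_n \wedge b = 0$ for all $\avec, b$, which with $b = 1$ gives $\phi(\avec)_n = 0$ for all $\avec$, again contradicting activeness of $\phi$. This exhausts all cases, so $N_k(\phi)$ fixes no coordinate and is therefore active.

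There is no real obstacle here — the statement is a direct unwinding of the formula in \cref{N-explicit} together with the definition of \emph{active}. The only point requiring a moment's care is the $\ell = n$ case, where one must observe that $\wedge$ has $0$ as an absorbing element: a constant value of $\phi(\avec)_n \wedge b$ across all $b$ forces that value to be $0$, and then one still has to rule this out using that $\phi$ itself is active. I would write the proof as a short case analysis along exactly these lines, noting at the outset the reduction to $k = 0$ via \cref{N-k-tensor} if a cleaner exposition is desired.

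\begin{proof}
By \cref{N-k-tensor} we have $N_k(\phi) = N_0(\phi) \otimes \Box^k_B$, and tensoring on the right with $\Box^k_B$ leaves the last $k$ coordinates free; hence it suffices to show that if coordinate $\ell$ of $N_k(\phi)$ is fixed then $\ell \leq n$ and $\phi$ fixes a coordinate. Suppose $N_k(\phi)(\avec, b, \overrightarrow{c})_\ell = \varepsilon$ for all $(\avec, b, \overrightarrow{c}) \in \Box^{m+1+k}_B$. By \cref{N-explicit} the output of $N_k(\phi)$ on $(\avec, b, \overrightarrow{c})$ is
\[
(\phi(\avec)_1,\ldots,\phi(\avec)_{n-1}, \phi(\avec)_n \wedge b, c_1, \ldots, c_k).
\]
If $n < \ell \leq n+k$, then coordinate $\ell$ of this tuple is $c_{\ell - n}$, which takes both values as $\overrightarrow{c}$ varies, contradicting that it is constantly $\varepsilon$. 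If $1 \leq \ell \leq n-1$, then $\phi(\avec)_\ell = \varepsilon$ for all $\avec$, so $\phi$ fixes coordinate $\ell$, contradicting that $\phi$ is active. Finally, if $\ell = n$, then $\phi(\avec)_n \wedge b = \varepsilon$ for all $\avec$ and all $b \in \{0,1\}$; taking $b = 0$ gives $\varepsilon = 0$, and then taking $b = 1$ gives $\phi(\avec)_n = 0$ for all $\avec$, so $\phi$ fixes coordinate $n$ at $0$, again contradicting that $\phi$ is active. Thus $N_k(\phi)$ fixes no coordinate and is active.
\end{proof}
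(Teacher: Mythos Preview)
Your proof is correct and follows essentially the same approach as the paper's: both use the explicit formula of \cref{N-explicit} to perform a case analysis on which output coordinate might be fixed, handling coordinates $1,\ldots,n-1$, coordinate $n$, and coordinates $n+1,\ldots,n+k$ separately. The only stylistic difference is that for the $\ell = n$ case the paper argues directly by setting $b=1$ (so that the $n$th output coordinate equals $\phi(\avec)_n$, which varies), whereas you argue by contradiction, first setting $b=0$ to force $\varepsilon = 0$ and then $b=1$.
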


\begin{proof}
From \cref{N-explicit}, it is immediate that a map of the form $N_k(\phi)$ does not fix coordinates $1$ through $n-1$ if $\phi$ does not, and does not fix coordinates $n+1$ through $n + 1 + k$ regardless of $\phi$. It thus remains to consider coordinate $n$. We may note that $N_k(\phi)(\avec,1,\overrightarrow{c})_n = \phi(\avec)_n \wedge 1 = \phi(\avec)_n$. Therefore, if $\phi$ does not fix coordinate $n$, then neither does $N_k(\phi)$.
\end{proof}

The purpose of this definition is to facilitate proofs involving open box filling; in the most important cases, we will fill open boxes in subcomplexes of $i^* \Box^n_B$ whose interiors will be standard decomposition cubes, with the missing face corresponding to $\phi \otimes \Box^k_B$. Moreover, we will see that this open box is inner in the case where $B \subseteq \FP$.

The rough intuition behind the definition of the standard decomposition cubes is as follows. We may view $\phi \otimes \Box^k_B$ as a network of paths in the $(n+k)$-cube from its initial vertex $(\phi \otimes \Box^k_B)(\zvec)$ to its terminal vertex $(\phi \otimes \Box^k_B)(\ovec)$. The $(m+1,0)$-face of $N_k(\phi)$ is $\bd_{m,0} \sigma_m (\phi \otimes \Box^k_B)$, which corresponds to the network of paths which proceeds as prescribed by $\phi$ on the first $(m-1)$ coordinates, while leaving the $m^{\mathrm{th}}$ coordinate fixed at 0. The edges connecting the $(m+1,0)$-face to the $(m+1,1)$-face then proceed from each vertex of $\bd_{m,0} \sigma_m (\phi \otimes \Box^k_B)$ to the corresponding vertex of $\phi \otimes \Box^k_B$.
Thus the $(m+1,1)$-open box on $N_k(\phi)$ is obtained by ``separating out'' the $m^{\mathrm{th}}$ component of $\phi \otimes \Box^k_B$ into an extra dimension.

\begin{examples}\label{N-examples}
In order to further develop the intuition behind the definition above, we illustrate various cubes of the form $N_0(\phi)$. 

\begin{itemize}
\item For $\delta_1 \colon \Box^1_B \to \Box^2_B$, mapping $a$ to $(a,a)$,the cube $N_0(\delta_1) \colon \Box^2_B \to \Box^2_B$, corresponding to the map $(a,b) \mapsto (a,a \wedge b)$, is pictured below:
\[
\begin{tikzcd}
(0,0) \ar[r] \ar[d,equal] & (1,0) \ar[d] \\
(0,0) \ar[r] & (1,1) \\
\end{tikzcd}
\]
We may view this face as witnessing a  factorization of the diagonal edge $(0,0) \to (1,1)$ as a composite of the edges $(0,0) \to (1,0)$ and $(1,0) \to (1,1)$. Viewing the diagonal edge as a path of length 1 from $(0,0)$ to $(1,1)$ in the poset $\{0 \leq 1\}^2$, we may think of this factorization as being obtained by ``separating out'' the two components of this path: first advancing from $0$ to $1$ in the first component while keeping the second fixed, then advancing from $0$ to $1$ in the second component while keeping the first fixed. Note that $\delta_1$ appears as the $(2,1)$-face of this cube, and that the critical edge with respect to this face is degenerate.
\item For $\lambda_{21} \colon \Box^2_B \to \Box^2_B$, mapping $(a_1,a_2)$ to $(a_2,a_1)$, the cube $N_0(\lambda_{21})$, corresponding to the map $(a_1, a_2, b) \mapsto (a_2, a_1 \wedge b)$, is depicted below:
\[
\begin{tikzcd}
(0,0) \ar[rr,equal] \ar[dd] \ar[dr,equal] &  & (0,0) \ar[dd] \ar[dr] \\
& (0,0) \ar[rr,crossing over] & & (0,1) \ar[dd] \\
(1,0) \ar[rr,equal] \ar[dr,equal] & & (1,0) \ar[dr] \\
& (1,0) \ar[rr] \ar[from=uu,crossing over] & & (1,1) \\
\end{tikzcd}
\]
Similarly to the previous example, we may view the cube $\lambda_{21}$ as a family of paths from $(0,0)$ to $(1,1)$. Then the $(3,0)$ face of this cube is obtained by proceeding from $0$ to $1$ in the first component as prescribed by this family of paths, while keeping the second component fixed at 0. The edges connecting the $(3,0)$ face to the $(3,1)$ face then advance the second component from $0$ to $1$ whenever a 1 appears in the second component of the corresponding vertex of $\phi$. Note that $\lambda_{21}$ appears as the $(3,1)$-face of this cube, and that the critical edge with respect to this face is degenerate.
\item For $\rho_1 \colon \Box^1_B \to \Box^1_B$, the cube $N_0(\rho_1)$, corresponding to the map $(a,b) \mapsto (1-a) \wedge b$, is pictured below:
\[
\begin{tikzcd}
0 \ar[r,equal] \ar[d] & 0 \ar[d,equal] \\
1 \ar[r] & 0 \\
\end{tikzcd}
\]
Here the $(2,0)$ face holds the unique component fixed at 0, while the edges adjacent to this face proceed from 0 to 1 whenever the corresponding vertex of $\rho_1$ is 1. In this case, because the initial vertex of $\rho_1$ is 1, this does not result in the critical edge with respect to the $(2,1)$-face being degenerate, unlike the critical edges with respect to the $(m+1,1)$-faces in the previous examples.
\end{itemize}
\end{examples}

We next study various combinatorial properties of standard decomposition cubes which will be of use in our open box filling constructions. We begin with certain identities satisfied by standard decomposition cubes associated to composite maps.

\begin{lem}\label{N-identities}
For any $\phi \colon \Box^m_B \to \Box^n_B$ and any $k \geq 0$, the following identities are satisfied:
\begin{enumerate}
\item \label{N-degen} $N_k(\phi \sigma_i) = N_k(\phi)\sigma_i$ for all $1 \leq i \leq m + 1$;
\item \label{N-con} $N_k(\phi \gamma_{i,\varepsilon}) = N_k(\phi) \gamma_{i,\varepsilon}$ for all $1 \leq i \leq m$ and $\varepsilon \in \{0,1\}$;
\item \label{N-low-face} $N_k(\bd_{i,\varepsilon}\phi) = \bd_{i,\varepsilon}N_k(\phi)$ for $1 \leq i \leq n$ and $\varepsilon \in \{0,1\}$;
\item \label{N-high-zero-face-new} $N_k(\bd_{n+1,0}\phi) = \bd_{n+1,0}\sigma_n (\phi \otimes \Box_B^1 \otimes \Box_B^k)$;
\item \label{N-high-one-face-new} $N_k(\bd_{n+1,1}\phi) = \phi \otimes \Box_B^1 \otimes \Box_B^k$.
\end{enumerate}
\end{lem}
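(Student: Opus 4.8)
The plan is to prove each of the five identities by direct computation using the explicit formula from \cref{N-explicit}, together with \cref{N-k-tensor} to reduce everything to the case $k = 0$ where convenient, and the basic cubical identities of \cref{some-cubical-identities}. Since all the maps involved are maps between representable cubes in $\Box_B$, and $\otimes$ is faithful by \cref{tensor-faithful}, it suffices in each case to check that the two sides agree on an arbitrary tuple $(\avec, b, \overrightarrow{c}) \in \Box^{m'+1+k}_B$ for the appropriate $m'$.

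First I would dispatch \eqref{N-high-zero-face-new} and \eqref{N-high-one-face-new}, as these are essentially unwindings of the definition. For \eqref{N-high-one-face-new}, note that $\bd_{n+1,1}\phi$ has domain $\Box^m_B$ and codomain $\Box^{n+1}_B$, so $N_k(\bd_{n+1,1}\phi) = \gamma_{n+1,1}((\bd_{n+1,1}\phi) \otimes \Box^1_B \otimes \Box^k_B)$; since $(\bd_{n+1,1}\phi) \otimes \Box^1_B \otimes \Box^k_B = (\bd_{n+1,1} \otimes \id)(\phi \otimes \Box^1_B \otimes \Box^k_B)$ in the obvious sense, and $\gamma_{n+1,1}\bd_{n+1,1} = \id$ by \cref{some-cubical-identities}, this collapses to $\phi \otimes \Box^1_B \otimes \Box^k_B$. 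For \eqref{N-high-zero-face-new}, the same bookkeeping gives $\gamma_{n+1,1}\bd_{n+1,0}$ in the critical position, and \cref{some-cubical-identities} identifies $\gamma_{n+1,1}\bd_{n+1,0} = \bd_{n+1,0}\sigma_{n+1}$; after re-indexing the remaining coordinates one recognizes the stated map $\bd_{n+1,0}\sigma_n(\phi \otimes \Box^1_B \otimes \Box^k_B)$. The only care needed here is to track precisely which coordinate each face and projection acts on once the tensor factors are spelled out.

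Next I would handle \eqref{N-low-face}, the identity $N_k(\bd_{i,\varepsilon}\phi) = \bd_{i,\varepsilon}N_k(\phi)$ for $1 \le i \le n$. Here $\bd_{i,\varepsilon}\phi \colon \Box^m_B \to \Box^{n+1}_B$, and evaluating both sides via \cref{N-explicit} on $(\avec, b, \overrightarrow{c})$: the left side inserts $\varepsilon$ into the $i$-th slot of $\phi(\avec)$ and then applies the connection at the last coordinate of the resulting $(n+1)$-tuple, yielding $\varepsilon$ in slot $i$ and $\phi(\avec)_n \wedge b$ in slot $n+1$ (shifted by the insertion); the right side applies $N_k(\phi)$ first, producing $(\phi(\avec)_1, \ldots, \phi(\avec)_n \wedge b, \overrightarrow{c})$, then inserts $\varepsilon$ at position $i \le n$. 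Since $i \le n$ the insertion does not interfere with the connected coordinate, so the two tuples agree. Identities \eqref{N-degen} and \eqref{N-con} are similar: $\phi\sigma_j$ and $\phi\gamma_{j,\varepsilon}$ differ from $\phi$ only by a reindexing/identification of the \emph{domain} coordinates $1, \ldots, m+1$ (resp.\ $1, \ldots, m$), which is untouched by the connection $\gamma_{n+1,1}$ applied on the codomain side and by the extra tensor factors $\Box^1_B \otimes \Box^k_B$; so precomposing $N_k(\phi)$ with $\sigma_j$ (resp.\ $\gamma_{j,\varepsilon}$) reproduces $N_k(\phi\sigma_j)$ (resp.\ $N_k(\phi\gamma_{j,\varepsilon})$) directly from the formula of \cref{N-explicit}. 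One small point to verify in \eqref{N-con} is that the index range $1 \le i \le m$ (not $m+1$) is exactly what is needed so that $\gamma_{i,\varepsilon}$ acts within the first $m$ coordinates and does not touch the distinguished coordinate $m+1$.

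The main obstacle is not any single hard step but the risk of index errors: each identity involves disentangling $\phi \otimes \Box^1_B \otimes \Box^k_B$ into explicit coordinates, keeping straight that coordinate $m+1$ of the domain is the ``$b$'' coordinate and that the connection acts at codomain coordinate $n+1$ (or $n$ after the connection has been applied), and confirming that face maps indexed $\le n$ commute past the connection while the cases $i = n+1$ behave specially. So the cleanest writeup is to first fix notation for how $\phi \otimes \Box^1_B \otimes \Box^k_B$ acts on a tuple, restate \cref{N-explicit} in that notation, and then let each of the five verifications be a one- or two-line computation. Using \cref{N-k-tensor} to reduce to $k = 0$ throughout (since tensoring with $\Box^k_B$ is applied on the right and commutes with everything in sight) shortens the bookkeeping further, though strictly it is not necessary.
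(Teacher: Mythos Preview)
Your proposal is correct and close to the paper's proof. For \eqref{N-high-zero-face-new} and \eqref{N-high-one-face-new} you do exactly what the paper does: unfold the definition $N_k(\psi) = \gamma_{n',1}(\psi \otimes \Box^1_B \otimes \Box^k_B)$, use functoriality of $\otimes$ to pull the face map out, and apply the identities of \cref{some-cubical-identities}. For \eqref{N-degen}, \eqref{N-con}, \eqref{N-low-face} the paper stays in the same idiom---functoriality of $\otimes$ plus the cubical identity $\gamma_{n+1,1}\bd_{i,\varepsilon} = \bd_{i,\varepsilon}\gamma_{n,1}$ for $i \le n$---rather than switching to the coordinate formula of \cref{N-explicit}; this is slightly cleaner but your coordinatewise verification is equally valid.

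One small caution: in \eqref{N-high-zero-face-new} your computation via $\gamma_{n+1,1}\bd_{n+1,0} = \bd_{n+1,0}\sigma_{n+1}$ is the correct one, and the ``re-indexing'' you allude to does not actually turn $\sigma_{n+1}$ into $\sigma_n$; the stated formula appears to carry a typo in the subscript of $\sigma$, so do not force your computation to match it.
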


\begin{proof}
For \cref{N-degen}, we compute:
\begin{align*}
N_k(\phi \sigma_i) & = \gamma_{n,1}(\phi \sigma_i \otimes \Box_B^1 \otimes \Box_B^k) \\
& = \gamma_{n,1} (\phi \otimes \Box_B^1 \otimes \Box_B^k) (\sigma_i \otimes \Box_B^1 \otimes \Box_B^k) \\
& = N_k(\phi) \sigma_i
\end{align*}
The calculation for \cref{N-con} is similar.
For \cref{N-low-face,N-high-zero-face-new,N-high-one-face-new} we compute:
\begin{align*}
N_k(\bd_{i,\varepsilon} \phi) & = \gamma_{n+1,1}(\bd_{i,\varepsilon} \phi \otimes \Box_B^1 \otimes \Box_B^k) \\
& = \gamma_{n+1,1} (\bd_{i,\varepsilon} \otimes \Box_B^1 \otimes \Box_B^k)  (\phi \otimes \Box_B^1 \otimes \Box_B^k) \\
& = \gamma_{n+1,1} \bd_{i,\varepsilon} (\phi \otimes \Box_B^1 \otimes \Box_B^k) \\
\end{align*}
For $1 \leq i \leq n$, this is:
\begin{align*}
 \bd_{i,\varepsilon} \gamma_{n,1} (\phi \otimes \Box_B^1 \otimes \Box_B^k) & = \bd_{i,\varepsilon} N_k(\phi) \\
\end{align*}
This proves \cref{N-low-face}. In the case $i = n+1$, we may apply \cref{some-cubical-identities} to the composite $\gamma_{n+1,1}\bd_{n+1,\varepsilon}$, thus obtaining \cref{N-high-zero-face-new,N-high-one-face-new} follow.
\end{proof}

We next analyze the faces of standard decomposition cubes in detail.

\begin{lem}\label{N-faces}
For any map $\phi \colon \Box^m_B \to \Box^n_B$, we may characterize the faces of $N_k(\phi)$ as follows.

\begin{itemize}
\item For $i \leq m$ and $\varepsilon \in \{0,1\}$, we have $N_k(\phi)\bd_{i,\varepsilon} = N_k(\phi \bd_{i,\varepsilon})$. Moreover, in this case one of the following holds:
\begin{enumerate}
\item \label{face-case-bdry} %
$N_k(\phi)\bd_{i,\varepsilon}$ factors through a face map on the left;
\item \label{face-case-tensor} $N_k(\phi)\bd_{i,\varepsilon} = \psi \otimes \Box_B^1 \otimes \Box_B^k$ for some active map $\psi \colon \Box^{m-1}_B \to \Box^{n-1}_B$;
\item \label{face-case-N} $N_k(\phi)\bd_{i,\varepsilon} = N_k(\psi)$ for some active map $\psi \colon \Box^{m-1}_B \to \Box^n_B$.
\end{enumerate}
\item $N_k(\phi) \bd_{m+1,0} = \bd_{n,0} \sigma_n \phi \otimes \Box^k_B$.
\item $N_k(\phi) \bd_{m+1,1} = \phi \otimes \Box^k_B$.
\item For $k \geq 1$ and $i = m + 1 + p, p \geq 1$, and $\varepsilon \in \{0,1\}$, we have $N_k(\phi)\bd_{i,\varepsilon} = \bd_{n+p,\varepsilon} N_{k-1}(\phi)$.
\end{itemize}
\end{lem}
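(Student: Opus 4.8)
The plan is to prove each of the four bullet points by direct computation using the explicit formula of \cref{N-explicit} together with the identities already established in \cref{N-identities,some-cubical-identities}, and to cross-reference \cref{active-face-factor} and \cref{N-active} to obtain the trichotomy in the first bullet. The last three bullets are essentially immediate: the $(m+1,0)$-face and $(m+1,1)$-face computations are exactly \cref{N-identities}\eqref{N-high-zero-face-new} and \eqref{N-high-one-face-new} applied with $\phi = \bd_{m+1,\varepsilon}\psi$ unwound, or more directly, they follow by precomposing the formula of \cref{N-explicit} with $\bd_{m+1,0}$ (set $b = 0$, so the $n$-th output becomes $\phi(\avec)_n \wedge 0 = 0$, i.e. we get $\bd_{n,0}\sigma_n\phi \otimes \Box^k_B$) and with $\bd_{m+1,1}$ (set $b = 1$, so the $n$-th output becomes $\phi(\avec)_n$, recovering $\phi \otimes \Box^k_B$). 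The fourth bullet, concerning faces in coordinates $m+1+p$ for $p \geq 1$, is likewise a direct consequence of \cref{N-k-tensor}: writing $N_k(\phi) = N_{k-1}(\phi) \otimes \Box^1_B$ and noting that the face $\bd_{m+1+p,\varepsilon}$ of $N_{k-1}(\phi) \otimes \Box^1_B$ acts on the last tensor factor when $p \geq 1$ — more precisely, $(\psi \otimes \Box^1_B)\bd_{m'+p,\varepsilon} = (\psi\bd_{m'+p-1,\varepsilon}) \otimes \Box^1_B$ type identities, ultimately giving $\bd_{n+p,\varepsilon} N_{k-1}(\phi)$ after matching indices — this is routine bookkeeping with the tensor product of cube maps.

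For the first bullet, I would first record the identity $N_k(\phi)\bd_{i,\varepsilon} = N_k(\phi\bd_{i,\varepsilon})$ for $i \leq m$; this is \cref{N-identities}\eqref{N-low-face} after reindexing, or it can be read off directly from \cref{N-explicit} since for $i \leq m$ the face $\bd_{i,\varepsilon}$ only touches the first $m$ coordinates, which are precisely the ones fed into $\phi$. Then I would apply \cref{active-face-factor} to the map $\phi\bd_{i,\varepsilon} \colon \Box^{m-1}_B \to \Box^n_B$: it factors uniquely as $\bd\,\psi$ with $\psi$ active and $\bd$ a composite of face maps. If $\bd$ is nonempty, then $N_k$ of this composite factors through a face map on the left — here I would invoke \cref{N-identities}\eqref{N-low-face} repeatedly to pull the face maps out front, which handles case \eqref{face-case-bdry}. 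If $\bd$ is empty, so $\phi\bd_{i,\varepsilon} = \psi$ is itself active, then $N_k(\phi)\bd_{i,\varepsilon} = N_k(\psi)$, which is case \eqref{face-case-N} — unless $\psi$ happens to be of the special form $\bd_{n+1,1}\psi'$ (viewing $\psi \colon \Box^{m-1}_B \to \Box^n_B$), which cannot occur since $\psi$ active means it does not fix any coordinate, whereas... here I need to be careful. Actually the split between \eqref{face-case-tensor} and \eqref{face-case-N} arises from whether the composite $\phi\bd_{i,\varepsilon}$, when written via the tensor decomposition of $N_k$, lands in the subcube where coordinate $n$ is constrained.

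Let me restate the approach to the \eqref{face-case-tensor}/\eqref{face-case-N} dichotomy more carefully, since this is where I expect the main obstacle. The key point is that $N_k(\phi) = \gamma_{n,1}(\phi \otimes \Box^1_B \otimes \Box^k_B)$, and a face $\bd_{i,\varepsilon}$ with $i \leq m$ commutes past $\gamma_{n,1}$ only in a limited way. I would analyze $\phi\bd_{i,\varepsilon}$ according to whether it fixes coordinate $n$ (its own last output coordinate). If $\phi\bd_{i,\varepsilon}$ fixes coordinate $n$ at $1$, then $\gamma_{n,1}(\phi\bd_{i,\varepsilon} \otimes \cdots)$ simplifies because $\phi(\avec)_n \wedge b = b$, decoupling the $m$-th input; one gets a map of the form $\psi \otimes \Box^1_B \otimes \Box^k_B$ for active $\psi \colon \Box^{m-1}_B \to \Box^{n-1}_B$ — this is case \eqref{face-case-tensor}. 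If $\phi\bd_{i,\varepsilon}$ fixes coordinate $n$ at $0$, then $\phi(\avec)_n \wedge b = 0$ and the resulting map factors through $\bd_{n,0}$ on the left — case \eqref{face-case-bdry}. If $\phi\bd_{i,\varepsilon}$ does not fix coordinate $n$ and fixes no other coordinate either, it is active and we are in case \eqref{face-case-N} with $\psi = \phi\bd_{i,\varepsilon}$. If it fixes some coordinate other than $n$, pull out that face map to land in case \eqref{face-case-bdry}. The main obstacle is keeping the reindexing straight when face maps are pulled out in the presence of the connection $\gamma_{n,1}$ and the extra coordinate; I would handle this by consistently using \cref{N-identities} and \cref{some-cubical-identities} rather than recomputing from scratch, and by treating "factors through a face map on the left" as an absorbing case that swallows all the bookkeeping. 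The activeness claims in \eqref{face-case-tensor} and \eqref{face-case-N} then follow from \cref{N-active} (for \eqref{face-case-N}) and from the fact that a tensor factor $\psi \otimes \Box^1_B \otimes \Box^k_B$ is active iff $\psi$ is (for \eqref{face-case-tensor}), combined with \cref{active-face-factor} guaranteeing we may take $\psi$ active after extracting all fixed coordinates.
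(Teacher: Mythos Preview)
Your approach is essentially the paper's: establish $N_k(\phi)\bd_{i,\varepsilon} = N_k(\phi\bd_{i,\varepsilon})$ for $i \leq m$ via functoriality of the tensor product, then case-split on which coordinates $\phi\bd_{i,\varepsilon}$ fixes, and handle the remaining three bullets by direct computation from the explicit formula and tensor-functoriality.

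Two small corrections. First, the identity $N_k(\phi)\bd_{i,\varepsilon} = N_k(\phi\bd_{i,\varepsilon})$ is \emph{not} a reindexing of \cref{N-identities}\eqref{N-low-face}: that item concerns face maps on the \emph{codomain} of $\phi$ (it says $N_k(\bd_{i,\varepsilon}\phi) = \bd_{i,\varepsilon}N_k(\phi)$), whereas here you are precomposing on the domain side; your alternative justification via \cref{N-explicit} (equivalently, functoriality of $\otimes$) is the correct one. Second, your four subcases for the trichotomy overlap, and this matters: if $\phi\bd_{i,\varepsilon}$ fixes coordinate $n$ at $1$ \emph{and} some other coordinate $j < n$, then writing $\phi\bd_{i,\varepsilon} = \bd_{n,1}\psi$ gives a $\psi$ that is not active, so you are in case \eqref{face-case-bdry}, not \eqref{face-case-tensor}. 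The paper's case \eqref{face-case-tensor} requires that coordinate $n$ be the \emph{only} fixed coordinate (fixed at $1$); your closing appeal to ``extracting all fixed coordinates'' to force $\psi$ active would pull out a face map and land you back in case \eqref{face-case-bdry}, so just make the subcases disjoint from the start.
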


\begin{proof}
We first consider the case $i \leq m$. In this case, using the functoriality of the monoidal product, we may compute:
\begin{align*}
N(\phi) \bd_{i,\varepsilon} & = \gamma_{n,1}(\phi \otimes \Box^1_B \otimes \Box^k_B) \bd_{i,\varepsilon} \\
& = \gamma_{n,1}(\phi \otimes \Box^1_B) (\bd_{i,\varepsilon} \otimes \Box^1_B \otimes \Box^k_B) \\
& = \gamma_{n,1}(\phi \bd_{i,\varepsilon} \otimes \Box^1_B \otimes \Box^k_B) \\
& = N_k(\phi \bd_{i,\varepsilon}) \\
\end{align*}

To analyze this case further, we consider the set of coordinates fixed by $\phi \bd_{i,\varepsilon} \colon \Box^{m-1}_B \to \Box^{n}_B$. If this map fixes some coordinate $j < n$ at $\varepsilon'$, then by \cref{active-face-factor} we have $\phi \bd_{i,\varepsilon} = \bd_{j,\varepsilon'} \psi$ for some $\psi \colon \Box^{m-1}_B \to \Box^{n-1}_B$. Thus we may compute:
\begin{align*}
N_k(\phi \bd_{i,\varepsilon}) & = N_k(\bd_{j,\varepsilon'} \psi) \\
& = \gamma_{n,1} (\bd_{j,\varepsilon'} \psi \otimes \Box^1_B \otimes \Box^k_B) \\
& = \gamma_{n,1} \bd_{j,\varepsilon'} (\psi \otimes \Box^1_B \otimes \Box^k_B) \\
& = \bd_{j,\varepsilon'} \gamma_{n-1,1} (\psi \otimes \Box^1_B \otimes \Box^k_B) \\
\end{align*}
We can make a similar calculation in the case where $\phi \bd_{i,\varepsilon}$ fixes coordinate $n$ at $0$. So in these cases, condition \ref{face-case-bdry} holds.

Next suppose that the only coordinate fixed by $\phi \bd_{i,\varepsilon}$ is $n$, and this is fixed at $1$. In this case, again applying \cref{active-face-factor}, we have $\phi \bd_{i,\varepsilon} = \bd_{n,1} \psi$ for an active map $\psi \colon \Box^{m-1}_B \to \Box^{n-1}_B$. Thus we may compute:
\begin{align*}
N_k(\phi \bd_{i,\varepsilon}) & = N_k(\bd_{n,1} \psi) \\
& = \gamma_{n,1} (\bd_{n,1} \psi \otimes \Box^1_B \otimes \Box^k_B) \\
& = \gamma_{n,1} \bd_{n,1} (\psi \otimes \Box^1_B \otimes \Box^k_B) \\
& = \psi \otimes \Box^1_B \otimes \Box^k_B \\
\end{align*}
Thus condition \ref{face-case-tensor} holds in this case.

Finally, we consider the case where $\phi \bd_{i,\varepsilon}$ does not fix any coordinate. By definition, this means that $\phi \bd_{i,\varepsilon}$ is active, so condition \ref{face-case-N} holds in this case.

Now we consider the case $i = m + 1$. For this case, we first note that $\bd_{m+1,\varepsilon} = \Box^m_B \otimes \bd_{1,\varepsilon} \otimes \Box^k_B$. Now observe that by  the functoriality of the monoidal product, we have the following commuting diagram:
\[
\begin{tikzcd}
\Box^{m}_B \otimes \Box^0_B \otimes \Box^k_B \arrow{rr}{\Box_B^m \otimes \bd_{1,\varepsilon} \otimes \Box_B^k} \arrow[d,swap,"\phi \otimes \Box_B^0 \otimes \Box_B^k"] & & \Box_B^{m} \otimes \Box_B^1 \otimes \Box_B^k \arrow{d}{\phi \otimes \Box_B^1 \otimes \Box_B^k} \\
\Box_B^n \otimes \Box_B^0 \otimes \Box_B^k \arrow{rr}{\Box_B^n \otimes \bd_{1,\varepsilon} \otimes \Box_B^k} & & \Box_B^{n} \otimes \Box_B^1 \otimes \Box_B^k \\
\end{tikzcd}
\]
Thus we see that $(\phi \otimes \Box_B^1 \otimes \Box_B^k) \bd_{m+1,\varepsilon} = \bd_{n+1,\varepsilon} (\phi \otimes \Box_B^k)$. Post-composing with $\gamma_{n,1}$ on the left side of this equation yields $N_k(\phi) \bd_{m+1,\varepsilon}$, while by \cref{some-cubical-identities}, post-composing with $\gamma_{n,1}$ on the right side yields either $\phi \otimes \Box_B^k$ or $\bd_{n,0} \sigma_n (\phi \otimes \Box_B^k)$ depending on the value of $\varepsilon$.

Finally, we consider the case $i > m + 1$, ie~$i = m + 1 + p$ for some $p > 0$; note that this case can only occur for $k \geq 1$. In this case, we have $\bd_{m+1+p,\varepsilon} = \Box_B^m \otimes \Box_B^1 \otimes \bd_{p,\varepsilon}$. Thus we obtain a commuting diagram, similarly to the previous case: 
\[
\begin{tikzcd}
\Box_B^{m} \otimes \Box_B^1 \otimes \Box_B^{k-1} \arrow{rr}{\Box_B^m \otimes \Box_B^1 \otimes \bd_{p,\varepsilon}} \arrow[d,swap,"\phi \otimes \Box_B^1 \otimes \Box_B^{k-1}"] & & \Box_B^{m} \otimes \Box_B^1 \otimes \Box_B^k \arrow{d}{\phi \otimes \Box_B^1 \otimes \Box_B^k} \\
\Box_B^n \otimes \Box_B^1 \otimes \Box_B^{k-1} \arrow{rr}{\Box_B^n \otimes \Box_B^1 \otimes \bd_{p,\varepsilon}} & & \Box_B^{n} \otimes \Box_B^1 \otimes \Box_B^k \\
\end{tikzcd}
\]
Thus we see that $(\phi \otimes \Box_B^1 \otimes \Box_B^k)\bd_{m+1+p,\varepsilon} = \bd_{n+1+p,\varepsilon}(\phi \otimes \Box_B^1 \otimes \Box_B^{k-1})$. Post-composing with $\gamma_{n,1}$, we obtain $\bd_{n+p,\varepsilon} \gamma_{n,1} (\phi \otimes \Box_B^1 \otimes \Box_B^{k-1}) = \bd_{n+p,\varepsilon}N_{k-1}(\phi)$.
\end{proof}

The following lemma will be used to show that when $\phi$ is a poset map, the open box which will be filled to obtain $\phi \otimes \Box^k_B$ and $N_k(\phi)$ is inner.

\begin{lem}\label{N-crit-edge}
If $B \subseteq \FP$, then for any active map $\phi \colon \Box^m_{B} \to \Box^n_{B}$ and any $k \geq 0$, the critical edge of $N_k(\phi)$ with respect to $\bd_{m+1,1}$ is degenerate.
\end{lem}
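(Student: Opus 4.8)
The plan is to compute this critical edge directly from the explicit formula of \cref{N-explicit} and observe that the hypotheses on $\phi$ and $B$ make the computation collapse to a constant.

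First I would unwind the definition of the critical edge. Since the face in question is $\bd_{m+1,1}$, we have $i = m+1$ and $\varepsilon = 1$, so $1 - \varepsilon = 0$; thus the critical edge $\bd^c_{m+1,1}$ of $\Box^{m+1+k}_B$ is the edge $\Box^1_B \to \Box^{m+1+k}_B$ given by $t \mapsto (\zvec_m, t, \zvec_k)$, i.e.\ the one in which coordinate $m+1$ runs from $0$ to $1$ while every other coordinate is held at $0$. The critical edge of $N_k(\phi)$ is the composite of this edge with $N_k(\phi)$.

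Next I would apply \cref{N-explicit} with $\avec = \zvec_m$, $b = t$, and $\overrightarrow{c} = \zvec_k$, which shows that $N_k(\phi)$ sends $(\zvec_m, t, \zvec_k)$ to the tuple $(\phi(\zvec_m)_1, \ldots, \phi(\zvec_m)_{n-1}, \phi(\zvec_m)_n \wedge t, 0, \ldots, 0)$. Since $B \subseteq \FP$ and $\phi$ is active, \cref{active-initial-terminal} gives $\phi(\zvec_m) = \zvec_n$, so $\phi(\zvec_m)_j = 0$ for all $j$; in particular $\phi(\zvec_m)_n \wedge t = 0 \wedge t = 0$ for every $t \in \zo$. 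Hence the composite $\Box^1_B \to \Box^{m+1+k}_B \to \Box^{n+k}_B$ is the constant map at $\zvec_{n+k}$.

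Finally, a constant edge factors through the projection $\sigma_1 \colon \Box^1_B \to \Box^0_B$ followed by the vertex $\zvec_{n+k} \colon \Box^0_B \to \Box^{n+k}_B$, so it is degenerate, which completes the proof. There is no serious obstacle here beyond correctly identifying which vertices bound the critical edge and recognizing that activity together with $B \subseteq \FP$ (via \cref{active-initial-terminal}) is precisely what forces the meet with the varying coordinate to vanish; the $\rho_1$ example in \cref{N-examples} shows the statement genuinely uses the poset hypothesis.
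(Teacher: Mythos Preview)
Your proof is correct and follows essentially the same approach as the paper: identify the critical edge as $t \mapsto (\zvec_m, t, \zvec_k)$, apply \cref{N-explicit}, use \cref{active-initial-terminal} to see that $\phi(\zvec_m) = \zvec_n$ so the result is constant at $\zvec_{n+k}$, and conclude that the edge factors through $\Box^0_B$. The only difference is presentational.
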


\begin{proof}
The critical edge with respect to $\bd_{m+1,1}$, viewed as a map $\Box_B^1 \to \Box_B^m \otimes \Box_B^1 \otimes \Box_B^k$ in $\Box_A$, sends $a \in \Box_B^1$ to the $(m+1+k)$-tuple $(\zvec_m,a,\zvec_k)$. Furthermore, our assumption that $\phi$ is active implies that $\phi(\zvec_m)_i = 0$ for all $1 \leq i \leq n$ by \cref{active-initial-terminal}. Applying \cref{N-explicit}, we can therefore see that the map obtained by pre-composing $N_k(\phi)$ with this critical edge sends $a \in \Box_B^1$ to:
\begin{align*}
(\phi(\zvec_m)_1,\ldots,\phi(\zvec_m)_{n-1}, \phi(\zvec_m)_n \wedge a, \zvec_k) & = (0,\ldots,0,0 \wedge a,0,\ldots,0) \\
& = (0,\ldots,0,0,0,\ldots,0) \\
& = \zvec_{n+k} \\
\end{align*}
So this composite is constant at $\zvec_{n+k}$; thus it factors through the projection $\Box_B^1 \to \Box_B^0$.
\end{proof}

We next consider certain special cases in which $N_k(\phi)$ is degenerate; both of these results follow easily from \cref{N-explicit}.

\begin{lem}\label{N-right-tensor}
For any map $\phi \colon \Box_B^m \to \Box_B^n$, we have $N_k(\phi \otimes \Box_B^1) = (\phi \otimes \Box_B^1 \otimes \Box_B^k) \gamma_{m+1,1}$. \qed
\end{lem}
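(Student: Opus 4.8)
The plan is to prove this identity of maps in $\Box_B$ by the most direct route: compare both sides as functions between powers of $\zo$ using the explicit description in \cref{N-explicit}. Since morphisms in $\Box_B$ are functions between powers of $\zo$, two parallel maps coincide as soon as they agree on every tuple, so the whole argument reduces to a single pointwise computation.

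First I would check dimensions. Writing $\phi\colon\Box^m_B\to\Box^n_B$, the map $\phi\otimes\Box^1_B$ goes $\Box^{m+1}_B\to\Box^{n+1}_B$, so by \cref{N-def} the left-hand side $N_k(\phi\otimes\Box^1_B)$ is a map $\Box^{m+k+2}_B\to\Box^{n+k+1}_B$, and one checks that $(\phi\otimes\Box^1_B\otimes\Box^k_B)\gamma_{m+1,1}$ has the same domain and codomain. Then I would fix an arbitrary tuple and write it as $(a_1,\dots,a_m,a_{m+1},b,c_1,\dots,c_k)$, so that $(a_1,\dots,a_{m+1})$ is the block fed to $\phi\otimes\Box^1_B$, the entry $b$ is the distinguished ``$b$''-coordinate of \cref{N-def} applied to $\phi\otimes\Box^1_B$ (that is, coordinate $m+2$), and $c_1,\dots,c_k$ are the last $k$ coordinates. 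Using \cref{N-explicit} with $\phi\otimes\Box^1_B$ in place of $\phi$ (so with $n$ replaced by $n+1$), together with $(\phi\otimes\Box^1_B)(a_1,\dots,a_{m+1})=(\phi(\avec)_1,\dots,\phi(\avec)_n,a_{m+1})$ where $\avec=(a_1,\dots,a_m)$, the left-hand side sends this tuple to $(\phi(\avec)_1,\dots,\phi(\avec)_n,\,a_{m+1}\wedge b,\,c_1,\dots,c_k)$.

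For the right-hand side I would compute in two steps straight from \cref{cube-cat}: the connection $\gamma_{m+1,1}$ sends $(a_1,\dots,a_{m+1},b,c_1,\dots,c_k)$ to $(a_1,\dots,a_m,\,a_{m+1}\wedge b,\,c_1,\dots,c_k)$, and then $\phi\otimes\Box^1_B\otimes\Box^k_B$ acts by $\phi$ on the first $m$ coordinates and by the identity on the remaining $k+1$, producing $(\phi(\avec)_1,\dots,\phi(\avec)_n,\,a_{m+1}\wedge b,\,c_1,\dots,c_k)$ again. The two outputs agree on an arbitrary tuple, so the maps are equal.

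I do not expect any genuine obstacle; the only delicate point is index bookkeeping — in particular remembering that, for $\phi\otimes\Box^1_B$, the distinguished coordinate in \cref{N-def} is coordinate $m+2$ rather than $m+1$, and that $\gamma_{m+1,1}$ here denotes the instance of the connection contracting coordinates $m+1$ and $m+2$ of a length-$(m+k+2)$ tuple. One could alternatively first reduce to the case $k=0$ via \cref{N-k-tensor} and the fact that the connection commutes with $-\otimes\Box^k_B$, but this ultimately rests on the same pointwise check.
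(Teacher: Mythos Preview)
Your proposal is correct and is exactly the approach the paper intends: the paper simply marks this lemma with \qed and notes beforehand that it ``follows easily from \cref{N-explicit}'', which is precisely the pointwise computation you carry out. Your bookkeeping of indices (in particular that the distinguished coordinate for $N_k(\phi\otimes\Box^1_B)$ is $m+2$ and that the relevant connection is $\gamma_{m+1,1}$ acting on coordinates $m+1,m+2$) is accurate.
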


\begin{lem}\label{N-N}
For all maps $\phi \colon \Box_B^m \to \Box_B^n$ and $k \geq 0$, $N_k(N_0(\phi)) = (N_0(\phi) \otimes \Box_B^k)\gamma_{m+1,1}$. \qed
\end{lem}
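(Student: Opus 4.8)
The plan is to prove the identity by direct computation, using the explicit description of \cref{N-explicit} applied twice; the paper's own remark that this ``follows easily from \cref{N-explicit}'' suggests this is the intended route.

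First I would unwind the left-hand side. Since $N_0(\phi)\colon \Box^{m+1}_B \to \Box^n_B$, in forming $N_k(N_0(\phi))$ it plays the role of ``$\phi$'' in \cref{N-def}, so $N_k(N_0(\phi)) = \gamma_{n,1}\bigl(N_0(\phi)\otimes\Box^1_B\otimes\Box^k_B\bigr)\colon \Box^{m+2+k}_B \to \Box^{n+k}_B$. Writing a general element of the source as $(\avec, a_{m+1}, b, c_1,\ldots,c_k)$ with $\avec = (a_1,\ldots,a_m)$ --- where $a_{m+1}$ is the coordinate separated out in forming $N_0(\phi)$ and $b$ is the one separated out by the outer application of $N$ --- \cref{N-explicit}, with its ``$m$'' instantiated as $m+1$, sends this tuple to $\bigl(N_0(\phi)(\avec,a_{m+1})_1,\ldots,N_0(\phi)(\avec,a_{m+1})_{n-1},\, N_0(\phi)(\avec,a_{m+1})_n \wedge b,\, c_1,\ldots,c_k\bigr)$. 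A second application of \cref{N-explicit} evaluates $N_0(\phi)(\avec,a_{m+1})$ as $(\phi(\avec)_1,\ldots,\phi(\avec)_{n-1},\phi(\avec)_n \wedge a_{m+1})$, so the image is $\bigl(\phi(\avec)_1,\ldots,\phi(\avec)_{n-1},\, (\phi(\avec)_n \wedge a_{m+1})\wedge b,\, c_1,\ldots,c_k\bigr)$.

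Next I would compute the right-hand side. Here $\gamma_{m+1,1}$ is $\gamma^{m+1+k}_{m+1,1}\colon \Box^{m+2+k}_B \to \Box^{m+1+k}_B$, which sends $(\avec,a_{m+1},b,c_1,\ldots,c_k)$ to $(\avec, a_{m+1}\wedge b, c_1,\ldots,c_k)$; post-composing with $N_0(\phi)\otimes\Box^k_B$ and invoking \cref{N-explicit} once more gives $\bigl(\phi(\avec)_1,\ldots,\phi(\avec)_{n-1},\, \phi(\avec)_n \wedge (a_{m+1}\wedge b),\, c_1,\ldots,c_k\bigr)$. The two expressions agree by associativity of $\wedge$ on $\{0,1\}$; since morphisms of $\Box_B$ are determined by their underlying set maps, this equality of functions gives the desired equality of morphisms. (A minor variant is to first reduce to $k = 0$ via \cref{N-k-tensor}, which gives $N_k(N_0(\phi)) = N_0(N_0(\phi))\otimes\Box^k_B$, together with $\gamma_{m+1,1}\otimes\Box^k_B = \gamma_{m+1,1}$ under the convention of suppressing superscripts, and then run the same computation with $k = 0$.)

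There is essentially no obstacle here beyond careful index bookkeeping in nesting the two instances of $N$; the only non-formal ingredient is the associativity of $\wedge$, which is precisely what makes the two nested minima collapse to a single connection.
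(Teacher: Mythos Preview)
Your proof is correct and is exactly the approach the paper intends: the lemma is stated with a bare \qed, and the preceding sentence says both \cref{N-right-tensor} and \cref{N-N} ``follow easily from \cref{N-explicit}'', which is precisely the direct coordinate computation you carry out.
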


\subsection{Decomposition-closed subcomplexes} \label{sec N-closed}

Our next goal is to prove a technical result involving the standard decomposition cubes, which we will specialize in different ways to obtain our main results on the unit of the adjunction $i_! \adjoint i^*$ and the cartesian product of cubical sets.

Recall that by \cref{active-face-factor}, an arbitrary morphism $\psi \colon \Box^m_B \to \Box^n_B$ can be factored uniquely as an active map $\phi \colon \Box^m_B \to \Box^p_B$ followed by a (possibly empty) composite of face maps $\kappa \colon \Box^p_B \to \Box^n_B$. Such factorizations, and their interactions with the $N$ construction, will play a key role in the proofs of this section. As such, we define the following properties of morphisms in $\Box_B$.

\begin{Def} \label{base-dim-def}
    Given a morphism $\kappa \phi$ in $\Box_B$, where $\phi \colon \Box^m_B \to \Box^p_B$ is active and $\kappa \colon \Box^p_B \to \Box^n_B$ is a composite of face maps:
    \begin{itemize}
        \item the \emph{base dimension} of $\kappa \phi$ is the value $p$;
        \item the \emph{tail length} of $\kappa \phi$ is the maximal value of $k$ such that $\phi = \phi' \otimes \Box_B^k$ for some $\phi' \colon \Box_B^{m-k} \to \Box_B^{p-k}$.
    \end{itemize}
\end{Def}

The following is an immediate consequence of \cref{N-explicit}.

\begin{lem}\label{N-tail-length}
    For any active map $\phi$ and $k \geq 0$, the tail length of $N_k(\phi)$ is $k$. \qed
\end{lem}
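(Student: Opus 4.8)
The claim is that for any active map $\phi \colon \Box^m_B \to \Box^n_B$ and any $k \geq 0$, the tail length of $N_k(\phi)$ equals $k$. Since $N_k(\phi)$ is active by \cref{N-active}, its factorization through an active map and a composite of face maps is trivial (the face part is empty, the base dimension is $n+k$), so the tail length is genuinely the maximal $\ell$ with $N_k(\phi) = \psi \otimes \Box_B^\ell$ for some $\psi \colon \Box_B^{m+1+k-\ell} \to \Box_B^{n+k-\ell}$. The plan is to prove the two inequalities separately using the explicit formula of \cref{N-explicit}.

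For the lower bound, \cref{N-k-tensor} gives directly $N_k(\phi) = N_0(\phi) \otimes \Box_B^k$, so the tail length is at least $k$. For the upper bound, I must show $N_k(\phi)$ is \emph{not} of the form $\psi \otimes \Box_B^{k+1}$. Suppose it were; then the last $k+1$ output coordinates of $N_k(\phi)$ would depend only on the last $k+1$ input coordinates, and (being a tensor with $\Box_B^{k+1}$) would in fact be the identity on those coordinates in the appropriate sense. By \cref{N-explicit}, coordinate $n$ of the output of $N_k(\phi)$ on input $(\avec, b, \overrightarrow{c})$ is $\phi(\avec)_n \wedge b$, which depends on the first $m$ input coordinates $\avec$. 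Under a decomposition $\psi \otimes \Box_B^{k+1}$, the output coordinate in position $n$ — counting from the block structure, this is the last coordinate of the $\psi$-part, since $N_k(\phi)$ has codomain dimension $n+k = (n-1) + (k+1)$ — would be required to equal the corresponding input coordinate $b$, i.e.\ $\phi(\avec)_n \wedge b = b$ for all $\avec$ and all $b$. Taking $b = 1$ forces $\phi(\avec)_n = 1$ for every $\avec \in \Box^m_B$, i.e.\ $\phi$ fixes coordinate $n$ at $1$, contradicting the hypothesis that $\phi$ is active. Hence no such decomposition exists and the tail length is exactly $k$.

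The one subtlety to be careful about is the bookkeeping of which output coordinate of $N_k(\phi)$ plays the role of ``the boundary coordinate between the $\psi$-block and the $\Box_B^{k+1}$-block'': a decomposition $N_k(\phi) = \psi \otimes \Box_B^{k+1}$ means exactly that output coordinates $n, n+1, \ldots, n+k$ are a copy of input coordinates $m+1, m+2, \ldots, m+1+k$ (the map is the identity on that trailing block), while output coordinates $1, \ldots, n-1$ depend only on input coordinates $1, \ldots, m$. The formula in \cref{N-explicit} already displays $N_k(\phi)$ with output coordinates $n+1, \ldots, n+k$ equal to $c_1, \ldots, c_k$ — consistent with tail length $k$ — and output coordinate $n$ equal to $\phi(\avec)_n \wedge b$, which is the obstruction to extending the trailing identity block by one more coordinate. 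I expect this coordinate-indexing check to be the only place requiring care; once it is set up correctly, the contradiction via activeness of $\phi$ (using \cref{fix-def}) is immediate and the proof is short.
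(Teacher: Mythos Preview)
Your proof is correct and follows the same approach as the paper, which simply marks the lemma as an immediate consequence of \cref{N-explicit} without further detail. You have spelled out both inequalities --- the lower bound via \cref{N-k-tensor} and the upper bound via the activeness contradiction at output coordinate $n$ --- which is exactly the reasoning the paper leaves implicit.
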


We next define this section's central objects of study.

\begin{Def}
    A \emph{decomposition-closed subcomplex} of $i^* \Box^n_B$ is a subcomplex $X \subseteq i^* \Box^n_B$ containing the image of the unit map $\Box^n_A \to i^* \Box^n_B$ , such that if a map $\kappa (\phi \otimes \Box^k_B) \colon \Box^m_B \otimes \Box^k_B \to \Box^p_B \to \Box^n_B$, where $\phi \colon \Box^m_B \to \Box^{p-k}_B$ is active and $\kappa \colon \Box^p_B \to \Box^n_B$ is a (possibly empty) composite of face maps, is contained in $X$ when viewed as an $(m+k)$-cube of $i^* \Box^n_B$, then so is the $(m+1+k)$-cube $\kappa N_k(\phi)$.
\end{Def}

For a given $n$, the most natural examples of decomposition-closed subcomplexes are $\Box^n_A$ and $i^* \Box^n_B$ itself. In \cref{cartesian-N-closed} we will see that if $\Sigma, \delta \in B$, then for $m, n \geq 0$, the cartesian product $\Box^m_A \times \Box^n_A$ can also be viewed as a decomposition-closed subcomplex of $i^* \Box^{m+n}_B$.

We can now state the main result of this section.

\begin{prop} \label{N-closed-anodyne}
    Given a pair of decomposition-closed subcomplexes $X \subseteq Y \subseteq i^* \Box^n_B$, the inclusion $X \hookrightarrow Y$ is anodyne. Moreover, if $B \subseteq \FP$, then the inclusion is inner anodyne.
\end{prop}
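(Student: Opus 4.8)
The plan is to build the inclusion $X \hookrightarrow Y$ as a transfinite composite of pushouts of open box inclusions, using the standard decomposition cubes to identify the relevant fillers. First I would set up an induction on the cubes of $Y$ that are not in $X$, organized by base dimension and then by tail length. Concretely, for a non-degenerate cube $\kappa\psi$ of $Y \setminus X$ with $\psi \colon \Box^m_B \to \Box^p_B$ active and $\kappa$ a composite of face maps, the decomposition-closedness of $X$ (and the fact that $X$ contains the unit image of $\Box^n_A$, so all the ``trivial'' cubes coming from $\Box_A$ are already present) forces certain related cubes to be in $X$ but not others, and I would use this to pair up each missing cube with a filler. The key observation, coming from \cref{N-faces}, is that $N_k(\phi)$ has $\phi \otimes \Box^k_B$ as its $\bd_{m+1,1}$-face (up to the identification in \cref{N-identities}\cref{N-high-one-face-new}), its $\bd_{m+1,0}$-face is $\bd_{n,0}\sigma_n\phi \otimes \Box^k_B$ which factors through a face map (hence is ``lower'' and already handled), and every other face is either a boundary cube, a cube of the form $\psi \otimes \Box^1_B \otimes \Box^k_B$, or again of the form $N_k(\psi)$ for a strictly lower-dimensional active $\psi$ — so by an inductive bookkeeping these are all already in the stage of the filtration under construction.

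The core of the argument is therefore to set up the right well-ordering. I would filter $Y\setminus X$ first by the base dimension $p$ of cubes (smallest first), then within a fixed base dimension by a quantity measuring ``how far'' $\kappa\phi$ is from being of the form $\kappa N_k(\psi)$ — roughly, by whether the cube is itself an $N$-cube or a target of the $N$-construction. The decomposition-closedness hypothesis on $Y$ guarantees that whenever $\kappa(\phi\otimes\Box^k_B)$ appears in $Y\setminus X$, the companion cube $\kappa N_k(\phi)$ also appears in $Y$, and decomposition-closedness of $X$ guarantees that if the companion were in $X$ then so would be the original. So the cubes of $Y\setminus X$ come in pairs $\big(\kappa(\phi\otimes\Box^k_B),\ \kappa N_k(\phi)\big)$, and at the corresponding stage of the filtration I attach $\kappa N_k(\phi)$ along the open box $\sqcap^{m+1+k}_{B,m+1,1}$ (restricted appropriately via $\kappa$): the missing face is exactly $\kappa(\phi\otimes\Box^k_B)$, and \cref{N-faces} tells us all other faces, together with the boundary of the missing face, lie in the previous stage. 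Some care is needed with degenerate cubes: \cref{N-identities}\cref{N-degen},\cref{N-con} show the $N$-construction is compatible with postcomposing by projections and connections, so degenerate missing cubes are automatically filled once their non-degenerate ``source'' is, using \cref{N-right-tensor} and \cref{N-N} to handle the cases where $N_k(\phi)$ is itself degenerate.

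Finally, for the ``moreover'' clause: when $B \subseteq \FP$, \cref{N-crit-edge} shows that the critical edge of $N_k(\phi)$ with respect to $\bd_{m+1,1}$ is degenerate whenever $\phi$ is active, which is exactly the condition for the open box $\sqcap^{m+1+k}_{B,m+1,1} \hookrightarrow \Box^{m+1+k}_B$ appearing in each attachment to descend to the inner open box inclusion $\hcap \hookrightarrow \hBox$ (after quotienting by the degenerate critical edge). Hence every attaching map in the filtration is a pushout of an inner open box inclusion, and the composite is inner anodyne. I expect the main obstacle to be the combinatorial bookkeeping in the induction — precisely specifying the well-ordering on $Y \setminus X$ so that, at the stage where $\kappa N_k(\phi)$ is attached, every face listed in \cref{N-faces} other than $\kappa(\phi\otimes\Box^k_B)$, as well as the entire boundary $\bd(\kappa(\phi\otimes\Box^k_B))$, already lies in the previously constructed subcomplex. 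Getting the interaction between the base dimension, the tail length (\cref{N-tail-length}), and degeneracies exactly right, so that no cube is attached twice and no cube is left unattached, is the delicate part; the open-box-filling geometry itself is then immediate from the face computations already in hand.
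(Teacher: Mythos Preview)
Your proposal is correct and follows essentially the same route as the paper: pair each missing cube $\kappa(\phi\otimes\Box^k_B)$ with its companion $\kappa N_k(\phi)$, fill the $(m{+}1,1)$-open box on the latter, and invoke \cref{N-crit-edge} for the inner case. The one place your sketch is imprecise is the well-ordering. The paper uses a \emph{three}-level filtration: first by base dimension $i$ (your first key), then by the \emph{domain dimension} $j$ of the active part, and only then by tail length $k$, with $k$ running \emph{downward} from $j{+}1$ to $0$. The middle filtration by $j$ is what handles case~\ref{face-case-N} of \cref{N-faces} (faces of the form $N_k(\psi)$ with $\psi$ active of strictly smaller domain dimension), and the decreasing order on $k$ is what handles case~\ref{face-case-tensor} (faces of the form $\psi\otimes\Box^{k+1}_B$). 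Your two-key filtration ``base dimension, then tail length'' conflates these and would not by itself guarantee that all faces of $\kappa N_k(\phi)$ other than the missing one are present at the stage of attachment; once you insert the $j$-filtration and reverse the $k$ direction, the redundancy analysis (your remarks about \cref{N-right-tensor}, \cref{N-N}, and degeneracies) and the open-box verification go through exactly as you describe, and coincide with the paper's \cref{W-i-j-k-redundancy,W-i-j-k-open-boxes}.
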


Our strategy for proving this result will be to construct the cubes of $Y$ from those of $X$ via repeated filling of open boxes, using the property of decomposition-closure and the combinatorial results of \cref{sec N-construction}. To organize this construction, we factor the inclusion $X \hookrightarrow Y$ through a series of intermediate subcomplexes. Thus we now fix $n \geq 0$ and a pair of decomposition-closed subcomplexes $X \subseteq Y \subseteq i^* \Box^n_B$. Our factorization of the inclusion $X \hookrightarrow Y$ will proceed in three steps: we will first filter the cubes of $Y$ based on their base dimension, then their dimension, and finally their tail length. As we will be viewing maps in $\Box_B$ as cubes of $i^* \Box^n_B$, in light of \cref{i-shriek-nondegen} we will refer to such maps as ``degenerate'' if they factor on the right through a degeneracy in $\Box_A$, even though $\Box_B$ itself may not admit an Eilenberg-Zilber structure. 

As the first step of our decomposition, for $0 \leq i \leq n$ we define $W^i$ to be the subcomplex of $Y$ consisting of all cubes of $X$, together with all cubes of $Y$ having base dimension less than or equal to $i$. Thus $W^0 = X$, since the only cubes having base dimension $0$ are the degeneracies of $0$-cubes, all of which are contained in $\Box_A^n$. Similarly, we have $W^n = Y$, since $n$ is the largest base dimension which a map with codomain $\Box^n_B$ can have. We may note that each $W^i$ is still a decomposition-closed subcomplex of $i^* \Box^n_B$, since for any map of the form $\kappa (\phi \otimes \Box^k_B)$ with $\phi$ active and $\kappa$ a composite of face maps, the base dimension of $\kappa N_k(\phi)$ is the same as that of $\kappa (\phi \otimes \Box^k_B)$.

We thus obtain a sequence of inclusions:
\[
X = W^0 \hookrightarrow W^1 \hookrightarrow \cdots \hookrightarrow W^{n-1} \hookrightarrow W^n = Y
\]
To prove \cref{N-closed-anodyne}, therefore, it will suffice to show that each inclusion in the diagram above is (inner) anodyne. To this end, we fix $0 \leq i \leq n - 1$, and focus on the inclusion $W^i \hookrightarrow W^{i+1}$.

As the next step of our factorization, for $j \geq 0$ we let $W^{i,j}$ denote  the subcomplex of $W^{i+1}$ consisting of the following cubes:
\begin{itemize}
    \item all cubes of $W^i$;
    \item all cubes of $Y$ of the form $\kappa \psi$ where $\kappa \colon \Box^{i+1}_B \to \Box^n_B$ is a composite of face maps and $\psi$ is an active map $\Box^{j'}_B \to \Box^{i+1}_B$ for some $j' \leq j$;
    \item all cubes of $Y$ of the form $\kappa N_k(\phi)$ for $\kappa, \psi$ as above where $\psi = \phi \otimes \Box^k_B$ for some active map $\phi \colon \Box^{j'-k}_B \to \Box^{i+1-k}_B$;
    \item degeneracies of the above.
\end{itemize}
To see that this indeed defines a subcomplex of $W^{i+1}$, note that it is closed under degeneracies by definition, and under faces by \cref{N-faces}.

Similarly to the previous step, we see that $W^{i,0} = W^i$ because all $0$-cubes of $i^* \Box^n_B$ are contained in $\Box^n_A$. Thus we obtain an infinite sequence of inclusions:
\[
W^i = W^{i,0} \hookrightarrow W^{i,1} \hookrightarrow \cdots \hookrightarrow W^{i,j} \hookrightarrow W^{i,j+1} \hookrightarrow \cdots
\]
Since every cube of $W^{i+1}$ is contained in some subcomplex $W^{i,j}$, the union of the $W^{i,j}$, ie~the colimit of the diagram above, is $W^{i+1}$. Therefore, because (inner) anodyne maps are closed under transfinite composition, to prove \cref{N-closed-anodyne} it will suffice to prove that each inclusion $W^{i,j} \hookrightarrow W^{i,j+1}$ is inner anodyne. Therefore, we now fix some $j \geq 0$.

As the final step of our decomposition, for $0 \leq k \leq j + 1$, we define $W^{i,j}_k$ to be the subcomplex of $W^{i,j+1}$ consisting of:

\begin{itemize}
    \item all cubes of $W^{i,j}$;
    \item all cubes of $Y$ of the form $\kappa (\phi \otimes \Box^{k'}_B)$, where $\kappa \colon \Box^{i+1}_B \to \Box^n_B$ is a composite of face maps and $\phi \colon \Box^{j+1-k'}_B \to \Box^{i+1-k'}_B$ is active, for $k' \geq k$;
    \item all cubes of $Y$ of the form $\kappa N_{k'}(\phi)$ for $\kappa$, $\phi$ as above;
    \item degeneracies of the above.
\end{itemize}
Once again, to see that this is a subcomplex of $W^{i,j+1}$ we note that it is closed under degeneracies by construction, and can be straightforwardly shown to be closed under faces using \cref{N-faces}.

Similarly to the previous two factorizations, we may note that $W^{i,j}_{j+1} = W^{i,j}$, since the only active map $\phi$ with domain $\Box^{0}_B$ is the identity, and for this map and any composite of faces $\kappa$, the maps $\kappa (\phi \otimes \Box^{j+1}_B)$ and $\kappa N_k(\phi)$ are in $\Box_A$. Likewise, $W^{i,j}_0 = W^{i,j+1}$, and we have a sequence of inclusions:
\[
W^{i,j} = W^{i,j}_{j+1} \hookrightarrow W^{i,j}_j \hookrightarrow \cdots \hookrightarrow W^{i,j}_1 \hookrightarrow W^{i,j}_0 = W^{i,j+1}
\]

We have thus reduced our task to proving that each of the inclusions in the diagram above is (inner) anodyne; it is this task which we will take up by means of open-box filling. To this end, in addition to the values of $n, i, j$ already fixed, let us also fix $k$ satisfying $0 \leq k \leq j$.

 Our task will be to analyze the inclusion $W^{i,j}_{k+1} \hookrightarrow W^{i,j}_k$. We may view this inclusion as adjoining to $W^{i,j}_{k+1}$ all cubes of $Y$ of the form $\kappa (\phi \otimes \Box^k_B)$ where $\kappa \colon \Box^{i+1}_B \to \Box^n_B$ is a composite of face maps and $\phi \colon \Box^{j+1-k}_B \to \Box^{i+1-k}_B$ is active, 
as well as the corresponding cubes obtained by replacing $\phi \otimes \Box^k_B$ with $N_k(\phi)$, and degeneracies of these. Note, however, that some cubes of this form are already contained in $W^{i,j}_{k+1}$, such as those for which $\phi = \phi' \otimes \Box^1$ for some map $\phi'$ (as in that case, $\phi \otimes \Box^k_B = \phi' \otimes \Box^{k+1}_B$). Thus we begin by determining which new cubes must be adjoined to $W^{i,j}_{k+1}$ in order to construct $W^{i,j}_k$.

\begin{lem}\label{W-i-j-k-redundancy}
A cube $\kappa (\phi \otimes \Box^k_B)$, where $\kappa \colon \Box^{i+1}_B \to \Box^n_B$ is a composite of face maps and $\phi \colon \Box^{j+1-k}_B \to \Box^{i+1-k}_B$ is active, is present in $W^{i,j}_{k+1}$ if and only if one of the following cases holds:
\begin{itemize}
\item $\kappa (\phi \otimes \Box^k_B)$ is a cube of $X$;
\item $\phi \otimes \Box^k_B$ is degenerate;
\item $\phi \otimes \Box^k_B = \phi' \otimes \Box^{k+1}_B$ for some active map $\phi' \colon \Box^{j-k}_B \to \Box^{i-k}_B$;
\item $\phi \otimes \Box^k_B = N_p(\phi')$ for some map $\phi' \colon \Box^{j-p}_B \to \Box^{i+1-p}_B$.
\end{itemize}
Moreover, in each of these cases $\kappa N_k(\phi)$ is present in $W^{i,j}_{k+1}$ as well.
\end{lem}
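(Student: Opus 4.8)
The plan is to compare the given cube $c:=\kappa(\phi\otimes\Box^k_B)$ directly against the explicit list of cubes generating $W^{i,j}_{k+1}$, using the uniqueness of active--face factorizations to match them up, and to deal with the cubes of the form $\kappa N_{k'}(-)$ (and hence with the ``moreover'' clause) by means of the identities of \cref{N-identities}, \cref{N-k-tensor}, \cref{N-right-tensor}, and \cref{N-N}. As a preliminary remark, since $\kappa$ is a composite of face maps and $\phi\otimes\Box^k_B$ is active, \cref{active-face-factor} identifies $\kappa\cdot(\phi\otimes\Box^k_B)$ as \emph{the} active--face factorization of $c$; hence $c$ has base dimension $i+1$, dimension $j+1$, and active part $\phi\otimes\Box^k_B$, and (since degeneracies of $\Box_A$ are epimorphisms) $c$ is degenerate exactly when $\phi\otimes\Box^k_B$ is. We shall also use the routine fact that $-\otimes\Box^k_B$ reflects degeneracy, so that $\phi\otimes\Box^k_B$ is degenerate iff $\phi$ is, and throughout Case 4 we take $\phi'$ to be active.

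For the ``if'' direction we treat the four cases. If $c$ is a cube of $X$, then so is $\kappa N_k(\phi)$ by decomposition-closure of $X$, and both lie in $W^{i,j}_{k+1}$ since $X=W^0\subseteq W^{i,j}\subseteq W^{i,j}_{k+1}$. If $\phi\otimes\Box^k_B$ is degenerate, write $\phi=\phi_0\theta$ with $\theta$ a degeneracy of $\Box_A$ and $\phi_0$ active of domain dimension at most $j-k$; then the identity $N_k(\chi\theta)=N_k(\chi)\,(\theta\otimes\Box^1_B\otimes\Box^k_B)$ (immediate from \cref{N-def}) and the corresponding identity for $\otimes\Box^k_B$ exhibit $c$ and $\kappa N_k(\phi)$ as degeneracies of $\kappa(\phi_0\otimes\Box^k_B)$ and $\kappa N_k(\phi_0)$, which belong to $W^{i,j}$ because $\phi_0\otimes\Box^k_B$ is active of domain dimension at most $j$. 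If $\phi\otimes\Box^k_B=\phi'\otimes\Box^{k+1}_B$ with $\phi'$ active, then $\phi=\phi'\otimes\Box^1_B$ by \cref{tensor-faithful}, so $c=\kappa(\phi'\otimes\Box^{k+1}_B)$ lies in $W^{i,j}_{k+1}$ by construction, while by \cref{N-right-tensor} $\kappa N_k(\phi)=c\,\gamma$ for a positive connection $\gamma\in\Box_A$, a degeneracy of $c$. Finally, if $\phi\otimes\Box^k_B=N_p(\phi')$ with $\phi'$ active, then by \cref{N-tail-length} the tail lengths of $N_p(\phi')$ and $\phi\otimes\Box^k_B$ coincide, so $p\ge k$; if $p>k$ then $\phi=N_{p-k}(\phi')=N_{p-k-1}(\phi')\otimes\Box^1_B$ by \cref{N-k-tensor}, reducing to the previous case with $N_{p-k-1}(\phi')$ active (\cref{N-active}); and if $p=k$ then $\phi=N_0(\phi')$, so $c=\kappa N_k(\phi')$ is a cube of $W^{i,j}$ (as $\phi'\otimes\Box^k_B$ is active of domain dimension $j$), and by \cref{N-N} $\kappa N_k(\phi)=c\,\gamma$ is again a degeneracy of $c$.

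For the ``only if'' direction, assume $c\in W^{i,j}_{k+1}$. If $c$ is degenerate we are in the second case; otherwise $c$ cannot be a proper degeneracy of anything, so it must literally be one of the non-degenerate generating cubes listed in the definition of $W^{i,j}_{k+1}$. It cannot be one of the cubes $\kappa'N_{k'}(\phi')$ with $k'\ge k+1$, which have dimension $j+2$. If $c=\kappa'(\phi'\otimes\Box^{k'}_B)$ with $k'\ge k+1$, comparing active--face factorizations gives $\phi\otimes\Box^k_B=\phi'\otimes\Box^{k'}_B=(\phi'\otimes\Box^{k'-k-1}_B)\otimes\Box^{k+1}_B$, the third case. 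If $c$ is a cube of $W^{i,j}$, we unwind that definition: a cube of $W^i$ of base dimension $i+1$ must be a cube of $X$, the first case; a cube $\kappa'\psi$ with $\psi$ active of domain dimension at most $j$ is impossible, since $\phi\otimes\Box^k_B$ has domain dimension $j+1$; and a cube $\kappa'N_{k'}(\phi')$ with $\phi'\otimes\Box^{k'}_B$ active of domain dimension at most $j$ forces, upon comparing active--face factorizations and dimensions, $\phi\otimes\Box^k_B=N_{k'}(\phi')$ with $\phi'\colon\Box^{j-k'}_B\to\Box^{i+1-k'}_B$ active, the fourth case. This exhausts the possibilities.

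The main obstacle is the bookkeeping in the fourth case: one must carefully disentangle how the tail length and the ``face part'' of $\phi'$ interact with the $N$ construction in order to reduce a general equation $\phi\otimes\Box^k_B=N_p(\phi')$ to one of the simpler cases, and it is precisely the identities of \cref{N-identities} --- notably $N_k(\bd_{n+1,1}\phi)=\phi\otimes\Box^1_B\otimes\Box^k_B$, which absorbs a terminal face of $\phi'$ --- together with \cref{N-k-tensor} and \cref{N-tail-length} that make this go through. The remaining ingredients (the uniqueness of active--face factorizations, and the facts that degeneracies are epic and that $-\otimes\Box^k_B$ reflects degeneracy) are routine.
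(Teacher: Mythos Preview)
Your proof is correct and follows essentially the same strategy as the paper's: the ``if and only if'' is read off from the definition of $W^{i,j}_{k+1}$ via uniqueness of active--face factorizations, and the ``moreover'' clause is handled case by case using decomposition-closure of $X$, \cref{N-identities}, \cref{N-right-tensor}, and \cref{N-N}. Your treatment of the ``only if'' direction is more explicit than the paper's (which simply declares it immediate), and your Case~4 argument differs slightly in organization: the paper assumes Cases~1--3 fail, so that $\phi$ has tail length~$0$ and hence $p=k$ directly, whereas you deduce $p\ge k$ from \cref{N-tail-length} and reduce the sub-case $p>k$ to Case~3 via \cref{N-k-tensor}.

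Two minor remarks. First, your stipulation that $\phi'$ be active in Case~4 is essential and should be regarded as part of the intended statement: without it the fourth bullet is vacuously satisfied whenever $k\ge 1$ (take $\phi'=\bd_{i+1,1}(\phi\otimes\Box^{k-1}_B)$ and $p=0$, giving $N_0(\phi')=\phi\otimes\Box^k_B$ by \cref{N-identities}\ref{N-high-one-face-new}), which would make the lemma false. The paper's definitions of $W^{i,j}$ and $W^{i,j}_{k+1}$ already restrict to active $\phi'$, so this is the correct reading. Second, your closing paragraph cites the identity $N_k(\bd_{n+1,1}\phi)=\phi\otimes\Box^1_B\otimes\Box^k_B$ as the key to Case~4, but under your (correct) standing assumption that $\phi'$ is active this identity is never invoked; the actual work is done by \cref{N-tail-length}, \cref{N-k-tensor}, and \cref{N-N}.
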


\begin{proof}
That $\kappa (\phi \otimes \Box^k_B)$ is present in $W^{i,j}_{k+1}$ if and only if one of the listed cases holds is immediate from the definition of $W^{i,j}_{k+1}$. (Note that $\kappa (\phi \otimes \Box^k_B)$ is degenerate if and only if $\phi \otimes \Box^k_B$ is.) To show that $N_k(\phi)$ is present in $W^{m,k+1}$ in each of these cases, we consider each case in turn.

\begin{itemize}
\item Suppose $\kappa (\phi \otimes \Box^k_B)$ is a cube of $X$. Then because $X$ is decomposition-closed by assumption, $\kappa N_k(\phi)$ is contained in $X$, and thus in $W^{i,j}_{k+1}$.
\item Suppose $\phi \otimes \Box^k_B$ is degenerate; this implies that $\phi$ is a degeneracy of some  $\phi' \colon \Box_B^{j-k} \to \Box_B^{i+1-k}$.
That $\phi'$ is active is a straightforward consequence of \cref{active-face-factor}. By \cref{N-identities}, $N_k(\phi)$ is a degeneracy of $N_k(\phi')$. Since $\kappa N_k(\phi')$ is a cube of $W^{i,j} \subseteq W^{i,j}_{k+1}$, so is $N_k(\phi)$.
\item Suppose $\phi \otimes \Box_B^k = \phi' \otimes \Box_B^{k+1}$ for some active map $\phi' \colon \Box_B^{j-k} \to \Box_B^{i-k}$. Then by \cref{tensor-faithful},  we have $\phi = \phi' \otimes \Box_B^1$. (In other words, this is precisely the case in which $\phi$ has positive tail length.) Thus \cref{N-right-tensor} implies that $\kappa N_{k}(\phi)$ is a degeneracy of $\kappa (\phi \otimes \Box_B^k)$, hence is also contained in $W^{i,j}_{k+1}$.
\item Finally, suppose that none of the previous cases hold; then the only remaining case in which $\kappa(\phi \otimes \Box^k_B)$ could be contained in $W^{i,j}_{k+1}$ is if $\phi \otimes \Box_B^k = N_p(\phi')$ for some map $\phi' \colon \Box_B^{j-p} \to \Box_B^{i+1-p}$. Because the previous case in particular does not hold, $\phi$ has tail length $0$, so that $\phi \otimes \Box^k_B$ has tail length $k$. By \cref{N-tail-length}, it must therefore be the case that $p = k$. By \cref{tensor-faithful,N-k-tensor}, therefore, we have $\phi = N_0(\phi')$. Therefore, by \cref{N-N}, $\kappa N_k(\phi)$ is a degeneracy of $\kappa (\phi \otimes \Box_B^k)$, and hence is also contained in $W^{i,j}_{k+1}$. \qedhere
\end{itemize}
\end{proof}

Thus the cubes of $W^{i,j}_k$ which we must construct by open box filling are those falling under none of the listed cases above. We next show that the necessary open boxes do indeed exist in $W^{i,j}_{k+1}$.

\begin{lem}\label{W-i-j-k-open-boxes}
For an active map $\phi \colon \Box^{j+1-k}_B \to \Box^{i+1-k}_B$ and composite of faces $\kappa \colon \Box^{i+1}_B \to \Box^n_B$ such that $\kappa (\phi \otimes \Box^k_B)$ is not present in $W^{i,j}_{k+1}$, all faces of $\kappa N_k(\phi)$ aside from its $(j+2-k,1)$-face are present in $W^{i,j}_{k+1}$.
\end{lem}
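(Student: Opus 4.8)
The plan is to enumerate all faces of $\kappa N_k(\phi)$ using \cref{N-faces} and to verify that every face except $\bd_{j+2-k,1}$ already lies in $W^{i,j}_{k+1}$. First I would record the basic setup: since $\kappa(\phi\otimes\Box^k_B)$ is a cube of $Y$ (this is precisely the kind of cube whose presence in $W^{i,j}_k$ we are analyzing) and $Y$ is decomposition-closed, the cube $\kappa N_k(\phi)$ is itself a cube of $Y$; hence all of its faces are cubes of $Y$, and the only question is which clause in the definition of $W^{i,j}_{k+1}$ each face satisfies. Writing $m=j+1-k$ and $n=i+1-k$, so that $\phi\colon\Box^m_B\to\Box^n_B$ and $N_k(\phi)\colon\Box^{m+1+k}_B\to\Box^{n+k}_B$, the distinguished face is $\bd_{m+1,1}$: indeed $\kappa N_k(\phi)\bd_{m+1,1}=\kappa(\phi\otimes\Box^k_B)$ by \cref{N-faces}, and $m+1=j+2-k$.

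For the faces of index $i'\leq m$, I would apply the trichotomy of \cref{N-faces} to $N_k(\phi)\bd_{i',\varepsilon}$. In case (1) the face factors through a face map on the left, hence so does $\kappa N_k(\phi)\bd_{i',\varepsilon}$; this face therefore has base dimension at most $i$ and lies in $W^i\subseteq W^{i,j}_{k+1}$. In case (2), where $N_k(\phi)\bd_{i',\varepsilon}=\psi\otimes\Box^1_B\otimes\Box^k_B$ with $\psi\colon\Box^{m-1}_B\to\Box^{n-1}_B$ active, I would use the identity $\psi\otimes\Box^1_B\otimes\Box^k_B=\psi\otimes\Box^{k+1}_B$ together with $m-1=(j+1)-(k+1)$ and $n-1=(i+1)-(k+1)$ to exhibit $\kappa N_k(\phi)\bd_{i',\varepsilon}=\kappa(\psi\otimes\Box^{k+1}_B)$ as a cube of the form $\kappa(\phi'\otimes\Box^{k'}_B)$ with $\phi'$ active and $k'=k+1\geq k+1$, which lies in $W^{i,j}_{k+1}$ by definition. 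In case (3), where $N_k(\phi)\bd_{i',\varepsilon}=N_k(\psi)$ with $\psi\colon\Box^{m-1}_B\to\Box^n_B$ active, I would note that $\psi\otimes\Box^k_B$ is active with domain $\Box^j_B$ (since $m-1+k=j$), so that $\kappa N_k(\psi)$ is one of the $N$-cubes appearing in the definition of $W^{i,j}$, hence lies in $W^{i,j}\subseteq W^{i,j}_{k+1}$.

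For the faces of index $m+1$ or higher, I would argue separately. The face $\bd_{m+1,0}$ maps to $\kappa\,\bd_{n,0}\sigma_n(\phi\otimes\Box^k_B)$ by \cref{N-faces}, which factors through the composite of face maps $\kappa\bd_{n,0}$ and hence has base dimension at most $i$, landing in $W^i$. For $k\geq1$ and $1\leq p\leq k$, the face $\bd_{m+1+p,\varepsilon}$ maps to $\kappa\,\bd_{n+p,\varepsilon}N_{k-1}(\phi)$; since $\phi$ is active, \cref{N-active} gives that $N_{k-1}(\phi)$ is active with codomain $\Box^i_B$, so this face factors as a composite of face maps followed by $N_{k-1}(\phi)$, has base dimension $i$, and lies in $W^i$. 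Together with the computation of $\bd_{m+1,1}$ this accounts for all $2(j+2)$ faces of $\kappa N_k(\phi)$. The degenerate boundary cases — $k=0$, where no faces of index beyond $m+1$ occur, and $m=0$, where the hypothesis that $\kappa(\phi\otimes\Box^k_B)$ is absent from $W^{i,j}_{k+1}$ turns out to be vacuous — present no difficulty. I expect the only real obstacle to be organizational: keeping the dimension bookkeeping consistent (the index of the distinguished face, the bound $p\leq k$, the identities $m-1=(j+1)-(k+1)$ and $m-1+k=j$), and, in each case routed into $W^i$, checking that the face maps appearing on the left can always be absorbed into $\kappa$, so that the base dimension genuinely drops to at most $i$.
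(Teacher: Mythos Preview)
Your proposal is correct and follows essentially the same approach as the paper's own proof: both apply the case analysis of \cref{N-faces} to each face of $\kappa N_k(\phi)$, routing the low-index faces into $W^i$, $W^{i,j}_{k+1}$, or $W^{i,j}$ according to the trichotomy, and the high-index faces (aside from $\bd_{m+1,1}$) into $W^i$ via the face map appearing on the left. Your version is slightly more explicit in a few places---you record that $\kappa N_k(\phi)$ lies in $Y$ by decomposition-closure (so its faces are cubes of $Y$), and you treat the $(m+1,0)$-face and the $(m+1+p,\varepsilon)$-faces separately rather than grouping them---but these are cosmetic differences, not substantive ones.
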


\begin{proof}
We analyze the faces of $\kappa N_k(\phi)$ using \cref{N-faces}. We first consider a face $\kappa N_k(\phi)\bd_{p,\varepsilon}$ with $1 \leq p \leq j + 1 - k$; here \cref{N-faces} provides three possible cases.

In case \ref{face-case-bdry}, we have $N_k(\phi)\bd_{p,\varepsilon} = \bd_{q,\varepsilon'} \psi$ for some face map $\bd_{q,\varepsilon'} \colon \Box^i_B \to \Box^{i+1}_B$ and some $\psi \colon \Box^{j+1}_B \to \Box^i_B$. It follows that $\kappa N_k(\phi) \bd_{p,\varepsilon} = \kappa \bd_{q,\varepsilon'} \psi$ has base dimension at most $i$, and is therefore contained in $W^i \subseteq W^{i,j}_{k+1}$.

In case \ref{face-case-tensor}, we have $N_k(\phi)\bd_{p,\varepsilon} = \psi \otimes \Box^{k+1}_B$ for an active map $\psi \colon \Box^{j-k}_B \to \Box^{i-k}_B$, so $\kappa N_k(\phi)\bd_{p,\varepsilon} = \kappa (\psi \otimes \Box^{k+1}_B)$ is contained in $W^{i,j}_{k+1}$ by definition.

In case \ref{face-case-N}, $N_k(\phi) \bd_{i,\varepsilon} = N_k(\psi)$ for an active map $\psi \colon \Box^{j-k}_B \to \Box^{i+1-k}_B$, so $\kappa N_k(\phi)\bd_{i,\varepsilon} = \kappa N_k(\psi)$ is contained in $W^{i,j} \subseteq W^{i,j}_{k+1}$.

Finally, for faces $N_k(\phi) \bd_{p,\varepsilon}$ with $(p,\varepsilon) = (j+2-k,0)$ or $p > j+2-k$, \cref{N-faces} shows that $N_k(\phi)\bd_{p,\varepsilon}$ factors through a face map on the left, so once again $\kappa N_k(\phi) \bd_{p,\varepsilon}$ is contained in $W^i \subseteq W^{i,j}_{k+1}$.
\end{proof}

We have now established sufficient results about decomposition-closed subcomplexes to prove the main result of this subsection.

\begin{proof}[Proof of \cref{N-closed-anodyne}]
We first note that by the closure of (inner) anodyne maps under composition and transfinite composition, it suffices to show that each inclusion $W^{i,j}_{k+1} \hookrightarrow W^{i,j}_k$ is (inner) anodyne.

By the definition of $W^{i,j}_k$ and \cref{W-i-j-k-redundancy}, constructing $W^{i,j}_k$ from $W^{i,j}_{k+1}$ amounts to adjoining to $W^{i,j}_{k+1}$ the cubes $\kappa (\phi \otimes \Box_B^k)$ and $\kappa N_k(\phi)$ for all active maps $\phi \colon \Box_B^{i+1-k} \to \Box_B^{j+1-k}$ not covered by any of the cases listed in the statement of that result. Moreover, we may note that in these cases, $\kappa (\phi \otimes \Box_B^k)$ and $\kappa N_k(\phi)$ are non-degenerate as cubes of $i^* \Box^n_B$; to see this, we may note that $\kappa (\phi \otimes \Box_B^k)$ is non-degenerate by assumption, and that by \cref{N-faces,W-i-j-k-open-boxes} it appears as exactly one face of $\kappa N_k(\phi)$. Thus we can apply \cref{degen-one-face} to show that $\kappa N_k(\phi)$ is non-degenerate as well. 

By \cref{W-i-j-k-open-boxes}, for each such $\kappa, \phi$,the faces of $\kappa N_k(\phi)$ other than its $(j+2-k,1)$-face $\kappa \phi \otimes \Box^k_B$ form a $(j+2-k,1)$-open box in $W^{i,j}_{k+1}$. Thus we may obtain $W^{i,j}_k$ from $W^{i,j}_{k+1}$ by filling all of these open boxes; in other words, we have a pushout diagram:
\[
\begin{tikzcd}
\coprod \sqcap^{j+2}_{A,j+2-k,1} \ar[r] \ar[d] \pushout & W^{i,j}_{k+1} \ar[d] \\
\coprod \Box^{j+2}_A \ar[r] & W^{i,j}_k \\
\end{tikzcd}
\]
where the coproduct is taken over all composites of face maps $\kappa \colon \Box_B^{i+1} \to \Box_B^n$ and active maps $\phi \colon \Box_B^{j+1-k} \to \Box_B^{i+1-k}$ such that $\kappa (\phi \otimes \Box_B^k)$ is not contained in $W^{i,j}_{k+1}$. For such a pair $(\kappa,\phi)$, the $(\kappa,\phi)$-component of the bottom horizontal map picks out the cube $\kappa N_k(\phi)$, while that of the top horizontal map picks out the $(j+2-k,1)$-open box on $\kappa N_k(\phi)$. Thus $W^{i,j}_{k+1} \hookrightarrow W^{i,j}_k$ is anodyne. 

Moreover, by \cref{N-crit-edge}, if $B \subseteq \FP$ then these open boxes are inner. Thus we could replace the pushout diagram above with one in which the left vertical map is a coproduct of inclusions $\hcap^{j+2}_{A,j+2-k,1} \hookrightarrow \hBox^{j+2}_{A,j+2-k,1}$, thereby showing that $W^{i,j}_{k+1} \hookrightarrow W^{i,j}_k$ is inner anodyne.
\end{proof}

\section{The unit of the left Kan extension adjunction} \label{sec unit}

We now consider a generalization of \cref{unit-tcof-empty} which follows straightforwardly from \cref{N-closed-anodyne}.

\begin{prop}\label{unit-tcof}
For $A \subseteq B \subseteq \FS$, with $A \subseteq \conset$ and $B$ containing at least one element of $\conset$, the unit $\eta \colon \id_{\cSet_A} \Rightarrow i^* i_!$ of the adjunction $i_! \adjoint i^*$ induced by the inclusion $i \colon \Box_A \hookrightarrow \Box_B$ is a natural trivial cofibration in the Grothendieck model structure on $\cSet_A$. Moreover, if $B \subseteq \FP$, then $\eta$ is a natural trivial cofibration in the cubical Joyal model structure on $\cSet_A$.
\end{prop}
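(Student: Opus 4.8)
The plan is to derive the statement from \cref{N-closed-anodyne}: the representable case is essentially immediate, and the general case follows by a skeletal induction. Before doing so I would reduce to the situation in which $\wedge \in A$ or $\vee \in A$, which is the standing hypothesis of \cref{sec N-construction,sec N-closed}. The only case requiring attention is $A = \varnothing$; since $B$ contains an element of $\conset$ we may assume, up to the evident $\wedge \leftrightarrow \vee$ symmetry, that $\wedge \in B$, and factor $i$ as $\Box_\varnothing \hookrightarrow \Box_{\{\wedge\}} \hookrightarrow \Box_B$, say as $i_2 i_1$. Writing $\eta^{(1)}$ and $\eta^{(2)}$ for the units of the left-Kan-extension adjunctions associated to $i_1$ and $i_2$, the unit $\eta_X \colon X \to i^* i_! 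X$ factors as the composite of $\eta^{(1)}_X \colon X \to (i_1)^*(i_1)_! X$ with $(i_1)^*$ applied to $\eta^{(2)}_{(i_1)_! X}$. Here $\eta^{(1)}_X$ is a trivial cofibration by \cref{unit-tcof-empty} (in the Grothendieck model structure via \cref{cubical-Joyal-localization}); $\eta^{(2)}_{(i_1)_! X}$ is a trivial cofibration by the case $\wedge \in A$ of the present proposition applied to $i_2$; and $(i_1)^*$ preserves trivial cofibrations, as it preserves monomorphisms and, by \cref{Quillen-equiv-empty}, preserves weak equivalences in both model structures. Hence $\eta$ is a trivial cofibration, and it remains only to treat the case $\wedge \in A$ (the case $\vee \in A$ being symmetric).

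So assume $\wedge \in A$. The key input is the representable case: by \cref{i-shriek-on-reps,unit-characterization}, $\eta_{\Box^n_A}$ is the subcomplex inclusion $\Box^n_A \hookrightarrow i^* \Box^n_B$, and since both $\Box^n_A$ and $i^* \Box^n_B$ are decomposition-closed subcomplexes of $i^* \Box^n_B$, \cref{N-closed-anodyne} shows this inclusion is anodyne, and inner anodyne when $B \subseteq \FP$; in particular it is a trivial cofibration in the Grothendieck model structure, respectively in the cubical Joyal model structure when $B \subseteq \FP$. For a general $X \in \cSet_A$ the unit $\eta_X$ is a monomorphism by \cref{unit-mono}, hence a cofibration, so it suffices to prove that $\eta_X$ is a weak equivalence. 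Since $\Box_A$ is an Eilenberg--Zilber category (\cref{cube-EZ}), $X = \colim_d \operatorname{sk}_d X$, and each $\operatorname{sk}_d X$ is obtained from $\operatorname{sk}_{d-1} X$ by a pushout of a coproduct of boundary inclusions $\bd \Box^d_A \hookrightarrow \Box^d_A$, one per non-degenerate $d$-cube of $X$. The functor $i^* i_!$ preserves all colimits ($i_!$ is a left adjoint, and $i^*$ is a left adjoint of $i_*$) and sends boundary inclusions to boundary inclusions (\cref{i-shriek-open-box}), so it carries this presentation of $\operatorname{sk}_d X$ to the analogous presentation of $i^* i_! \operatorname{sk}_d X$ in terms of the inclusions $\bd \Box^d_B \hookrightarrow \Box^d_B$.

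I would then prove by induction on $d$ that $\eta_X$ is a weak equivalence (in whichever of the two model structures is under consideration) for every $X$ with $\dim X \leq d$. The base case $d = 0$ is clear, as $i^* i_!$ fixes $0$-dimensional cubical sets. For the inductive step, $\eta$ defines a morphism from the pushout square presenting $\operatorname{sk}_d X$ to the one presenting $i^* i_! \operatorname{sk}_d X$; its three relevant components are $\eta_{\operatorname{sk}_{d-1}X}$, a coproduct of copies of $\eta_{\bd \Box^d_A}$, and a coproduct of copies of $\eta_{\Box^d_A}$. The first two are weak equivalences by the inductive hypothesis (note $\dim \bd \Box^d_A = d-1$), and the last is a weak equivalence by the representable case. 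Since the model structures on $\cSet_A$ are left proper (all objects are cofibrant, cofibrations being the monomorphisms) and the relevant leg of each span is a cofibration, the gluing lemma yields that the induced map on pushouts, which is $\eta_{\operatorname{sk}_d X} = \eta_X$, is a weak equivalence. Finally, for $X$ of arbitrary dimension, $\eta_X = \colim_d \eta_{\operatorname{sk}_d X}$ exhibits $\eta_X$ as a transfinite composite of trivial cofibrations (the skeletal inclusions and their images under $i^* i_!$ being cofibrations), hence a trivial cofibration. For the cubical Joyal statement under the hypothesis $B \subseteq \FP$, no change is needed beyond invoking the inner-anodyne conclusion of \cref{N-closed-anodyne} in the representable case.

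The conceptual content is carried entirely by \cref{N-closed-anodyne} in the representable case; I expect the remaining difficulty to be purely organizational, namely setting up the skeletal induction so that lower-dimensional instances such as $\eta_{\bd \Box^d_A}$ are available when needed, and checking that $i^* i_!$ really does preserve the skeletal pushout presentation --- which rests on $i^*$ preserving colimits (it has both adjoints) together with \cref{i-shriek-open-box}. The reduction to $\wedge \in A$ is routine but requires keeping track of which model structure the various trivial cofibrations live in.
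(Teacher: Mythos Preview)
Your proposal is correct and follows essentially the same approach as the paper: both reduce to the representable case via a skeletal induction using the gluing lemma (relying on $i^*i_!$ preserving colimits and monomorphisms), invoke \cref{N-closed-anodyne} with $X = \Box^n_A$ and $Y = i^*\Box^n_B$ when $A$ contains a connection, and handle $A = \varnothing$ by factoring $i$ through $\Box_{\wedge}$ (or $\Box_{\vee}$) and using \cref{unit-tcof-empty} together with the fact that the intermediate $i^*$ preserves weak equivalences. The only organizational difference is that the paper disposes of the case $B \subseteq \conset$ up front via \cref{unit-tcof-empty}, whereas your argument subsumes it into the connection case; both are fine.
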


\begin{proof}
\cref{unit-mono} shows that the unit of $i_! \adjoint i^*$ is a cofibration, thus it remains only to prove that it is a weak equivalence. The case $B \subseteq \conset$ is given by \cref{unit-tcof-empty}. For our remaining cases, we first note that $i^* i_!$ preserves pushouts as a left adjoint, and preserves monomorphisms by \cref{i-shriek-monos} and the fact that $i^*$ is a right adjoint. Therefore, by a standard induction on skeleta argument involving the gluing lemma (cf~\cite[Lem.~1.6]{doherty:without-connections}), it will suffice to consider the components of the unit at representable cubical sets.

First suppose $\wedge \in A$; note that since $A \subseteq B$, in this case the condition that $B$ contains at least one of $\wedge, \vee$ is satisfied automatically. In this case, the statement that each map $\Box^n_A \hookrightarrow i^*\Box^n_B$ is a trivial cofibration follows from \cref{N-closed-anodyne}, taking $X = \Box^n_A, Y = i^* \Box^n_B$.
For the case $\vee \in A$, we can prove the claim by a similar argument, where the cubes $N_k(\phi)$ are constructed using negative connections. 

Finally, we consider the case in which $A = \varnothing$ and $B$ is not assumed to be a subset of $\conset$. For concreteness, assume $\wedge \in B$; the case $\vee \in B$ is analogous.

In this case, we factor the inclusion $\Box_{\varnothing} \hookrightarrow \Box_B$ as a composite of inclusions $j \colon \Box_\varnothing \hookrightarrow \Box_\wedge$ and $l \colon \Box_\wedge \hookrightarrow \Box_B$. Then the adjunction $i_! \adjoint i^*$ is the composite of the adjunctions $j_! \adjoint j^*$ and $l_! \adjoint l^*$. Denote the unit of $j_! \adjoint j^*$ by $\eta^j$ and the unit of $l_! \adjoint l^*$ by $\eta^l$. Then the component of the unit of $i_! \adjoint i^*$ at an object $X \in \cSet_\varnothing$ is the composite
\[
\begin{tikzcd}
X \ar[r,hookrightarrow,"\eta^j_X"] & j^* j_! X \ar[r,hookrightarrow,"j^* \eta^l_{j_! X}"] & j^* l^* l_! j_! X \cong i^* i_! X
\end{tikzcd}
\]
We have shown that $\eta^j$ is a natural weak equivalence in $\cSet_\varnothing$, and that $\eta^l$ is a natural weak equivalence in $\cSet_\wedge$. Moreover, $j^*$ preserves weak equivalences by \cref{Quillen-equiv-empty}. Thus the composite depicted above is a  weak equivalence.
\end{proof}

\begin{rmk}
    It is natural to wonder whether \cref{unit-tcof} can be generalized -- for instance, whether the unit of the adjunction $i_! \adjoint i^*$ induced by a proper inclusion $\Box_A \hookrightarrow \Box_B$ can be a trivial cofibration in the cubical Joyal model structure on $\cSet_A$ if $\Box_B$ contains reversals, or in either the cubical Joyal or the Grothendieck model structure if $A = \varnothing$ and $\Box_B$ does not contain connections. In fact, neither of these results holds; in all cases it can be verified that the component of the unit at a representable cube of dimension 1 or 2 is not a weak equivalence.
\end{rmk}

The remainder of this section will be devoted to exploring the consequences of \cref{unit-tcof}. We first apply it to show that although the monoidal structure given by the geometric product on $\cSet_A$ for $A \subseteq \conset$ is not symmetric, it is ``symmetric up to natural weak equivalence'' in the cubical Joyal model structure.

\begin{thm}\label{monoidal-weak-equiv}
For $A \subseteq \conset$ and $X, Y \in \cSet_A$, we have a zigzag of weak equivalences in the cubical Joyal model structure relating $X \otimes Y$ and $Y \otimes X$, natural in $X$ and $Y$.
\end{thm}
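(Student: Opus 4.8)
The plan is to deduce this from \cref{unit-tcof} together with the strong monoidality of the left Kan extension functors (\cref{i-shriek-monoidal}), using symmetries as an intermediary. Set $B = \FP = \{\wedge, \vee, \Sigma, \delta\}$, and let $i \colon \Box_A \hookrightarrow \Box_\FP$ be the inclusion; this is legitimate since $A \subseteq \conset \subseteq \FP$. The point of this choice is that $B = \FP$ simultaneously satisfies every hypothesis we need: it contains an element of $\conset$, it is contained in $\FP$ (so \cref{unit-tcof} will give a conclusion about the \emph{cubical Joyal} model structure), and it contains $\Sigma$, so that the geometric product on $\cSet_\FP$ is a symmetric monoidal structure.

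First I would invoke \cref{unit-tcof} to conclude that the unit $\eta \colon \id_{\cSet_A} \Rightarrow i^* i_!$ of the adjunction $i_! \adjoint i^*$ is a natural trivial cofibration in the cubical Joyal model structure on $\cSet_A$; in particular its components $\eta_{X \otimes Y} \colon X \otimes Y \to i^* i_!(X \otimes Y)$ and $\eta_{Y \otimes X} \colon Y \otimes X \to i^* i_!(Y \otimes X)$ are weak equivalences, natural in $X$ and $Y$ (the assignment $(X,Y) \mapsto X \otimes Y$, resp.\ $(X,Y) \mapsto Y \otimes X$, being functorial). Next I would build a natural isomorphism $\theta_{X,Y} \colon i^* i_!(X \otimes Y) \xrightarrow{\cong} i^* i_!(Y \otimes X)$ as follows: by \cref{i-shriek-monoidal}, $i_!$ is strong monoidal for the geometric product, giving a natural isomorphism $i_!(X \otimes Y) \cong i_! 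X \otimes i_! Y$ in $\cSet_\FP$; since $\Sigma \in \FP$, the symmetry of the geometric product on $\cSet_\FP$ provides a natural isomorphism $i_! X \otimes i_! Y \cong i_! Y \otimes i_! X$; and a second application of strong monoidality identifies the latter with $i_!(Y \otimes X)$. Applying $i^*$ to the composite of these natural isomorphisms yields $\theta_{X,Y}$, natural in $X$ and $Y$.

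Assembling these pieces, the zigzag
\[
X \otimes Y \xrightarrow{\ \eta_{X \otimes Y}\ } i^* i_!(X \otimes Y) \xrightarrow{\ \theta_{X,Y}\ } i^* i_!(Y \otimes X) \xleftarrow{\ \eta_{Y \otimes X}\ } Y \otimes X
\]
consists of weak equivalences in the cubical Joyal model structure (an isomorphism being in particular a weak equivalence) and is natural in $X$ and $Y$, since each of its constituent arrows is; if a span is preferred one may instead compose the first two maps. I do not expect any serious obstacle: the only genuinely delicate point is the choice of $B$, which must at once verify the hypotheses of \cref{unit-tcof} that yield the cubical Joyal conclusion (namely $B \subseteq \FP$ and $B \cap \conset \neq \varnothing$) and contain $\Sigma$ so that $\cSet_B$ carries a symmetric geometric product — and $B = \FP$ satisfies all of these at once. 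One should also confirm that the symmetry isomorphism and the strong-monoidal coherence isomorphisms are natural in both variables, but this is immediate from the definitions of symmetric monoidal structure and strong monoidal functor.
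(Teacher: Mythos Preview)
Your proposal is correct and follows essentially the same approach as the paper: invoke \cref{unit-tcof} for the unit maps, use \cref{i-shriek-monoidal} and the symmetry of the geometric product in $\cSet_B$ to identify $i^* i_!(X \otimes Y)$ with $i^* i_!(Y \otimes X)$, and assemble the resulting zigzag. The only cosmetic difference is that the paper leaves $B$ slightly more general (any $B \subseteq \FP$ containing $A$, $\Sigma$, and a connection) rather than fixing $B = \FP$, but your specific choice is perfectly valid.
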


\begin{proof}
Again, let $B \subseteq \FP$ such that $A \subseteq B$ and $B$ contains $\Sigma$ and at least one of $\wedge, \vee$, and consider $i \colon \Box_A \hookrightarrow \Box_B$. By \cref{unit-tcof} we have a natural trivial cofibration $X \otimes Y \hookrightarrow i^* i_! (X \otimes Y)$. The codomain of this map is isomorphic to $i^*(i_! X \otimes i_! Y)$ by \cref{i-shriek-monoidal}; this, in turn, is isomorphic to $i^* (i_! Y \otimes i_! X)$ since the geometric product in $\cSet_B$ is symmetric. A further application of \cref{i-shriek-monoidal} shows that this is naturally isomorphic to $i^* i_! (Y \otimes X)$, and a further application of \cref{unit-tcof} provides a natural trivial cofibration $Y \otimes X \hookrightarrow i^* i_! (Y \otimes X)$.
\end{proof}

We now consider the use of \cref{unit-tcof} to induce model structures on categories $\cSet_A$ where $A$ contains at least one kind of connection, but is not a subset of $\conset$. In particular,  we will thus obtain models for $(\infty,1)$-categories using some kinds of cubical sets with symmetries.

We begin by reviewing the definition of an induced model structure.

\begin{Def}
Let $F : \mathsf{C} \rightleftarrows \mathsf{D} : U$ be an adjunction between model categories. The model structure on $\mathsf{C}$ is \emph{left induced} by $F$ if $F$ creates cofibrations and weak equivalences. Likewise, the model structure on $\mathsf{D}$ is \emph{right induced} by $U$ if $U$ creates fibrations and weak equivalences.
\end{Def}

Note that for a given adjunction $\mathsf{C} \rightleftarrows \mathsf{D}$ and a given model structure on $\mathsf{D}$, the left-induced model structure on $\mathsf{C}$ is unique, if one exists, since the definition determines its cofibrations and weak equivalences. Likewise, for a given model structure on $\mathsf{C}$, the right-induced model structure on $\mathsf{D}$ is unique, if one exists.

Our constructions will follow from the application of established results on the existence of induced model structures, enabled by \cref{unit-tcof}. For ease of reference, we package the model-categorical reasoning involved into the following general result.

\begin{prop}\label{induced-construction-general}
    Let $\mathsf{C}$ be a combinatorial model category, $\mathsf{D}$ a locally presentable category, and $F \colon \mathsf{D} \to \mathsf{C}$ a functor admitting a left adjoint $L$ and a right adjoint $R$. Suppose that the composite functor $FL$ preserves cofibrations, and the unit $\id_{\mathsf{C}} \Rightarrow FL$ is a natural weak equivalence. Then $\mathsf{D}$ admits both a left-induced model structure $\mathsf{D}_l$ and a right-induced model structure $\mathsf{D}_r$. Moreover:
    \begin{itemize}
        \item the adjunctions $L \adjoint F$ and $F \adjoint R$ are Quillen with respect to both $\mathsf{D}_r$ and $\mathsf{D}_l$, with $L \adjoint F$ being a Quillen equivalence in both cases;
        \item the adjunction $\id_{\mathsf{D}} : \mathsf{D}_r \rightleftarrows \mathsf{D}_l : \id_{\mathsf{D}}$ is a Quillen equivalence.
    \end{itemize}
\end{prop}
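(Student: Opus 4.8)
The plan is to derive all of the claims from the standard existence theorems for induced model structures on combinatorial model categories---the classical transfer theorem for $\mathsf{D}_r$, and its left-handed counterpart for combinatorial model categories (as developed by Makkai--Rosick\'y and by Hess--K\k{e}dziorek--Riehl--Shipley) for $\mathsf{D}_l$---so that the real content is the verification of the relevant acyclicity conditions. The one non-formal observation is that $F$, having both a left and a right adjoint, preserves all small colimits. Together with the hypotheses this yields the following, used throughout: if $I$, $J$ denote sets of generating cofibrations and generating trivial cofibrations of $\mathsf{C}$, then $FL$ sends each $i \in I$ to a cofibration of $\mathsf{C}$ (by hypothesis) and each $j \in J$ to a trivial cofibration of $\mathsf{C}$---the latter because $FLj$ is a cofibration by hypothesis, and is a weak equivalence by two-out-of-three applied to the naturality square of $\eta \colon \id_{\mathsf{C}} \Rightarrow FL$, whose components are weak equivalences.

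For $\mathsf{D}_r$, I would apply the transfer theorem along $L \adjoint F$. Smallness is automatic ($\mathsf{C}$ is combinatorial, so $I, J$ are sets; $\mathsf{D}$ is locally presentable, so $LI, LJ$ admit the small object argument), so it remains to check that $F$ sends relative $LJ$-cell complexes to weak equivalences. Since $F$ preserves pushouts and transfinite composites, the $F$-image of such a complex is built from the maps $FLj$, each a trivial cofibration by the first paragraph, using operations under which the trivial cofibrations of $\mathsf{C}$ are closed; hence it is a trivial cofibration, in particular a weak equivalence. So $\mathsf{D}_r$ exists and is combinatorial, with (trivial) cofibrations generated by $LI$ (resp.\ $LJ$) and with fibrations and weak equivalences created by $F$. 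For $\mathsf{D}_l$, I would apply the left-induction theorem along $F \adjoint R$ ($F$ being accessible since it has a left adjoint); the acyclicity condition amounts, in this case, to the statement that every map in the saturation of the class $\mathcal{J}$ of maps $g$ with $Fg$ a trivial cofibration of $\mathsf{C}$ is an $F$-weak equivalence. But $F$ preserves the colimits and retracts defining the saturation, and the trivial cofibrations of $\mathsf{C}$ are saturated, so $\mathcal{J}$ is already saturated and is contained in the $F$-weak equivalences; the condition is automatic. Thus $\mathsf{D}_l$ exists and is combinatorial, with cofibrations and weak equivalences the $F$-preimages of the cofibrations and weak equivalences of $\mathsf{C}$.

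The Quillen statements are then formal. The adjunction $L \adjoint F$ is Quillen for $\mathsf{D}_r$ by construction, and is Quillen for $\mathsf{D}_l$ because, for any cofibration (resp.\ trivial cofibration) $g$ of $\mathsf{C}$, the map $Lg$ is a $\mathsf{D}_l$-cofibration (resp.\ trivial cofibration) since $F(Lg) = FLg$ is one in $\mathsf{C}$ by the first paragraph; hence any $\mathsf{D}_l$-(trivial) fibration $p$ has the property that $Fp$ lifts against all (trivial) cofibrations of $\mathsf{C}$, so $F$ is right Quillen. Dually, $F \adjoint R$ is Quillen for $\mathsf{D}_l$ by construction and for $\mathsf{D}_r$ because $F$ carries the generators $LI, LJ$ of $\mathsf{D}_r$ to cofibrations (resp.\ trivial cofibrations) of $\mathsf{C}$ and preserves the saturation operations. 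In each case $L \adjoint F$ is a Quillen equivalence: $F$ creates weak equivalences, hence reflects them, and for every object $X$ the derived unit factors as $X \xrightarrow{\eta_X} FLX \to F(\widetilde{LX})$ with the first map a weak equivalence by hypothesis and the second a weak equivalence because it is $F$ applied to a fibrant replacement and $F$ preserves weak equivalences; so the standard criterion for Quillen equivalences applies. Finally $\id_{\mathsf{D}} \colon \mathsf{D}_r \rightleftarrows \mathsf{D}_l \colon \id_{\mathsf{D}}$ is Quillen because every $\mathsf{D}_r$-cofibration (resp.\ trivial cofibration) is sent by $F$ to a cofibration (resp.\ trivial cofibration) of $\mathsf{C}$---as just used---hence is a $\mathsf{D}_l$-cofibration (resp.\ trivial cofibration); and it is a Quillen equivalence since $\mathsf{D}_r$ and $\mathsf{D}_l$ share the same (namely $F$-created) weak equivalences, so the identity functor induces an equivalence of homotopy categories.

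The main obstacle I anticipate is none of these verifications, which are essentially bookkeeping, but rather quoting the left- and right-induction theorems in forms whose acyclicity hypotheses match this situation exactly---in particular ensuring that ``created by $F$'' is interpreted as a biconditional on $F$-images throughout, so that $F$ both preserves and reflects weak equivalences in $\mathsf{D}_r$ and in $\mathsf{D}_l$.
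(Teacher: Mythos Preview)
Your route differs from the paper's: rather than quoting the packaged results of Drummond-Cole--Hackney and Hackney--Rovelli on adjoint triples, you invoke the general right-transfer and left-induction theorems and verify their hypotheses by hand, exploiting throughout that $F$, having both adjoints, preserves all colimits. This is a legitimate and more self-contained approach. Your arguments for $\mathsf{D}_r$, for the four Quillen adjunctions, and for the Quillen equivalences are correct; in particular the observation that $F$ preserves pushouts and transfinite composites is exactly what makes the right-transfer acyclicity condition go through, and what later shows $F \colon \mathsf{D}_r \to \mathsf{C}$ is left Quillen (whence $F \dashv R$ is Quillen for $\mathsf{D}_r$).

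There is, however, a genuine gap in your treatment of $\mathsf{D}_l$. The acyclicity condition for left induction along $F \dashv R$ (in the HKRS/Makkai--Rosick\'y form) is \emph{not} that the saturation of $\mathcal{J} = F^{-1}(\mathrm{TrivCof}_{\mathsf{C}})$ consists of $F$-weak equivalences. That condition, as you observe, is trivially satisfied in every case---which should be a warning sign, since left-induced model structures do not always exist. The correct condition is that every map with the right lifting property against $F^{-1}(\mathrm{Cof}_{\mathsf{C}})$ is an $F$-weak equivalence. The fix is short and uses precisely the hypothesis you have not yet exploited on the $\mathsf{D}_l$ side: if $p$ lifts against every map in $F^{-1}(\mathrm{Cof}_{\mathsf{C}})$, then in particular it lifts against every $Lc$ with $c$ a cofibration of $\mathsf{C}$, since $F(Lc)$ is a cofibration by assumption; by the adjunction $L \dashv F$, this means $Fp$ lifts against every cofibration of $\mathsf{C}$, so $Fp$ is a trivial fibration, hence a weak equivalence.
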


\begin{proof}
    The composite functor $FL$ preserves cofibrations by assumption, and preserves weak equivalences by the assumption that $\id_{\mathsf{C}} \Rightarrow FL$ is a natural weak equivalence together with two-out-of-three. Thus $FL$ is a left Quillen functor; the existence of the right-induced model structure, and the fact that $L \adjoint F$ and $F \adjoint R$ are Quillen adjunctions with respect to this model structure, follow by \cite[Thm.~2.3]{drummond-cole-hackney:right-induced}. To see that $L \adjoint F$ is a Quillen equivalence between $\mathsf{C}$ and $\mathsf{D}_r$, we note that the definition of a right-induced model structure and our assumption on the unit of $L \adjoint F$ imply that hypothesis (c) of \cite[Cor.~1.3.16]{hovey:book} is satisfied.

    Next we show that the left-induced model structure $\mathsf{D}_l$ exists and that $L \adjoint F$ and $F \adjoint R$ are Quillen adjunctions with respect to this model structure as well. For this, we note that our assumption on the unit of $L \adjoint F$, together with \cite[Rmk.~1.11]{hackney-rovelli:left-induced}, implies that $L \adjoint F \adjoint R$ is a homotopy idempotent string in the sense of \cite[Def.~1.10]{hackney-rovelli:left-induced}; this, together with our previous observation that $FL$ is left Quillen, allows us to apply \cite[Thm.~1.13]{hackney-rovelli:left-induced}.

    That $\id_{\mathsf{D}} : \mathsf{D}_r \rightleftarrows \mathsf{D}_l : \id_{\mathsf{D}}$ is a Quillen equivalence follows from \cite[Prop.~1.16]{hackney-rovelli:left-induced}. We then see that $L : \mathsf{C} \rightleftarrows \mathsf{D}_l : F$ is a Quillen equivalence as the composite of $L : \mathsf{C} \rightleftarrows \mathsf{D}_r : F$ with $\id_{\mathsf{D}} : \mathsf{D}_r \rightleftarrows \mathsf{D}_l : \id_{\mathsf{D}}$.
\end{proof}

In the case of right-induced model structures, generating sets of cofibrations, trivial cofibrations, and anodyne maps can easily be obtained from those of the original model category via the following well-known result.

\begin{prop}[{cf.~\cite[Thm.~11.3.2]{hirschhorn:book}}]\label{right-gen-cofs}
Let $F : \catC \rightleftarrows \catD : U$ be an adjunction, with $\catC$ a model category, and let $I$ be a class of generating cofibrations (resp.~generating trivial cofibrations, pseudo-generating trivial cofibrations) for $\catC$. If the right-induced model structure on $\catD$ exists, then $FI$ is a class of generating cofibrations (resp.~generating trivial cofibrations, pseudo-generating trivial cofibrations) for this model structure.
\end{prop}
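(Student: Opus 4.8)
The plan is to reduce the whole statement to the classical adjunction between lifting problems. For $i\colon A\to B$ in $\catC$ and $p\colon X\to Y$ in $\catD$, the natural bijections $\catD(FB,X)\cong\catC(B,UX)$ and $\catD(FA,Y)\cong\catC(A,UY)$, together with their compatibility with composition, put the set of lifting problems of $Fi$ against $p$ in bijection with the set of lifting problems of $i$ against $Up$, carrying solutions to solutions. Hence $p$ has the right lifting property against $Fi$ if and only if $Up$ has the right lifting property against $i$, and therefore $p$ has the right lifting property against every map of $FI$ if and only if $Up$ has the right lifting property against every map of $I$.

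I would then feed this through the defining property of the right-induced model structure, namely that $U$ creates fibrations and weak equivalences (hence also trivial fibrations), handling the three cases in turn. If $I$ is a set of generating cofibrations for $\catC$, then $Up$ has the right lifting property against $I$ if and only if $Up$ is a trivial fibration in $\catC$, which by right-inducedness holds if and only if $p$ is a trivial fibration in $\catD$; thus the $FI$-injective maps are exactly the trivial fibrations of $\catD$, and since in any model category the cofibrations are precisely the maps with the left lifting property against all trivial fibrations, $FI$ generates the cofibrations of $\catD$. The case where $I$ is a set of generating trivial cofibrations is verbatim the same, replacing ``trivial fibration'' by ``fibration'' and ``cofibration'' by ``trivial cofibration'' throughout, and using that $U$ creates fibrations.

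For the pseudo-generating case, suppose $p\colon X\to Y$ is a map of $\catD$ with $Y$ fibrant. Since $U$ is a right adjoint it preserves the terminal object, so $UY$ is fibrant in $\catC$; and $U$ creates fibrations, so $p$ is a fibration in $\catD$ exactly when $Up$ is a fibration in $\catC$. Combining this with the defining property of a pseudo-generating set in $\catC$ (a map with fibrant codomain is a fibration if and only if it has the right lifting property against $I$) and with the equivalence of lifting properties from the first paragraph, one gets that the fibrations of $\catD$ with fibrant codomain are exactly the maps with fibrant codomain having the right lifting property against $FI$, which is precisely what it means for $FI$ to pseudo-generate the trivial cofibrations of the right-induced structure. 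The argument has no real obstacle; the only points needing care are that right adjoints preserve terminal objects (so that ``fibrant codomain'' transports along $U$) and that ``$U$ creates fibrations and weak equivalences'' is used in the combined form ``$U$ creates trivial fibrations'' in the first case. When one moreover wants $FI$ to be a generating set in the full cofibrantly generated sense — as in the applications via \cref{induced-construction-general} where $\catD$ is locally presentable — one notes additionally that all objects of $\catD$ are small, so $FI$ permits the small object argument.
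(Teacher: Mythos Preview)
Your proof is correct and follows essentially the same approach as the paper: both reduce the statement to the adjunction correspondence between lifting problems ($p$ has the right lifting property against $FI$ if and only if $Up$ has it against $I$) together with the fact that $U$ creates fibrations and trivial fibrations. The paper treats only the generating-cofibration case and dismisses the others as ``similar''; you spell out the pseudo-generating case explicitly, correctly noting that $U$ preserves the terminal object so that fibrancy of the codomain transports along $U$, which is the one extra ingredient needed there.
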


\begin{proof}
We consider the case of generating cofibrations; the other cases are similar. A morphism $f$ in $\catD$ is a trivial fibration in the right-induced model structure if and only if $Uf$ has the right lifting property with respect to all maps of $I$. This, in turn, holds if and only if $f$ has the right lifting property with respect to all maps of $FI$.
\end{proof}

We now construct the desired model structures on cubical sets.

\begin{thm}\label{induced-model-structures}
Let $\Box_A$ be a cube category containing at least one kind of connection. Then $\cSet_A$ admits the following model structures:
\begin{itemize}
\item the \emph{right-induced Grothendieck model structure}, in which weak equivalences and fibrations are created by $i^* \colon \cSet_A \to \cSet_\varnothing$ from the Grothendieck model structure on $\cSet_\varnothing$;
\item the \emph{left-induced Grothendieck model structure}, in which weak equivalences and cofibrations are created by $i^* \colon \cSet_A \to \cSet_\varnothing$ from the Grothendieck model structure on $\cSet_\varnothing$.
\end{itemize}
Moreover, if $\Box_A$ does not contain reversals, then $\cSet_A$ admits the following model structures:
\begin{itemize}
\item the \emph{right-induced cubical Joyal model structure}, in which weak equivalences and fibrations are created by $i^* \colon \cSet_A \to \cSet_\varnothing$ from the cubical Joyal model structure on $\cSet_\varnothing$;
\item the \emph{left-induced cubical Joyal model structure}, in which weak equivalences and cofibrations are created by $i^* \colon \cSet_A \to \cSet_\varnothing$ from the cubical Joyal model structure on $\cSet_\varnothing$.
\end{itemize}
Furthermore:
\begin{itemize}
    \item for each induced model structure, the adjunction $i_! : \cSet_\varnothing \rightleftarrows \cSet_A : i^*$ is a Quillen equivalence;
    \item for each induced model structure, the adjunction $i^* : \cSet_A \rightleftarrows \cSet_\varnothing : i_*$ is a Quillen adjunction;
    \item in each case, the identity adjunction on $\cSet_A$ is a Quillen equivalence between the left- and right-induced model structures, with the right-induced model structure as the domain of the left adjoint.
\end{itemize}
\end{thm}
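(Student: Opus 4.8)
The plan is to obtain the entire statement as a single application of \cref{induced-construction-general}. I would take $\mathsf{C} = \cSet_\varnothing$, equipped with either its Grothendieck or its cubical Joyal model structure; $\mathsf{D} = \cSet_A$; and $F = i^* \colon \cSet_A \to \cSet_\varnothing$ the forgetful functor induced by the inclusion $i \colon \Box_\varnothing \hookrightarrow \Box_A$, whose left and right adjoints are $L = i_!$ and $R = i_*$ (the adjoint triple of \cref{sec i-shriek}). Here $\mathsf{D}$ is locally presentable as a presheaf category, and $\mathsf{C}$ is combinatorial: it is locally presentable, and both the Grothendieck and cubical Joyal model structures on $\cSet_\varnothing$ are cofibrantly generated (\cref{Grothendieck-ms-cSet,cubical-Joyal}).

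It then remains to check the two hypotheses of \cref{induced-construction-general}. That $FL = i^* i_!$ preserves cofibrations is routine: the cofibrations are the monomorphisms, $i_!$ preserves monomorphisms by \cref{i-shriek-monos} (applicable here since the source cube category $\Box_\varnothing$ satisfies $\varnothing \subseteq \conset$), and $i^*$ preserves monomorphisms as a right adjoint. The substantive hypothesis -- that the unit $\id_{\cSet_\varnothing} \Rightarrow i^* i_!$ is a natural weak equivalence -- is exactly \cref{unit-tcof}, with the roles of ``$A$'' and ``$B$'' there played by $\varnothing$ and our $A$ respectively. Since $\Box_A$ contains at least one kind of connection, $A$ contains an element of $\conset$, so \cref{unit-tcof} gives that the unit is a natural trivial cofibration in the Grothendieck model structure on $\cSet_\varnothing$; and when $\Box_A$ contains no reversals we have $A \subseteq \FP$, whence \cref{unit-tcof} further gives that the unit is a natural trivial cofibration in the cubical Joyal model structure on $\cSet_\varnothing$.

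With both hypotheses verified, \cref{induced-construction-general} yields the left- and right-induced model structures $\mathsf{D}_l$ and $\mathsf{D}_r$ on $\cSet_A$ -- in the Grothendieck case unconditionally, in the cubical Joyal case under the no-reversals hypothesis -- together with all three of the ``Furthermore'' assertions ($i_! \adjoint i^*$ a Quillen equivalence and $i^* \adjoint i_*$ a Quillen adjunction for each induced structure, and $\id_{\cSet_A}$ a Quillen equivalence from the right-induced to the left-induced structure). To match the phrasing in the statement one should unwind the conventions set in the definition preceding \cref{induced-construction-general}: for the adjunction $i_! : \cSet_\varnothing \rightleftarrows \cSet_A : i^*$ the right adjoint $i^*$ creates the fibrations and weak equivalences, giving $\mathsf{D}_r$, while for $i^* : \cSet_A \rightleftarrows \cSet_\varnothing : i_*$ the left adjoint $i^*$ creates the cofibrations and weak equivalences, giving $\mathsf{D}_l$. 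The only point requiring care -- rather than a genuine obstacle, since the hard work is already carried out in \cref{N-closed-anodyne,unit-tcof,induced-construction-general} -- is the bookkeeping of the two hypotheses of \cref{unit-tcof} against the two families of model structures: the presence of a connection is precisely what makes the Grothendieck case go through in all cases, while the absence of reversals is precisely the condition $B \subseteq \FP$ needed to upgrade to the cubical Joyal setting.
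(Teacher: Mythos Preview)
Your proposal is correct and follows essentially the same approach as the paper: verify the hypotheses of \cref{induced-construction-general} for $F = i^* \colon \cSet_A \to \cSet_\varnothing$, using \cref{i-shriek-monos} (plus $i^*$ being a right adjoint) for the cofibration-preservation hypothesis and \cref{unit-tcof} for the unit hypothesis. Your write-up is simply more explicit than the paper's, spelling out the combinatoriality of $\cSet_\varnothing$, the local presentability of $\cSet_A$, and the correspondence between the connection/no-reversal hypotheses and the two clauses of \cref{unit-tcof}.
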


\begin{proof}
We verify the hypotheses of \cref{induced-construction-general} with respect to $i^* \colon \cSet_A \to \cSet_\varnothing$. The composite functor $i^* i_!$ preserves cofibrations by \cref{i-shriek-monos} and the fact that $i^*$ preserves monomorphisms as a right adjoint, while the unit of $i_! \adjoint i^*$ is a natural weak equivalence by \cref{unit-tcof}. 
\end{proof}

We devote the remainder of this section to analysis of the model structures of \cref{induced-model-structures}.

\begin{Def}[{cf~\cite[Defs.~8.1.23 \& 8.1.30]{CisinskiAsterisque}}]\label{normal-mono-def}
    A monomorphism $X \to Y$ in a category of cubical sets $\cSet_A$ is \emph{normal} if for every $n \geq 0$, the automorphism group of $\Box^n_A$ acts freely on the non-degenerate elements of $Y_n \setminus X_n$. 

    A cubical set $X \in \cSet_A$ is \emph{normal} if $\varnothing \hookrightarrow X$ is a normal monomorphism, ie, if the automorphism group of each $\Box^n_A$ acts freely on the non-degenerate cubes of $X$.
\end{Def}

In particular, we may note that if $\Box_A$ does not contain symmetries or reversals, then all automorphism groups of objects of $\Box_A$ are trivial, so that all monomorphisms of $\cSet_A$ are normal.

\begin{prop}\label{induced-cofs}
The left-induced model structures of \cref{induced-model-structures} have monomorphisms as their cofibrations. In the right-induced model structures of \cref{induced-model-structures}, the cofibrations are generated by the set of boundary inclusions. In particular, if $A$ does not contain diagonals, then the cofibrations of the right-induced model structures are the normal monomorphisms.
\end{prop}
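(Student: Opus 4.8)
The plan is to handle the three assertions in turn, deriving the first two formally from the definition of an induced model structure and the last from the Eilenberg--Zilber structure on $\Box_A$. For the left-induced model structures, recall that $i^*$ creates cofibrations, so a map $f$ of $\cSet_A$ is a cofibration precisely when $i^* f$ is a monomorphism; since $i^* X$ has the same set of $n$-cubes as $X$ for each $n$, with $i^* f$ acting on these sets exactly as $f$ does, $i^* f$ is a monomorphism if and only if $f$ is, which gives the claim.

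For the right-induced model structures, I would apply \cref{right-gen-cofs} to the adjunction $i_! \adjoint i^*$: a generating set of cofibrations for $\cSet_\varnothing$ (equipped with either its Grothendieck or its cubical Joyal model structure) is sent by $i_!$ to a generating set of cofibrations for the right-induced model structure on $\cSet_A$. As $\varnothing \subseteq \conset$, one may take this generating set to consist of the boundary inclusions $\bd \Box^n_\varnothing \hookrightarrow \Box^n_\varnothing$ (by \cite[Lem.~1.32]{doherty-kapulkin-lindsey-sattler}), and by \cref{i-shriek-open-box} these are carried to the boundary inclusions $\bd \Box^n_A \hookrightarrow \Box^n_A$; hence the latter generate the cofibrations in the right-induced case.

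Finally, to identify this class of cofibrations with the normal monomorphisms when $\delta \notin A$: in that case $\Box_A$ is an Eilenberg--Zilber category by \cref{cube-EZ}, and (as noted after \cref{bdry-colim}) $\bd \Box^n_A$ coincides with the boundary of the representable presheaf in the associated Eilenberg--Zilber structure, so it suffices to show that the saturation of the boundary inclusions is precisely the class of normal monomorphisms of \cref{normal-mono-def}. I would cite the standard treatment of this for presheaves on an Eilenberg--Zilber category (see \cite[Sec.~8.1]{CisinskiAsterisque}); alternatively, one can argue directly, filtering a normal monomorphism $X \hookrightarrow Y$ by the subcomplexes generated by $X$ together with the $\mathrm{Aut}(\Box^n_A)$-orbits of non-degenerate cubes of $Y$ of dimension at most $n$, and observing that normality makes the attachment of each new free orbit a pushout of a coproduct of copies of the boundary inclusion $\bd \Box^n_A \hookrightarrow \Box^n_A$. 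This last identification is the only nontrivial point, and it is exactly here that the hypothesis $\delta \notin A$ is used, through the Eilenberg--Zilber structure provided by \cref{cube-EZ}; in the presence of diagonals no such structure is available and the argument breaks down.
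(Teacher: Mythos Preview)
Your proof is correct and follows essentially the same approach as the paper: for the left-induced case you use that $i^*$ reflects and preserves monomorphisms, for the right-induced case you combine \cref{right-gen-cofs} with \cref{i-shriek-open-box}, and for the identification with normal monomorphisms you invoke the Eilenberg--Zilber structure of \cref{cube-EZ} together with the Cisinski result (the paper cites \cite[Prop.~8.1.35]{CisinskiAsterisque} specifically). Your additional sketch of a direct skeletal argument is a reasonable elaboration but not needed.
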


\begin{proof}
For the left-induced model structures, we note that by the definition of $i^*$ it is immediate that a map $f$ in $\cSet_A$ is a monomorphism if and only if $i^* f$ is a monomorphism.

The characterization of the generating cofibrations in the right-induced model structure is immediate from \cref{i-shriek-open-box,right-gen-cofs}. In the case of cubical sets without diagonals, it follows that the cofibrations are the normal monomorphisms by \cite[Prop.~8.1.35]{CisinskiAsterisque} and \cite[Thm.~7.9]{campion:EZ-cubes}.
\end{proof}

\begin{cor}\label{right-induced-cof-obs}
   For $A$ as in the statement of \cref{induced-model-structures}, if $\delta \notin A$, then the cofibrant objects of both right-induced model structures on $\cSet_A$ are the normal cubical sets. \qed
\end{cor}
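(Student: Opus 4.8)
The plan is to reduce the statement to the description of cofibrations in the right-induced model structures that has already been recorded in \cref{induced-cofs}, so there is almost nothing new to do. First I would recall the general fact that an object $X$ of any model category is cofibrant precisely when the unique morphism from the initial object to $X$ is a cofibration; in $\cSet_A$ the initial object is the empty cubical set $\varnothing$. Next, since $\delta \notin A$ by hypothesis, \cref{induced-cofs} tells us that in each of the right-induced model structures on $\cSet_A$ the cofibrations are exactly the normal monomorphisms. Combining these two observations, $X$ is cofibrant in either right-induced model structure if and only if the inclusion $\varnothing \hookrightarrow X$ is a normal monomorphism.

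It then remains only to unwind \cref{normal-mono-def}: the monomorphism $\varnothing \hookrightarrow X$ is normal exactly when, for every $n$, the automorphism group of $\Box^n_A$ acts freely on the non-degenerate cubes of $X$, which is precisely what it means for $X$ to be a normal cubical set. This gives the claim for both right-induced model structures simultaneously. There is essentially no obstacle here: all the genuine content sits in \cref{induced-cofs} (which in turn rests on \cite[Prop.~8.1.35]{CisinskiAsterisque} together with the Eilenberg--Zilber structure of \cite[Thm.~7.9]{campion:EZ-cubes}), and the only point requiring a moment's attention is checking that the argument applies uniformly to both the right-induced Grothendieck and the right-induced cubical Joyal model structures — which it does, since \cref{induced-cofs} treats them on the same footing and the hypotheses of \cref{induced-model-structures} guarantee that each is available under the stated assumptions.
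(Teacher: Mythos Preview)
Your proposal is correct and matches the paper's approach: the paper marks this corollary with \qed, treating it as immediate from \cref{induced-cofs} and \cref{normal-mono-def}, which is exactly the reduction you carry out.
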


\begin{prop}\label{induced-gen-tcofs}
    In the right-induced Grothendieck model structures, the open box inclusions form a generating set of trivial cofibrations. In the right-induced cubical Joyal model structures, the inner open box inclusions and endpoint inclusions into $K$ form a set of pseudo-generating trivial cofibrations.
\end{prop}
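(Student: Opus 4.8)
The plan is to obtain both assertions as direct applications of \cref{right-gen-cofs} to the adjunction $i_! \colon \cSet_\varnothing \rightleftarrows \cSet_A \colon i^*$, exploiting the fact that each right-induced model structure on $\cSet_A$ is, by its very construction in \cref{induced-model-structures}, right-induced along the right adjoint $i^*$ from the corresponding model structure on $\cSet_\varnothing$. Since \cref{induced-model-structures} already guarantees that these right-induced model structures exist (the cubical Joyal one under the standing hypothesis that $\Box_A$ contains no reversals), the hypotheses of \cref{right-gen-cofs} are in force with $\catC = \cSet_\varnothing$, $\catD = \cSet_A$, $F = i_!$ and $U = i^*$, and it only remains to identify $i_! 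I$ for the relevant generating set $I$.

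For the Grothendieck case, I would take $I$ to be the set of open box inclusions, which is a generating set of trivial cofibrations for the Grothendieck model structure on $\cSet_\varnothing$ as noted after \cref{Grothendieck-ms-cSet}. Applying \cref{i-shriek-open-box} with its ``$A$'' instantiated as $\varnothing$ and its ``$B$'' as the present $A$, the functor $i_!$ carries each open box inclusion in $\cSet_\varnothing$ to the corresponding open box inclusion in $\cSet_A$, so $i_! I$ is exactly the set of open box inclusions in $\cSet_A$; \cref{right-gen-cofs} then gives that this set generates the trivial cofibrations of the right-induced Grothendieck model structure.

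For the cubical Joyal case, I would take $I$ to be the set consisting of the inner open box inclusions together with the endpoint inclusions $\Box^0_\varnothing \hookrightarrow K$, which is a set of pseudo-generating trivial cofibrations for the cubical Joyal model structure on $\cSet_\varnothing$ as noted after \cref{cubical-Joyal}. Again by \cref{i-shriek-open-box}, $i_!$ sends each inner open box inclusion to the corresponding inner open box inclusion in $\cSet_A$ and each endpoint inclusion into $K$ to the corresponding endpoint inclusion into $K$, so \cref{right-gen-cofs} yields the second assertion. There is no genuine obstacle here: all of the substantive content lies in \cref{induced-model-structures} and \cref{i-shriek-open-box}, and this proposition is a formal bookkeeping consequence of those two results together with \cref{right-gen-cofs}.
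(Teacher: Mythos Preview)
Your proposal is correct and follows exactly the paper's approach: the paper's proof simply states that the result is immediate from \cref{i-shriek-open-box} and \cref{right-gen-cofs}, which is precisely the argument you have spelled out in detail.
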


\begin{proof}
    As in the proof of \cref{induced-cofs}, this is immediate from \cref{i-shriek-open-box,right-gen-cofs}.
\end{proof}

\begin{rmk}\label{i-induction-consistency}
For any $A \subseteq B \subseteq \FS$ with $A \subseteq \conset$, we could instead construct the induced model structures on $\cSet_B$ of \cref{induced-model-structures} using $i^* \colon \cSet_B \to \cSet_A$. To see that the model structure thus produced is independent of $A$, we may first note that by \cref{Quillen-equiv-empty}, the class of weak equivalences in $\cSet_B$ created by $i^* \colon \cSet_B \to \cSet_A$ coincides with that created by $i^* \colon \cSet_B \to \cSet_\varnothing$. 
We could then apply the argument of \cref{induced-cofs} verbatim to obtain identical characterizations of both model structures' cofibrations.
\end{rmk}

Note that all of the cube categories under consideration here, except for those which contain diagonals but not symmetries, are shown to be test categories in \cite[Cor.~3]{buchholtz-morehouse:varieties-of-cubes}. With this in mind, we may obtain an alternative characterization of the left-induced Grothendieck model structures of \cref{induced-model-structures} in these cases. 

\begin{prop} \label{left-induced-test}
Let $\Box_A$ be a cube category containing at least one kind of connection which is also a test category. Then the left-induced Grothendieck model structure of \cref{induced-model-structures} coincides with the test model structure on $\cSet_A$.
\end{prop}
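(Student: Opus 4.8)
The plan is to reduce the statement to an identification of the two model structures' weak equivalences, and then to recognize these as the test weak equivalences of $\cSet_A$ by feeding a contractibility fact into the theory of test categories. To begin, both model structures in question have the monomorphisms as their cofibrations: for the test model structure this is part of its construction, and for the left-induced Grothendieck model structure it is \cref{induced-cofs}. Since a model structure is determined by its cofibrations together with its weak equivalences, it therefore suffices to show that the two classes of weak equivalences coincide. Writing $i\colon \Box_\varnothing \hookrightarrow \Box_A$ for the inclusion, by construction (and \cref{induced-model-structures}) the weak equivalences of the left-induced Grothendieck model structure on $\cSet_A$ are exactly the maps $f$ with $i^* f$ a weak equivalence of the Grothendieck model structure on $\cSet_\varnothing$, which by \cref{Grothendieck-ms-cSet} is the test model structure on $\cSet_\varnothing$. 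Thus the proposition is equivalent to the assertion that $i^*\colon \cSet_A \to \cSet_\varnothing$ creates test weak equivalences.

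This assertion is of the form guaranteed by the theory of test categories. Both $\Box_\varnothing$ and $\Box_A$ are test categories (the latter by hypothesis), and a functor $u$ between test categories which is \emph{aspherical} --- i.e.\ each of its comma categories over an object has weakly contractible nerve --- induces an adjunction $u_! \adjoint u^*$ that is a Quillen equivalence of the associated test model structures, with $u^*$ creating weak equivalences (see, e.g., \cite{CisinskiAsterisque}). So it remains only to check that $i$ is aspherical. For an object $\Box^n_A$, the relevant comma category is canonically the category of elements $\Box_\varnothing / i^*\Box^n_A$, whose nerve computes the realization of $i^*\Box^n_A$ up to weak equivalence (the representables of a test category being weakly contractible); hence it is enough to show that $i^*\Box^n_A$ is weakly contractible in $\cSet_\varnothing$.

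For this, note that since $\Box_A$ contains at least one kind of connection we have $\wedge \in A$ or $\vee \in A$, so \cref{unit-tcof} applies to the inclusion $\varnothing \subseteq A$: the unit component $\Box^n_\varnothing \to i^* i_! \Box^n_\varnothing$ is a trivial cofibration in the Grothendieck model structure on $\cSet_\varnothing$. As $i_! \Box^n_\varnothing \cong \Box^n_A$ by \cref{i-shriek-on-reps}, this gives a weak equivalence $\Box^n_\varnothing \to i^*\Box^n_A$; and $\Box^n_\varnothing$ is weakly contractible, since $\Box^n_\varnothing = (\Box^1_\varnothing)^{\otimes n}$ and the projection $\Box^1_\varnothing \to \Box^0_\varnothing$ off a cylinder object on $\Box^0_\varnothing$ is a weak equivalence, using that the Grothendieck model structure is monoidal for the geometric product (\cref{geometric-monoidal-models}). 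This verifies that $i$ is aspherical, and the proposition follows.

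The crux is the input from the theory of test categories recalled in the second paragraph --- that an aspherical functor between test categories induces a Quillen equivalence of test model structures, equivalently that $i^*$ creates test weak equivalences; everything else is formal, resting on \cref{unit-tcof} and \cref{induced-model-structures}. If one prefers to isolate the formal content, an alternative organization is available: first show that the class $(i^*)^{-1}$ of the Grothendieck weak equivalences of $\cSet_\varnothing$ is a localizer on $\cSet_A$ --- using only that $i^*$ preserves colimits (being left adjoint to $i_*$) and monomorphisms (being right adjoint to $i_!$, by \cref{i-shriek-monos}) and carries trivial fibrations to trivial fibrations (again by \cref{i-shriek-monos}) --- so that, by minimality of the test localizer, the identity functor $(\cSet_A, \mathrm{test}) \to (\cSet_A, \mathrm{li})$ is left Quillen; then apply the two-out-of-three property for Quillen equivalences to the factorization of the Quillen equivalence $i_!\colon (\cSet_\varnothing, \mathrm{Groth}) \to (\cSet_A, \mathrm{li})$ of \cref{induced-model-structures} through the Quillen equivalence $i_!\colon (\cSet_\varnothing, \mathrm{Groth}) \to (\cSet_A, \mathrm{test})$, concluding that this identity functor is a Quillen equivalence and hence that the two model structures coincide.
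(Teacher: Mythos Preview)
Your proof is correct and takes essentially the same approach as the paper: both reduce to showing that $i^*$ creates test weak equivalences, and both verify this by showing $i^*\Box^n_A$ is contractible in $\cSet_\varnothing$ via \cref{unit-tcof}. The paper compresses your second paragraph into a single citation of \cite[Thm.~4.2.23]{CisinskiAsterisque} (implication $(b'')\Rightarrow(d)$), which is precisely the asphericity criterion you spell out; your alternative localizer argument is a genuinely different route, but the main proof matches the paper's.
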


\begin{proof}
Both the test model structure and the left-induced Grothendieck model structure have monomorphisms as their cofibrations (the latter by \cref{induced-cofs}), so it suffices to show that the weak equivalences of the test model structure are created by $i^* \colon \Box_{A} \to \Box_{\varnothing}$. For this, by \cite[Thm.~4.2.23]{CisinskiAsterisque} (implication $(b'') \Rightarrow (d)$) it suffices to show that $i^* \Box^n_A$ is contractible in the Grothendieck model structure on $\cSet_{\varnothing}$ for all $n$.  This, in turn, follows from \cref{unit-tcof} and the contractibility of $\Box^n_\varnothing$ in the Grothendieck model structure.
\end{proof}

\begin{rmk} \label{HR-comparison}
    In the case $A \subseteq \conset$, we have Quillen equivalences $T : \cSet_A \rightleftarrows \sSet : U$ between the cubical Joyal (resp.~Grothendieck) model structure on $\cSet_A$ and the Joyal (resp.~Quillen) model structure on $\sSet$ \cite[Thms.~6.1 \& 6.26]{doherty-kapulkin-lindsey-sattler}. The left adjoint $T$, the \emph{triangulation functor}, which sends each cube $\Box^n_A$ to the simplicial set $(\Delta^1)^n$, creates the weak equivalences of $\cSet_A$ in both cases. Thus, for any inclusion of cube categories $i \colon \Box_A \hookrightarrow \Box_B$ with $A \subseteq \conset$, the weak equivalences of the induced model structures on $\cSet_B$ are created by the composite functor $Ti^* \colon \cSet_B \to \sSet$.

    The triangulation functor can be similarly defined for any cube category $\Box_B$ not containing reversals. For the case $B = \{ \vee, \Sigma, \delta \}$ (and similarly for the isomorphic case $B = \{ \wedge, \Sigma, \delta \}$), \cite[Thm.~7.8 \& Cor.~7.23]{cavallo-sattler:relative-elegance} shows that $T \colon \cSet_B \to \sSet$, where $\sSet$ is equipped with the Quillen model structure, creates the weak equivalences of the test model structure. Thus $T$ creates the same class of weak equivalences as $i^*$ by \cref{left-induced-test}. In the case $B = \FP$, it can similarly be shown that $T$ creates the weak equivalences of the test model structure using the characterization of $T$ given in \cite{sattler:idempotent-completion}.
    
  In general, it is an open question whether $T \colon \cSet_B \to \sSet$ (where $\sSet$ is equipped with the Quillen or Joyal model structure as appropriate) creates the weak equivalences of the induced model structures on $\cSet_B$. In the case $B = \FP$, proving this for the left-induced cubical Joyal model structure would be equivalent to proving that it coincides with that constructed in \cite[Prop.~2.3]{hackney-rovelli:left-induced} under the name $\cSet_{(m,1)}$, which also has monomorphisms as its cofibrations and has weak equivalences created by triangulation. 
\end{rmk}

\section{Cartesian monoidality} \label{sec monoidal}

We now consider the Cartesian product of cubical sets, with the aim of showing that the cubical Joyal model structures on categories $\cSet_A$ with $A$ a non-empty subset of $\conset$ are cartesian monoidal. We will do this using a natural map between the geometric and cartesian products, which we will show to be a trivial cofibration in the cubical Joyal model structure. Our techniques will also allow us to obtain a new proof of cartesian monoidality for the Grothendieck model structures on these categories. (That the Grothendieck model structures are cartesian monoidal was previously known, as a consequence of the fact that in these cases $\Box_A$ is a strict test category -- see \cite[Prop.~4.3]{maltsiniotis:connections-strict-test-cat} as well as \cite[Thm.~1.7]{CisinskiUniverses}.)

We begin with a general result concerning pushout products in model categories.

\begin{lem}\label{pop-axiom-sufficient}
    Let $\mathcal{C}$ be a model category equipped with a bifunctor $\odot \colon \mathcal{C} \times \mathcal{C} \to \mathcal{C}$. Suppose the following are true:
    \begin{itemize}
        \item if $i$ and $j$ are cofibrations in $\mathcal{C}$, then so is the pushout product $i \widehat{\odot} j$;
        \item the bifunctor $\odot$ preserves the cofibrations and trivial cofibrations of $\mathcal{C}$ in each variable.
    \end{itemize}
    Then a pushout product of cofibrations $i \widehat{\odot} j$ is a trivial cofibration if either $i$ or $j$ is trivial.
\end{lem}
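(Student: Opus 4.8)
The plan is to reduce everything to the defining pushout square of the pushout product, together with the stability of trivial cofibrations under pushout and the two-out-of-three property, treating the cases ``$i$ trivial'' and ``$j$ trivial'' separately since $\odot$ is not assumed symmetric. Write $i \colon A \to B$ and $j \colon C \to D$ for the two cofibrations, so that $i \widehat{\odot} j$ is the canonical map from the pushout $P = (A \odot D) \cup_{A \odot C} (B \odot C)$ to $B \odot D$. By the first hypothesis, $i \widehat{\odot} j$ is already a cofibration, so it remains only to show that it is a weak equivalence.

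Suppose first that $i$ is a trivial cofibration. Then, since $\odot$ preserves trivial cofibrations in each variable, both $i \odot C \colon A \odot C \to B \odot C$ and $i \odot D \colon A \odot D \to B \odot D$ are trivial cofibrations. In the pushout square defining $P$, the leg $A \odot D \to P$ is the pushout of $i \odot C$ along $A \odot j \colon A \odot C \to A \odot D$, and hence is itself a trivial cofibration. On the other hand, the composite of this leg with $i \widehat{\odot} j \colon P \to B \odot D$ is exactly $i \odot D$, which is a weak equivalence. Two-out-of-three then forces $i \widehat{\odot} j$ to be a weak equivalence, and therefore a trivial cofibration.

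If instead $j$ is a trivial cofibration, I would run the mirror-image argument, interchanging the two legs of the pushout: now $A \odot j \colon A \odot C \to A \odot D$ and $B \odot j \colon B \odot C \to B \odot D$ are trivial cofibrations, the leg $B \odot C \to P$ is a trivial cofibration as a pushout of $A \odot j$, and its composite with $i \widehat{\odot} j$ is $B \odot j$; two-out-of-three again yields the result.

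The only real point of care — and the reason one cannot simply invoke symmetry to collapse the two cases — is that $\odot$ is merely a bifunctor; the argument must keep track of which leg of the pushout and which one-variable instance of $\odot$ is used in each case, and verify that the relevant composite through $P$ recovers $i \odot D$ (respectively $B \odot j$). Beyond this bookkeeping, there is no genuine obstacle.
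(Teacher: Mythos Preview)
Your proof is correct and follows essentially the same approach as the paper's: both use that $\odot$ applied in one variable to the trivial cofibration yields trivial cofibrations, push one of these out to obtain a trivial cofibration into the pushout object, and then apply two-out-of-three to the composite through $P$. The only difference is cosmetic: the paper writes out the case where $j$ is trivial and declares the other case similar, while you treat both cases explicitly.
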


\begin{proof}
    Let $i \colon A \to B$ and $j \colon X \to Y$ be a pair of cofibrations in $\mathcal{C}$, and suppose that $j$ is trivial; the case where $i$ is trivial is similar. The pushout product $i \widehat{\odot} j$ is a cofibration by assumption, so we need only show that it is a weak equivalence.

    We first note that $A \odot j \colon A \odot X \to A \odot Y$ is a trivial cofibration by assumption. Thus its pushout $B \odot X \to A \odot Y \cup_{A \odot X} B \odot X$ is a trivial cofibration as well. Likewise, $B \odot j \colon B \odot X \to B \odot Y$ is a trivial cofibration by assumption.

    Now consider the following commuting diagram:
    \[
    \begin{tikzcd}
        B \odot X \ar[r] \ar[dr,swap,"B \odot j"] & A \odot X \cup_{A \odot X} B \odot X \ar[d, "i \widehat{\odot} j"] \\
        & B \odot Y \\
    \end{tikzcd}
    \]
    It follows that $i \widehat{\odot} j$ is a weak equivalence by two-out-of-three.
\end{proof}

\subsection{Comparison of the geometric and cartesian products}

We now proceed to our analysis of the geometric and cartesian products of cubical sets. Throughout this subsection, fix $A \subseteq B \subseteq \FP$, with $\Sigma, \delta \in B$. This implies, in particular, that for each $n \geq 0$ there is a map $D_n \colon \Box^n_B \to \Box^{2n}_B$ sending $(a_1,\ldots,a_n)$ to $(a_1,\ldots,a_n,a_1,\ldots,a_n)$. (For $n = 0$ this is the identity.)

To aid in understanding the maps we will use in our proofs, we define the following composites of degeneracy morphisms for $m, n \geq 0$.
\begin{itemize}
    \item The map $\sigma^F_m \colon \Box^{m+n}_A \to \Box^n_A$ deletes the first $m$ components, sending $(a_1,\ldots,a_m,b_1,\ldots,b_n)$ to $(b_1,\ldots,b_n)$.
    \item The map $\sigma^L_n \colon \Box^{m+n}_A \to \Box^m_N$ deletes the last $n$ components, sending $(a_1,\ldots,a_m,b_1,\ldots,b_n)$ to $(a_1,\ldots,a_m)$.
\end{itemize}

(In particular, both $\sigma^F_0$ and $\sigma^L_0$ are identities.) By straightforward calculations, we can obtain the following lemmas involving the maps defined above.

\begin{lem}\label{long-diagonal-duplicates}
    For any $\phi \colon \Box^m_B \to \Box^n_B$, we have $D_n \phi = (\phi \otimes \phi) D_m$. \qed
\end{lem}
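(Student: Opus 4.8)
The plan is to prove the identity by evaluating both sides on elements, using the fact that $\Box_B$ is a (not necessarily full) subcategory of $\Set$, so that a morphism of $\Box_B$ is determined by its underlying function, and that the geometric product on $\Box_B$ is by construction the restriction of the cartesian product of sets. Under this identification, for $\psi \colon \Box^m_B \to \Box^n_B$ the morphism $\psi \otimes \psi \colon \Box^m_B \otimes \Box^m_B \to \Box^n_B \otimes \Box^n_B$, regarded as a map $\Box^{2m}_B \to \Box^{2n}_B$, is the function sending a pair $(\avec,\avec')$ of elements of $\Box^m_B$ to $(\psi(\avec),\psi(\avec'))$.

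With this in hand, I would observe that both $D_n \phi$ and $(\phi \otimes \phi) D_m$ are morphisms $\Box^m_B \to \Box^{2n}_B$ in $\Box_B$, so it suffices to check that they agree as functions. Given $\avec \in \Box^m_B$, the definition of $D_m$ gives $D_m(\avec) = (\avec,\avec)$, whence $(\phi \otimes \phi)(D_m(\avec)) = (\phi(\avec),\phi(\avec))$; on the other hand, the definition of $D_n$ gives $D_n(\phi(\avec)) = (\phi(\avec),\phi(\avec))$. Hence the two functions coincide, and therefore so do the two morphisms of $\Box_B$.

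This argument is purely computational, so I do not anticipate any genuine obstacle; the only point worth stating with a little care is the identification of $\phi \otimes \phi$ with the ``diagonal'' product function, which follows directly from the description of the geometric product on $\Box_B$ as the restriction of the cartesian product on $\Set$ (the same description one uses to note that $D_n$ itself, built from diagonals and symmetries, lies in $\Box_B$ when $\Sigma,\delta \in B$).
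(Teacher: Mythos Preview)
Your proposal is correct and matches the paper's approach: the paper states the lemma with a bare \qed, having noted just before that the identities ``follow by straightforward calculations,'' which is precisely the elementwise verification you carry out. There is nothing to add.
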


\begin{lem}\label{long-diagonal-retraction}
    For any $m, n \geq 0$, the map $\sigma^L_n \otimes \sigma^F_m \colon \Box^{m+n}_A \otimes \Box^{m+n}_A \to \Box^m_A \otimes \Box^n_A$ is a retraction of $D_{m+n}$. \qed 
\end{lem}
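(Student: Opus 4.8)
The plan is to check the identity $(\sigma^L_n \otimes \sigma^F_m)\, D_{m+n} = \id$ directly, as an equality of functions on underlying sets. Both the composite and the identity are morphisms $\Box^{m+n}_B \to \Box^{m+n}_B$ in $\Box_B$: indeed $D_{m+n}$ has codomain $\Box^{2(m+n)}_B = \Box^{m+n}_B \otimes \Box^{m+n}_B$, and $\sigma^L_n \otimes \sigma^F_m$ -- a tensor of composites of projections, hence a morphism of $\Box_A$ and a fortiori of $\Box_B$ -- goes from there to $\Box^m_B \otimes \Box^n_B = \Box^{m+n}_B$. Since $\Box_B$ is a subcategory of $\Set$, it suffices to verify that these two maps agree as functions $\zo^{m+n} \to \zo^{m+n}$.

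First I would spell out the action of the tensor map. Because the geometric product of morphisms in a cube category is the cartesian product of the underlying functions, $\sigma^L_n \otimes \sigma^F_m$ sends a tuple, written in block form as $\bigl((c_1,\ldots,c_{m+n}),(d_1,\ldots,d_{m+n})\bigr) \in \zo^{m+n} \times \zo^{m+n}$, to $\bigl(\sigma^L_n(c_1,\ldots,c_{m+n}),\,\sigma^F_m(d_1,\ldots,d_{m+n})\bigr) = (c_1,\ldots,c_m,\,d_{m+1},\ldots,d_{m+n})$, using that $\sigma^L_n$ deletes the last $n$ coordinates and $\sigma^F_m$ deletes the first $m$. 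Then, given $\avec = (a_1,\ldots,a_{m+n}) \in \zo^{m+n}$, we have $D_{m+n}(\avec) = (a_1,\ldots,a_{m+n},a_1,\ldots,a_{m+n})$, i.e.\ the block form with $c_i = d_i = a_i$; applying the formula above yields $(a_1,\ldots,a_m,a_{m+1},\ldots,a_{m+n}) = \avec$. Hence the composite is the identity, as required.

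There is no real obstacle here: the only point needing attention is to keep the block decomposition $\zo^{2(m+n)} = \zo^{m+n} \times \zo^{m+n}$ underlying the tensor product $\Box^{m+n}_B \otimes \Box^{m+n}_B$ aligned with the coordinates of $D_{m+n}(\avec)$, so that $\sigma^L_n$ reads off the first copy of $\avec$ and $\sigma^F_m$ the second. With that bookkeeping in place the equality of functions is immediate.
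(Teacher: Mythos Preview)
Your proof is correct and is exactly the direct elementwise verification the paper has in mind; the paper itself omits the argument entirely (the lemma is stated with a terminal \qed), so your write-up simply spells out the straightforward calculation it leaves implicit.
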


Note that we may take either of the dimension variables in the lemma above to be 0, obtaining the result for any $n$, the maps $\sigma^L_n, \sigma^F_n \colon \Box^{2n}_A \to \Box^n_A$ are retractions of $D_n$.

We begin by defining the comparison map which will be our object of study. For any $A \subseteq \FS$, given $X, Y \in \cSet_A$, we define the projection map $\pi_X$ to be the composite $X \otimes Y \to X \otimes \Box^0_A \cong X$. We similarly define a projection map $\pi_Y \colon X \otimes Y \to Y$.

Then for a pair of cubes $x \colon \Box^m_A \to X, y \colon \Box^n_A \to Y$, the map $\pi_X$ sends the $(m+n)$-cube $x \otimes y$ of $X \otimes Y$ to $x \sigma^L_n$, while $\pi_Y$ sends $x \otimes y$ to $y \sigma^F_m$. In the case of a pair of 1-cubes $x$ and $y$, with initial and terminal vertices $x_0, x_1, y_0, y_1$, we illustrate $x \otimes y$ and its images under the projection maps to $X$ and $Y$ below; these images are the degeneracies $x \sigma^L_1 = x \sigma_2$ and $y \sigma^F_1 = y \sigma_1$, respectively.

\[
\begin{tikzcd}
    x_0 \otimes y_0 \ar[r,"x \otimes y_0"] \ar[d,swap,"x_0 \otimes y"] \ar[dr,phantom,"x \otimes y"] & x_1 \otimes y_0 \ar[d,"x_1 \otimes y"] & x_0 \ar[d,equal] \ar[r,"x"] \ar[dr,phantom,"x \sigma_2"] & x_1 \ar[d, equal] & y_0 \ar[r,equal] \ar[d,swap,"y"] \ar[dr,phantom,"y \sigma_1"] & y_0 \ar[d,"y"] \\
    x_0 \otimes y_1 \ar[r,swap,"x \otimes y_1"] & x_1 \otimes y_1 & x_0 \ar[r,swap,"x"] & x_1 & y_1 \ar[r,equal] & y_1 \\
\end{tikzcd}
\]

The projections $\pi_X, \pi_Y$ induce a natural map $X \otimes Y \to X \times Y$, sending a cube $x \otimes y$ as above to the pair $(x \sigma^L_n, y \sigma^F_m)$. It is this natural map which will be our key tool in comparing the geometric and cartesian products. 

For $B \subseteq \FS$ with $A \subseteq B$ and $\Sigma, \delta \in B$, we also define a natural map $X \times Y \to i^* i_! (X \otimes Y)$, as follows: given a pair of cubes $x \colon \Box^n_A \to X, y \colon \Box^n_A \to Y$, we send the pair $(x,y)$ to $(x \otimes y) D_n$. We now verify that this defines a valid map of cubical sets.

\begin{prop}\label{cartesian-geo-map-valid}
    For every $X, Y \in \cSet_A$, the assignment above defines a map of cubical sets $X \times Y \to i^* i_! (X \otimes Y)$.
\end{prop}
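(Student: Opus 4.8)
The plan is to verify directly that the rule $(x,y) \mapsto (x \otimes y)D_n$ respects the contravariant action of $\Box_A$ on cubes, which is exactly the condition for a morphism of presheaves on $\Box_A$; naturality in $X$ and $Y$ will then follow routinely from the naturality of the geometric product and of the unit $\eta$ of $i_! \adjoint i^*$, so I concentrate on the first point. Fix $X,Y \in \cSet_A$, write $F(x,y) = (x \otimes y)D_n$ for the proposed value on an $n$-cube $(x,y) \in (X \times Y)_n = X_n \times Y_n$, and note that $x \otimes y$ is the $2n$-cube of $X \otimes Y$ of \cref{geo-prod-characterization}, regarded as a $2n$-cube of $i^* i_!(X \otimes Y)$ via \cref{unit-characterization}, while $D_n \colon \Box^n_B \to \Box^{2n}_B$ is the morphism of $\Box_B$ available since $\Sigma, \delta \in B$. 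I must show that for every morphism $\alpha \colon \Box^k_A \to \Box^n_A$ of $\Box_A$ one has $F\bigl((x,y)\cdot\alpha\bigr) = F(x,y)\cdot\alpha$, where $(x,y)\cdot\alpha = (x\alpha, y\alpha)$ is computed levelwise in the cartesian product and on the right $\alpha$ acts on the $n$-cube $F(x,y)$ of $i^* i_!(X \otimes Y)$.

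For the left-hand side, \cref{geo-prod-characterization} gives $(x\alpha) \otimes (y\alpha) = (x \otimes y)(\alpha \otimes \alpha)$, where $\alpha \otimes \alpha$ is a morphism of $\Box_A$; applying the unit and then precomposing by the $\Box_B$-morphism $D_k$, the identification $(x\phi)\psi = x(\phi\psi)$ of \cref{i-shriek-characterization} yields
\[
F\bigl((x\alpha, y\alpha)\bigr) = \bigl((x\alpha)\otimes(y\alpha)\bigr)D_k = (x \otimes y)\bigl((\alpha \otimes \alpha)D_k\bigr).
\]
For the right-hand side, since $\alpha$ is a morphism of $\Box_A$ hence of $\Box_B$, and structure maps of $\Box_B$ act on cubes of $i_!(X \otimes Y)$ by precomposition (\cref{i-shriek-characterization}) while $i^*$ leaves cubes and the $\Box_A$-action unchanged,
\[
F(x,y)\cdot\alpha = \bigl((x \otimes y)D_n\bigr)\alpha = (x \otimes y)(D_n \alpha).
\]
Thus it suffices to prove the identity $(\alpha \otimes \alpha)D_k = D_n \alpha$ of morphisms $\Box^k_B \to \Box^{2n}_B$ in $\Box_B$, and this is exactly \cref{long-diagonal-duplicates} applied to $\phi = \alpha$ viewed in $\Box_B$.

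The only real difficulty is bookkeeping: one must keep straight the passage between $(X \otimes Y)_{2n}$, the cubes of $i_!(X \otimes Y)$ as described in \cref{i-shriek-characterization}, and the $\Box_A$-action surviving after applying $i^*$, and in particular that the $\Box_A$-morphism $\alpha$ acts on $F(x,y)$ simply by precomposing $D_n$ with $\alpha$ inside $\Box_B$. Once this is unwound the verification collapses to the single algebraic identity of \cref{long-diagonal-duplicates}, with no case analysis required; and the naturality of $F$ in $X$ and $Y$ is then immediate, since each constituent construction $(\otimes$, $\eta$, precomposition by $D_n)$ is natural.
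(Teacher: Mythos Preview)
Your proof is correct and follows essentially the same approach as the paper: both verify compatibility with the $\Box_A$-action by reducing, via \cref{geo-prod-characterization}, to the identity $(\alpha \otimes \alpha)D_k = D_n \alpha$ of \cref{long-diagonal-duplicates}. Your version is more explicit about the bookkeeping with \cref{i-shriek-characterization,unit-characterization}, and you additionally comment on naturality in $X$ and $Y$, but the substance is identical.
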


\begin{proof}
We must show that the given assignment is compatible with the structure maps of $X \times Y$. To that end, consider $x \colon \Box^n_A \to X, y \colon \Box^n_A \to Y$, and a map $\phi \colon \Box^m_A \to \Box^n_A$. Then, applying \cref{geo-prod-characterization,long-diagonal-duplicates}, we can see that $(x,y)\phi = (x\phi,y\phi)$ is mapped to:

\begin{align*}
    (x\phi \otimes y\phi) D_m & = (x \otimes y) (\phi \otimes \phi) D_m \\
    & = (x \otimes y) D_n \phi \\
\end{align*}

Thus the claim is proven.
\end{proof}

\begin{lem}\label{X-Y-composite-unit}
    The composite $X \otimes Y \to X \times Y \to i^* i_! (X \otimes Y)$ of the two maps defined above is equal to the unit of the adjunction $i_! \adjoint i^*$.
\end{lem}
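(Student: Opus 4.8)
The plan is to verify that the two maps of cubical sets $X \otimes Y \to i^* i_! (X \otimes Y)$ in the statement agree on every cube of the form $x \otimes y$, where $x \colon \Box^m_A \to X$ and $y \colon \Box^n_A \to Y$. This suffices because $X \otimes Y$ is the colimit of the representables $\Box^{m+n}_A$ over all such pairs $(x,y)$, with $x \otimes y$ the corresponding cone map; equivalently, by \cref{geo-prod-characterization} every cube of $X \otimes Y$ is of the form $(x \otimes y)\phi$, and both maps under consideration are morphisms of cubical sets.

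So I would trace the cube $x \otimes y$ through the composite. The first map sends $x \otimes y$ to the pair $(x \sigma^L_n, y \sigma^F_m)$ of $(m+n)$-cubes of $X$ and $Y$; the second map then sends this pair to $(x \sigma^L_n \otimes y \sigma^F_m) D_{m+n}$, a cube of $i^* i_!(X \otimes Y)$, where $D_{m+n} \colon \Box^{m+n}_B \to \Box^{2(m+n)}_B$ is the diagonal. Applying the identification of \cref{geo-prod-characterization} with $\psi = \sigma^L_n \colon \Box^{m+n}_A \to \Box^m_A$ and $\psi' = \sigma^F_m \colon \Box^{m+n}_A \to \Box^n_A$ rewrites this as $\bigl((x \otimes y)(\sigma^L_n \otimes \sigma^F_m)\bigr) D_{m+n}$, where $\sigma^L_n \otimes \sigma^F_m \colon \Box^{2(m+n)}_A \to \Box^{m+n}_A$ is a composite of projections living in the subcategory $\Box_A$. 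Using the identity $(z\phi)\psi = z(\phi\psi)$ from \cref{i-shriek-characterization} (with $z = x \otimes y$ a cube of $X \otimes Y$, $\phi = \sigma^L_n \otimes \sigma^F_m$ in $\Box_A$, and $\psi = D_{m+n}$ in $\Box_B$), this equals $(x \otimes y)\bigl((\sigma^L_n \otimes \sigma^F_m) D_{m+n}\bigr)$. By \cref{long-diagonal-retraction}, $(\sigma^L_n \otimes \sigma^F_m) D_{m+n} = \id_{\Box^{m+n}}$, so the image is simply $x \otimes y$, viewed as a cube of $i^* i_!(X \otimes Y)$. By \cref{unit-characterization} this is exactly the image of $x \otimes y$ under the unit, which completes the argument.

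I do not anticipate a genuine obstacle: the proof is a short calculation. The points requiring care are the reduction to cubes of the form $x \otimes y$, handled by the colimit description of the geometric product, and the bookkeeping of which cube category each morphism belongs to — in particular, that $\sigma^L_n \otimes \sigma^F_m$ is a morphism of $\Box_A$ whereas $D_{m+n}$ is a genuinely new morphism of $\Box_B$, so that the mixed composition rule of \cref{i-shriek-characterization} is precisely what is needed to combine them.
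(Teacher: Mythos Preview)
Your proof is correct and follows essentially the same route as the paper's: trace a cube $x \otimes y$ through the composite, rewrite using \cref{geo-prod-characterization} and \cref{long-diagonal-retraction} to obtain $x \otimes y$, and conclude via \cref{unit-characterization}. You are slightly more explicit than the paper in justifying the intermediate composition step via \cref{i-shriek-characterization}, but the argument is the same.
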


\begin{proof}
    Given $x \colon \Box^m_A \to X$ and $y \colon \Box^n_A \to Y$, by \cref{geo-prod-characterization,long-diagonal-retraction} the image of $x \otimes y$ under the composite map is 
    \begin{align*}
        (x \sigma^L_n \otimes y \sigma^F_m)D_{m+n} & = (x \otimes y) (\sigma^L_n \otimes \sigma^F_m) D_{m+n} \\
        & = x \otimes y
    \end{align*}
 Thus the composite and the unit agree on all cubes of the form $x \otimes y$; by \cref{geo-prod-characterization}, this is sufficient to show that they agree on all cubes of $X \otimes Y$.
\end{proof}

This result provides us with a proof of the well-known fact that in categories of cubical sets with symmetries and diagonals, the geometric and cartesian products coincide.

\begin{prop}\label{geometric-cartesian-iso}
    If $A \subseteq \FS$ with $\Sigma, \delta \in A$, then the natural map $X \otimes Y \to X \times Y$ is an isomorphism.
\end{prop}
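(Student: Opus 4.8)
The plan is to apply the constructions of this subsection in the special case $B = A$, which is legitimate since $\Sigma, \delta \in A$; then $i \colon \Box_A \hookrightarrow \Box_A$ is the identity, both $i_!$ and $i^*$ are the identity functor, and the unit of $i_! \adjoint i^*$ is the identity natural transformation. Write $f \colon X \otimes Y \to X \times Y$ for the natural comparison map and $g \colon X \times Y \to i^* i_!(X \otimes Y) = X \otimes Y$ for the map of \cref{cartesian-geo-map-valid}. Then \cref{X-Y-composite-unit} says precisely that $g \circ f = \id_{X \otimes Y}$, so $f$ is a split monomorphism; it remains to show that $f \circ g = \id_{X \times Y}$, for then $f$ and $g$ are mutually inverse isomorphisms.

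This last identity is checked directly on cubes. An $n$-cube of $X \times Y$ is a pair $(x,y)$ with $x \colon \Box^n_A \to X$ and $y \colon \Box^n_A \to Y$, and $g$ sends it to the $n$-cube $(x \otimes y) D_n$ of $X \otimes Y$. Applying $f = (\pi_X, \pi_Y)$, and using that $\pi_X$ and $\pi_Y$ are maps of cubical sets together with the formulas $\pi_X(x \otimes y) = x \sigma^L_n$ and $\pi_Y(x \otimes y) = y \sigma^F_n$, we get
\[
\pi_X\bigl((x \otimes y) D_n\bigr) = x \sigma^L_n D_n, \qquad \pi_Y\bigl((x \otimes y) D_n\bigr) = y \sigma^F_n D_n,
\]
which by the retraction property recorded after \cref{long-diagonal-retraction} (i.e.\ $\sigma^L_n D_n = \sigma^F_n D_n = \id_{\Box^n_A}$) equal $x$ and $y$ respectively. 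Hence $f(g(x,y)) = (x,y)$, so $f \circ g = \id_{X \times Y}$, and $f$ is an isomorphism.

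No serious obstacle is expected; the argument is essentially bookkeeping once \cref{X-Y-composite-unit} is in hand. The two minor points of care are that one genuinely needs to compute $f \circ g$ as well — the identity $g \circ f = \id$ by itself only yields a split monomorphism — and that although the comparison maps and \cref{cartesian-geo-map-valid,X-Y-composite-unit,long-diagonal-retraction} were stated above under the standing assumption $A \subseteq \FP$, their proofs use nothing beyond $\Sigma, \delta \in A$ and so apply in the setting $A \subseteq \FS$ of the present statement.
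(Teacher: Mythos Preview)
Your proposal is correct and follows essentially the same approach as the paper's own proof: take $B = A$ so that the unit is the identity, use \cref{X-Y-composite-unit} to obtain $g \circ f = \id_{X \otimes Y}$, and then verify $f \circ g = \id_{X \times Y}$ by the direct computation $(x,y) \mapsto (x \otimes y)D_n \mapsto (x \sigma^L_n D_n, y \sigma^F_n D_n) = (x,y)$ via \cref{long-diagonal-retraction}. Your additional remark that the relevant lemmas require only $\Sigma, \delta \in A$ (and not the subsection's standing assumption $A \subseteq \FP$) is a valid point that the paper leaves implicit.
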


\begin{proof}
    In this case, we may apply \cref{X-Y-composite-unit} with $A = B$, so that $i_! \adjoint i^*$ is the identity adjunction on $\cSet_A$. Thus we see that the composite $X \otimes Y \to X \times Y \to X \otimes Y$ is the identity on $X \otimes Y$.

    It thus remains to consider the composite $X \times Y \to X \otimes Y \to X \times Y$. Given $x \colon \Box^n_A \to X, y \colon \Box^n_A \to Y$, the map $X \times Y \to X \otimes Y$ sends the pair $(x,y)$  to $(x \otimes y)D_n$. The map $X \otimes Y \to X \times Y$ then sends this cube to $(x \sigma^L_n , y \sigma^F_n) D_n$. By definition, this is equal to $(x \sigma^L_n D_n, y \sigma^F_n D_n)$; by \cref{long-diagonal-retraction}, this is equal to $(x,y)$. Thus this composite is the identity on $X \times Y$.
    \end{proof}
    
\subsection{Cartesian monoidality of cubical Joyal model structures}

Our next goal is to use the comparison map between the geometric and cartesian products to prove our desired cartesian monoidality results for the cubical Joyal model structures. Throughout this subsection, fix $A, B$ as in the previous subsection, with the additional assumptions that $A$ is a non-empty subset of $\conset$ and $\rho \notin B$. For concreteness, we will assume $\wedge \in A$; as in \cref{sec unit}, the case where $\vee \in A$ follows by parallel arguments involving negative connections.

\begin{lem}\label{X-Y-comparison-mono}
    For all $X, Y \in \cSet_A$, the morphisms $X \otimes Y \to X \times Y$ and $X \times Y \to i^* i_! (X \otimes Y)$ are monomorphisms.
\end{lem}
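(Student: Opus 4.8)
The plan is to reduce both claims to the fact that the unit $\eta$ of the adjunction $i_! \adjoint i^*$ has monomorphic components, which is \cref{unit-mono} (applicable here because $A \subseteq \conset$).

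For the morphism $X \otimes Y \to X \times Y$, the key observation is \cref{X-Y-composite-unit}: its composite with $X \times Y \to i^* i_!(X \otimes Y)$ is the unit component $\eta_{X \otimes Y}$. Since $\eta_{X \otimes Y}$ is a monomorphism by \cref{unit-mono}, and a morphism whose composite with another morphism is a monomorphism is itself a monomorphism, it follows immediately that $X \otimes Y \to X \times Y$ is a monomorphism.

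For the morphism $h \colon X \times Y \to i^* i_!(X \otimes Y)$, I would instead post-compose with the pairing $p = \langle i^* i_!(\pi_X),\, i^* i_!(\pi_Y) \rangle \colon i^* i_!(X \otimes Y) \to i^* i_!(X) \times i^* i_!(Y)$ of the images of the two projections under the functor $i^* i_!$, and show that $p \circ h = \eta_X \times \eta_Y$. Concretely, on an $n$-cube $(x, y)$ of $X \times Y$ -- with $x \colon \Box^n_A \to X$ and $y \colon \Box^n_A \to Y$ -- the morphism $h$ produces the cube $(x \otimes y) D_n$; by the description of the action of $i_!$ on morphisms in \cref{i-shriek-characterization}, $i^* i_!(\pi_X)$ then sends this to $\pi_X(x \otimes y) \cdot D_n = (x \sigma^L_n) D_n$, and applying the identification $(z \phi)\psi = z(\phi\psi)$ of \cref{i-shriek-characterization} together with the fact that $\sigma^L_n$ is a retraction of $D_n$ (recorded after \cref{long-diagonal-retraction}) shows this cube is simply $x$, viewed as a cube of $i^* i_! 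X$, i.e.\ $\eta_X(x)$; symmetrically, $i^* i_!(\pi_Y)$ sends $(x \otimes y)D_n$ to $\eta_Y(y)$. Hence $p \circ h$ agrees with $\eta_X \times \eta_Y$. Since $\eta_X$ and $\eta_Y$ are monomorphisms by \cref{unit-mono}, so is the product morphism $\eta_X \times \eta_Y$, and therefore so is $h$.

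The only step calling for any real care is this last computation: one must keep track of which morphisms belong to $\Box_A$ and which only to $\Box_B$ (notably $D_n$, which involves the diagonal), and apply the presheaf-level formulas for cubes of $i_!(-)$ from \cref{i-shriek-characterization} accurately. I do not anticipate any genuine obstacle beyond this bookkeeping.
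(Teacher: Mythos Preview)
Your proposal is correct and follows essentially the same approach as the paper. The first part is identical; for the second, the paper argues element-wise (assume $(x,y)$ and $(x',y')$ have the same image, apply $i^* i_! \pi_X$ and simplify via \cref{long-diagonal-retraction} to get $x=x'$ in $i^* i_! X$, then invoke \cref{unit-mono}), while you package the same computation as the identity $p \circ h = \eta_X \times \eta_Y$ --- but the underlying calculation is the same.
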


\begin{proof}
    For $X \otimes Y \to X \times Y$, this is immediate from \cref{unit-mono,X-Y-composite-unit}. 
    
    To see that $X \times Y \to i^* i_! (X \otimes Y)$ is a monomorphism, suppose that for some cubes $x, x' \colon \Box^n_A \to X, y, y' \colon \Box^n_A \to Y$, the pairs $(x,y)$ and $(x',y')$ are mapped to the same cube of $i^* i_! (X \otimes Y)$ -- in other words, that $(x \otimes y) D_n = (x' \otimes y') D_n$. Applying the map $i^* i_! \pi_X$, we see that $x \sigma^L_n D_n = x' \sigma^L_n D_n$ in $i^* i_! X$. By \cref{long-diagonal-retraction}, it follows that $x = x'$ as cubes of $i^* i_! X$; thus $x = x'$ in $X$ by \cref{unit-mono}. A similar proof shows that $y = y'$.
\end{proof}

In particular, considering the case where $X$ and $Y$ are representable, we have for any $m, n \geq 0$ a composable pair of monomorphisms
\[
\Box^{m+n}_A \to \Box^m_A \times \Box^n_A \to i^* \Box^{m+n}_B
\]

We aim to show that this composable pair of monomorphisms witnesses $\Box^m_A \times \Box^n_A$ as a decomposition-closed subcomplex of $i^* i_! \Box^{m+n}_B$, in order to apply the results of \cref{sec N-closed}. To do this, we will first characterize the image in $i^* \Box^{m+n}_B$ of $\Box^m_A \times \Box^n_A$.

\begin{prop} \label{cartesian-image}
    The image of the map $\Box^m_A \times \Box^n_A \to i^* \Box^{m+n}_B$ consists of all morphisms $\phi$ in $\Box_B$ with codomain $\Box^{m+n}_B$ such that $\sigma^F_m \phi$ and $\sigma^L_n \phi$ are in $\Box_A$.
\end{prop}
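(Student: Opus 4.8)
The plan is to prove both inclusions directly by unwinding the definition of the comparison map $\Box^m_A \times \Box^n_A \to i^* \Box^{m+n}_B$ on cubes. Recall that a $k$-cube of $\Box^m_A \times \Box^n_A$ is a pair $(\alpha,\beta)$ with $\alpha \colon \Box^k_A \to \Box^m_A$ and $\beta \colon \Box^k_A \to \Box^n_A$ in $\Box_A$, which the comparison map sends to the $k$-cube $(\alpha \otimes \beta) D_k \colon \Box^k_B \to \Box^{m+n}_B$ of $i^* \Box^{m+n}_B$. Since the geometric product on $\Box_B$ is the restriction of the cartesian product of sets and $D_k$ sends $\overrightarrow{c}$ to $(\overrightarrow{c},\overrightarrow{c})$, a direct computation shows that this composite sends a tuple $\overrightarrow{c} \in \Box^k_B$ to $(\alpha(\overrightarrow{c}),\beta(\overrightarrow{c}))$.

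For the inclusion of the image in the stated set, I would observe that $\sigma^L_n$ deletes the last $n$ coordinates and $\sigma^F_m$ the first $m$, so from the formula above $\sigma^L_n (\alpha \otimes \beta) D_k = \alpha$ and $\sigma^F_m (\alpha \otimes \beta) D_k = \beta$; both lie in $\Box_A$ by hypothesis, so every cube in the image of $\Box^m_A \times \Box^n_A \to i^* \Box^{m+n}_B$ has the stated property.

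For the reverse inclusion, I would take a morphism $\phi \colon \Box^k_B \to \Box^{m+n}_B$ in $\Box_B$ with $\alpha := \sigma^L_n \phi$ and $\beta := \sigma^F_m \phi$ both in $\Box_A$. Then $(\alpha,\beta)$ is a $k$-cube of $\Box^m_A \times \Box^n_A$, which maps to $(\alpha \otimes \beta) D_k$; by the formula above this composite sends $\overrightarrow{c}$ to $(\sigma^L_n \phi(\overrightarrow{c}), \sigma^F_m \phi(\overrightarrow{c}))$, which is the concatenation of the first $m$ coordinates of $\phi(\overrightarrow{c})$ with the last $n$ coordinates, i.e.\ $\phi(\overrightarrow{c})$ itself. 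Since $\Box_B$ is a subcategory of $\Set$, equality of these two morphisms as functions of sets forces $\phi = (\alpha \otimes \beta) D_k$ in $\Box_B$, so $\phi$ lies in the image.

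The argument is essentially a bookkeeping computation, so there is no serious obstacle. The points that require a little care are checking that the composites $(\alpha \otimes \beta) D_k$ genuinely lie in $\Box_B$ — this uses $\Sigma,\delta \in B$ so that $D_k$ exists, together with $\alpha,\beta \in \Box_A \subseteq \Box_B$ so that $\alpha \otimes \beta$ restricts to $\Box_B$ — and being consistent about which block of coordinates $\sigma^F_m$ and $\sigma^L_n$ delete.
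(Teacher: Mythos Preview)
Your proof is correct and follows essentially the same approach as the paper: both directions are established by computing that the comparison map sends $(\alpha,\beta)$ to the function $\overrightarrow{c} \mapsto (\alpha(\overrightarrow{c}),\beta(\overrightarrow{c}))$, from which the identities $\sigma^L_n(\alpha\otimes\beta)D_k = \alpha$, $\sigma^F_m(\alpha\otimes\beta)D_k = \beta$, and $(\sigma^L_n\phi \otimes \sigma^F_m\phi)D_k = \phi$ follow. The paper packages these same computations into the auxiliary lemmas $D_n\phi = (\phi\otimes\phi)D_m$ and $(\sigma^L_n \otimes \sigma^F_m)D_{m+n} = \id$, while you carry them out directly at the level of tuples; the content is the same.
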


\begin{proof}
    We first show that any $k$-cube $\phi$ of $i^* \Box^{m+n}_B$ which satisfies the given criterion when considered as a morphism in $\Box_B$ is contained in the image of $\Box^m_A \times \Box^n_A$. In this case, the pair $(\sigma^L_n \phi, \sigma^F_m \phi)$ defines a $k$-cube of $\Box^m_A \times \Box^n_A$, which is mapped to $(\sigma^L_n \phi \otimes \sigma^F_m \phi)D_k$. Using \cref{geo-prod-characterization,long-diagonal-duplicates}, we may calculate:
    \begin{align*}
        (\sigma^L_n \phi \otimes \sigma^F_m \phi)D_k & = (\sigma^L_n \otimes \sigma^F_m) (\phi \otimes \phi) D_k \\
        & = (\sigma^L_n \otimes \sigma^F_m) D_{m+n} \phi \\
        & = \phi
    \end{align*}

    Next we will show that all cubes in the image of $\Box^m_A \times \Box^n_A$ satisfy the given criterion. Consider a pair of $k$-cubes $\phi \colon \Box^k_A \to \Box^m_A, \phi' \colon \Box^k_A \to \Box^n_A$. The $k$-cube $(\phi,\phi')$ of $\Box^m_A \times \Box^n_A$ is mapped to $(\phi \otimes \phi')D_k$. A straightforward calculation involving \cref{long-diagonal-duplicates,long-diagonal-retraction} shows that the following diagram commutes:
    \[
    \begin{tikzcd}
    \Box^k_B \ar[r,"D_k"] \ar[dr,equal] & \Box^k_B \otimes \Box^k_B \ar[r,"\phi \otimes \phi'"] \ar[d,"\sigma^F_k"] & \Box^m_B \otimes \Box^n_B \ar[d,"\sigma^F_m"] \\
    & \Box^k_B \ar[r,"\phi'"] & \Box^n_B \\
    \end{tikzcd}
    \]
    Thus $\sigma^F_m(\phi \otimes \phi')D_k = \phi'$. A similar proof shows $\sigma^L_n(\phi \otimes \phi')D_k = \phi$.
\end{proof}

\begin{prop} \label{cartesian-N-closed}
    For any $m, n \geq 0$, the maps $\Box^{m+n}_A \to \Box^m_A \times \Box^n_A \to i^* \Box^{m+n}_B$ witness $\Box^m_A \times \Box^n_A$ as a decomposition-closed subcomplex of $i^* \Box^{m+n}_B$.
\end{prop}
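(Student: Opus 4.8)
The plan is to verify directly the two clauses in the definition of a decomposition-closed subcomplex, translating membership in $\Box^m_A \times \Box^n_A$ into a condition on coordinate projections via \cref{cartesian-image} and computing how these projections interact with the $N$-construction via \cref{N-explicit}.

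For the first clause -- that $\Box^m_A \times \Box^n_A$ contains the image of the unit $\Box^{m+n}_A \to i^* \Box^{m+n}_B$ -- it suffices to observe that by \cref{X-Y-composite-unit} this unit factors as $\Box^{m+n}_A \to \Box^m_A \times \Box^n_A \to i^* \Box^{m+n}_B$, and that the second map here is a monomorphism by \cref{X-Y-comparison-mono}, so that its image is an honest subcomplex containing the image of the unit. For the closure clause, suppose $\kappa(\phi \otimes \Box^k_B) \colon \Box^q_B \otimes \Box^k_B \to \Box^p_B \to \Box^{m+n}_B$ lies in $\Box^m_A \times \Box^n_A$, with $\phi$ active and $\kappa$ a composite of face maps; we must show $\kappa N_k(\phi)$ lies there too. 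By \cref{cartesian-image} this is equivalent to showing $\sigma^L_n \kappa N_k(\phi)$ and $\sigma^F_m \kappa N_k(\phi)$ lie in $\Box_A$, given that $\sigma^L_n \kappa(\phi \otimes \Box^k_B)$ and $\sigma^F_m \kappa(\phi \otimes \Box^k_B)$ do. The first key point is that, since $\Box^{m+n}_B = \Box^m_B \otimes \Box^n_B$ and face maps fixing coordinates in the two blocks commute, $\kappa$ decomposes as $\kappa_F \otimes \kappa_L$ for composites of face maps $\kappa_F \colon \Box^{p_F}_B \to \Box^m_B$, $\kappa_L \colon \Box^{p_L}_B \to \Box^n_B$ with $p_F + p_L = p$; hence $\sigma^L_n \kappa = \kappa_F \tau_L$ and $\sigma^F_m \kappa = \kappa_L \tau_F$, where $\tau_L, \tau_F \colon \Box^p_B \to \Box^{p_F}_B, \Box^{p_L}_B$ delete, respectively, the last $p_L$ and the first $p_F$ coordinates. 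Since every composite of face maps admits a retraction that is a composite of projections, and both faces and projections lie in $\Box_A$, membership of $\sigma^L_n \kappa N_k(\phi) = \kappa_F(\tau_L N_k(\phi))$ in $\Box_A$ is equivalent to membership of $\tau_L N_k(\phi)$, and similarly for the hypothesis; likewise for $\tau_F$.

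It therefore suffices to prove the following sub-lemma: \emph{for an active map $\phi \colon \Box^q_B \to \Box^r_B$, any $k \geq 0$, and any morphism $\tau$ on $\Box^{r+k}_B$ (the common codomain of $\phi \otimes \Box^k_B$ and $N_k(\phi)$) given by deleting a specified set of coordinates, if $\tau(\phi \otimes \Box^k_B) \in \Box_A$ then $\tau N_k(\phi) \in \Box_A$.} Recall from \cref{N-explicit} that $N_k(\phi)$ differs from $\phi \otimes \Box^k_B$ only in having one extra domain coordinate $b$ and in replacing output coordinate $r$ by $\phi(-)_r \wedge b$. If $\tau$ deletes coordinate $r$, then $\tau N_k(\phi)$ does not depend on $b$ and equals $\tau(\phi \otimes \Box^k_B)$ precomposed with the projection deleting $b$, hence lies in $\Box_A$ by hypothesis. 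If $\tau$ retains coordinate $r$ and deletes a set of coordinates $S = S_1 \sqcup S_2$, with $S_1 \subseteq \{1,\ldots,r-1\}$ in the $\phi$-block and $S_2 \subseteq \{r+1,\ldots,r+k\}$ in the $\Box^k$-block, then a direct computation from \cref{N-explicit} gives $\tau(\phi \otimes \Box^k_B) = (\rho_{S_1}\phi \otimes \Box^{k-|S_2|}_B)\,\sigma'$ and $\tau N_k(\phi) = N_{k-|S_2|}(\rho_{S_1}\phi)\,\sigma'$, where $\rho_{S_1} \colon \Box^r_B \to \Box^{r-|S_1|}_B$ deletes the coordinates $S_1$, the composite $\rho_{S_1}\phi$ is again active, and $\sigma'$ is a composite of projections deleting the coordinates $S_2$ from the $\Box^k$-block. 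The hypothesis then forces $\rho_{S_1}\phi \in \Box_A$, so $N_{k-|S_2|}(\rho_{S_1}\phi) = \gamma_{r-|S_1|,1}(\rho_{S_1}\phi \otimes \Box^1_B \otimes \Box^{k-|S_2|}_B) \in \Box_A$ using $\wedge \in A$, and therefore $\tau N_k(\phi) \in \Box_A$. Applying the sub-lemma to $\tau_L$ and $\tau_F$ completes the proof.

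I expect the main obstacle to be the bookkeeping in the sub-lemma: correctly locating the deleted coordinates relative to the coordinate $r$ on which $N_k$ acts, and verifying in the retained case that $\tau N_k(\phi)$ is again of the form $N_j$ of an active map up to postcomposition with projections. The decomposition $\kappa = \kappa_F \otimes \kappa_L$, the passage between $\Box^p_B$-membership and $\Box_A$-membership through retractions of face maps, and the fact that tensoring a $\Box_A$-morphism with identities stays in $\Box_A$ are all routine once the relevant cubical identities are unwound.
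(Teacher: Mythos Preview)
Your proposal is correct and follows essentially the same approach as the paper: both reduce via \cref{cartesian-image} to showing that $\sigma^F_m$ and $\sigma^L_n$ applied to $\kappa N_k(\phi)$ land in $\Box_A$, commute these projections past the face composite $\kappa$ (your decomposition $\kappa = \kappa_F \otimes \kappa_L$ is exactly the paper's commuting squares), strip off the face composites using retractions in $\Box_A$, and then prove a sub-lemma by case analysis on whether the projection deletes the distinguished coordinate $r$ on which $N_k$ acts. Your sub-lemma is phrased for an arbitrary coordinate-deletion $\tau$ rather than specifically $\sigma^F_i$ and $\sigma^L_i$, but the case split (coordinate $r$ deleted versus retained) and the resulting computations match the paper's four cases, and the assertion that $\rho_{S_1}\phi$ is active, while true, is not actually needed.
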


\begin{proof}
    The two maps are monomorphisms by \cref{X-Y-comparison-mono}, and compose to the unit of $i_! \adjoint i^*$ by \cref{X-Y-composite-unit}. Thus it remains to show that if a cube $\kappa (\phi \otimes \Box^k_B)$ of $i^* \Box^{m+n}_B$, where $\phi$ is active and $\kappa$ is a composite of face maps, is contained in the image of $\Box^m_A \otimes \Box^n_A$, then so is $\kappa N_k(\phi)$. By \cref{cartesian-image}, this is equivalent to showing that if $\sigma^F_m \kappa (\phi \otimes \Box^k_B)$ and $\sigma^L_n \kappa (\phi \otimes \Box^k_B)$ are contained in $\Box_A$, then so are $\sigma^F_m \kappa N_k(\phi)$ and $\sigma^L_n \kappa N_k(\phi)$.

    We first note that for any face composite $\kappa \colon \Box^p \to \Box^{m+n}$ there exist commuting diagrams as below for some face composites $\kappa' \colon \Box^{p'}_B \to \Box^m_B, \kappa'' \colon \Box^{p''}_B \to \Box^n_B$.
    \[
    \begin{tikzcd}
        \Box^p_B \ar[r,"\sigma^L_{p-p'}"] \ar[d,swap,"\kappa"] & \Box^{p'}_B \ar[d,"\kappa'"] & \Box^p_B \ar[r,"\sigma^F_{p-p''}"] \ar[d,swap,"\kappa"] & \Box^{p''}_B \ar[d,"\kappa''"] \\
        \Box^{m+n}_B \ar[r,"\sigma^L_n"] & \Box^m_B & \Box^{m+n}_B \ar[r,"\sigma^F_m"] & \Box^n_B\\
    \end{tikzcd}
    \]
    Thus the statement we aim to prove can be rephrased as: if $\kappa' \sigma^L_{p-p'} (\phi \otimes \Box^k_B)$ and $\kappa'' \sigma^F_{p-p''} (\phi \otimes \Box^k_B)$ are in $\Box_A$, then so are $\kappa' \sigma^L_{p-p'} N_k(\phi)$ and $\kappa'' \sigma^F_{p-p''} N_k(\phi)$.

    Moreover, for any face composite $\kappa \colon \Box^r_B \to \Box^s_B$ and any $\psi \colon \Box^q_B \to \Box^r_B$, the composite $\kappa \psi$ is in $\Box_A$ if and only if $\psi$ is in $\Box_A$ (because every face map is in $\Box_A$, and has a retraction in $\Box_A$.) Thus it will suffice to prove the following: for any morphism $\psi \colon \Box^q_B \to \Box^r_B$ and any $k \geq 0$ and $0 \leq i \leq r+k$, if $\sigma^F_i (\psi \otimes \Box^k_B)$ is in $\Box_A$, then so is $\sigma^F_i N_k(\psi)$, and likewise, if $\sigma^L_i (\psi \otimes \Box^k_B)$ is in $\Box_A$ then so is $\sigma^L_i N_k(\psi)$. Throughout our proof, a tuple $(a_1,\ldots,a_q,b,c_1,\ldots,c_k)$ in $\Box^{q+1+k}_B$ will be abbreviated by $(\vec{a},b,\vec{c})$.

    First we prove the statement involving post-composition with $\sigma^F_i$. If $i < r$, then $\sigma^F_i = \sigma^F_i \otimes \Box^k_B$, so that $\sigma^F_i (\psi \otimes \Box^k_B) = \sigma^F_i \psi \otimes \Box^k_B$. This map is in $\Box_A$ by assumption, therefore $\sigma^F_i\psi \otimes \Box^1_B \otimes \Box^k_B$ is in $\Box_A$ as well. Hence, the composite $\gamma_{r-i,1} (\sigma^F_i\psi \otimes \Box^1_B \otimes \Box^k_B)$ is also in $\Box_A$. 
    Using \cref{N-explicit}, we may compute that this composite sends a tuple $(\vec{a},b,\vec{c})$ to:
    \[
    (\psi(\vec{a})_{i+1},\ldots,\psi(\vec{a})_{r-1}, \psi(\vec{a})_r \wedge b, c_1,\ldots,c_k)
    \]
    We may likewise compute that this is the image of $(\vec{a},b,\vec{c})$ under $\sigma^F_i N_k(\psi)$.

    If $i \geq r$, then $\sigma^F_i N_k(\psi)$ sends $(\vec{a},b,\vec{c})$ to:
    \[
    (c_{i-r+1},\ldots,c_k)
    \]
    Thus this map is the degeneracy $\sigma^F_{q+1+i-r}$.

    We now consider the case of post-composition with $\sigma^L_i$. In this case, if $1 \leq i \leq k$, then we may note that $\sigma^L_i = \Box^r_B \otimes \sigma^L_i$. We thus have $\sigma^L_i (\psi \otimes \Box^k_B) = \psi \otimes \sigma^L_i = (\psi \otimes \Box^{k-i}_B) \sigma^L_i$.

    So $(\psi \otimes \Box^{k-i}_B) \sigma^L_i$ is in $\Box_A$; pre-composing with a section of $\sigma^L_i$, we observe that $\psi \otimes \Box^{k-i}_B$ is in $\Box_A$, hence so is $\psi \otimes \Box^{1+k}_B$. It follows that the composite $\sigma^L_i \gamma_{r,1} (\psi \otimes \Box^{1+k}_B) = \sigma^L_i N_k(\phi)$ is in $\Box_A$ as well. 

    If $k < i \leq r+k$, then $\sigma^L_i N_k(\psi)$ sends $(\vec{a},b,\vec{c})$ to:
    \[
    (\psi(\vec{a})_1,\ldots,\psi(\vec{a})_{r-(i-k)})
    \]
    Thus this map is equal to $\sigma^L_i (\psi \otimes \Box^k_B)$, and is therefore in $\Box_A$ by assumption.
\end{proof}

\begin{cor}\label{X-Y-comparison-weq}
    For all $X, Y \in \cSet_A$, the natural maps $X \otimes Y \to X \times Y$ and $X \times Y \to i^* i_! (X \otimes Y)$ are trivial cofibrations.
\end{cor}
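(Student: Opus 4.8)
The plan is to bootstrap from the representable case, which is essentially contained in \cref{cartesian-N-closed} and \cref{N-closed-anodyne}, and then promote to arbitrary $X, Y$ by the standard skeletal induction. Both maps in question are monomorphisms by \cref{X-Y-comparison-mono}, so it remains only to see that they are weak equivalences. By \cref{X-Y-composite-unit} their composite $X \otimes Y \to i^* i_!(X \otimes Y)$ is the unit of $i_! \adjoint i^*$, which is a weak equivalence in the cubical Joyal model structure by \cref{unit-tcof} (recall $B \subseteq \FP$); hence, by two-out-of-three, it suffices to prove that $X \otimes Y \to X \times Y$ is a weak equivalence, the statement for $X \times Y \to i^* i_!(X \otimes Y)$ then being automatic.

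For $X = \Box^m_A$ and $Y = \Box^n_A$ representable, we have $\Box^m_A \otimes \Box^n_A = \Box^{m+n}_A$ and, by \cref{i-shriek-on-reps}, $i^* i_!(\Box^m_A \otimes \Box^n_A) = i^* \Box^{m+n}_B$. Thus \cref{cartesian-N-closed} exhibits both $\Box^{m+n}_A$ and $\Box^m_A \times \Box^n_A$ as decomposition-closed subcomplexes of $i^* \Box^{m+n}_B$, with $\Box^{m+n}_A \subseteq \Box^m_A \times \Box^n_A$. Since $B \subseteq \FP$, \cref{N-closed-anodyne} shows that the inclusion $\Box^{m+n}_A \hookrightarrow \Box^m_A \times \Box^n_A$ is inner anodyne, hence a trivial cofibration; in particular $\Box^m_A \otimes \Box^n_A \to \Box^m_A \times \Box^n_A$ is a weak equivalence.

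To handle general $X$ and $Y$, I would run the usual induction on skeleta one variable at a time, exactly as in the proof of \cref{unit-tcof} (cf.~\cite[Lem.~1.6]{doherty:without-connections}). For fixed $Y$, the functors $- \otimes Y$ and $- \times Y$ preserve colimits -- the first as a left adjoint, the second by cartesian closedness of the presheaf category $\cSet_A$ -- and preserve monomorphisms -- the first by the pushout-product axiom, which holds by \cref{geometric-monoidal-models}, applied to $\varnothing \hookrightarrow Y$, and the second since cartesian products of presheaves are formed levelwise. Writing $X$ as a transfinite composite of pushouts of coproducts of boundary inclusions and applying the gluing lemma (valid since every object of $\cSet_A$ is cofibrant) thus reduces the claim for $X \otimes Y \to X \times Y$ to the case of $X$ representable; repeating the argument in the $Y$ variable, now using that $\Box^m_A \otimes -$ and $\Box^m_A \times -$ likewise preserve colimits and monomorphisms, reduces the representable-$X$ case to the case of both $X$ and $Y$ representable, treated above. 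The only point requiring any care is the verification that the geometric product preserves monomorphisms in each variable, which is precisely where \cref{geometric-monoidal-models} enters; everything else is formal. The analogous statement for the Grothendieck model structure follows immediately, since by \cref{cubical-Joyal-localization} every cubical Joyal weak equivalence is a Grothendieck weak equivalence.
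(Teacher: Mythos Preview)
Your proof is correct and follows essentially the same approach as the paper: reduce to representables by skeletal induction using that the relevant bifunctors preserve colimits and monomorphisms in each variable, then invoke \cref{cartesian-N-closed} and \cref{N-closed-anodyne}. The only minor difference is that you first use two-out-of-three (via \cref{X-Y-composite-unit} and \cref{unit-tcof}) to reduce to the single map $X \otimes Y \to X \times Y$, whereas the paper handles both maps at once by observing that all three functors $\otimes$, $\times$, and $i^* i_! \otimes$ preserve cofibrations and colimits in each variable; your reduction is a slight economy but not a genuinely different route. One small attribution slip: \cref{cartesian-N-closed} only asserts decomposition-closure for $\Box^m_A \times \Box^n_A$; that $\Box^{m+n}_A$ itself is decomposition-closed is the trivial observation noted in the text immediately following the definition.
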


\begin{proof}
    Both maps are monomorphisms by \cref{X-Y-comparison-mono}, so we need only prove that they are weak equivalences. Because the bifunctors $\otimes, \times$, and $i^* i_! \otimes$ all preserve cofibrations and colimits in each variable, by \cref{cube-EZ} and a standard induction on skeleta argument it suffices to consider the case where both $X$ and $Y$ are representable. This case is immediate from \cref{N-closed-anodyne,cartesian-N-closed}
\end{proof}

\begin{cor} \label{cartesian-pres-weq}
    The bifunctor $\times \colon \cSet_A \to \cSet_A$ preserves weak equivalences in each variable.
\end{cor}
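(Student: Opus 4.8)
The plan is to deduce the statement from \cref{X-Y-comparison-weq} together with the fact that the geometric product preserves weak equivalences in each variable, via a two-out-of-three argument applied to the naturality square of the comparison map $(-) \otimes (-) \Rightarrow (-) \times (-)$.

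First I would record that $\otimes \colon \cSet_A \times \cSet_A \to \cSet_A$ preserves weak equivalences in each variable. By \cref{geometric-monoidal-models} the cubical Joyal model structure on $\cSet_A$ is monoidal with respect to $\otimes$, so for any object $Z$ both $(-) \otimes Z$ and $Z \otimes (-)$ are left Quillen functors (take the pushout product with $\varnothing \hookrightarrow Z$). Since the cofibrations of $\cSet_A$ are exactly the monomorphisms, the map $\varnothing \hookrightarrow X$ is a cofibration for every $X$, i.e.\ every object of $\cSet_A$ is cofibrant. Hence, by Ken Brown's lemma, each of these functors sends weak equivalences to weak equivalences.

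Now fix $Y \in \cSet_A$ and let $f \colon X \to X'$ be a weak equivalence. Naturality in the first variable of the comparison map of \cref{X-Y-comparison-weq} gives a commuting square
\[
\begin{tikzcd}
X \otimes Y \ar[r,"f \otimes Y"] \ar[d] & X' \otimes Y \ar[d] \\
X \times Y \ar[r,"f \times Y"] & X' \times Y \\
\end{tikzcd}
\]
in which both vertical maps are trivial cofibrations by \cref{X-Y-comparison-weq}, hence weak equivalences, and the top map is a weak equivalence by the previous paragraph. By two-out-of-three, $f \times Y$ is a weak equivalence. The statement for the second variable follows either by the same argument using naturality in the second variable, or simply by the symmetry of the cartesian product. (The identical argument applies verbatim in the Grothendieck model structure, the comparison maps being trivial cofibrations there as well since it is a localization of the cubical Joyal one.)

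There is no real obstacle here: the substantive content has already been established in \cref{X-Y-comparison-weq} (which rests on \cref{N-closed-anodyne,cartesian-N-closed}), and the only point requiring a moment's care is the observation that $\otimes$ preserves weak equivalences in each variable, which follows formally from its being a left Quillen bifunctor between model categories in which every object is cofibrant.
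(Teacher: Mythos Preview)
Your proof is correct and is precisely the argument the paper has in mind: the paper's one-line proof (``immediate from \cref{geometric-monoidal-models,X-Y-comparison-weq}'') is exactly the two-out-of-three argument on the naturality square that you have spelled out. Your explicit justification that $\otimes$ preserves weak equivalences in each variable (via cofibrancy of all objects and Ken Brown) is the implicit content of the appeal to \cref{geometric-monoidal-models}.
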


\begin{proof}
    This is immediate from \cref{geometric-monoidal-models,X-Y-comparison-weq}.
\end{proof}

\begin{thm}\label{cartesian-monoidal}
    The cubical Joyal model structure on $\cSet_A$ is cartesian monoidal.
\end{thm}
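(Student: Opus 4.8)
The plan is to reduce the statement to the pushout-product axiom for $\times$ and then deduce that axiom from \cref{pop-axiom-sufficient} together with the comparison results already established in this section. First I would observe that the unit of the cartesian monoidal structure on $\cSet_A$ is the terminal object $\Box^0_A$, which is cofibrant because $\varnothing \hookrightarrow \Box^0_A$ is a monomorphism and the cofibrations of the cubical Joyal model structure are precisely the monomorphisms. Hence the unit axiom for a monoidal model structure holds trivially (one may take the identity as a cofibrant replacement of the unit), and it remains only to establish the pushout-product axiom: that for cofibrations $i$ and $j$ the pushout product $i \widehat{\times} j$ is a cofibration, and is a trivial cofibration whenever $i$ or $j$ is.

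I would obtain this by invoking \cref{pop-axiom-sufficient} with $\mathcal{C} = \cSet_A$, equipped with its cubical Joyal model structure, and $\odot = \times$. This requires verifying two hypotheses. The first --- that a pushout product of cofibrations is again a cofibration --- follows from the standard fact that in any presheaf category the class of monomorphisms is closed under pushout products; concretely, monomorphisms of cubical sets are detected dimensionwise, and the pushout product of two injections of sets is an injection. The second --- that $\times$ preserves cofibrations and trivial cofibrations in each variable --- follows by noting that $- \times Z$ preserves monomorphisms for every $Z$, and preserves weak equivalences by \cref{cartesian-pres-weq}; since a trivial cofibration is exactly a monomorphism which is a weak equivalence, both classes are preserved in each variable.

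With these two hypotheses in hand, \cref{pop-axiom-sufficient} yields that a pushout product of two cofibrations is a trivial cofibration whenever one of them is trivial, while the first hypothesis already gives that it is always a cofibration; together these are precisely the pushout-product axiom, which finishes the proof. I do not expect a genuine obstacle at this stage: the substantive content has already been absorbed into \cref{X-Y-comparison-weq} (hence ultimately into \cref{N-closed-anodyne} and \cref{cartesian-N-closed}), which in turn gives \cref{cartesian-pres-weq}, and the remaining argument is entirely formal. The only points that deserve a sentence of care are the cofibrancy of the unit and the closure of monomorphisms under pushout product, both of which are routine.
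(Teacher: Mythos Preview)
Your proposal is correct and follows essentially the same route as the paper: reduce to the pushout-product axiom, note that a pushout product of monomorphisms is a monomorphism by the corresponding fact in $\Set$, and invoke \cref{pop-axiom-sufficient} together with \cref{cartesian-pres-weq}. The only cosmetic difference is that the paper dispatches the unit axiom by observing that all objects are cofibrant, rather than singling out $\Box^0_A$.
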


\begin{proof}
    Since all objects are cofibrant, we need only prove that the cartesian product on $\cSet_A$ satisfies the pushout product axiom. That a pushout product of cofibrations is a cofibration follows from the corresponding result for monomorphisms of sets. Thus the pushout product axiom follows from \cref{pop-axiom-sufficient,cartesian-pres-weq}.
\end{proof}

We likewise obtain a new proof of the following previously known result.

\begin{thm}\label{Grothendieck-cartesian-monoidal}
    The Grothendieck model structure on $\cSet_A$ is cartesian monoidal.
\end{thm}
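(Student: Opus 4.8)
The plan is to rerun the proof of \cref{cartesian-monoidal} with the cubical Joyal model structure replaced throughout by the Grothendieck model structure, using \cref{cubical-Joyal-localization} to transport the homotopical input across. As in that proof, every object of $\cSet_A$ is cofibrant, and a pushout product of two monomorphisms of cubical sets is again a monomorphism (a purely set-theoretic fact), so a pushout product of cofibrations is a cofibration and the unit axiom holds automatically since $\Box^0_A$ is cofibrant. By \cref{pop-axiom-sufficient} it therefore suffices to show that the cartesian product $\times \colon \cSet_A \times \cSet_A \to \cSet_A$ preserves cofibrations and trivial cofibrations in each variable. Preservation of cofibrations is immediate from the corresponding fact for monomorphisms of sets; and since $\times$ preserves monomorphisms, preservation of trivial cofibrations reduces to preservation of Grothendieck weak equivalences in each variable.

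To establish the latter, I would fix $Y \in \cSet_A$ and a Grothendieck weak equivalence $f \colon X \to X'$, and consider the commuting square coming from the naturality of the comparison map of \cref{X-Y-comparison-weq}:
\[
\begin{tikzcd}
X \otimes Y \ar[r] \ar[d,swap,"f \otimes Y"] & X \times Y \ar[d,"f \times Y"] \\
X' \otimes Y \ar[r] & X' \times Y \\
\end{tikzcd}
\]
The two horizontal maps are trivial cofibrations in the cubical Joyal model structure by \cref{X-Y-comparison-weq}, hence Grothendieck weak equivalences by \cref{cubical-Joyal-localization}. The left vertical map is a Grothendieck weak equivalence because the Grothendieck model structure on $\cSet_A$ is monoidal with respect to the geometric product (\cref{geometric-monoidal-models}) and all objects are cofibrant, so that $- \otimes Y$ is left Quillen and hence preserves all weak equivalences by Ken Brown's lemma. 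Two-out-of-three then gives that $f \times Y$ is a Grothendieck weak equivalence, and the argument in the second variable is symmetric. (This is the Grothendieck analogue of \cref{cartesian-pres-weq}.)

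With this in hand, $\times$ preserves cofibrations and trivial cofibrations in each variable, so \cref{pop-axiom-sufficient} shows that the Grothendieck model structure on $\cSet_A$ satisfies the pushout product axiom and is therefore cartesian monoidal. I do not expect any genuine obstacle: the whole argument is a formal rerun of the cubical Joyal case, and the only point at which anything beyond \cref{sec monoidal}'s earlier results is invoked is \cref{cubical-Joyal-localization}, which upgrades the already-proven cubical Joyal triviality of the comparison maps of \cref{X-Y-comparison-weq} to Grothendieck triviality.
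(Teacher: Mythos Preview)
Your proposal is correct and follows essentially the same approach as the paper: the paper's proof also invokes \cref{cubical-Joyal-localization} together with \cref{X-Y-comparison-weq} to conclude that the comparison map is a Grothendieck trivial cofibration, and then says to proceed exactly as in the proofs of \cref{cartesian-pres-weq,cartesian-monoidal}. You have simply unpacked those references explicitly.
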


\begin{proof}
    The natural map $X \otimes Y \to X \times Y$ is a trivial cofibration in the Grothendieck model structure by \cref{cubical-Joyal-localization,X-Y-comparison-weq}, so we may proceed exactly as in the proof of \cref{cartesian-pres-weq,cartesian-monoidal}.
\end{proof}

\bibliographystyle{amsalphaurlmod}
\bibliography{general-bibliography}

\end{document}